\documentclass{amsart}
\usepackage{amsmath,amssymb,amsthm,geometry,graphics,color,mdwlist,mathrsfs}
\usepackage[all]{xy}

\geometry{left=24mm,right=24mm,bottom=28mm,top=28mm}
\DeclareMathOperator{\image}{Im}
\renewcommand{\Im}{\image}
\DeclareMathOperator{\Ker}{Ker}

\DeclareMathOperator{\Hom}{Hom}
\DeclareMathOperator{\sHom}{\underline{\Hom}}

\DeclareMathOperator{\RHom}{\mathrm{R}\!\Hom}

\DeclareMathOperator{\End}{End}
\DeclareMathOperator{\sEnd}{\underline{\End}}

\def\lotimes{\otimes^\mathrm{L}}

\DeclareMathOperator{\RH}{\mathrm{H}}

\DeclareMathOperator{\Tor}{Tor}
\DeclareMathOperator{\Ext}{Ext}

\DeclareMathOperator{\pd}{proj.dim}

\DeclareMathOperator{\gd}{gl.dim}

\DeclareMathOperator{\md}{mod}
\renewcommand{\mod}{\md}
\DeclareMathOperator{\Mod}{Mod}

\DeclareMathOperator{\proj}{proj}
\DeclareMathOperator{\inj}{inj}

\DeclareMathOperator{\refl}{ref}

\DeclareMathOperator{\CM}{CM}
\DeclareMathOperator{\sCM}{\underline{\CM}}

\DeclareMathOperator{\coh}{coh}

\DeclareMathOperator{\per}{per}

\DeclareMathOperator{\add}{add}

\DeclareMathOperator{\Sym}{Sym}
\DeclareMathOperator{\Spec}{Spec}

\DeclareMathOperator{\Proj}{Proj}

\DeclareMathOperator{\depth}{depth}

\DeclareMathOperator{\ch}{char}

\def\seg{\#}
\def\rD{\mathrm{D}}

\def\rC{\mathrm{C}}

\def\Db{\mathrm{D^b}}
\def\Kb{\mathrm{K^b}}

\newcommand\recollement[3]{\xymatrix{{#1}\ar[r]&{#2}\ar[r]\ar@/_7pt/[l]\ar@/^7pt/[l]&{#3}\ar@/_7pt/[l]\ar@/^7pt/[l] }}
\def\A{\mathscr{A}}

\def\E{\mathscr{E}}

\def\M{\mathscr{M}}

\def\P{\mathscr{P}}

\def\T{\mathscr{T}}
\def\U{\mathscr{U}}
\def\V{\mathscr{V}}


\def\Ga{\Gamma}

\def\La{\Lambda}

\def\Om{\Omega}

\def\al{\alpha}

\def\la{\lambda}

\def\om{\omega}
\def\NN{\mathbb{N}}
\def\Z{\mathbb{Z}}

\def\m{\mathfrak{m}}
\def\n{\mathfrak{n}}

\def\op{\mathrm{op}}

\def\pprime{{\prime\prime}}

\def\rsimeq{\rotatebox{-90}{$\simeq$}}
\def\xsimeq{\xrightarrow{\simeq}}

\newtheorem{Thm}{Theorem}[section]
\newtheorem{Lem}[Thm]{Lemma}
\newtheorem{Prop}[Thm]{Proposition}
\newtheorem{Cor}[Thm]{Corollary}

\newtheorem{Prop-Def}[Thm]{Proposition-Definition}
\newtheorem{Thm-Def}[Thm]{Theorem-Definition}

\theoremstyle{definition}
\newtheorem{Def}[Thm]{Definition}
\newtheorem{Ex}[Thm]{Example}

\newtheorem{Setup}[Thm]{Setting}

\theoremstyle{remark}
\newtheorem{Rem}[Thm]{Remark}

\newcounter{step}

\def\disoplus{\displaystyle\bigoplus}

\def\Oplus{\displaystyle\bigoplus}

\def\Prod{\displaystyle\prod}

\def\Sum{\displaystyle\sum}

\def\Wedge{\textstyle\bigwedge}
\makeatletter

\@addtoreset{equation}{section}
\makeatother


\def\Diff{\Omega}
\def\seg{\#}

\title[NCCRs for Segre products and CM rings of hereditary representation type]{Non-commutative resolutions for Segre products and \\ Cohen-Macaulay rings of hereditary representation type}
\author{Norihiro Hanihara}
\thanks{This work is supported by JSPS KAKENHI Grant Number JP22J00649}
\subjclass[2020]{13C14, 14A22, 16E35, 16G60, 16S38, 18G80}
\keywords{Hereditary representation type, Strictly hereditary representation type, Cohen-Macaulay ring, hereditary algebra, Segre product, non-commutative crepant resolution, CT module, extended numerical semigroup ring, rigid module}
\address{Kavli Institute for the Physics and Mathematics of the Universe (WPI),The University of Tokyo Institutes for Advanced Study, The University of Tokyo, Kashiwa, Chiba 277-8583, Japan}
\email{norihiro.hanihara@ipmu.jp}

\begin{document}
\begin{abstract}
We study commutative Cohen-Macaulay rings whose Cohen-Macaulay representation theory are controlled by representations of quivers, which we call hereditary representation type. Based on tilting theory and cluster tilting theory, we construct some commutative Cohen-Macaulay rings of hereditary representation type.
First we give a general existence theorem of cluster tilting module or non-commutative crepant resolutions on the Segre product of two commutative Gorenstein rings whenever each factor has such an object. As an application we obtain three examples of Gorenstein rings of hereditary representation type coming from Segre products of polynomial rings.
Next we introduce extended numerical semigroup rings which generalize numerical semigroup rings and form a class of one-dimensional Cohen-Macaulay non-domains, and among them we provide one family of Gorenstein rings of hereditary representation type.
Furthermore, we discuss a $4$-dimensional non-Gorenstein Cohen-Macaulay ring whose representations are still controlled by a finite dimensional hereditary algebra. We show that it has a unique $2$-cluster tilting object, and give a complete classification of rigid Cohen-Macaulay modules, which turns out to be only finitely many.
\end{abstract}

\maketitle
\setcounter{tocdepth}{1}
\tableofcontents
\section{Introduction}
\subsection{Hereditary representation types}
The study of Cohen-Macaulay modules over Cohen-Macaulay rings \cite{Yo90,LW} is one of the most important subjects in commutative algebra.
Given a commutative Cohen-Macaulay ring $R$, one aims at understanding the category $\CM R$ of (maximal) Cohen-Macaulay modules over $R$. 
The most fundamental case where one has a complete understanding of the category is when $R$ is of {\it finite representation type}. Recall that it means there are only finitely many indecomposable Cohen-Macaulay modules up to isomorphism. 
%
Such commutative rings, especially the simple singularities in dimension $2$, are classical subject which are well-studied from many perspectives including representation theory \cite{BGS,Au86} and the McKay correspondence \cite{AV}, but in general it is considered as hopeless to understand the whole category $\CM R$.

The aim of this paper is to develop the study of Cohen-Macaulay representations which can be seen as ``next simple'' from finite representation type, from the point of view of tilting theory and cluster tilting theory. We introduce the notion of {\it hereditary representation type}, construct some examples of such rings, and apply it to give some classification results.

Let us explain how this notion is motivated. Let $k$ be a field, and let $R$ be a commutative Noetherian $k$-algebra which we assume to be Gorenstein. Then the stable category $\sCM R$ of Cohen-Macaulay modules is triangulated, which is also canonically equivalent to the {\it singularity category} $\Db(\mod R)/\Kb(\proj R)$ of Buchweitz \cite{Bu86} and Orlov \cite{Or04}. It has been actively studied from many perspectives such as representation theory of rings and groups, algebraic geometry, mirror symmetry, and so on. 

Beyond finite representation type an effective way of studying Cohen-Macaulay representations is tilting theory. It gives equivalences among various triangulated categories arising from different contexts, 
which provides a mutual understanding of these categories, and it is therefore an important subject in representation theory.
Extensive studies on tilting theory for singularity categories e.g.\,\cite{AIR,BIY,FU,haI,HeI,HIMO,HO,IT,KST1,KMV,KRac,KLM,Na,U1}, also a survey article \cite{Iy18}, reveal that we often have a triangle equivalence 
\[ \sCM R\simeq \rC_n(A) \]
for some finite dimensional algebra $A$, realizing the triangulated category $\sCM R$ as a canonical form of such. Here the right-hand-side is the {\it $n$-cluster category} \cite{BMRRT,Am09} of $A$ which is obtained as (the triangulated hull of) an orbit category of the derived category $\Db(\mod A)$.

We intend to measure the complexity the representation theory of $R$ in terms of the finite dimensional algebra $A$. The most basic class of such algebras is formed by {\it hereditary algebras}; recall that it means the algebra has global dimension at most $1$. Over an algebraically closed field, every such algebra is Morita equivalent to the path algebra of a quiver, and the modules are nothing but quiver representations. They are in this way fundamental objects in representation theory, which suggests the following definition.
\begin{Def}[{Definition \ref{hered}}]\label{ihered}
Let $R$ be a commutative complete Gorenstein local ring. We say that $R$ is of {\it hereditary representation type} if there exists a finite dimensional hereditary algebra $H$ and a triangle autoequivalence $F$ of $\Db(\mod H)$ such that there is a triangle equivalence
\[ \sCM R\simeq\Db(\mod H)/F. \]
\end{Def}
As a first example of Gorenstein ring of hereditary representation type, one can state the structure theorem of finite triangulated categories \cite{Am07} as follows.
\begin{Prop}[{Proposition \ref{finite}}]
Gorenstein rings of finite representation type are of hereditary representation type. Moreover, one can take the hereditary algebra $H$ to be the path algebra of a Dynkin quiver.
\end{Prop}
Among many perspectives in which hereditary algebras are distinguished, the first but essential property is that hereditary algebras have concise structure of their derived categories; it is well-known that every object is isomorphic to its cohomology, thus the structure of the derived category $\Db(\mod H)$ is very close to that of the module category. 
As a first consequence of a Gorenstein ring being hereditary representation type, we translate a result of Keller on derived orbit categories of hereditary algebras \cite{Ke05} into gradability of Cohen-Macaulay modules (Theorem \ref{gradable}).

\medskip
To give more sophisticated consequences related to cluster tilting theory or the theory of non-commutative resolutions, we also introduce a stronger version, which we call {\it strictly hereditary representation type}.
Let $R$ be a commutative ring which we assume to be a Gorenstein local isolated singularity of dimension $d+1$ over a field $k$. Then Auslander-Reiten duality implies that the stable category $\sCM R$ is a $d$-Calabi-Yau triangulated category in the sense that the $d$-th suspension functor $[d]$ is the Serre functor. Such triangulated categories are of substantial importance in numerous areas of mathematics. In representation theory and around, those endowed with cluster tilting objects, called the {cluster categories}, are of particular interest. They play an essential role in categorification of cluster algebras, and also provides a deep connection between singularity theory. We refer to Section \ref{NCCR} for more details which will be used in this paper.
Now Morita theorem for Calabi-Yau triangulated categories \cite{ha4} (see Theorem \ref{jus}) provides the canonical form of $d$-Calabi-Yau triangulated categories arising from hereditary algebras, which leads to the following definition.
\begin{Def}[{Definition \ref{strict}}]\label{ishered}
Let $R$ be a commutative complete Gorenstein local isolated singularity of dimension $d+1\geq3$. We say that $R$ is of {\it strictly hereditary representation type} if there exists a triangle equivalence
\[ \sCM R \simeq \Db(\mod H)/\tau^{-1/(d-1)}[1] \]
for some finite dimensional hereditary algebra $H$ and the naturally defined $(d-1)$-st root of the Auslander--Reiten translation.
\end{Def}
Compared to Definition \ref{ihered}, we require that the autoequivalence $F$ has to be of the form $\tau^{-1/(d-1)}[1]$, which is best possible in the sense that the right-hand-side has a natural $d$-cluster tilting object and is a certain variation of the $d$-cluster category of $H$ \cite{KMV,ha3,haI}; see Definition \ref{strict} and Remark \ref{remark} on the behavior of $\tau^{-1/(d-1)}$ which assures these implications.

We discuss some consequences of a ring being strictly hereditary representation type. We note the uniqueness of enhancements of the singularity category (Theorem \ref{ue}).
Also, taking full advantage of the algebra $H$ being hereditary, we provide a first evidence for the non-commutative Bondal--Orlov conjecture (Theorem \ref{smd}, also Remark \ref{bo}); we show that if $R$ is of strictly hereditary representation type of dimension $d+1\geq3$, then for any $d$-cluster tilting object has the same number of direct summands.

We refer to Section \ref{else} for possible variations of the definitions of hereditary representation types.
\medskip

It is a non-trivial task to construct commutative rings of (strictly) hereditary representation type. To the best of the author's knowledge, there are only two known such examples (see Theorem \ref{exIY}) which are given as certain finite quotient singularities. In the following main computational results of this paper, we present one family of $1$-dimensional semigroup rings of hereditary representation type, and three toric singularities of strictly hereditary representation type.
\begin{Thm}[{Theorem \ref{num}}]\label{exn}
Let $k$ be a field of characteristic $0$. For each $n\geq3$, the ring
\[ k[[x_0,\ldots,x_{n-2}]]/(x_ix_j-x_lx_m\mid i+j=l+m\, (\mod n)) \]
is of hereditary representation type. In fact, its singularity category is equivalent to the $0$-cluster category of the $n$-subspace quiver below. 
\[  \xymatrix@R=3mm{
	&&0\ar[dll]\ar[dl]\ar[dr]\ar[drr]&&\\
	1&2&\cdots&n-1&n } \]
\end{Thm}

For (the completion of) a graded ring $A$ we denote by $A^{(n)}$ the $n$-th Veronese subring of $A$.
\begin{Thm}[{Section \ref{MORITA}}]\label{Segres}
Let $k$ be an arbitrary field. The following rings are of strictly hereditary representation type.
\begin{enumerate}
	\item Segre product $k[[x,y]]^{(2)}\seg k[[x,y]]^{(2)}$ with $\deg x=\deg y=1$.
	\item Segre product $k[[x,y,z]]\seg k[[u,v]]$ with $\deg x=\deg y=\deg z=1$ and $\deg u=1$, and $\deg v=2$.
	\item Segre product $k[[x,y,z]]\seg k[[x,y,z]]$ with $\deg x=\deg y=\deg z=1$.
\end{enumerate}
In fact, the singularity categories of the above rings are equivalent to the cluster categories arising from the following quivers.
\[ \xymatrix@R=1.2mm@C=4mm{
	{\rm (1)} &\circ\ar@2[dd]& {\rm(2)}&&&& {\rm(3)}& \circ\ar@3[ddrr]\ar@3[ddddrr]&&\circ\ar@3[ddll]\ar@3[ddddll]\\
	&&& \circ\ar[rr]\ar@3[ddrr]&&\circ\ar@3[ddll] &&&&\\
	&\circ&&&&&& \circ\ar@3[ddrr]&&\circ\ar@3[ddll]\\
	&&& \circ\ar[rr]&&\circ &&&&\\
	&\circ\ar@2[uu]&&&&&& \circ&&\circ } \]	
\end{Thm}
The ring in Theorem \ref{exn} is a $1$-dimensional Gorenstein non-domain for each $n$. In fact, it is an example of {\it extended numerical semigroup rings}, which we introduce as a generalization of the classical numerical semigroup rings (Definition \ref{exnum}). An application of tilting theory for rings of dimension $1$ \cite{BIY,haI} shows that it is of hereditary representation type; in fact the triangle equivalence we give (Theorem \ref{num}) shows that its Cohen-Macaulay representation is essentially equivalent to the classical $n$-subspace problem.

The results in Theorem \ref{Segres} are obtained as applications of Morita theorem for Calabi-Yau triangulated categories, as recalled in Theorem \ref{morita}. To apply this theorem we need to construct a cluster tilting object on our rings, which is achieved in a much greater generality as we explain in the following subsection.
\subsection{Non-commutative crepant resolutions and cluster tilting for Segre products}
Let $R$ be a commutative Gorenstein local ring of dimension $d+1\geq2$ which is an isolated singularity. Recall that a {\it $d$-cluster tilting object}, in the sense of Iyama \cite{Iy07a}, is $M\in\CM R$ satisfying
\[
\begin{aligned}
	\add M&=\{ X\in \CM R \mid \Ext^i_R(M,X)=0 \text{ for } 0<i<d\}\\
	&=\{X\in \CM R \mid \Ext^i_R(X,M)=0 \text{ for } 0<i<d \}.
\end{aligned}
\]
It serves as a natural domain for higher dimensional Auslander--Reiten theory \cite{Iy07a,Iy07b}, which is actively studied in representation theory and also plays a crucial role in categorification of cluster algebras \cite{BMRRT}. 
At the same time, cluster tilting objects are special types of {\it non-commutative crepant resolutions} (NCCR for short), in the sense of Van den Bergh \cite{VdB04}; recall that an NCCR of a commutative Gorenstein normal domain $R$ is a reflexive module $M$ whose endomorphism ring $\End_R(M)$ has finite global dimension and is Cohen-Macaulay as an $R$-module. The non-commutative ring $\End_R(M)$ is regarded as a virtual space which is represents a crepant resolution of the singularity $\Spec R$, and in this way the study of non-commutative rings is an important subject in birational geometry.
These cluster tilting objects or NCCRs are characterized by distinguished homological properties of the endomorphism rings $\End_R(M)$, called Calabi--Yau algebras \cite{Gi,Ke11}. 
Such a variety of aspects of cluster tilting objects or NCCRs attracts numerous studies on the subject, from such viewpoints as representation theory, algebraic geometry, commutative algebra, and combinatorics. We refer to \cite{Br,BLV1,BIKR,DFI,FMS,Hara,HIMO,HN,Iy07b,SV17,SV20a,VdB04} and also surveys \cite{Le12,We16,VdB22}, for some of the studies on NCCRs or cluster tilting modules.

Theorem \ref{Segres} is based on a general existence theorem of non-commutative crepant resolutions on Segre products.
Let $k$ be a perfect field, $R^\prime=\bigoplus_{i\geq0}R^\prime_i$ and $R^\pprime=\bigoplus_{i\geq0}R^\pprime_i$ positively graded commutative Gorenstein normal domains of dimension $\geq2$ such that $R_0^\prime$ and $R_0^\pprime$ are finite dimensional over $k$. 
Let $R$ be the Segre product, that is,
\[ R=\bigoplus_{i\geq0}R^\prime_i\otimes_k R^\pprime_i. \]
When $R^\prime$ and $R^\pprime$ have the same negative $a$-invariant, then $R$ is again Gorenstein with still the same $a$-invariant (Corollary \ref{GP}).
Our general main result is the following construction of non-commutative crepant resolutions over the Segre product.
\begin{Thm}[{Theorem \ref{CT}}]\label{iCT}
Suppose that $R^\prime$ and $R^\pprime$ have the same $a$-invariant $-p$. If $R^\prime$ (resp. $R^\pprime$) has an NCCR $M$ (resp. $M^\pprime$) such that $\End_{R^\prime}(M^\prime)$ (resp. $\End_{R^\pprime}(M^\pprime)$) is positively graded, then $\bigoplus_{l=0}^{p-1}M^\prime(l)\seg M^\pprime$ is an NCCR for $R$.
\end{Thm}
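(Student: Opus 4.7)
The plan is to identify the endomorphism algebra $\Lambda := \End_R(N)$ of the candidate module $N := \bigoplus_{l=0}^{p-1}M'(l)\# M''$ with a Segre-type construction built from the given NCCR algebras $\Lambda' := \End_{R'}(M')$ and $\Lambda'' := \End_{R''}(M'')$, and then to transfer each of the three defining properties of an NCCR---reflexivity of $N$, Cohen--Macaulayness of $\Lambda$ as an $R$-module, and finite global dimension of $\Lambda$---from the factors.

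First I would establish the basic Segre--$\Hom$ compatibility: for graded reflexive modules $X',Y'$ over $R'$ and $X'',Y''$ over $R''$ (the relevant setting after completion), the canonical map
\[
\Hom_{R'}(X',Y')\,\#\,\Hom_{R''}(X'',Y'')\longrightarrow\Hom_R(X'\# X'',\,Y'\# Y'')
\]
is an isomorphism. Summing over the index pairs $0\leq l,l'\leq p-1$ then displays $\Lambda$ as the Segre product of the $p\times p$-matrix algebra $\Gamma' := \End_{R'}(\bigoplus_{l=0}^{p-1}M'(l))$---whose $(l,l')$-entry is the shifted $\Lambda'$-bimodule $\Lambda'(l'-l)$, well-defined thanks to positive grading of $\Lambda'$---with $\Lambda''$. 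Reflexivity of $N$ follows immediately, since each $M'(l)$ and $M''$ is reflexive and the Segre product of graded reflexive modules over positively graded Gorenstein normal domains of dimension $\geq 2$ is again reflexive.

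For Cohen--Macaulayness of $\Lambda$ over $R$, I would invoke the K\"unneth-type local cohomology calculation for Segre products that underlies Corollary \ref{GP}. Each matrix entry $\Hom_{R'}(M'(l),M'(l'))$ is Cohen--Macaulay over $R'$ (because $\Lambda'$ is, as $M'$ is an NCCR), $\Lambda''$ is Cohen--Macaulay over $R''$, and $R',R''$ share $a$-invariant $-p$; these together imply that each Segre product $\Hom_{R'}(M'(l),M'(l'))\#\Lambda''$ is Cohen--Macaulay of the full dimension $\dim R$, hence so is the finite direct sum $\Lambda$.

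The main obstacle is finite global dimension of $\Lambda$. The hypothesis that $\Lambda'$ and $\Lambda''$ are positively graded together with the precise choice of $p$ shifts is what makes this work: it ensures that $\bigoplus_{l=0}^{p-1}M'(l)$ generates enough of the ``$p$-periodic window'' on the $R'$-side so that, after Segre-producting with $M''$, one obtains finite resolutions of every simple $\Lambda$-module. Concretely, for each simple $\Lambda$-module I would splice together a graded projective resolution of the corresponding simple $\Gamma'$-module (of length $\leq\gd\Gamma'<\infty$) and one of the corresponding $\Lambda''$-simple (of length $\leq\gd\Lambda''<\infty$), applying the Segre--$\Hom$ formula termwise. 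The subtle point is to verify that the resulting complex consists of genuine projective $\Lambda$-modules and terminates in finitely many steps, which rests on positivity of the gradings to align tensor factors in each Segre degree, bounding the total length by $\gd\Gamma'+\gd\Lambda''$. Together with the preceding reflexivity and Cohen--Macaulay properties, this yields that $N$ is an NCCR for $R$.
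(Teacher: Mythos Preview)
Your overall architecture mirrors the paper's: identify $\End_R(N)$ via the Segre--$\Hom$ formula (Proposition~\ref{five}), check reflexivity, prove Cohen--Macaulayness via the Goto--Watanabe local cohomology formula, and establish finite global dimension by resolving simples. The reflexivity step and the identification of $\Lambda$ are essentially as in the paper.

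For Cohen--Macaulayness your sketch is too quick. Having both factors CM and the base rings sharing $a$-invariant $-p$ does \emph{not} by itself force the Segre product to be CM: in Theorem~\ref{lc} the cross terms $H_{\mathfrak m'}^{d'+1}(\Gamma'(l))\#\Gamma''$ and $\Gamma'(l)\#H_{\mathfrak m''}^{d''+1}(\Gamma'')$ must vanish, and this needs degree control on the top local cohomology of $\Gamma'$ and $\Gamma''$, not of $R'$ and $R''$. The paper supplies the missing ingredient (Lemma~\ref{a}): the canonical isomorphism $\Gamma'\simeq\Hom_{R'}(\Gamma',R')$ gives $H_{\mathfrak m'}^{d'+1}(\Gamma')\simeq D\Gamma'(p)$, concentrated in degree $\leq -p$ precisely because $\Gamma'$ is positively graded; combined with the shift range $-p<l<p$, this kills the cross terms. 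You have not accounted for this step.

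The genuine gap is the global-dimension argument. Your proposal to ``splice'' a projective resolution over $\Gamma'$ with one over $\Lambda''$ by ``applying the Segre--$\Hom$ formula termwise'' does not work as stated: the Segre product of complexes is not exact, and the Segre product of graded projectives need not be projective over $\Lambda$. The paper's mechanism (Proposition~\ref{resolution} and Proposition~\ref{gl.dim}) is substantively different. One works in $\per^{\mathbb Z^2}(\Lambda'\otimes\Lambda'')$, truncates the minimal resolutions $P'\to S'$ and $P''\to S''$ along the stable $t$-structures $\Kb(\proj^{\leq 0}\Lambda')\perp\Kb(\proj^{>0}\Lambda')$ (and a shifted version on the $\Lambda''$ side), and shows that the cone on $X'\otimes X''\to Y'\otimes Y''$ restricts to a complex of genuine $\Lambda$-projectives with unique cohomology $S'(l)\otimes S''$. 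The key input making each term land in $\proj\Lambda$ after restriction is that the minimal resolutions have all terms generated in degrees $[0,p]$, with the last term in degree exactly $p$; this follows from the $a$-invariant hypothesis via Lemma~\ref{a} (so $\Ext^i_{\Lambda'}(S',\Lambda')$ sits in $[-p,0]$, and in degree $-p$ for $i=d'+1$). Without this truncation idea your termwise Segre argument cannot be completed, and the bound you state, $\gd\Gamma'+\gd\Lambda''=(d'+1)+(d''+1)$, is one more than $\dim R$; the paper's construction gives exactly $d'+d''+1=\dim R$.
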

Note that existence of an NCCR whose endomorphism ring is positively graded implies that the $a$-invariant is negative (Lemma \ref{a}), thus the direct sum always makes sense. 
We also give the version of Theorem \ref{iCT} with ``cluster tilting'' in the place of NCCR, see Theorem \ref{CT} for details.

Trivially, if $S$ is a regular ring, then $S\in\CM S$ is an NCCR. Applying Theorem \ref{iCT} to this very special case has the following general consequence.
\begin{Cor}[{Corollary \ref{poly}}]
Let $S_i=k[x_{i,0},\ldots,x_{i,d_i}]$, $1\leq i\leq n$ be the polynomial rings with $\deg x_{i,j}=a_{i,j}>0$. Suppose that $\sum_{j=0}^{d_i}a_{i,j}$ is common for all $i$. Then the Segre product $S_1\seg\cdots\seg S_n$ has an NCCR.
\end{Cor}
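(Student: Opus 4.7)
The plan is to prove the corollary by induction on $n$, applying Theorem \ref{iCT} at each step to the Segre product constructed so far and the next factor. The base case $n=1$ is trivial: the regular ring $S_1$ is its own NCCR, with endomorphism ring $S_1$ visibly positively graded.

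For the inductive step, I set $T:=S_1\seg\cdots\seg S_{n-1}$ and suppose $T$ carries an NCCR $M_T$ whose endomorphism ring is positively graded. I will verify the hypotheses of Theorem \ref{iCT} applied to $T$ and $S_n$. The ring $S_n$ is a positively graded regular local ring with $a$-invariant $-\sum_{j}a_{n,j}=-p$, and $S_n\in\CM S_n$ is a trivial NCCR with positively graded endomorphism ring $S_n$. For $T$, the Segre product of positively graded Gorenstein normal domains is again a positively graded normal domain of dimension $\geq 2$, and by Corollary \ref{GP} (applied inductively) it is Gorenstein of $a$-invariant $-p$; hence $a(T)=-p=a(S_n)$. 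All hypotheses of Theorem \ref{iCT} are then satisfied, and it produces an NCCR $\bigoplus_{l=0}^{p-1}M_T(l)\seg S_n$ on $T\seg S_n=S_1\seg\cdots\seg S_n$.

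To close the induction, I must verify that the endomorphism ring of this new NCCR is again positively graded. Using the Segre-product description of Hom spaces for modules of the form $N_1\seg N_2$, the degree-$i$ component of $\End_{T\seg S_n}(\bigoplus_{l}M_T(l)\seg S_n)$ decomposes into pieces determined by degree-$i$ homogeneous maps $M_T(l)\to M_T(l')$ and degree-$i$ homogeneous endomorphisms of $S_n$; both vanish in negative degrees by positivity of $\End_T(M_T)$ and of $S_n$, together with the fact that all shifts $l,l'\in\{0,\ldots,p-1\}$ remain within the range compatible with the common $a$-invariant. The main technical point I expect to require the most care is precisely this preservation of positive grading at each inductive stage; the remaining ingredients are routine consequences of Corollary \ref{GP} and standard properties of Segre products.
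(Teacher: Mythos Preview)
Your approach---induction on $n$, applying Theorem \ref{iCT} at each step---is exactly the paper's (implicit) argument, and it is essentially correct. Two points deserve correction or mention.

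First, your justification that the new endomorphism ring is positively graded is not right as written. The component $\Hom_T(M_T(l),M_T(l'))_i=\End_T(M_T)_{l'-l+i}$ need \emph{not} vanish for $i<0$ when $l'>l$ (take $l'-l=p-1$ and $i=-(p-1)$), so the ``range of shifts'' does not help. The correct reason is simpler: by Proposition \ref{five} the degree-$i$ piece of $\End_{T\seg S_n}(M^p)$ is a direct sum of tensors whose second factor is $(S_n)_i$, and this alone vanishes for $i<0$ since $S_n$ is positively graded. This is precisely the remark the paper makes after Definition \ref{ps} (positivity of $\Lambda''$ implies positivity of the $p$-Segre product), and it closes your induction.

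Second, the paper's one-line proof addresses something you omit: Theorem \ref{CT} is stated for a \emph{perfect} field, used only to guarantee that $R'\otimes R''$ is normal. For (iterated Segre products of) polynomial rings this is automatic over any field---the tensor product is a direct summand of a polynomial ring---so you should note this to recover the corollary in its full strength over an arbitrary $k$.
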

We mention that Theorem \ref{iCT} is obtained as an operation on the Calabi--Yau algebras in the following sense. Given two graded rings and an integer $p>0$, we introduce their {\it $p$-Segre product} (Definition \ref{ps}) which contains the usual Segre product as an idempotent subring. We prove that the $p$-Segre product of two Calabi-Yau algebras $\End_{R^\prime}(M^\prime)$ and $\End_{R^\pprime}(M^\pprime)$ with the same $a$-invariant $-p$ gives rise to a new Calabi-Yau algebra, which turns out to be the endomorphism ring of $\bigoplus_{l=0}^{p-1}M^\prime(l)\seg M^\pprime$ over the Segre product.

Note that Calabi-Yau algebras have intimate relationship between a class of finite dimensional algebras called higher representation infinite algebras. An independent proof of Theorem \ref{iCT} based on such algebras will appear in our forthcoming paper \cite{segre}. 

\subsection{Non-Gorenstein rings}
Finally, we also discuss non-Gorenstein examples. Due to the lack of a ``model'' of the stable category $\sCM R$ as derived or cluster categories, it is not possible at this point to give an analogous definition for non-Gorenstein cases as in Definition \ref{ihered} and \ref{ishered}. Nevertheless, one can still find some examples where Cohen-Macaulay representations of a non-Gorenstein ring is controlled by a finite dimensional hereditary algebra. Here we restrict our attention to the following ring.
Let
\[ R=k[[x_0,x_1]]\seg k[[y_0,y_1,y_2]] \]
be (the completion of) the Segre product with standard gradings. It is a $4$-dimensional Cohen-Macaulay isolated singularity which is not Gorenstein. 
We construct a $2$-cluster tilting object in $\CM R$, which consequently yields that the Cohen-Macaulay representations of is controlled by the representations of the $3$-Kronecker quiver, hence $R$ can be seen as ``hereditary representation type''. We denote by $\om$ the canonical module, and by $\Om$ the syzygy, that is, the kernel of the projective cover in $\mod R$.
\begin{Thm}\label{CM}
Let $R=k[[x_0,x_1]]\seg k[[y_0,y_1,y_2]]$.
\begin{enumerate}
\item(Theorem \ref{main}) The module $X=R\oplus\om\oplus\Om^2\om\in\CM R$ is $2$-cluster tilting.
\item\label{cor}(Corollary \ref{kQ_3}) There exists an additive equivalence
\[ \xymatrix{ \sHom_R(\Om X,-)\colon \CM R/[\add X]\ar[r]^-\simeq&\mod kQ_3 }\]
for the Kronecker quiver $\xymatrix{Q_3\colon\circ\ar@3[r]&\circ}$ with $3$ arrows.
\end{enumerate}
\end{Thm}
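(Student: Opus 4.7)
The plan is to work in the geometric realization $\qgr R\simeq\coh(\PP^1\times\PP^2)$ provided by Serre's theorem, so that $\Ext$-computations in $\CM R$ reduce to sheaf cohomology on this product. Under this equivalence, $R$ corresponds to the structure sheaf while the canonical module $\om$ corresponds, up to an appropriate grading shift, to the dualizing sheaf $\O(-2,-3)$ of $\PP^1\times\PP^2$. The key preliminary is to identify $\Om^2\om$ with an explicit sheaf arising from a Beilinson/Koszul-type resolution of $\om$ driven by the linear systems $H^0(\PP^1,\O(1))$ and $H^0(\PP^2,\O(1))$; this should be readable off the minimal graded free resolution of $\om$ over $R$.

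For part (1), I would verify $\Ext^1_R$-vanishing between summands of $X$ by translating into sheaf cohomology on $\PP^1\times\PP^2$ and invoking the K\"unneth formula together with Bott vanishing on projective spaces, which kills each of the nine relevant $\Ext$-groups. For the second cluster-tilting condition, that any $Y\in\CM R$ with $\Ext^1_R(X,Y)=0$ lies in $\add X$, I would take a right $\add X$-approximation $0\to K\to X_0\to Y\to 0$, which exists because $\End_R(X)$ is module-finite over $R$; the $\Ext$-vanishing propagates to $K$, and an induction on an appropriate invariant (regularity of the associated sheaf, or projective dimension over $\End_R(X)$) forces $Y\in\add X$. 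This step requires $\End_R(X)$ to have finite global dimension, which I expect to establish directly from the geometric identification.

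For part (2), since $R$ is a summand of $X$ we have $\Om X=\Om\om\oplus\Om^3\om$, and I would compute the stable endomorphism algebra $\sEnd_R(\Om X)$ directly: the diagonal blocks collapse to $k$, one off-diagonal block has dimension $3$, coming from the $3$-dimensional space $H^0(\PP^2,\O(1))$ that governs the maps in the Beilinson-type resolution, and the other vanishes. This yields $\sEnd_R(\Om X)\cong kQ_3$, and the additive equivalence $\CM R/[\add X]\simeq\mod kQ_3$ then follows from a higher Auslander-type correspondence applied to the 2-cluster tilting object $X$, with the functor $\sHom_R(\Om X,-)$ implementing this correspondence. The main obstacle I anticipate is the non-Gorenstein nature of $R$: since $\CM R$ is not a Frobenius category, the syzygy functor and the stable $\Hom$ must be handled with care, and the approximation arguments in part (1) have to be set up without appealing to Frobenius-type duality, which is what would ordinarily streamline the verification of cluster-tilting properties.
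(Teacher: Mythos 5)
Your overall architecture is parallel to the paper's: establish enough $\Ext^1$-vanishing, invoke an Auslander-type correspondence for the maximality of $X$, and use a Demonet--Liu-type equivalence $\sHom_R(\Om X,-)\colon\CM R/[\add X]\to\mod\sEnd_R(X)$ for part (2). The paper, however, never passes to $\coh(\PP^1\times\PP^2)$; it stays with the bigraded modules $M_i=S^\prime(i)\seg S^\pprime$ and controls $\Ext^1$ and $\Ext^2$ via the depth of the $\Hom$-modules (Proposition \ref{depth}) together with the Goto--Watanabe local cohomology formula. If you do go geometric, you must account for the discrepancy between $\Ext^1_R(M,N)$ over the complete local ring and sheaf cohomology on $\PP^1\times\PP^2$, which is governed by local cohomology of $\Hom_R(M,N)$ --- essentially the same depth bookkeeping in different clothing, so this route is workable but not a shortcut.

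The genuine gap is in the maximality step of (1). Finite global dimension of $\End_R(X)$ is not sufficient: by Auslander correspondence for a $4$-dimensional non-Gorenstein isolated singularity one needs $\gd\End_R(X)=4$, $\depth_R\End_R(X)\geq3$, \emph{and} that the simple modules at the non-projective vertices have projective dimension at most $3$. Verifying this forces one to produce the minimal $\add X$-resolutions (sink sequences) of $R$, $\om$ and $\Om^2\om$, and the one at $R$ is genuinely delicate: the natural Koszul-type resolution extending the sink map $\Om^2\om^3\to R$ has a kernel involving $M_3$, and $\Ext^1_R(\Om^2\om,M_3)\neq0$, so that resolution is \emph{not} an $\add X$-resolution and must be corrected by an explicit push-out/pull-back modification (this is where the extra summand $\Om^2\om^{2}$ in the third term of the paper's sink sequence at $R$ comes from). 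Your proposed ``induction on regularity or projective dimension over $\End_R(X)$'' does not engage with this failure, and it is exactly the point where a naive propagation of $\Ext$-vanishing to the kernel of an approximation breaks down. Once maximality is in place, your part (2) is essentially correct: $\sEnd_R(X)\simeq kQ_3$ follows from the quiver of $\End_R(X)$ (the key point being that every map $\om\to R$ factors through $\Om^2\om$), and the Demonet--Liu theorem gives the stated equivalence.
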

This should be compared with the study by Auslander-Reiten \cite{AR89} (see also \cite{Yo90}), where they discover $3$-dimensional non-Gorenstein rings of finite representation type, in other words, with $1$-cluster tilting module. The above theorem can be seen as a higher dimensional analogue in the sense that we have a $2$-cluster tilting object in a $4$-dimensional ring.

The statement Theorem \ref{CM}(2) of being hereditary representation type in an undefined sense is still strong enough to give a classification result. Recall that a module $M$ over a ring $A$ is {\it rigid} if $\Ext^1_A(M,M)=0$. 
As was accomplished by Iyama--Yoshino \cite{IYo} for some Gorenstein quotient singularities, being hereditary representation type for some special quivers enables us to give a classification result.
It turns out that $X$ given above is the {\it unique} $2$-cluster titling object, and also there are only {\it finitely many} rigid Cohen-Macaulay modules. We say a module is {\it basic} if it has no multiple direct summands.
\begin{Thm}
Let $R=k[[x_0,x_1]]\seg k[[y_0,y_1,y_2]]$ as above.
\begin{enumerate}
\item $R\oplus \om\oplus\Om^2\om$ is the unique basic $2$-cluster tilting object in $\CM R$.
\item A basic Cohen-Macaulay $R$-module is rigid if and only if it is a direct summand of one of the following.
\[ R\oplus\om\oplus\Om^2\om, \quad R\oplus\Om\om,\quad \om\oplus \Hom_R(\Om\om,\om) \]
\end{enumerate}
\end{Thm}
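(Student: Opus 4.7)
The plan is to leverage the additive equivalence
\[ F := \sHom_R(\Om X, -) \colon \CM R/[\add X] \xrightarrow{\simeq} \mod kQ_3 \]
from Theorem~\ref{CM}(\ref{cor}) to reduce both assertions to the representation theory of the wild hereditary algebra $kQ_3$. Any basic $M \in \CM R$ decomposes uniquely as $M = M_X \oplus M_0$ with $M_X \in \add X$ and $M_0$ without $\add X$-summands; under $F$, the module $M_0$ corresponds to $FM_0 \in \mod kQ_3$, carrying the information of $M$ not visible from $X$.

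The central technical step, and main obstacle, is to set up an Ext-comparison through $F$. Since $F$ is stated only as an additive equivalence, this must be derived by revisiting its construction in Theorem~\ref{CM}(\ref{cor}), exploiting the higher Auslander--Reiten formalism attached to the $2$-cluster tilting object $X$ and its syzygy $\Om X$. Concretely, one expects a functorial formula relating $\Ext^1_R(M, N)$ for $M, N$ without $\add X$-summands to a homological invariant of $FM, FN$ in $\mod kQ_3$ (most naturally $\Ext^1_{kQ_3}(FM, FN)$, possibly after an Auslander--Reiten twist), together with descriptions of the cross-Ext groups $\Ext^1_R(M, X')$ and $\Ext^1_R(X', M)$ for $X' \in \add X$ in terms of $FM$.

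For (1), such an Ext-translation converts the defining condition $\add Y = {}^\perp Y \cap Y^\perp$ of a basic $2$-cluster tilting object $Y = Y_1 \oplus Y_2$ into a rigidity-plus-maximality condition on $FY_2$ in $\mod kQ_3$. Rigidity forces $FY_2$ to be a direct sum of preprojective and preinjective indecomposables, since $Q_3$ is wild hereditary. Combining this with the cross-Ext conditions imposed by all summands of $X$, the strict maximality can be shown to admit no nonzero solution, using the structure of the preprojective and preinjective components of $\mod kQ_3$. Hence $FY_2 = 0$, so $Y \in \add X$, and matching summands via the orthogonality conditions yields $Y \cong X$.

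For (2), rigidity of $M = M_X \oplus M_0$ decomposes into the three conditions: $M_X$ rigid (automatic from rigidity of $X$), $\Ext^1_R(M_0, M_0) = 0$, and $\Ext^1_R(M_0, M_X) = \Ext^1_R(M_X, M_0) = 0$. Self-rigidity forces each indecomposable summand of $FM_0$ to be preprojective or preinjective in $\mod kQ_3$. The cross-compatibility constraints with $\add X$ are what cut the otherwise infinite families of preprojective and preinjective indecomposables down to just two viable candidates, namely $F(\Om\om)$ and $F(\Hom_R(\Om\om, \om))$, while dictating precisely which subsets of summands of $X$ can accompany each choice of $M_0 \in \{0, \Om\om, \Hom_R(\Om\om, \om)\}$. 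Enumeration yields exactly the three maximal rigid configurations listed in the theorem, and every basic rigid Cohen--Macaulay $R$-module appears as a direct summand of one of them.
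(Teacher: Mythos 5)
Your overall framework (reduce to $\mod kQ_3$ via $F=\sHom_R(\Om X,-)$, invoke Kac's theorem to restrict to preprojective/preinjective candidates, then eliminate) matches the paper's strategy at the level of Lemma \ref{sugu}. But there is a genuine gap at your ``central technical step'': the two-way Ext-comparison through $F$ that your argument hinges on does not exist. What the paper proves (Proposition \ref{rigid}) is only the forward implication $\Ext^1_R(M,N)=0\Rightarrow\Hom_{\Ga}(FN,\tau FM)=0$, i.e.\ $F$ \emph{preserves} rigidity; the paper explicitly remarks that $F$ does not \emph{detect} rigidity. This is not a removable technicality: every $P_i$ and $I_i$ is rigid in $\mod kQ_3$, yet the theorem asserts that the corresponding Cohen--Macaulay modules are rigid only for $i=1$. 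So some indecomposables become rigid after applying $F$ without being rigid upstairs, and no formula of the type $\Ext^1_R(M_0,N_0)\cong\Ext^1_{kQ_3}(FM_0,\tau^{?}FN_0)$ can hold.

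Consequently your mechanism for cutting the infinite families down to two candidates fails. You propose to use the cross-Ext conditions $\Ext^1_R(M_0,X')$, $\Ext^1_R(X',M_0)$ for $X'\in\add X$, but these constraints are only in force when the rigid module under consideration actually contains summands of $X$; a single indecomposable $P_i$ (or $I_i$) with $i\geq2$ is subject to no cross-condition whatsoever, and your argument leaves open that it is rigid on its own --- which would contradict the classification. This is exactly where the paper's real work lies (Steps 1--5 of the proof of Theorem \ref{classification}): one must produce a nonzero element of $\Ext^1_R(P_i,P_i)$ directly. The paper does this by passing to the graded category $\CM^\Z\!R$, establishing generation/concentration degrees of the $P_i$ and $I_i$ along the Auslander--Reiten quiver, constructing an explicit morphism $\tau^{-1}P_i(-1)\to P_i$ through the injective hull, and showing via Lemma \ref{PO} and a degree argument that it cannot factor through a projective module, whence $\Ext^1_R(P_i,P_i)\neq0$ by Auslander--Reiten duality. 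Without an argument of this kind (or some substitute), the ``only if'' direction of (2), and hence also the uniqueness in (1), remains unproved. A smaller remark: the paper obtains (1) as a corollary of (2) (any $2$-cluster tilting object is rigid and contains $R$), rather than by a separate maximality analysis in $\mod kQ_3$ as you suggest.
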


Let us stress that what makes the above classification possible is the theory of representations of quivers, which therefore certifies the significance of pursuing the relationship between representations Cohen-Macaulay rings and finite dimensional algebras, especially Cohen-Macaulay rings of hereditary representation type.

Further examples and generalizations shall be discussed in \cite{segre}.

\subsection*{Acknowledgements}
The author would like to thank Osamu Iyama, Martin Kalck, and Yusuke Nakajima for many valuable discussions.
He is grateful to Yuji Yoshino for bringing gradability to his attention.

\section{Preliminaries on Segre products}
All rings in this section are assumed to be commutative.

We fix a base ring $k$. Let $A=\bigoplus_{i\in\Z}A_i$ and $B=\bigoplus_{i\in\Z}B_i$ be graded algebras over $k$. Then the {\it Segre product} of $A$ and $B$ over $k$ is the graded ring
\[ A\seg_kB=A\seg B:=\bigoplus_{i\in\Z}A_i\otimes B_i. \]
This is a homogeneous coordinate ring of the product $\Proj A\times_{\Spec k}\Proj B$.
Similarly for graded modules $M\in\Mod^\Z\!A$ and $N\in\Mod^\Z\!B$, we define a graded $(A\seg B)$-module
\[ M\seg N:=\bigoplus_{i\in\Z}M_i\otimes N_i. \]
We shall often regard the Segre product as a Veronese subring in the following way. One can naturally view the tensor product $A{\otimes}B=\bigoplus_{i,j\in\Z}A_i\otimes B_j$ as a $\Z^2$-graded ring by $\deg(a\otimes b)=(\deg a,\deg b)$. Then the Segre product is nothing but the Veronese subring corresponding to the diagonal subgroup $\{(i,i)\mid i\in\Z\}$ of $\Z^2$.

In what follows we assume that $k$ is a field. Let $A=\bigoplus_{i\geq0}A_i$ be a positively graded Cohen-Macaulay ring of dimension $d$ such that $A_0$ is a finite dimensional local $k$-algebra, and denote by $\om=\om_A$ the graded canonical module of $A$. We say that $A$ has {\it $a$-invariant $a$} if there exists an isomorphism
\[ \RHom_A(A_0,\om_A)[d]\simeq A_0(-a) \]
in $\rD(\Mod^\Z\!A)$. This is equivalent to saying that the socle of the injective hull of the simple $A$-module (concentrated in degree $0$) lies in degree $a$.

The following result of Goto--Watanabe on Segre products is fundamental, as this computation of the local cohomology groups has many important implications.
Let $k$ be a field, and let $A=\bigoplus_{i\geq0}A_i$ and $B=\bigoplus_{i\geq0}B_i$ be a positively graded commutative Noetherian rings with such that $A_0$ and $B_0$ are finite dimensional local $k$-algebras. We denote by $\m=J_{A_0}\oplus\bigoplus_{i>0}A_i$ and $\n=J_{B_0}\oplus\bigoplus_{i>0}B_i$ the maximal ideals, where $J$ means the Jacobson radical.
\begin{Thm}[{\cite[Theorem 4.1.5]{GW1}}]\label{lc}
Let $M\in\Mod^\Z\!A$ and $N\in\Mod^\Z\!B$. Suppose $\RH_\m^i(M)=0$ for $i=0,1$. Then we have
\[ \RH^p_{\m\seg\n}(M\seg N)=\RH_\m^p(M)\seg N \oplus M\seg\RH^p_\n(N) \oplus \bigoplus_{i=1}^p\RH_\m^i(M)\seg\RH_\n^{p+1-i}(N) \]
for all $p\geq0$.
\end{Thm}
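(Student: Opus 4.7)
The plan is to realize $M \seg N$ as the diagonal $\Z$-graded slice of $M \otimes_k N$ inside the bigraded ring $T := A \otimes_k B$, and to compute via Mayer--Vietoris in this larger ambient. Set $\m_T := \m \otimes_k B$ and $\n_T := A \otimes_k \n$. The ideal $\m_T \cap \n_T$ has the same radical as the extension of $\m \seg \n$ to $T$ (both are cut out by the bihomogeneous elements $x_i y_j$ for generators $x_i$ of $\m$ and $y_j$ of $\n$), and since the Čech complex for $\m \seg \n$ on $M \seg N$ is the diagonal part of the Čech complex for $\m_T \cap \n_T$ on $M \otimes_k N$, one obtains
\[ \RH^p_{\m \seg \n}(M \seg N) = \bigl(\RH^p_{\m_T \cap \n_T}(M \otimes_k N)\bigr)^\Delta, \]
where $(-)^\Delta$ denotes the diagonal bigraded piece. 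This reduces the problem to computing local cohomology for $M \otimes_k N$ over the bigraded $T$ and then reading off the diagonal.

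Next I would write down the Mayer--Vietoris long exact sequence for the pair $\m_T, \n_T$ in $T$, and compute each of its terms via Čech complexes. Since the generators of $\m_T$ lie in $A \otimes 1$, its Čech complex on $M \otimes_k N$ factors as $\check C^\bullet(\m; M) \otimes_k N$, which by flatness of $N$ over the field $k$ yields $\RH^p_{\m_T}(M \otimes_k N) = \RH^p_\m(M) \otimes_k N$; symmetrically $\RH^p_{\n_T}(M \otimes_k N) = M \otimes_k \RH^p_\n(N)$. For $\m_T + \n_T$ the Čech complex is the tensor product of the two Čech complexes, so Künneth over $k$ gives
\[ \RH^p_{\m_T + \n_T}(M \otimes_k N) = \bigoplus_{i+j=p} \RH^i_\m(M) \otimes_k \RH^j_\n(N). \]
The hypothesis $\RH^0_\m(M) = \RH^1_\m(M) = 0$ kills every Künneth summand with $i \leq 1$; in particular the connecting map into $\RH^p_{\m_T} \oplus \RH^p_{\n_T}$ has its image concentrated in the single piece $\RH^p_\m(M) \otimes_k \RH^0_\n(N) \hookrightarrow \RH^p_\m(M) \otimes_k N$, and the next term $\RH^{p+1}_{\m_T + \n_T}$ collapses to the cross sum with $i, j \geq 1$.

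Taking the diagonal Veronese converts each $X \otimes_k Y$ of graded $A$- and $B$-modules into the Segre product $X \seg Y$ by definition. Reading off the pieces of the Mayer--Vietoris sequence after $(-)^\Delta$ produces the three families in the statement: $\RH^p_\m(M) \seg N$ and $M \seg \RH^p_\n(N)$ from the direct summands $\RH^p_{\m_T}$ and $\RH^p_{\n_T}$, and the cross terms $\bigoplus_{i=1}^p \RH^i_\m(M) \seg \RH^{p+1-i}_\n(N)$ from the Künneth components of $\RH^{p+1}_{\m_T + \n_T}$ surviving the hypothesis.

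The main obstacle is the careful identification of the Mayer--Vietoris middle map and the justification that the resulting short exact sequences split as direct sums once one passes to the diagonal Veronese. A cleaner alternative that bypasses the splitting bookkeeping is to use $\Proj(A \seg B) = \Proj A \times_k \Proj B$, combine it with the Serre correspondence $\RH^p_\m(M) = \bigoplus_d \RH^{p-1}(\Proj A, \widetilde M(d))$ for $p \geq 2$, and apply the Künneth formula for coherent sheaf cohomology on a product of projective $k$-schemes; the hypothesis $\RH^0_\m(M) = \RH^1_\m(M) = 0$ is exactly what identifies $M$ with $\bigoplus_d \RH^0(\Proj A, \widetilde M(d))$, so the three pieces of the statement then appear directly from the sheaf-theoretic Künneth decomposition.
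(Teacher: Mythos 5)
The paper does not actually prove this statement --- it is quoted from Goto--Watanabe \cite{GW1} --- so the only question is whether your argument stands on its own. The architecture (diagonal slice of the bigraded ambient $T=A\otimes_kB$, Mayer--Vietoris for $\m_T,\n_T$, K\"unneth for $\m_T+\n_T$) is reasonable, but the step you yourself flag as ``the main obstacle'' is precisely where the proof is missing. Mayer--Vietoris only yields short exact sequences $0\to\Coker(\alpha_p)\to\RH^p_{\m\seg\n}(M\seg N)\to\Ker(\alpha_{p+1})\to0$ of graded $(A\seg B)$-modules, where $\alpha_q\colon\RH^q_{\m_T+\n_T}\to\RH^q_{\m_T}\oplus\RH^q_{\n_T}$, and the theorem asserts that these extensions split. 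The K\"unneth splitting over $k$ is available for $\RH^\bullet_{\m_T+\n_T}$, whose \v{C}ech complex really is a tensor product of complexes, but \emph{not} for $\RH^\bullet_{\m_T\cap\n_T}$, which is the group carrying the Segre product; so the splitting is a genuine assertion requiring proof, and your sketch does not supply one. This is exactly why the sheaf-theoretic route you mention last is preferable (and is essentially the original argument): for $p\geq2$ one has $\RH^p_{\m\seg\n}(M\seg N)=\bigoplus_d\RH^{p-1}(X\times_kY,\widetilde M(d)\boxtimes\widetilde N(d))$ with $X=\Proj A$, $Y=\Proj B$, and the coherent K\"unneth formula over a field gives a canonical direct sum decomposition with no extension problem; only $p=0,1$ then need separate (easy) treatment.

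There is a second, sharper problem with the identification of the summand $\RH^p_\m(M)\seg N$. By your own (correct) description, the image of $\alpha_p$ inside $\RH^p_\m(M)\otimes_kN$ is $\RH^p_\m(M)\otimes_k\RH^0_\n(N)$, so what actually survives in $\Coker(\alpha_p)$ is $\RH^p_\m(M)\seg\bigl(N/\RH^0_\n(N)\bigr)$, not $\RH^p_\m(M)\seg N$; in the sheaf picture the corresponding K\"unneth component is $\RH^p_\m(M)$ Segre-ed with the saturation $\bigoplus_d\RH^0(Y,\widetilde N(d))$, which again differs from $N$ by $\RH^0_\n(N)$ and $\RH^1_\n(N)$. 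These agree with the stated answer only under an additional hypothesis such as $\RH^0_\n(N)=\RH^1_\n(N)=0$, which Goto--Watanabe impose and which holds in every application in this paper (the modules fed into Theorem \ref{CT} are Cohen--Macaulay of dimension $\geq2$), but which is absent from the statement as quoted. Without it the formula is false: take $A=k[x_1,x_2]$, $B=k[y]$, $M=A(-2)$, $N=B\oplus(B/\n)$. Then $M\seg(B/\n)=M_0\otimes_k k=0$, so $M\seg N=M\seg B$ and the left-hand sides for the pairs $(M,N)$ and $(M,B)$ coincide, while the right-hand side for $(M,N)$ exceeds that for $(M,B)$ in cohomological degree $p=2$ by $\RH^2_\m(M)\seg(B/\n)=(\RH^2_\m A)_{-2}\otimes_kk\neq0$. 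So either the hypothesis on $N$ must be added or the first summand replaced by $\RH^p_\m(M)\seg(N/\RH^0_\n(N))$; as written, your conclusion ``reading off the pieces produces $\RH^p_\m(M)\seg N$'' contradicts your own computation of the connecting map.
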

As we shall use later for the case $A$ and $B$ are polynomial rings, this yields ``Cohen-Macaulayness of modules of covariants'' which has been studied from various perspectives e.g.\! \cite{HR,St,VdB93}. For now let us note the following consequence which we will frequently use.
\begin{Cor}[{\cite[Theorem 4.2.3, 4.3.1]{GW1}}]\label{GP}
Let $A=\bigoplus_{i\geq0}A_i$ and $B=\bigoplus_{i\geq0}B_i$ be positively graded Cohen-Macaulay rings such that $A_0$ and $B_0$ are finite dimensional local $k$-algebras, $\dim A=d\geq2$, and $\dim B=e\geq1$. 
\begin{enumerate}
\item Suppose that the $a$-invariants of $A$ and $B$ negative. Then $A\seg B$ is Cohen-Macaulay of dimension $d+e-1$, and $\om_{A\seg B}=\om_A\seg \om_B$.
\item If $A$ and $B$ are Gorenstein with the same negative $a$-invariant, then $A\seg B$ is also Gorenstein with the same $a$-invariant. \end{enumerate}
\end{Cor}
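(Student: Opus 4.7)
The plan is to read both parts off from the Goto--Watanabe formula (Theorem \ref{lc}) applied with $M=A$ and $N=B$, using Cohen--Macaulayness together with negativity of the $a$-invariants to collapse the right-hand side to a single nonzero term in top degree. The hypothesis $\RH^i_\m(A)=0$ for $i=0,1$ required by Theorem \ref{lc} is automatic since $\depth A=d\geq 2$.

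Cohen--Macaulayness of $A$ and $B$ forces $\RH_\m^i(A)=0$ for $i\neq d$ and $\RH_\n^j(B)=0$ for $j\neq e$. Hence in the formula the mixed sum $\bigoplus_{i=1}^p\RH_\m^i(A)\seg\RH_\n^{p+1-i}(B)$ contributes only at $p=d+e-1$, where it produces $\RH_\m^d(A)\seg\RH_\n^e(B)$; the ``pure'' summands $\RH_\m^p(A)\seg B$ and $A\seg\RH_\n^p(B)$ are a priori nonzero only at $p=d$ and $p=e$ respectively. Here the negativity of the $a$-invariants enters: $\RH_\m^d(A)$ is concentrated in strictly negative degrees while $B$ lives in non-negative degrees, so $(\RH_\m^d(A)\seg B)_i=\RH_\m^d(A)_i\otimes_k B_i=0$ for every $i$, and symmetrically for $A\seg\RH_\n^e(B)$. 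One therefore obtains
\[ \RH^p_{\m\seg\n}(A\seg B)=\begin{cases}\RH_\m^d(A)\seg\RH_\n^e(B) & p=d+e-1,\\ 0 & \text{otherwise.}\end{cases} \]
Grothendieck vanishing applied to the nonvanishing at $p=d+e-1$ gives $\dim(A\seg B)\geq d+e-1$, while the above shows $\depth(A\seg B)=d+e-1$; since depth never exceeds dimension this pins down $\dim(A\seg B)=d+e-1=\depth(A\seg B)$, proving Cohen--Macaulayness.

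To identify the canonical module I would take the graded Matlis dual $D=\Hom_k(-,k)$ of the displayed equality. Since the graded pieces of $A$ and $B$ are finite-dimensional over $k$ (each being finite over the Artinian rings $A_0$, $B_0$), one has a degree-wise identification $D(X)\seg D(Y)\cong D(X\seg Y)$, coming from $\Hom_k(U,k)\otimes_k\Hom_k(V,k)\cong\Hom_k(U\otimes_k V,k)$ on each graded piece. Combined with local duality this yields $\om_{A\seg B}\cong\om_A\seg\om_B$, completing (1). For (2), Gorensteinness with common $a$-invariant $-p$ makes $\om_A$ and $\om_B$ identical grading shifts of $A$ and $B$, and the elementary identity $A(s)\seg B(s)\cong(A\seg B)(s)$ shows that $\om_{A\seg B}\cong\om_A\seg\om_B$ is the same shift of $A\seg B$; in particular it is free of rank one, so $A\seg B$ is Gorenstein with the same $a$-invariant.

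The main technical point is the commutation of graded Matlis duality with the Segre construction, which is what lets us promote the top local cohomology identity to an isomorphism of canonical modules; this depends on the finite-dimensionality of the graded pieces, and once it is in hand the remainder is transparent degree-bookkeeping from Theorem \ref{lc}.
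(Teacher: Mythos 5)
Your derivation is correct in substance and is essentially the intended one: the paper states Corollary \ref{GP} as a quotation of Goto--Watanabe (Theorems 4.2.3 and 4.3.1 of \cite{GW1}) without reproving it, and their proof is precisely this degree bookkeeping from the local cohomology formula recalled as Theorem \ref{lc}. Your key steps all check out: $\depth A=d\geq2$ licenses the application of Theorem \ref{lc}; Cohen--Macaulayness kills all but the top local cohomology of each factor; negativity of the $a$-invariants kills the two pure summands $\RH^d_\m(A)\seg B$ and $A\seg\RH^e_\n(B)$ because the first tensor factor lives in negative degrees and the second in non-negative degrees; and the identification $D(X\seg Y)\cong D(X)\seg D(Y)$ for graded modules with finite-dimensional components, combined with graded local duality, gives $\om_{A\seg B}\cong\om_A\seg\om_B$, after which part (2) is the observation $A(a)\seg B(a)\cong(A\seg B)(a)$. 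The one logical slip is in how you pin down the dimension: from $\dim(A\seg B)\geq d+e-1$ and $\depth(A\seg B)=d+e-1$, the inequality $\depth\leq\dim$ points in the wrong direction and cannot give $\dim(A\seg B)\leq d+e-1$. The upper bound needs Grothendieck's \emph{non-vanishing} theorem ($\RH^{\dim M}_{\m\seg\n}(M)\neq0$ for a nonzero finitely generated module $M$): since $\RH^p_{\m\seg\n}(A\seg B)=0$ for all $p\neq d+e-1$, non-vanishing at $p=\dim$ forces $\dim(A\seg B)=d+e-1$. (Alternatively, $\dim(A\seg B)\leq\dim(A\otimes_kB)-1=d+e-1$ can be checked directly.) With that one-line repair the argument is complete.
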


We shall be looking at modules over the Segre product $A\seg B$, among which typical ones are direct summands $A\otimes B$.
Let $A=\bigoplus_{i\geq0}A_i$ and $B=\bigoplus_{i\geq0}B_i$ be positively graded Noetherian domains, and let $S=A\otimes B$, on which we give a $\Z^2$-grading by $\deg(a\otimes b)=(\deg a,\deg b)$. For a $\Z^2$-graded $S$-module $X$ we let $X_i=\bigoplus_{j\in\Z}X_{(i,0)+(j,j)}$. In particular, $S_0=\bigoplus_{j\in\Z}S_{(j,j)}=:R$ is the Segre product $A\seg B$, and $S_i=\bigoplus_{j\in\Z}S_{(i,0)+(j,j)}$ is equal to $A(i)\seg B$ as $\Z$-graded $R$-modules. 

The following result should be standard, and the proof can be found for example in \cite[Lemma 16.7]{Yo90}. Recall that a {\it normal ring} is a commutative ring whose localization at any prime ideal is a normal domain. Any Noetherian normal ring is a direct product of finitely many Noetherian normal domains, and vice versa.
\begin{Lem}\label{cov}
Assume that $S$ is a Noetherian normal ring. Then for reflexive $N\in\mod^{\Z^2}\!S$, the natural map
\[ \xymatrix{ N_{j-i}\ar[r]&\Hom_R(S_i,N_j) } \]
is an isomorphism if any associated prime of $S/SS_i$ has height $\geq2$.
\end{Lem}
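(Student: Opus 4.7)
The plan is to verify injectivity and surjectivity of the natural multiplication map $\phi\colon n\mapsto(s\mapsto sn)$, both by exploiting reflexivity of $N$ and the codimension hypothesis on $V(SS_i)\subseteq\Spec S$.

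\textbf{Injectivity.} The kernel of $\phi$ consists of $n\in N_{j-i}$ annihilated by the ideal $SS_i$. The hypothesis implies $\height SS_i\geq 2$, and since $S$ is normal (hence $(S_2)$), $\grade_S(SS_i)\geq 2$; in particular $SS_i$ contains an $S$-regular element. Reflexivity of $N$ over the normal Noetherian ring $S$ ensures $N$ is torsion-free, so $n=0$.

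\textbf{Surjectivity.} Recall that reflexivity of $N$ over the normal ring $S$ is equivalent to Serre's condition $(S_2)$: we have $N=\bigcap_{\height\p=1}N_\p$ inside the total fraction module $N\otimes_SK$ (with $K=\mathrm{Frac}(S)$), and $N_\p$ is free over the DVR $S_\p$ at each height-$1$ prime $\p$. Given $f\in\Hom_R(S_i,N_j)$, fix any nonzero $\sigma\in S_i$ and set tentatively $n:=f(\sigma)/\sigma\in N\otimes_SK$. For each height-$1$ prime $\p$ of $S$, the codimension hypothesis gives $(SS_i)_\p=S_\p$, so there exists $\sigma_\p\in S_i$ that becomes a unit in $S_\p$. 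The equality $n=f(\sigma_\p)/\sigma_\p$ in $(N\otimes K)_\p$ follows from the reciprocity $\sigma_\p f(\sigma)=\sigma f(\sigma_\p)$ in $N_\p$; in particular $n\in N_\p$ for every such $\p$, so by reflexivity $n\in N$. A bi-degree check places $n$ in $N_{j-i}$. The identity $\phi(n)=f$ amounts to the global reciprocity $\sigma f(x)=xf(\sigma)$ in $N$ for all $x\in S_i$, which again reduces by reflexivity to the same reciprocity in $N_\p$ at each height-$1$ prime, where it is a statement inside the free $S_\p$-module $N_\p$.

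\textbf{Main obstacle.} The principal technical point is the reciprocity $\tau f(\sigma)=\sigma f(\tau)$ in $N_\p$ at height-$1$ primes $\p$, for $\sigma,\tau\in S_i$. This does not follow immediately from $R$-linearity of $f$: its derivation uses the freeness of $N_\p$ over the DVR $S_\p$ to reduce the identity to a scalar computation, combined with unpacking the $R$-module syzygies of $S_i\subseteq S$ (which arise from the commutativity of $S$ under the $\Z^2$-graded structure).
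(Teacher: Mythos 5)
Your injectivity argument is fine. The gap is in surjectivity: the identity $\tau f(\sigma)=\sigma f(\tau)$ for $\sigma,\tau\in S_i$, which you correctly single out as the principal technical point, is not a side computation --- it is essentially the entire content of the statement (an $R$-linear $f\colon S_i\to N_j$ is multiplication by some $n\in N_{j-i}$ precisely when this reciprocity holds and the local fractions glue), and your proposal does not prove it. Freeness of $N_\p$ over the DVR $S_\p$ does not help: the obstruction lives in the $R$-linear structure of $f$, not in the $S_\p$-module structure of $N_\p$. The syzygies of $S_i$ over $R$ that one can readily write down are $(c\tau)\sigma=(c\sigma)\tau$ for $c\in S_{-i}$ (so that $c\tau,\,c\sigma\in R=S_0$); applying $f$ to these only shows that the defect $\tau f(\sigma)-\sigma f(\tau)$ is annihilated by the ideal $SS_{-i}$. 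To conclude that the defect vanishes at a height-one prime you would therefore need height (or grade) control on $S/SS_{-i}$, which is not the stated hypothesis --- the lemma assumes it for $S/SS_{i}$. In the application in Proposition \ref{five} both ideals do happen to have height $\geq2$, but as written your argument neither proves the lemma in its stated generality nor actually completes the reciprocity step even in that case.

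For comparison, the paper sidesteps all of this: it quotes the case $N=S$ from \cite[Lemma 16.7]{Yo90} and bootstraps. Reflexivity of $N$ gives an exact sequence $0\to N\to P^0\to P^1$ with $P^0,P^1\in\proj^{\Z^2}\!S$; the statement for graded free modules follows from the case $N=S$ by shifting degrees (note $(S(a,b))_j=S_{j+a-b}$, while the hypothesis on $S/SS_i$ is unchanged); and the five lemma applied to the two left-exact sequences obtained from $(-)_{j-i}$ and $\Hom_R(S_i,(-)_j)$ finishes the proof. If you want a self-contained argument you must either give a genuine proof of the reciprocity --- which amounts to reproving Yoshino's lemma and will require a more careful analysis of the syzygies of $S_i$ over $R$ than the one you sketch --- or adopt this reduction to the known base case.
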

\begin{proof}
	See \cite[Lemma 16.7]{Yo90} for $N=S$. For general $N$, we can take an exact sequence $0\to N\to P^0\to P^1$ in $\mod^{\Z^2}\!S$ with $P^0,P^1\in\proj^{\Z^2}\!S$ since $N$ is reflexive. Then use five lemma.
\end{proof}
The above result leads to the following useful observation. Let $k$ be a perfect field, and let $A=\bigoplus_{i\geq0}A_i$ and $B=\bigoplus_{i\geq0}B_i$ be positively graded Noetherian normal domains of dimension $\geq2$ such that $A_0$ and $B_0$ are finite dimensional over $k$. As above let $R=A\seg B$ the Segre product.
\begin{Prop}\label{five}
Let $M_1,M_2\in\mod^\Z\!A$ and $N_1,N_2\in\mod^\Z\!B$ such that $M_2$ and $N_2$ are reflexive. 
Then the canonical map
\[ \xymatrix{ \Hom_{A}(M_1,M_2)\seg\Hom_{B}(N_1,N_2)\ar[r]&\Hom_{R}(M_1\seg N_1,M_2\seg N_2) } \]
is an isomorphism.
\end{Prop}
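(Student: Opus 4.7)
The plan is to reduce to the case where all four modules are graded shifts of the base rings, and then to verify the statement directly using Lemma \ref{cov}. The first observation is that both sides of the canonical map are left exact in each of the four arguments. For the covariant variables $M_2$ and $N_2$ this is immediate from left exactness of $\Hom$ combined with exactness of $\otimes_k$ over the field $k$. For the contravariant variables $M_1$ and $N_1$, one uses that $M_1\seg N_1=\bigoplus_i(M_1)_i\otimes_k(N_1)_i$ is exact in each factor (again by exactness of $\otimes_k$), so the right-hand side is left exact in $M_1$ and $N_1$; the left-hand side is left exact by the same reasoning.

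Since $M_2$ is finitely generated reflexive, dualizing a graded finite presentation of $M_2^*$ yields a graded exact sequence $0\to M_2\to P^0\to P^1$ with $P^0,P^1$ finitely generated graded free $A$-modules, and analogously for $N_2$. The five-lemma combined with additivity then reduces us to the case $M_2=A(a')$ and $N_2=B(b')$. Taking graded finite presentations of $M_1$ and $N_1$ and applying the five-lemma once more reduces the claim to $M_1=A(a)$ and $N_1=B(b)$.

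After an overall graded shift we may assume $a'=b'=0$, so what remains is to check that the canonical map
\[ A(-a)\seg B(-b)\longrightarrow\Hom_R(A(a)\seg B(b),R) \]
is an isomorphism of graded $R$-modules. Passing to the $\Z^2$-grading on $S=A\otimes_k B$, a direct computation shows $A(a)\seg B(b)=S_{a-b}(b)$ in the notation introduced just before Lemma \ref{cov}. Since $k$ is perfect and $A,B$ are normal domains, $S$ is normal, and Lemma \ref{cov} applied to the reflexive $S$-module $S$ itself yields $\Hom_R(S_{a-b},R)\cong S_{b-a}$; after the $(-b)$-shift, a direct comparison of graded pieces identifies this with $A(-a)\seg B(-b)$.

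The main obstacle is verifying the hypothesis of Lemma \ref{cov} in this final step, namely that every associated prime of $S/SS_{a-b}$ has height at least $2$. This is expected to hold (for $a\neq b$, the case $a=b$ being trivial) because $SS_{a-b}$ contains the extension to $S$ of the irrelevant maximal ideal of either $A$ or $B$, whose vanishing locus has codimension $\dim A\geq 2$ or $\dim B\geq 2$ in $\Spec S$, and normality of $S$ excludes height-one embedded primes.
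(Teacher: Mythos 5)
Your proof follows essentially the same strategy as the paper's: reduce to graded free modules via presentations and the five lemma, then invoke Lemma \ref{cov}. The one structural difference is that the paper applies Lemma \ref{cov} with the general reflexive module $N=M_2\otimes N_2$, so it only resolves $M_1$ and $N_1$ and never touches $M_2,N_2$; you instead reduce all four modules to frees, using the co-resolution $0\to M_2\to P^0\to P^1$, and then need Lemma \ref{cov} only for $N=S$. That is a legitimate trade --- you are in effect re-deriving the general case of Lemma \ref{cov} (whose proof in the paper is exactly this co-resolution-plus-five-lemma argument) inside the proof of the proposition --- and your left-exactness bookkeeping in all four variables is correct.

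Two points need repair. First, ``after an overall graded shift we may assume $a'=b'=0$'' is not available: the only shift acting on $R$-modules is the diagonal one, so from $(a,b,a',b')$ you can only normalize the differences $a-b$ and $a'-b'$, not set $a'$ and $b'$ to zero separately. The fix is immediate and costs nothing: $A(a')\seg B(b')=S_{a'-b'}(b')$, and Lemma \ref{cov} with $N=S$, $i=a-b$, $j=a'-b'$ gives $\Hom_R(S_{a-b},S_{a'-b'})\cong S_{(a'-b')-(a-b)}$, which after unwinding the shifts is $A(a'-a)\seg B(b'-b)$, as required; but the display you wrote with target $R$ only covers the case $a'=b'$. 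Second, the height hypothesis of Lemma \ref{cov} that you leave as ``expected to hold'' is precisely the piece of content the paper verifies: for $i\geq0$ the ideal $SS_i$ generates $A_{\geq i}\otimes B$, so $S/SS_i=A/A_{\geq i}\otimes B$ has codimension $\dim A\geq2$, and symmetrically for $i\leq0$. Note that $SS_i$ does \emph{not} contain $\m_AS$ for $i>0$ (it is contained in $A_{\geq i}\otimes B$); what you want is an inclusion of radicals, i.e.\ a bound on $V(SS_i)$. Once $\height(SS_i)\geq2$ is known, every associated prime of $S/SS_i$ automatically has height $\geq2$ simply because it contains $SS_i$; the appeal to normality of $S$ to ``exclude height-one embedded primes'' is not the right mechanism (normality is needed so that Lemma \ref{cov} applies at all, not to control embedded primes of $S/SS_i$).
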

\begin{proof}
	We know that $S=A\otimes B$ is a Noetherian normal ring since $k$ is perfect (see \cite[Chapter V, Section 1.7, Corollary to Proposition 19]{NB}). Let us verify that the associated primes of $S/S_iS$ have height $\geq2$. Consider first the case $i\geq0$. It is easy to see that $S_i$ generates the ideal $A_{\geq i}\otimes B$, so $S/S_iS=A/A_{\geq i}\otimes B$ has codimension $\dim S-\dim B=\dim A\geq2$. Similarly if $i\leq0$ the ring $S/S_iS$ has codimension $\geq\dim B\geq2$.
	
	If $M_1$ and $N_1$ are finitely generated projective, say $M_1=A(i)$ and $N_1=B(j)$, then the map becomes
	\[ \xymatrix{ M_2(-i)\seg N_2(-j)\ar[r]&\Hom_{R}(A(i)\seg B(j),M_2\seg N_2) }. \]
	This can be seen to be an isomorphism by applying Lemma \ref{cov} to $N=M_2\otimes N_2$ which is reflexive since each factor is. The general assertion is now a routine. Take a presentation of $M_1$ and use five lemma to prove for the case $M_1$ is general and $N_1$ is projective. Do the same argument for $N_1$ to deduce the result.
\end{proof}
\section{Non-commutative resolutions for Segre products}
We prove the first main result of this paper. It is a general construction of non-commutative crepant resolutions (NCCRs) or cluster tilting (CT) modules on Segre products of commutative Gorenstein rings from such a module on each component. For this we work over their Auslander algebras, in other words Calabi-Yau algebras. They have distinguished homological properties which serve well enough to characterize NCCRs or CT modules. In fact, we give a construction of a Calabi-Yau algebra from given such two, and therefore deduce our result. 

\subsection{$p$-Segre products}\label{daiji}
We start with a construction of graded rings, which we call {\it $p$-Segre product} for any positive integer $p$, generalizing the classical Segre product. 
In this subsection we let $k$ be a commutative base ring. Every tensor product will be over $k$. For a $\Z$-graded $k$-algebra $\La$ and a subset $I\subset\Z$ we put $\proj^I\!\La:=\add\{\La(-i)\mid i\in I\}\subset\proj^\Z\!\La$, the full subcategory consisting of projective modules generated in degree $i$ with $i\in I$.

Let
\[ \La^\prime=\bigoplus_{i\in\Z}\La^\prime_i, \qquad \La^\pprime=\bigoplus_{i\in\Z}\La^\pprime_i \]
be graded $k$-algebras. We first view the tensor product $\La^\prime\otimes\La^\pprime$ as a $\Z^2$-graded ring. Fix an integer $p>0$ and consider the category
\[ \P=\add\{ \La^\prime\otimes\La^\pprime(i,j)\mid 0\leq i-j\leq p-1\}\subset\proj^{\Z^2}\!\La^\prime\otimes\La^\pprime. \]
By taking a representative, for example $P=\bigoplus_{l=0}^{p-1}\La^\prime\otimes\La^\pprime(l,0)$, for the action $(1,1)$ on $\P$, we have $\P\simeq\proj^\Z\!\La$ for the graded algebra
\[ \La=\bigoplus_{l\in\Z}\Hom_{\La^\prime\otimes\La^\pprime}^{\Z^2}(P,P(l,l)). \]
\begin{Def}\label{ps}
We call $\La$ above the {\it $p$-Segre product} of $\La^\prime$ and $\La^\pprime$.
\end{Def}
For $p=1$ this is nothing but the usual Segre product. 
The above definition endows the $p$-Segre product $\La$ a structure of a graded ring. It is easy to see that if $\La^\pprime$ is positively graded, then $\La$ is also positively graded. Note however that the grading depends on the choice of the representative $P$, hence it is not canonical. (For example, another choice of a representative $\bigoplus_{l=0}^{p-1}\La^\prime\otimes\La^\pprime(0,-l)$ yields a different grading.) Nevertheless, as an ungraded ring, $\La$ is uniquely determined up to Morita equivalence by the graded rings $\La^\prime$ and $\La^\pprime$, since $\proj\La$ is equivalent to the orbit category $\P/(1,1)$.

	
\bigskip

In what follows we assume that
\[ \La^\prime=\bigoplus_{i\geq0}\La^\prime_i, \qquad \La^\pprime=\bigoplus_{i\geq0}\La^\pprime_i \]
are positively graded, and are flat over $k$. Next fix $S^\prime\in\Mod\La^\prime_0$ (resp. $S^\pprime\in\Mod\La^\pprime_0$), which we view as a graded module over $\La^\prime$ (resp. $\La^\pprime$) concentrated in degree $0$. We assume that they have projective resolutions
\[ \xymatrix@R=2mm{
	P^\prime\colon&0\ar[r]&P^\prime_{m^\prime}\ar[r]&P^\prime_{{m^\prime}-1}\ar[r]&\cdots\ar[r]&P^\prime_1\ar[r]&P^\prime_0\ar[r]&0 \\
	P^\pprime\colon&0\ar[r]&P^\pprime_{m^\pprime}\ar[r]&P^\pprime_{{m^\pprime}-1}\ar[r]&\cdots\ar[r]&P^\pprime_1\ar[r]&P^\pprime_0\ar[r]&0 }\]
in $\mod^\Z\!\La^\prime$ and $\mod^\Z\!\La^\pprime$ respectively, satisfying the following.
\begin{itemize}
	\item The first term $P_0^\prime$ (resp. $P_0^\pprime$) is generated in degree $0$.
	\item The middle terms are generated in degrees $[0,p]$.
	\item The last term $P^\prime_{m^\prime}$ (resp. $P^\pprime_{m^\pprime}$) is generated in degree $p$.
\end{itemize}

Now from these two complexes we construct a projective resolution of the $\La$-module $S^\prime(l)\otimes S^\pprime$ for each $0\leq l\leq p-1$.
First truncate the complex $P^\prime(l)$ (which is quasi-isomorphic to $S^\prime(l)$) along the stable $t$-structure $\Kb(\proj^\Z\!\La^\prime)=\Kb(\proj^{\leq0}\!\La^\prime)\perp\Kb(\proj^{>0}\!\La^\prime)$ to realize it as a mapping cone
\[ \xymatrix{ X^\prime\ar[r]& Y^\prime\ar[r]& S^\prime(l)\ar[r]&X^\prime[1] } \]
in $\per^\Z\!\La^\prime$ with $X^\prime\in\Kb(\proj^{[1,p-l]}\!\La^\prime)$ and $Y^\prime\in\Kb(\proj^{[-l,0]}\!\La^\prime)$.
Similarly, truncate the second complex $P^\pprime$ along $\Kb(\proj^\Z\!\La^\pprime)=\Kb(\proj^{<p-l}\!\La^\pprime)\perp\Kb(\proj^{\geq p-l}\!\La^\pprime)$ to obtain a triangle
\[ \xymatrix{ X^\pprime\ar[r]& Y^\pprime\ar[r]& S^\pprime\ar[r]&X^\pprime[1] } \]
in $\per^\Z\!\La^\pprime$ with $X^\pprime\in\Kb(\proj^{[p-l,p]}\!\La^\pprime)$ and $Y^\pprime\in\Kb(\proj^{[0,p-l-1]}\!\La^\pprime)$.
\begin{Prop}\label{resolution}
	The restriction of the sequence
	\[ \xymatrix{ X^\prime\otimes X^\pprime\ar[r]&Y^\prime\otimes Y^\pprime\ar[r]& S^\prime\otimes S^\pprime(l,0)\ar[r]& X^\prime\otimes X^\pprime[1] } \]
	in $\rD(\Mod^{\Z^2}\!\La^\prime\otimes\La^\pprime)$ to $\P$ yields a triangle in $\rD(\Mod^\Z\!\La)$. Therefore, the mapping cone of the first map gives a projective resolution of $S^\prime(l)\otimes S^\pprime$ in $\Mod^\Z\!\La$.
\end{Prop}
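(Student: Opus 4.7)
The plan is to assemble the sequence via the tensor of the two truncation triangles and then show that the natural comparison map from $\cone(X'\otimes X''\to Y'\otimes Y'')$ to $S'(l)\otimes S''$ becomes an isomorphism after restriction. Writing $\alpha'\colon X'\to Y'$, $\beta'\colon Y'\to S'(l)$ and $\alpha'',\beta''$ for the first and second maps in the two truncation triangles, the sequence is built from $\alpha'\otimes\alpha''$ and $\beta'\otimes\beta''$. Since $(\beta'\alpha')\otimes(\beta''\alpha'')$ vanishes, there is a canonical induced map $\phi\colon C:=\cone(\alpha'\otimes\alpha'')\to S'(l)\otimes S''$ in $\rD(\Mod^{\Z^2}\La'\otimes\La'')$.

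First I would verify that every summand of $Y'\otimes Y''$ and of $X'\otimes X''$ lies in $\P$ by a direct bidegree count. For $Y'\otimes Y''$, summands $\La'\otimes\La''(c_1,c_2)$ have $c_1\in[0,l]$ and $c_2\in[-(p-l-1),0]$, hence $c_1-c_2\in[0,p-1]$; a mirror count with $c_1\in[-(p-l),-1]$ and $c_2\in[-p,-(p-l)]$ handles $X'\otimes X''$. Consequently the restrictions of these complexes lie in $\Kb(\proj^\Z\La)$.

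Two octahedra then identify $\cone(\phi)$. Applied to $X'\otimes X''\xrightarrow{\alpha'\otimes 1}Y'\otimes X''\xrightarrow{1\otimes\alpha''}Y'\otimes Y''$, the first produces a triangle $S'(l)\otimes X''\to C\to Y'\otimes S''$ and realizes $\phi$ as the composition $C\to Y'\otimes S''\xrightarrow{\beta'\otimes 1}S'(l)\otimes S''$, whose second factor has cone $X'\otimes S''[1]$ by the triangle~(A) tensored with $S''$. A second octahedron on this composition yields a triangle $S'(l)\otimes X''[1]\to\cone(\phi)\to X'\otimes S''[1]$. It remains to prove the vanishings $\RHom^{\Z^2}_{\La'\otimes\La''}(P,X'\otimes S'')=0$ and $\RHom^{\Z^2}_{\La'\otimes\La''}(P,S'(l)\otimes X'')=0$ in $\rD(\Mod^\Z\La)$. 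Using $\Hom^{\Z^2}(P,M)_s=\bigoplus_{l'=0}^{p-1}M_{(s-l',s)}$, each vanishing reduces to an incompatibility of supports: $X'$ is supported in first-coordinate degree $\ge 1$ while $S''$ is concentrated in second coordinate $0$, forcing $s=0$ and $s-l'\ge 1$ for some $l'\ge 0$, which is impossible; symmetrically, $X''$ is supported in second-coordinate degree $\ge p-l$ while $S'(l)$ sits in first coordinate $-l$, and the constraint $l'=s+l\in[0,p-1]$ together with $s\ge p-l$ is likewise inconsistent.

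Combining these, $\cone(\phi)$ restricts to zero, so $\phi$ becomes an isomorphism in $\rD(\Mod^\Z\La)$, and the restricted sequence is the claimed distinguished triangle. Since its outer terms lie in $\Kb(\proj^\Z\La)$, the mapping cone of the first map is a bounded projective resolution of $S'(l)\otimes S''$ in $\Mod^\Z\La$. The hard part, I expect, will be the bigraded vanishing: it is precisely where the positive grading of $\La'$ and $\La''$ interlocks with the strict positivity ``$\ge 1$'' at the truncation boundary of $X'$ (and ``$\ge p-l$'' for $X''$) to disconnect their supports from the diagonal strip that parametrizes $\P$.
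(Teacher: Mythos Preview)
Your argument is correct and runs parallel to the paper's, sharing the same first octahedron on $X'\otimes X''\to Y'\otimes X''\to Y'\otimes Y''$ to produce the triangle $S'(l)\otimes X''\to C\to Y'\otimes S''$, and the same bidegree count placing $X'\otimes X''$ and $Y'\otimes Y''$ in $\P$. The organization of the final step differs. The paper works directly with the triangle $S'(l)\otimes X''\to C\to Y'\otimes S''$ and computes the restricted cohomology of $C$ term by term: it shows $H^n(S'(l)\otimes X'')$ restricts to zero (your second vanishing), that $H^n(Y'\otimes S'')$ restricts to zero for $n\neq 0$ via $H^nX'\xrightarrow{\sim}H^nY'$, and then handles $H^0$ by explicitly identifying $H^0Y'_{[-p+1,0]}$ with $S'(l)$. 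You instead build the comparison map $\phi$ and apply a second octahedron to exhibit $\cone(\phi)$ as an extension of $X'\otimes S''[1]$ by $S'(l)\otimes X''[1]$, reducing everything to two symmetric support vanishings. Your packaging is a bit cleaner: it bypasses the case split on $n=0$ versus $n\neq0$ and the hands-on truncation of $H^0Y'$, at the cost of tracking that the $\phi$ coming out of the first octahedron really is a lift of $\beta'\otimes\beta''$ (which it is, since the octahedral map $C\to Y'\otimes S''$ is compatible with $1\otimes\beta''$). One small quibble: calling $\phi$ ``canonical'' is an overstatement in the triangulated setting, but since you immediately pin it down as the composite through $Y'\otimes S''$, this causes no problem.
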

\begin{proof}
	Extend the first map to the triangle
	\[ \xymatrix{ X^\prime\otimes X^\pprime\ar[r]&Y^\prime\otimes Y^\pprime\ar[r]& C\ar[r]& X^\prime\otimes X^\pprime[1] } \]
	in $\rD(\Mod^{\Z^2}\!\La^\prime\otimes\La^\pprime)$ and we show that restricting $C$ to $\P$, in other words, taking the degree $\{(i,j)\in\Z^2\mid 0\leq j-i\leq p-1\}$ part (notice the difference of the signs), gives $S^\prime(l)\otimes S^\pprime$, hence its resolution.
	
	First, since $X^\prime\in\Kb(\proj^{[1,p-l]}\!\La^\prime)$ and $X^\pprime\in\Kb(\proj^{[p-l,p]}\!\La^\pprime)$, we have that each term of $X^\prime\otimes X^\pprime$ lies in $\add\{\La^\prime\otimes\La^\pprime(-i,-j)\mid 1\leq i\leq p-l,\, p-l\leq j\leq p\}$. It follows that restricting them to $\P$ gives graded projective $\La$-modules (precisely, in $\add\{\La(-j)\mid p-l\leq j\leq p\}$). Similarly, by the locations of $Y^\prime$ and $Y^\pprime$, we find that each term of $Y^\prime\otimes Y^\pprime$ restricts to $\add\{\La(-j)\mid 0\leq j\leq p-l-1\}$. 
	
	Let us next compute the cohomology of $C$. The octahedral axiom applied to the composite $X^\prime\otimes X^\pprime\to Y^\prime\otimes X^\pprime\to Y^\prime\otimes Y^\pprime$ yields a triangle
	\[ \xymatrix{ S^\prime\otimes X^\pprime(l,0)\ar[r]& C\ar[r]& Y^\prime\otimes S^\pprime\ar[r]&S^\prime\otimes X^\pprime(l,0)[1] } \]
	in $\rD(\Mod^{\Z^2}\!\La^\prime\otimes\La^\pprime)$.
	For each $n\in\Z$ we have $H^n(S^\prime\otimes X^\pprime(l,0))=H^0S^\prime(l)\otimes H^nX^\pprime$, which is concentrated in degree $\{(-l,j)\mid j\geq p-l\}$ by the location of $X^\pprime$, hence the cohomology is $0$ when restricted to $\P$. Similarly we have $H^n(Y^\prime\otimes S^\pprime)=H^nY^\prime\otimes H^0S^\pprime$. If $n\neq0$ there is an isomorphism $H^nX^\prime\xsimeq H^nY^\prime$ so that this is concentrated in degree $>0$. Therefore $H^n(Y^\prime\otimes S^\pprime)$ is concentrated in degree $\{(i,0)\mid i>0\}$ when $n\neq0$, thus restricts to $0$ on $\P$.
	It remains to consider $H^0(Y^\prime\otimes S^\pprime)=H^0Y^\prime\otimes S^\pprime$. Truncating the exact sequence $H^0X^\prime\to H^0Y^\prime\to S^\prime(l)\to0$ at degree $\leq0$, we get an isomorphism $H^0Y^\prime_{\leq0}\xsimeq S^\prime(l)$ by the location of $X^\prime$, and since each term of $Y^\prime$ is generated in degree $\geq-l$, we have $H^0Y^\prime_{\leq0}=H^0Y^\prime_{[-p+1,0]}$. Noting that $S^\pprime$ is concentrated in degree $0$, it follows that the restriction of $H^0Y^\prime\otimes S^\pprime$ to $\P$ is isomorphic to $H^0Y^\prime_{[-p+1,0]}\otimes S^\pprime=S^\prime(l)\otimes S^\pprime$.
\end{proof}

\subsection{Noetherian algebras of finite global dimension}
As a next step we specialize the above construction to the following setup of module-finite algebras.
\begin{Setup}
Let $k$ be a field and let $R=\bigoplus_{i\geq0}R_i$ be a positively graded commutative Noetherian $k$-algebra such that $R_0$ is finite dimensional.
\begin{enumerate}
	\item $\La^\prime$ and $\La^\pprime$ are positively graded module-finite graded $R$-algebras of finite global dimension $n^\prime$ and $n^\pprime$, respectively. Moreover either $\La^\prime/J^\prime$ or $\La^\pprime/J^\pprime$ is separable over $k$, where $J^\prime$ and $J^\pprime$ are the Jacobson radicals.
	\item $S^\prime=\La^\prime/J^\prime$ and $S^\pprime=\La^\pprime/J^\pprime$.
	\item $m^\prime\geq n^\prime$, $m^\pprime\geq n^\pprime$, and $p>0$ are integers satisfying the two conditions below.
	\begin{itemize}
	\item $\Ext^i_{\La^\prime}(S^\prime,\La^\prime)$ is concentrated in degrees $[-p,0]$ for $0<i<m^\prime$, and in degree $-p$ for $i=m^\prime$.
	\item $\Ext^i_{\La^\pprime}(S^\pprime,\La^\pprime)$ is concentrated in degrees $[-p,0]$ for $0<i<m^\pprime$, and in degree $-p$ for $i=m^\pprime$.
	\end{itemize}
\end{enumerate}
\end{Setup}
For example one can take $m^\prime>n^\prime$ and $m^\pprime>n^\pprime$, so that the third condition is satisfied for any sufficiently large $p$.
Now let
\[ \La=\Oplus_{l\geq0}\Hom_{\La^\prime\otimes\La^\pprime}^{\Z^2}(P,P(l,l)) \]
for $P=\bigoplus_{l=0}^{p-1}\La^\prime\otimes\La^\pprime(l,0)$, the $p$-Segre product.
\begin{Prop}\label{gl.dim}
The $p$-Segre product $\La$ has global dimension $\leq m^\prime+m^\pprime-1$. If $m^\prime=n^\prime$ and $m^\pprime=n^\pprime$, the global dimension is precisely $n^\prime+n^\pprime-1$. Moreover, the projective resolution of the semisimple $\La$-modules $S^\prime(l)\otimes S^\pprime$ are computed as in Proposition \ref{resolution}.
\end{Prop}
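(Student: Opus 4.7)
The plan is to reduce the computation of $\gd \La$ to bounding projective dimensions of the simple graded $\La$-modules, apply Proposition~\ref{resolution} to produce explicit resolutions, and carefully analyze the lengths of the resulting complexes.

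By the separability hypothesis on $\La^\prime/J^\prime$ or $\La^\pprime/J^\pprime$, the tensor product $(\La^\prime/J^\prime) \otimes_k (\La^\pprime/J^\pprime)$ remains semisimple. Through the Morita-type identification $\proj^\Z\!\La \simeq \P/(1,1)$, the $p$ summands $\La^\prime \cotimes \La^\pprime(l, 0)$ of $P$ correspond to the $p$ distinct blocks of primitive idempotents in $\La$, and consequently each simple graded $\La$-module appears (up to degree shift) as a summand of some $S^\prime(l) \otimes S^\pprime$ with $l \in \{0, 1, \ldots, p-1\}$. Since $\La$ is Noetherian and positively graded, $\gd \La = \sup_{l \in [0, p-1]} \pd_\La(S^\prime(l) \otimes S^\pprime)$, and it suffices to bound these projective dimensions.

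For each $l$, Proposition~\ref{resolution} provides a projective resolution of $S^\prime(l) \otimes S^\pprime$ as the mapping cone of $X^\prime \otimes X^\pprime \to Y^\prime \otimes Y^\pprime$, restricted to $\P$. The key length estimate exploits the extremal-term hypotheses on $P^\prime$ and $P^\pprime$: after the shift by $l$, the term $P^\prime_0(l)$ has generation degree $-l \leq 0$ and thus lies entirely in $Y^\prime$, while $P^\prime_{m^\prime}(l)$ has generation degree $p-l \geq 1$ and lies entirely in $X^\prime$. This confines $X^\prime$ to cohomological positions $[-m^\prime, -1]$ and $Y^\prime$ to $[-(m^\prime-1), 0]$, each of span at most $m^\prime - 1$; symmetric bounds hold for $X^\pprime, Y^\pprime$ with $m^\pprime$. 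The tensor products $X^\prime \otimes X^\pprime$ and $Y^\prime \otimes Y^\pprime$ therefore have cohomological spans at most $m^\prime + m^\pprime - 2$, and their mapping cone has span at most $m^\prime + m^\pprime - 1$. Restriction to $\P$ preserves cohomological positions, giving a projective resolution of $S^\prime(l) \otimes S^\pprime$ in $\Mod^\Z\!\La$ of length at most $m^\prime + m^\pprime - 1$, hence $\gd \La \leq m^\prime + m^\pprime - 1$.

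For equality under $m^\prime = n^\prime$ and $m^\pprime = n^\pprime$, I would take $P^\prime, P^\pprime$ to be minimal projective resolutions, so that all differentials land in the graded Jacobson radicals. The semi-orthogonal truncations and the tensor-cone-restriction construction preserve this property, so the resulting projective resolution of $S^\prime(0) \otimes S^\pprime$ is itself minimal over $\La$. Its length is exactly $n^\prime + n^\pprime - 1$, with top term coming from the nonzero restriction of $P^\prime_{n^\prime} \otimes P^\pprime_{n^\pprime}$ to $\P$. Minimality forces $\pd_\La(S^\prime(0) \otimes S^\pprime) = n^\prime + n^\pprime - 1$, yielding $\gd \La = n^\prime + n^\pprime - 1$. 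The moreover statement about the projective resolutions of $S^\prime(l) \otimes S^\pprime$ is then simply a restatement of Proposition~\ref{resolution} applied in this setting. The main technical obstacle is the cohomological-position bookkeeping: verifying that the semi-orthogonal truncation confines $X^\prime, Y^\prime$ to the claimed positions so that the combined span is exactly $m^\prime + m^\pprime - 1$ rather than the naive $m^\prime + m^\pprime + 1$, and checking that minimality is transported faithfully through the tensor-cone-restriction operations.
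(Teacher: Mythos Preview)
Your proposal is correct and follows essentially the same approach as the paper: reduce to the simple $\La$-modules $S^\prime(l)\otimes S^\pprime$, apply Proposition~\ref{resolution} to the minimal resolutions $P^\prime$, $P^\pprime$, and read off the length bound from the cohomological positions of $X^\prime,Y^\prime,X^\pprime,Y^\pprime$. Two small points worth tightening: first, the paper explicitly notes that the $\Ext$-degree hypotheses in the Setup are precisely what force the minimal resolutions $P^\prime,P^\pprime$ to satisfy the generation-degree constraints of Section~\ref{daiji} (first term in degree $0$, last term in degree $p$, middle terms in $[0,p]$), which you invoke as ``the extremal-term hypotheses'' without saying where they come from; second, ``Noetherian and positively graded'' alone does not yield $\gd\La=\sup_L\pd_\La L$ over simples---the paper instead uses that $\La$ is module-finite over $R$ (hence semiperfect), which is the cleaner justification here.
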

\begin{proof}
	Let $P^\prime\to S^\prime$ and $P^\prime\to S^\pprime$ be the minimal projective resolutions in $\mod^\Z\!\La^\prime$ and $\mod^\Z\!\La^\pprime$, respectively. Then by our assumption on the location of the $\Ext$ groups, these resolutions satisfy the conditions on the projective resolution from the previous subsection. Then we get a projective resolution of $S^\prime(l)\otimes S^\pprime\in\mod^\Z\!\La$ in Proposition \ref{resolution}. Since these are all the simple $\La$-modules (up to degree shifts) we see that $\La/J_\La$ has projective dimension $\leq m^\prime+m^\pprime-1$. It follows from \cite[Theorem A.1]{GI} that $\La$ has global dimension $\leq m^\prime+m^\pprime-1$.
\end{proof}

\subsection{Non-commutative resolutions for Segre products}\label{NCCR}
To state one of the main results of this paper, we recall some important notions related to non-commutative resolutions. We denote by $\refl A$ the category of finitely generated reflexive modules over a commutative ring $A$.
\begin{Def}\label{NCR}
Let $R$ be a Gorenstein normal domain and $M\in\refl R$.
\begin{enumerate}
\item\cite{VdB04} $M$ is a {\it non-commutative resolution}, or an NCR, if $\End_R(M)$ has finite global dimension.
\item\cite{VdB04} $M$ is a {\it non-commutative crepant resolution}, or an NCCR, if it is an NCR and $\End_R(M)\in\CM R$.
\item\cite{Iy07a,IW14} $M$ is a {\it CT module} if $M\in\CM R$ and satisfies the following.
\[ \begin{aligned}
	\add M&=\{ X\in \CM R \mid \Hom_R(M,X)\in\CM R\}\\
	&=\{X\in \CM R \mid \Hom_R(X,M)\in\CM R\}
	\end{aligned}
\]
\end{enumerate}
\end{Def}
If $M\in\refl R$ is an NCCR, then the global dimension of $\End_R(M)$ must be $\dim R$. Also by \cite[Corollary 5.9]{IW14} a CT module is precisely an NCCR which contains $R$ as a direct summand. (Note that we do not need $\dim R=3$ for this.) Moreover, when $R$ has only an isolated singularity, CT module is nothing but a $(\dim R-1)$-cluster tilting object in $\CM R$, in the sense of Definition \ref{ct} below. These facts can be stated as ``Auslander correspondence'' below.
\begin{Thm}[\cite{Iy07b,IW14}]
Let $R$ be a Gorenstein local normal domain and $M\in\refl R$. Then the following are equivalent.
\begin{enumerate}
	\renewcommand\labelenumi{(\roman{enumi})}
	\renewcommand\theenumi{(\roman{enumi})}
	\item $M\in\CM R$ and is a CT module.
	\item $R\in\add M$, $\gd\End_R(M)=\dim R$, and $\End_R(M)\in\CM R$.
\end{enumerate}
\end{Thm}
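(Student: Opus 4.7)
The plan is to unpack the two implications using the standard dictionary between $\CM R$ and $\mod\La$ where $\La=\End_R(M)$, together with the non-commutative Auslander--Buchsbaum formula for orders.

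For (i)$\Rightarrow$(ii), I would first observe that $R\in\add M$ is forced by the definitions: since $R$ is Gorenstein, the module $R$ satisfies $\Hom_R(R,X)=X\in\CM R$ and $\Hom_R(X,R)\in\CM R$ for every $X\in\CM R$ (duals of CM modules over a Gorenstein ring are CM), so $R$ lies in both test classes defining a CT module. In particular, taking $X=M$ in the first CT condition gives $\End_R(M)=\Hom_R(M,M)\in\CM R$. For $\gd\La=d:=\dim R$, the strategy is to build right $\add M$-approximations: for any $X\in\CM R$, the fact that $R\in\add M$ produces short exact sequences $0\to X'\to M_0\to X\to 0$ with $M_0\in\add M$ and $X'\in\CM R$, which after applying $\Hom_R(M,-)$ give a projective resolution step in $\mod\La$. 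Iterating $d$ times and using that $\Omega^dX'$ becomes projective over the regular local ring in the limit bounds the projective dimension of every simple $\La$-module by $d$, hence $\gd\La\leq d$. The reverse inequality follows from computing $\pd_\La S=d$ for a simple $\La$-module $S$ with $\depth_R S=0$ via the same non-commutative Auslander--Buchsbaum formula used below.

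For (ii)$\Rightarrow$(i), the key input is that formula: since $\La\in\CM R$ with $\depth_R\La=d$ and $\gd\La=d<\infty$, every $N\in\mod\La$ satisfies $\pd_\La N+\depth_R N=d$. Let $X\in\CM R$ with $\Hom_R(M,X)\in\CM R$; then $\depth_R\Hom_R(M,X)=d$, and $\pd_\La\Hom_R(M,X)<\infty$ by the global dimension hypothesis, so the formula forces $\Hom_R(M,X)$ to be projective over $\La$. Since $R\in\add M$, the functor $\Hom_R(M,-)\colon\add M\to\proj\La$ is the standard progenerator equivalence, and so $X\in\add M$. Conversely, any $X\in\add M$ clearly satisfies $\Hom_R(M,X)\in\CM R$, yielding the first equality in the CT definition. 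The symmetric equality involving $\Hom_R(X,M)$ is obtained by running the same argument over $\La^{\op}=\End_R(M^*)$; the assumptions transfer to $M^*\in\refl R$ via $R$-duality, since $R$ is Gorenstein and $\End_R(M^*)\cong\End_R(M)^{\op}$ inherits being CM of dimension $d$ with finite global dimension.

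The main obstacle is establishing the non-commutative Auslander--Buchsbaum equality $\pd_\La N+\depth_R N=d$ for $\La$-modules of finite projective dimension, and setting up the progenerator equivalence $\add M\simeq\proj\La$ carefully enough that the CM/projective correspondence is clean on both sides. Once those two ingredients are in place, both implications are essentially formal: (ii)$\Rightarrow$(i) is a one-line application of the formula, and (i)$\Rightarrow$(ii) reduces to producing approximations in $\CM R$ using $R\in\add M$ and tracking depths.
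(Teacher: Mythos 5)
The paper does not prove this statement --- it is quoted from \cite{Iy07b,IW14} --- so I am comparing your proposal with the standard arguments in those references. Your overall architecture is the right one and, for (ii)$\Rightarrow$(i), essentially reproduces \cite{IW14}: the Auslander--Buchsbaum formula $\pd_\La N+\depth_RN=\dim R$ for the $R$-order $\La=\End_R(M)\in\CM R$ and $N$ of finite projective dimension, combined with the equivalence $\add M\simeq\proj\La$. One caution on terminology: $M$ is not a progenerator of $\mod R$, and $\Hom_R(M,-)$ is not an equivalence on all modules; the correct tool is the reflexive equivalence $\Hom_R(M,-)\colon\refl R\to\refl\La$ (valid because $R$ is normal and $R\in\add M$, and applicable here because maximal Cohen--Macaulay modules over a normal domain are reflexive). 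With that reading, your (ii)$\Rightarrow$(i), including the passage to $M^*$ and $\La^{\op}$ for the second CT equality, is correct, as are your derivations of $R\in\add M$ and $\End_R(M)\in\CM R$ from (i).

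The genuine gap is the global dimension bound in (i)$\Rightarrow$(ii). You claim that after iterating approximations $d$ times one uses ``that $\Om^dX'$ becomes projective over the regular local ring in the limit.'' There is no regular local ring in this setting, and nothing happens ``in the limit''; as written this step fails, and nothing in your argument makes the resolution terminate. The actual mechanism requires invoking the CT equality a second time. Writing $d=\dim R$: for $N\in\mod\La$ one has $\Om_\La^2N\cong\Hom_R(M,Y)$ with $Y$ reflexive (hence $\depth_RY\geq2$); taking iterated surjective right $(\add M)$-approximations $0\to Y_{i+1}\to M_i\to Y_i\to0$ (surjectivity uses $R\in\add M$), the depth lemma gives $\depth_RY_{d-2}\geq d$, so $Y_{d-2}\in\CM R$, and likewise $\Hom_R(M,Y_{d-2})\cong\Om^d_\La N$ has depth $d$ because it is a $d$-th syzygy over the order $\La$. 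Only now does the hypothesis $\add M=\{X\in\CM R\mid\Hom_R(M,X)\in\CM R\}$ force $Y_{d-2}\in\add M$, hence $\Om^d_\La N$ projective and $\gd\La\leq d$. Your proposal never uses the CT equality in this direction, which is precisely the missing idea; note also that the shortcut through $\Ext$-vanishing and Proposition \ref{depth} is unavailable because $R$ is not assumed to be an isolated singularity --- this is exactly why \cite{IW14} argues through depths as above.
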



We remark the following example which is trivial but will be useful.
\begin{Ex}
Let $R$ be a regular ring. Then $R\in\CM R$ is a CT module.
\end{Ex}


We need to compare the $a$-invariants of $R$ and its Auslander algebra $\Ga$. Assume that $\dim R=d$, and recall that $\Ga$ has {\it $a$-invariant $a$} if $\RHom_\Ga(S,\Ga)[d]\in\mod^\Z\!\Ga^\op$, and is a simple $\Ga^\op$-module concentrated in degree $a$ for each simple $\Ga$-module $S$.
\begin{Lem}\label{a}
Let $R$ be a positively graded Noetherian normal domain, $M\in\refl R$, and $\Ga=\End_R(M)$.
\begin{enumerate}
\item We have $\Ga\simeq\Hom_R(\Ga,R)$ as graded bimodules.
\item If $M\in\refl R$ is an NCCR, then $R$ and $\Ga$ has the same $a$-invariant, which is negative if $\dim R>0$ and $\Ga$ is positively graded.
\end{enumerate}
\end{Lem}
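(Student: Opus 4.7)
For Part (1), my plan is to produce a canonical morphism of graded $\Ga$-bimodules via the trace-like pairing and verify it is an isomorphism by checking in codimension one. The pairing $M \otimes_R M^* \to \Ga$, $m \otimes f \mapsto (x \mapsto f(x) m)$, induces by tensor-hom adjunction together with the reflexivity $M^{**} \simeq M$ a map
\[ \phi \colon \Hom_R(\Ga, R) \to \Hom_R(M \otimes_R M^*, R) \simeq \Hom_R(M, M^{**}) = \Hom_R(M, M) = \Ga. \]
Both source and target of $\phi$ are reflexive: $\Ga = \Hom_R(M, M)$ because $\Hom$ into a reflexive module is reflexive over a normal ring, and $\Hom_R(\Ga, R)$ as a dual is always reflexive. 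At any height-one prime $\p$ of $R$, the local ring $R_\p$ is a DVR and $M_\p$ is free, so the trace pairing is a local isomorphism and hence $\phi_\p$ is an isomorphism. Viewed as $R$-submodules of a common generic fibre, both $\Ga$ and $\Hom_R(\Ga, R)$ coincide with the intersection of their height-one localizations, so $\phi$ is globally an isomorphism, automatically compatible with the bimodule structure.

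The first assertion of Part (2) is then a quick corollary. Using the Gorenstein hypothesis, $\omega_R \simeq R(a_R)$ as a shift of the free module, and Part (1) gives
\[ \omega_\Ga = \Hom_R(\Ga, \omega_R) \simeq \Hom_R(\Ga, R)(a_R) \simeq \Ga(a_R) \]
as graded $\Ga$-bimodules. Since the $a$-invariant of $\Ga$ is characterized by the shift appearing in $\omega_\Ga \simeq \Ga(a_\Ga)$ as graded bimodules, we conclude $a_\Ga = a_R$.

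For the negativity when $\Ga$ is positively graded and $\dim R > 0$, I would analyze a minimal graded projective resolution
\[ 0 \to P_d \to \cdots \to P_0 \to S \to 0 \]
of a simple $\Ga$-module $S$ concentrated in degree $0$, where $d = \gd \Ga = \dim R > 0$. For an NCCR with basic decomposition $M = \bigoplus M_i$, the quotient $\Ga_0/J(\Ga_0)$ is a finite product of division rings (semisimple), so at each step of the minimal resolution the syzygy lies in $\Ga_{\geq 1} \cdot P_{i-1}$, and inductively $P_d$ is generated in degrees $\geq d$. Dualizing, $\Ext^d_\Ga(S, \Ga)$ is concentrated in strictly negative degree, and comparing with the local duality isomorphism $\Ext^d_\Ga(S, \Ga) \simeq S^\vee(-a_\Ga)$ yields $a_\Ga < 0$.

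The main obstacle is the codimension-one reconstruction principle invoked in Part (1); reflexive modules over a normal domain coincide with the intersection of their height-one localizations, but pinning this down in the graded and possibly completed setting requires care in identifying the common ambient generic fibre, and in checking that $\phi$ indeed sends the relevant submodules into each other. A secondary technical point is verifying that $\Ga_0/J(\Ga_0)$ is semisimple under the NCCR hypothesis, which is exactly what powers the strict degree bumps in the final step and thus the strict negativity of $a_\Ga$.
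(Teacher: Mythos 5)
Parts (1) and the equality $a_\Ga=a_R$ in (2) are fine and essentially follow the paper's route: for (1) the paper simply cites \cite[Lemma 2.9]{IW14}, whose proof is exactly your trace-pairing-plus-reflexivity argument (agree in codimension one, both sides reflexive), and for the equality of $a$-invariants the paper uses the adjunction $\RHom_\Ga(-,\Ga)=\RHom_\Ga(-,\RHom_R(\Ga,R))=\RHom_R(-,R)$ applied to simple modules, which is the same content as your $\om_\Ga\simeq\Ga(a_R)$ computation.

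The negativity argument, however, contains a genuine error. The observation that $\Ga_0/J(\Ga_0)$ is semisimple is a tautology and does not imply that syzygies in the minimal resolution of a degree-$0$ simple land in $\Ga_{\geq1}P_{i-1}$; they land in $\rad(\Ga)P_{i-1}=(J(\Ga_0)\oplus\Ga_{\geq1})P_{i-1}$, and the strict degree increase at every step requires $\Ga_0$ \emph{itself} to be semisimple. This fails for the NCCRs in this paper: for $T=k[[x,y]]^{(2)}$ with $M=T\oplus N$, the degree-$0$ part of $\End_T(M)$ contains the two-dimensional radical $\Hom_T(T,N)_0$, so degree-$0$ arrows do occur in the minimal resolutions. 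Moreover your intermediate conclusion ``$P_d$ is generated in degrees $\geq d$'' would force $a_\Ga\leq-\dim R$, which contradicts the equality $a_\Ga=a_R$ already proved: the three-dimensional Segre product $k[[x,y]]^{(2)}\seg k[[x,y]]^{(2)}$ has $a$-invariant $-1$, not $\leq-3$. The statement only needs \emph{one} strict degree jump, and the clean way to get it is to resolve $\Ga/\Ga_{\geq1}$ rather than a single simple: its first syzygy is exactly $\Ga_{\geq1}$, generated in degrees $\geq1$, so all $P_i$ with $i\geq1$ are generated in degrees $\geq1$ and every simple quotient of $\Hom_\Ga(P_d,\Ga)$ sits in degree $\leq-1$; since $\Hom_\Ga(\Ga/\Ga_{\geq1},\Ga)=\Hom_R(\Ga/\Ga_{\geq1},R)=0$ because $\dim R>0$, while $\Ext^d_\Ga(\Ga/\Ga_{\geq1},\Ga)\simeq D(\Ga/\Ga_{\geq1})(-a)$ is a nonzero quotient of $\Hom_\Ga(P_d,\Ga)$ concentrated in degree $a$, this yields $a\leq-1$.
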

\begin{proof}
	(1)  This can be seen from that the isomorphism $\Hom_R(\Ga,R)\simeq\Ga$ is canonical, see \cite[Lemma 2.9]{IW14}.
	
	(2)  Note that by (1) we have $\RHom_R(\Ga,R)\simeq\Ga$ in $\rD(\Mod^\Z\!\Ga^e)$ since $\Ga\in\CM R$. Then there is an isomorphism $\RHom_\Ga(-,\Ga)=\RHom_\Ga(-,\RHom_R(\Ga,R))=\RHom_R(-,R)$ on $\rD(\Mod^\Z\!\Ga)$. Since any simple $\Ga$-module is a finite length $R$-module concentrated in degree $0$, the assertion follows. It is easy to see that the $a$-invariant is negative using that $\Ga$ is positively graded and has finite global dimension. 
\end{proof}
We are now ready to state and prove the result. Let $k$ be a perfect field, {let $R^\prime=\bigoplus_{i\geq0}R^\prime_i$ and $R^\pprime=\bigoplus_{i\geq0}R^\pprime_i$ be positively graded} commutative Gorenstein normal domains of dimension $\geq2$ such that $R^\prime_0$ and $R^\pprime_0$ are finite dimensional local $k$-algebras.
\begin{Thm}\label{CT}
Suppose that $R^\prime$ and $R^\pprime$ have the same $a$-invariant $-p<0$, and let $R=R^\prime\seg R^\pprime$ the Segre product.
Let $M^\prime\in\refl R^\prime$ and $M^\pprime\in\refl R^\pprime$ be graded modules such that $\End_{R^\prime}(M^\prime)$ and $\End_{R^\pprime}(M^\pprime)$ are positively graded, and put $M^q=\bigoplus_{l=0}^{q-1}M^\prime(l)\seg M^\pprime$ for each $q>0$.  
\begin{enumerate}
	\item If $M^\prime$ and $M^\pprime$ are NCRs, then $M^q$ is an NCR for any sufficiently large $q$.
	\item If $M^\prime$ and $M^\pprime$ are NCCRs, then $M^p$ is an NCCR for $R$.
	\item If $M^\prime$ and $M^\pprime$ are CT modules, then $M^p$ is a CT module for $R$.
\end{enumerate}
\end{Thm}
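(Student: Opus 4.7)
The plan is to identify $\End_R(M^q)$ with the complete $q$-Segre product of Subsection \ref{daiji}, invoke Proposition \ref{gl.dim} to control its global dimension, and then transfer Cohen--Macaulayness from the two factors to the Segre product level. Set $\La^\prime = \End_{R^\prime}(M^\prime)$ and $\La^\pprime = \End_{R^\pprime}(M^\pprime)$, which are completely positively graded module-finite algebras over $R^\prime$ and $R^\pprime$ by hypothesis. Applying Proposition \ref{five} summand by summand yields a graded identification
\[ \End_R(M^q) \;\simeq\; \bigoplus_{i,j=0}^{q-1} \La^\prime(j-i)\seg\La^\pprime, \]
and a direct comparison of graded pieces shows this is the complete $q$-Segre product of $\La^\prime$ and $\La^\pprime$ in the sense of Definition \ref{ps}, with representative $P = \bigoplus_{l=0}^{q-1}\La^\prime\cotimes\La^\pprime(l,0)$ of the $(1,1)$-orbit on $\P$.

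For (1), the NCR hypothesis gives $n^\prime := \gd\La^\prime < \infty$ and $n^\pprime := \gd\La^\pprime < \infty$, so the minimal graded projective resolutions of the simple tops $S^\prime, S^\pprime$ are bounded with finitely generated terms. Hence each of the finitely many nonzero groups $\Ext^i_{\La^\prime}(S^\prime,\La^\prime)$, $\Ext^i_{\La^\pprime}(S^\pprime,\La^\pprime)$ is concentrated in finitely many degrees, and for $q$ sufficiently large Setup~(3) of Subsection \ref{daiji} holds with $m^\prime = n^\prime + 1$ and $m^\pprime = n^\pprime + 1$ (the top-degree conditions on $\Ext^{m^\prime}, \Ext^{m^\pprime}$ being vacuous by the global dimension bound). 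Proposition \ref{gl.dim} then gives $\gd\End_R(M^q) < \infty$, proving (1).

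For (2) and (3), the NCCR hypothesis combined with Lemma \ref{a}(2) translates the common $a$-invariant $-p$ into $\Ext^i_{\La^\prime}(S^\prime,\La^\prime) = 0$ for $0 < i < n^\prime$ and $\Ext^{n^\prime}_{\La^\prime}(S^\prime,\La^\prime)$ concentrated in degree $-p$; analogously for $\La^\pprime$. This is precisely Setup~(3) with $m^\prime = n^\prime$, $m^\pprime = n^\pprime$, $q = p$, so Proposition \ref{gl.dim} gives $\gd\End_R(M^p) = n^\prime + n^\pprime - 1 = \dim R$, the last equality from Corollary \ref{GP}. To complete (2) I would then show $\End_R(M^p) \in \CM R$, which via the decomposition above reduces to proving each shifted Segre product $\La^\prime(j-i)\seg\La^\pprime$ lies in $\CM R$; since $\La^\prime \in \CM R^\prime$ and $\La^\pprime \in \CM R^\pprime$ from the NCCR condition, this is a module analogue of Corollary \ref{GP}(1) obtained by computing local cohomology via Theorem \ref{lc}. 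Part (3) is then immediate from (2) via the Auslander correspondence stated before the theorem: a CT module is precisely an NCCR containing $R$ as a direct summand, and the $l = 0$ factor $M^\prime\seg M^\pprime$ of $M^p$ contains $R^\prime\seg R^\pprime = R$ since $R^\prime \in \add M^\prime$ and $R^\pprime \in \add M^\pprime$ under the CT hypothesis.

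The main obstacle is the Cohen--Macaulayness step for the shifted Segre products $\La^\prime(j-i)\seg\La^\pprime$, particularly when the shift $j-i$ is negative so that the positive-gradedness of the first factor is lost. The argument must track where $\RH^\bullet_{\m^\prime}(\La^\prime(j-i))$ and $\RH^\bullet_{\n^\pprime}(\La^\pprime)$ are concentrated; Cohen--Macaulayness of $\La^\prime$ and $\La^\pprime$ concentrates each in top cohomological degree, and the matching $a$-invariants $-p$ should kill every cross-term produced by Theorem \ref{lc} in cohomological degrees strictly below $\dim R$. Once this local cohomology computation is made explicit, the rest of the proof is organizational.
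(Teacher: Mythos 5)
Your proposal follows essentially the same route as the paper's proof: identify $\End_R(M^q)$ with the (complete) $q$-Segre product of the two endomorphism algebras via Proposition \ref{five}, get finite (resp.\ exactly $\dim R$) global dimension from Proposition \ref{gl.dim} using the $a$-invariant/Ext-concentration translation of Lemma \ref{a}, prove $\End_R(M^p)\in\CM R$ by the Goto--Watanabe decomposition of Theorem \ref{lc} (the genuine cross terms die by Cohen--Macaulayness of the factors, and the two remaining terms die because local duality places $\RH^{\mathrm{top}}_{\m^\prime}(\Ga^\prime)$ in degrees $\leq -p$ while $\Ga^\pprime$ is positively graded), and deduce (3) from the characterization of CT modules as NCCRs containing $R$ as a summand. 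The step you flag as the main obstacle is exactly the computation the paper carries out, and your sketch of it is the correct one.
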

\begin{proof}
	Let $\dim R^\prime=d^\prime+1$, $\dim R^\pprime=d^\pprime+1$, and $\Ga^\prime=\End_{R^\prime}(M^\prime)$, $\Ga^\pprime=\End_{R^\pprime}(M^\pprime)$. We know by Corollary \ref{GP}(2) that $R:=R^\prime\seg R^\pprime$ is Gorenstein of dimension $d+1:=d^\prime+d^\pprime+1$ with $a$-invariant $-p$. Also we know that $R^\prime\otimes R^\pprime$ is normal since each factor is. 
	
	
	By Proposition \ref{five} we find that the Segre product of reflexive modules are reflexive, hence $M^q\in\refl R$.
	Notice that the endomorphism ring of $M^q$ is the $q$-Segre product of $\Ga^\prime$ and $\Ga^\pprime$ for any $q>0$.
	Indeed, we have $\Hom_R(M^\prime(i)\seg M^\pprime,M^\prime(j)\seg M^\pprime)=\Hom_{R^\prime}(M^\prime(i),M^\prime(j))\seg\Hom_{R^\pprime}(M^\pprime,M^\pprime)=\Ga^\prime(j-i)\seg\Ga^\pprime$ by Proposition \ref{five}, while the components of the $q$-Segre product of $\Ga^\prime$ and $\Ga^\pprime$ is $\Hom^\Z_{\Ga^\prime\otimes\Ga^\pprime}(\Ga^\prime(i)\otimes\Ga^\pprime,\Ga^\prime(j)\otimes\Ga^\pprime)=(\Hom_{\Ga^\prime}(\Ga^\prime(i),\Ga^\prime(j))\otimes\Hom_{\Ga^\pprime}(\Ga^\pprime,\Ga^\pprime))_0=(\Ga^\prime(j-i)\otimes\Ga^\pprime)_0=\Ga^\prime(j-i)\seg\Ga^\pprime$. 
	This assertion immediately yields (1) by Proposition \ref{gl.dim}.
	
	In what follows assume $M^\prime$ and $M^\pprime$ are NCCRs. We have to prove that $\End_R(M^p)$ is Cohen-Macaulay as an $R$-module. By the above computation this is to show that $\Ga^\prime(l)\seg\Ga^\pprime\in\CM R$ for $-p<l<p$. By Theorem \ref{lc} we know that $\RH_{\m^\prime\seg\m^\pprime}^{q}(\Ga^\prime(l)\seg \Ga^\pprime)$ is isomorphic to the direct sum of the following three modules:
	\[ \RH_{\m^\prime}^{q}(\Ga^\prime(l))\seg \Ga^\pprime, \qquad \Ga^\prime(l)\seg\RH_{\m^\pprime}^{q}(\Ga^\pprime),\qquad \bigoplus_{i=1}^{q}\RH_{\m^\prime}^i(\Ga^\prime(l))\seg\RH_{\m^\pprime}^{q+1-i}(\Ga^\pprime). \]
	Since $\Ga^\prime\in\CM R^\prime$ and $\Ga^\pprime\in\CM R^\pprime$, the third one vanishes for $q<d+1$. For the first one, we only have to look at $\RH^{d^\prime+1}_{\m^\prime}(\Ga^\prime)$. By local duality and Lemma \ref{a} we have $\RH_{\m^\prime}^{d^\prime+1}(\Ga^\prime)=(D\Hom_{R^\prime}(\Ga^\prime,R^\prime))(p)=D\Ga^\prime(p)$, so that this is concentrated in degree $\leq-p$. It follows that $\RH_{\m^\prime}^{q}(\Ga^\prime(l))\seg \Ga^\pprime=0$ for all $l>-p$. Similarly, the module $\Ga^\prime(l)\seg\RH_{\m^\pprime}^{d^\pprime+1}(\Ga^\pprime)=0$ whenever $l<p$. We therefore conclude that $\End_R(M^p)\in\CM R$, proving (2).
	
	Finally, if $M^\prime$ and $M^\pprime$ are CT modules, then they contain projective summands, hence so does $M^p$. Therefore $M^p$ is a CT module over $R$. This proves (3).	
%
%
%
%
\end{proof}



Even if two rings $R^\prime$ and $R^\pprime$ has different $a$-invariants, one can multiply the gradings to adjust the $a$-invariants to the same value. This observation leads to the following consequence.
\begin{Cor}
Let $R^\prime=\bigoplus_{i\geq0}R^\prime_i$ has $a$-invariant $-p^\prime$ and $R^\pprime=\bigoplus_{i\geq0}R_i^\pprime$ has $a$-invariant $-p^\pprime$. Let $p$ be the least common multiple of $p^\prime$ and $p^\pprime$ with $p/p^\prime=q^\prime$ and $p/p^\pprime=q^\pprime$. Then $(R^\prime)^{(q^\pprime)}\seg (R^\pprime)^{(q^\prime)}$ has an NCCR (resp. a CT module) whenever $R^\prime$ and $R^\pprime$ do.
\end{Cor}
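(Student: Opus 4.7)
The plan is to apply Theorem~\ref{CT} to the Veronese subrings $(R^\prime)^{(q^\pprime)}$ and $(R^\pprime)^{(q^\prime)}$, after verifying that these share a common negative $a$-invariant and that the given NCCRs (resp.\ CT modules) of $R^\prime$ and $R^\pprime$ transfer to them. From $p^\prime q^\prime=p^\pprime q^\pprime=p=\mathrm{lcm}(p^\prime,p^\pprime)$ one reads off $q^\pprime=p^\prime/\gcd(p^\prime,p^\pprime)$, so $q^\pprime\mid p^\prime$ with quotient $g:=\gcd(p^\prime,p^\pprime)$, and symmetrically $q^\prime\mid p^\pprime$. A standard Veronese computation using $\omega_{(R^\prime)^{(q^\pprime)}}\simeq(\omega_{R^\prime})^{(q^\pprime)}$ (whose generator lies in new degree $p^\prime/q^\pprime=g$) then shows that $(R^\prime)^{(q^\pprime)}$ with its natural regrading is Gorenstein of $a$-invariant $-g$, and likewise for $(R^\pprime)^{(q^\prime)}$. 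So both Veronese rings share the $a$-invariant $-g<0$, fulfilling the hypothesis of Theorem~\ref{CT}.

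Next I would transfer $M^\prime$ to an NCCR of $(R^\prime)^{(q^\pprime)}$. For $l=0,\ldots,q^\pprime-1$ set $M^\prime_{(l)}:=\bigoplus_{i}M^\prime_{q^\pprime i+l}$, viewed as a graded $(R^\prime)^{(q^\pprime)}$-module in the new grading (with $M^\prime_{q^\pprime i+l}$ sitting in new degree $i$), and put $\widetilde M^\prime:=\bigoplus_{l=0}^{q^\pprime-1}M^\prime_{(l)}$. As an ungraded $(R^\prime)^{(q^\pprime)}$-module, $\widetilde M^\prime$ is just $M^\prime$, and its endomorphism ring identifies with the skew group algebra $\End_{R^\prime}(M^\prime)\rtimes\Z/q^\pprime$ for the natural character action of $\Z/q^\pprime$ on $R^\prime$ (whose invariants form $(R^\prime)^{(q^\pprime)}$). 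From this identification, finite global dimension and the Cohen--Macaulay property over $(R^\prime)^{(q^\pprime)}$ are inherited from $\End_{R^\prime}(M^\prime)$. For positivity of the new grading, the new-degree-$i$ piece of $\Hom_{(R^\prime)^{(q^\pprime)}}(M^\prime_{(l)},M^\prime_{(m)})$ is $\End_{R^\prime}(M^\prime)_{q^\pprime i+m-l}$ in the original grading; for $l,m\in[0,q^\pprime-1]$ and $i<0$ we have $q^\pprime i+m-l\le -q^\pprime+(q^\pprime-1)<0$, so this piece vanishes by the original positivity of $\End_{R^\prime}(M^\prime)$. When $M^\prime$ is a CT module, $R^\prime\in\add M^\prime$ yields $(R^\prime)^{(q^\pprime)}=R^\prime_{(0)}\in\add\widetilde M^\prime$, preserving the CT property. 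An analogous construction produces $\widetilde M^\pprime$ for $(R^\pprime)^{(q^\prime)}$.

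Finally, applying Theorem~\ref{CT}(2) (resp.\ (3)) to $(R^\prime)^{(q^\pprime)}$ and $(R^\pprime)^{(q^\prime)}$ with data $\widetilde M^\prime,\widetilde M^\pprime$ and common $a$-invariant $-g$ yields the desired NCCR (resp.\ CT module) of $(R^\prime)^{(q^\pprime)}\seg(R^\pprime)^{(q^\prime)}$. I expect the main obstacle to be the second step: establishing the skew group algebra identification and checking that the three structural properties (finite global dimension, Cohen--Macaulay, positive new grading) all transfer to the Veronese endomorphism ring. Each piece is essentially routine once the identification is in place, but the grading bookkeeping across the two gradings of $R^\prime$ is where the care is required.
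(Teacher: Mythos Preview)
Your approach is workable but takes a longer route than the paper's. The paper avoids constructing new NCCRs on the Veronese subrings altogether by a simple regrading trick: rather than \emph{compressing} the gradings via Veronese, it \emph{stretches} them. For a graded ring $A$ and $n>0$, write ${}^n\!A$ for the ring with $({}^n\!A)_i=A_{i/n}$ when $n\mid i$ and $0$ otherwise; this is the same underlying ring, so any NCCR or CT module with positively graded endomorphism ring remains one verbatim, while the $a$-invariant is multiplied by $n$. Applying Theorem~\ref{CT} directly to ${}^{q^\prime}\!R^\prime$ and ${}^{q^\pprime}\!R^\pprime$, both of $a$-invariant $-p$, is then immediate. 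The only remaining step is to identify their Segre product: $({}^{q^\prime}\!R^\prime)_i\otimes({}^{q^\pprime}\!R^\pprime)_i$ vanishes unless both $q^\prime$ and $q^\pprime$ divide $i$, and since $\gcd(q^\prime,q^\pprime)=1$ this forces $q^\prime q^\pprime\mid i$; writing $i=q^\prime q^\pprime j$ gives $R^\prime_{q^\pprime j}\otimes R^\pprime_{q^\prime j}$, exactly the degree-$j$ piece of $(R^\prime)^{(q^\pprime)}\seg(R^\pprime)^{(q^\prime)}$.

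Your route instead descends $M^\prime$ to an NCCR of the genuine Veronese subring and applies Theorem~\ref{CT} there. The step you correctly flag as the obstacle is real: the identification $\Hom_{(R^\prime)^{(q^\pprime)}}(M^\prime_{(l)},M^\prime_{(m)})_i\cong\End_{R^\prime}(M^\prime)_{q^\pprime i+m-l}$ is not a purely formal skew-group-algebra fact (and phrasing it that way risks importing an unnecessary characteristic hypothesis). It needs a reflexivity/codimension-$2$ argument in the spirit of Lemma~\ref{cov} or Proposition~\ref{five}. Once that is in place your argument completes, but the paper's stretching trick bypasses the entire transfer problem in one line.
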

\begin{proof}
For a graded ring $A$ and positive integer $n$ we denote by ${}^n\!A$ the graded ring with multiplied grading, that is, $({}^n\!A)_i=A_{i/n}$ if $n\mathrel{|}i$, and $0$ otherwise. Clearly this construction multiplies the $a$-invariant.
We can apply Theorem \ref{CT} to ${}^{q^\prime}\!R^\prime$ and ${}^{q^\pprime}\!R^{\pprime}$, both of which have $a$-invariant $-p$. Their Segre product is $\bigoplus_{i\geq0}({}^{q^\prime}\!R^\prime)_i\otimes({}^{q^\pprime}\!R^\pprime)_i$, whose $i$-th component do not vanish only if $i$ is a multiple of both $q^\prime$ and $q^\pprime$. Now since they are relatively prime, we can write the Segre product as $\bigoplus_{j\geq0}R^\prime_{q^\pprime j}\otimes R^\pprime_{q^\prime j}$, which is nothing but $(R^\prime)^{(q^\pprime)}\seg (R^\pprime)^{(q^\prime)}$.
\end{proof}

Recall that for a regular local ring $S$, the free module $S\in\CM S$ is trivially a CT module. Consequently we obtain a generalization of \cite{HN} from the standard grading.
\begin{Cor}\label{poly}
Let $k$ be an arbitrary field, and let $S_i=k[x_{i,0},\ldots,x_{i,d_i}]$ be the polynomial rings with $\deg x_{i,j}=a_{i,j}>0$. Suppose that $\sum_{j=0}^{d_i}a_{i,j}$ is common for all $1\leq i\leq n$. Then the Segre product $S_1\seg\cdots\seg S_n$ has a CT module consisting of rank $1$ modules.
\end{Cor}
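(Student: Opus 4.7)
The plan is to prove this by induction on $n$, applying Theorem~\ref{CT}(3) at each step. Since each power series ring $S_i$ is regular, $S_i \in \CM S_i$ is tautologically a CT module, and its endomorphism ring $S_i$ itself is positively graded. A standard computation for weighted polynomial rings shows that $S_i$ has $a$-invariant $-\sum_{j} a_{i,j}$, which by the hypothesis equals a common value $-p$ for all $i$. This matching of $a$-invariants is exactly what triggers Theorem~\ref{CT}.

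For the inductive step, I assume $R := S_1 \seg \cdots \seg S_{n-1}$ is a completely positively graded Gorenstein normal domain with $a$-invariant $-p$ admitting a CT module $M$ whose endomorphism ring is positively graded. Invoking Theorem~\ref{CT}(3) with $R^\prime = R$, $M^\prime = M$ and $R^\pprime = S_n$, $M^\pprime = S_n$ then produces the CT module
\[ \bigoplus_{l=0}^{p-1} M(l) \seg S_n \]
over $R \seg S_n = S_1 \seg \cdots \seg S_n$. Its endomorphism ring is the $p$-Segre product of $\End_R(M)$ and $S_n$, and hence is again positively graded by the construction in Section~\ref{daiji}. Corollary~\ref{GP}(2) upgrades the new Segre product to a Gorenstein ring with the same $a$-invariant $-p$, so the inductive hypothesis is restored.

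The main technical obstacle lies in maintaining all of the standing hypotheses of Theorem~\ref{CT} through this iteration: namely, that each intermediate Segre product remains a normal domain (which reduces to the fact that the tensor products at hand are polynomial-type rings, hence regular, so normality of the associated Segre products is automatic without needing $k$ perfect — this is why the corollary admits an \emph{arbitrary} field), that each factor has dimension $\geq 2$ (handled by Corollary~\ref{GP}(1), with a minor grouping argument if some $d_i = 0$ forces a one-dimensional factor to be paired first with a higher-dimensional one), and that positivity of the grading on the endomorphism rings propagates (which is the formal output of Definition~\ref{ps}). Once these three properties are tracked through the induction, the statement follows immediately from the two-factor case of Theorem~\ref{CT}(3).
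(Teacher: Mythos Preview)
Your approach is the same as the paper's: iterate Theorem~\ref{CT}(3), and observe that perfectness of $k$ is unnecessary because the normality of the relevant tensor products is automatic in this polynomial setting. The paper's proof is a single sentence making exactly this point.

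One correction is needed in your justification for dropping perfectness. You write that ``the tensor products at hand are polynomial-type rings, hence regular.'' This is true only at the first step ($n=2$). In the inductive step with $R^\prime = S_1 \seg \cdots \seg S_{n-1}$, the ring $(R^\prime)^\circ$ is typically singular, so $(R^\prime)^\circ \otimes_k S_n^\circ$ is not regular. What is true is that $(R^\prime)^\circ \otimes_k S_n^\circ \cong (R^\prime)^\circ[x_{n,0},\ldots,x_{n,d_n}]$ is a polynomial extension of a normal domain, and polynomial extensions preserve normality. That is the correct reason the perfectness hypothesis is not needed here, and it yields the same conclusion.

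A smaller point: your ``grouping argument'' for the case $d_i = 0$ does not work as stated, since Theorem~\ref{CT} requires \emph{both} factors to have dimension $\geq 2$; pairing a one-dimensional $S_i$ with a higher-dimensional factor does not fall under its hypotheses. The paper does not address this edge case either, and the intended reading is presumably $d_i \geq 1$ throughout.
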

\begin{proof}
	Note that we do not need that $k$ is perfect since it is only used in the proof of Theorem \ref{CT} to ensure that $R^\prime\otimes R^\pprime$ is normal. The assertion on the rank is clear from the construction.
\end{proof}

Let us note that even if both $R^\prime$ and $R^\pprime$ has only an isolated singularity (even regular) the Segre product might not be an isolated singularity. (We therefore need the notion of CT modules, not just cluster titling objects.)
\begin{Ex}
Let $R^\prime=k[x,y]$, $R^\pprime=k[u,v]$ with $\deg x=\deg u=1$ and $\deg y=\deg v=2$. These have $a$-invariant $-3$, and $R^\prime\in\CM R^\prime$ and $R^\pprime\in\CM R^\pprime$ are trivially CT modules. Therefore $R:=R^\prime\seg R^\pprime$ (which is isomorphic to $k[xu,x^2v,yu^2,yv]\simeq k[a,b,c,d]/(a^2d-bc)$, a compound $A_2$-singularity) has a CT module $R\oplus M_1\oplus M_2$ with $M_i=R^\prime(i)\seg R^\pprime$, whose Auslander algebra is the $3$-Segre product of $R^\prime$ and $R^\pprime$. It is presented by the following quiver with commutativity relations.
\[ \xymatrix{
	&M_1\ar@<2pt>[dr]^-x\ar@<2pt>[dl]^-u\ar@(ul,ur)[]^-{yv}&\\
	R\ar@<2pt>[ur]^-x\ar@<-2pt>[rr]_-y&&M_2\ar@<2pt>[ul]^-u\ar@<-2pt>[ll]_-v} \]
\end{Ex}

We refer to Section \ref{MORITA} for more examples.


\section{Gorenstein rings of hereditary representation type}
We introduce the class of commutative rings which we shall investigate. Our definitions are motivated by tilting theory and cluster tilting theory, an interaction of representation theories of commutative rings and finite dimensional algebras.
\subsection{Backgrounds on cluster categories and singularity categories}

Let us start with the notion of cluster tilting objects, which plays an important role both in commutative algebra and finite dimensional algebras.
\begin{Def}[{\cite{Iy07a}}]\label{ct}
Let $d\geq1$ an integer and let $\E$ be an exact category with enough projectives and injectives, or a triangulated category. A functorially finite subcategory $\M\subset\E$ is called {\it $d$-cluster tilting} if the following equalities hold.
\[
\begin{aligned}
	\M&=\{ X\in\E \mid \Ext^i_\E(M,X)=0 \text{ for all } M\in\M \text{ and } 0<i<d\}\\
	&=\{ X\in\E \mid \Ext^i_\E(X,M)=0 \text{ for all } M\in\M \text{ and } 0<i<d\}
\end{aligned}
\]
\end{Def}
As we have seen before, we have many examples of commutative rings whose $\CM R$ has a cluster tilting module. On the other hand, the following construction of {\it cluster categories} \cite{BMRRT,Ke05,Am09,Guo,Ke11} and its generalization \cite{KMV,ha3,haI} gives a counterpart for finite dimensional algebras, which consequently serves as a model for the stable categories of Cohen-Macaulay modules \cite{Iy18,haI}.
\begin{Def}
	Let $H$ be a finite dimensional hereditary algebra, $1\neq d\in\Z$ and $a\neq0$. For a bimodule $V\in\per H^e$ such that $V^{\lotimes_Ha}\simeq\RHom_H(DH,H)[d]$ in $\rD(H^e)$, the {\it $a$-folded $d$-cluster category} is the orbit category
	\[ \rC_d^{(1/a)}(H):=\Db(\mod H)/-\lotimes_HV. \]
\end{Def}
Note that we shall use the following special form of $V$ (see Definition \ref{strict}): $U$ is a bimodule such that $U^{\lotimes_H(d-1)}=\RHom_H(DH,H)[1]$ and $V=U[1]$, so that $V^{\lotimes_H(d-1)}\simeq\RHom_H(DH,H)[d]$.

One of the strong implications of derived category of hereditary categories is the following theorem due to Keller \cite{Ke05}, which in particular shows that the above orbit category has a natural structure of a triangulated category (when $d\neq1$).
For additive subcategories $\U, \V$ of an additive category, we denote by $\U\vee\V$ the smallest additive subcategory containing $\U$ and $\V$.
\begin{Thm}[{\cite{Ke05}}]\label{orbit}
	Let $H$ be a finite dimensional hereditary algebra, and let $T$ be a complex of $(H,H)$-bimodules such that $F=-\lotimes_HT$ gives an autoequivalence of $\Db(\mod H)$ satisfying the following.
	\begin{enumerate}
		\renewcommand{\labelenumi}{(\roman{enumi})}
		\renewcommand{\theenumi}{\roman{enumi}}
		\item For each $X\in\mod H$, there are at most finitely many $i\in\Z$ such that $F^iX\in\mod H$.
		\item There exists an integer $n$ such that the $F$-orbit of any indecomposable object of $\Db(\mod H)$ intersects with the subcategory $\mod H\vee(\mod H)[1]\vee\cdots\vee(\mod H)[n]$.
	\end{enumerate}
	Then the orbit category $\Db(\mod H)/F$ has a natural structure of a triangulated category.
\end{Thm}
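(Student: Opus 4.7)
The plan is to realize the orbit category $\Db(\mod H)/F$ as the homotopy category $H^0(\mathcal{T})$ of a pretriangulated dg category $\mathcal{T}$, which then automatically inherits a triangulated structure. First I would fix a dg enhancement $\mathcal{A}$ of $\Db(\mod H)$ --- say, the dg category of bounded complexes of finitely generated projective $H$-modules with componentwise Hom complexes --- and lift $F$ to a quasi-functor on $\mathcal{A}$ coming from a K-projective resolution of $T$ as a bimodule. Condition (i) ensures that for $X, Y$ in the heart $\mod H$, only finitely many of the $\Hom_{\mathcal{A}}(X, F^iY)$ are nonzero in each degree, and this finiteness propagates to all objects since they are built from the heart by shifts and cones.

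With this setup in hand, I would define the dg orbit category $\mathcal{A}/F$ as having the same objects as $\mathcal{A}$ and morphism complexes
\[ \Hom_{\mathcal{A}/F}(X,Y) := \bigoplus_{i \in \Z} \Hom_{\mathcal{A}}(X, F^i Y), \]
with composition defined from the bimodule isomorphisms associated with $F$. Taking the pretriangulated hull $\mathcal{T} := \pretr(\mathcal{A}/F)$ gives a pretriangulated dg category whose $H^0$ is triangulated. There is a canonical dg functor $\mathcal{A} \to \mathcal{T}$ whose induced functor $\pi \colon \Db(\mod H) = H^0(\mathcal{A}) \to H^0(\mathcal{T})$ intertwines the action of $F$ with the identity, and hence factors as $\bar\pi \colon \Db(\mod H)/F \to H^0(\mathcal{T})$. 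Fully faithfulness of $\bar\pi$ is built into the construction of the morphism complexes of $\mathcal{A}/F$.

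The heart of the matter is to prove essential density of $\bar\pi$, so that it becomes an equivalence along which the triangulated structure on $H^0(\mathcal{T})$ transports to $\Db(\mod H)/F$. Condition (ii) implies that every indecomposable of $\Db(\mod H)$ is $F$-translated into the strip $\mathcal{F} := \mod H \vee (\mod H)[1] \vee \cdots \vee (\mod H)[n]$, so $\pi(\mathcal{F})$ additively generates $H^0(\mathcal{T})$ up to shifts and cones. A priori, an object of $\pretr(\mathcal{A}/F)$ is a one-sided twisted complex on objects from $\mathcal{A}/F$, so I must show that such twisted complexes of objects in $\mathcal{F}$ are isomorphic to honest finite direct sums in $\mathcal{A}/F$. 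This is the step where heredity of $H$ is essential: because $\gd H \leq 1$, any object of $\Db(\mod H)$ splits canonically as $\bigoplus_i H^i(X)[i]$ and there are no nontrivial extensions across distinct shifts of the heart, so every twisted complex of shifted modules decomposes. I expect this decomposition argument to be the main obstacle --- without heredity one would be forced to pass to the triangulated hull $(\Db(\mod H)/F)_\triangle$ rather than the orbit category itself. Once it is in hand, $\bar\pi$ is an equivalence, and the resulting triangulated structure on $\Db(\mod H)/F$ is canonical in that the projection $\pi$ is exact with respect to it.
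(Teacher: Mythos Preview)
The paper does not give its own proof of this statement: it is quoted from Keller's paper \cite{Ke05} and used as a black box. So there is nothing in the present paper to compare your proposal against; what you have sketched is essentially Keller's original strategy --- dg enhance, form the dg orbit category, embed into the pretriangulated hull, and use heredity together with conditions (i) and (ii) to show the embedding is dense.

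That said, your account of the density step is a bit off. The issue is not that ``twisted complexes of shifted modules decompose'' in $\pretr(\A/F)$; rather, one must show that the image of $\Db(\mod H)/F$ in the triangulated hull is closed under taking cones. Given a morphism $f\colon X\to Y$ in the orbit category, lift it to a finite family of maps $X\to F^{i_j}Y$ in $\Db(\mod H)$, and use heredity plus condition (ii) to arrange that (after replacing $X$ and $Y$ by suitable $F$-translates of their summands) these maps live in a fixed bounded strip of shifts of $\mod H$. One then computes the cone there and checks it descends. Your remark that objects split as $\bigoplus_i H^i(X)[i]$ is exactly what makes this rearrangement possible, but the conclusion you draw from it --- that twisted complexes automatically decompose --- does not follow directly; a twisted complex in $\A/F$ involves differentials that mix $F$-powers, and heredity alone does not kill those. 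The correct use of heredity is to control where cones of individual maps land, not to split arbitrary twisted objects. If you rewrite the density argument along these lines you will have a faithful outline of Keller's proof.
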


In what follows we will mainly consider commutative rings in the complete local setting. Representation theoretically it is no harm to do so, as is justified by the following results.
\begin{Prop}[\cite{Or09b}]
Let $R$ be a commutative Gorenstein ring such that the singular locus consists of finitely many maximal ideals $\{\m_1,\ldots,\m_n\}$. We put $R_i$ the $\m_i$-adic completion of $R$. Then $\sCM R\simeq\prod_{i=1}^n\sCM R_i$ up to direct summands.
\end{Prop}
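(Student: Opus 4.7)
The plan is to translate the statement into one about singularity categories and then apply Orlov's completion theorem. Since $R$ is Gorenstein and each completion $R_i$ is Gorenstein as well, Buchweitz's equivalence gives $\sCM R \simeq \Dsg(R) := \Db(\mod R)/\Kb(\proj R)$ and similarly for each $R_i$. Let $J = \m_1 \cap \cdots \cap \m_n$ be the defining ideal of the singular locus. Because the $\m_i$ are pairwise coprime maximal ideals, the Chinese remainder theorem applied to $R/J^k$ and passage to the limit identify the $J$-adic completion as $\widehat{R} = \prod_{i=1}^n R_i$, and hence $\mod \widehat R = \prod_i \mod R_i$ and $\Dsg(\widehat R) = \prod_{i=1}^n \Dsg(R_i)$. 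The flat ring map $R \to \widehat R$ induces a triangle functor
\[ F \colon \Dsg(R) \longrightarrow \Dsg(\widehat R) = \prod_{i=1}^n \Dsg(R_i), \]
and the task reduces to showing that $F$ is fully faithful with every object of the target arising as a direct summand of one in the image.

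For full-faithfulness, the key observation is that for $M, N \in \CM R$ the stable Hom $\underline{\Hom}_R(M,N)$ is supported on $V(J)$: if $\p \notin V(J)$ then $R_\p$ is regular, so $M_\p$ and $N_\p$ are free and every map factors stably through a free module, giving $\underline{\Hom}_R(M,N)_\p = 0$. As a finitely generated $R$-module supported on the finite set $V(J)$, it is annihilated by some power of $J$, hence agrees with its tensor product with $\widehat R$. Combined with flat base change $\Hom_R(M,N) \otimes_R \widehat R \simeq \Hom_{\widehat R}(M\otimes\widehat R, N\otimes\widehat R)$ for finitely presented $M$, and the corresponding statement for maps factoring through projectives, one obtains
\[ \underline{\Hom}_R(M,N) \simeq \underline{\Hom}_{\widehat R}(M\otimes\widehat R, N\otimes\widehat R) = \prod_{i=1}^n \underline{\Hom}_{R_i}(M\otimes R_i, N\otimes R_i), \]
which is the full-faithfulness of $F$.

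The remaining step, essential surjectivity up to direct summands, is where the real difficulty lies and is the content of Orlov's theorem \cite{Or09b}. Given $(N_i) \in \prod_i \CM R_i$, one must produce $M \in \CM R$ whose completion at each $\m_i$ contains $N_i$ as a stable summand; the standard strategy is to approximate each $N_i$ modulo a high power of $\m_i$ (where completion is an isomorphism), use Artin approximation to lift the resulting matrix factorization or projective presentation to an honest CM module over $R$, and verify that $N_i$ reappears as a summand after re-completing. The appearance of ``up to direct summands'' rather than an equivalence on the nose is precisely what this approximation argument produces, since the lifted module need only contain the $N_i$ as stable summands. Rather than reprove this, I would invoke \cite{Or09b} directly, as is done in the statement.
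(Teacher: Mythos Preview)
The paper does not give its own proof of this proposition; it is simply stated with attribution to Orlov \cite{Or09b} and used as a black box. Your sketch is therefore more detailed than anything the paper provides, and the outline is correct: the passage to singularity categories via Buchweitz, the identification of the $J$-adic completion with $\prod_i R_i$ by the Chinese remainder theorem, and the full-faithfulness argument (stable Homs between CM modules are finitely generated and supported on $V(J)$, hence already $J$-adically complete, so base change to $\widehat R$ is an isomorphism) are all standard and sound.

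One small correction on the part you defer: Orlov's proof of essential surjectivity up to summands in \cite{Or09b} does not use Artin approximation. His argument is more elementary: every object of $\Dsg(\widehat R)$ is, up to shift, the image of a finitely generated $\widehat R$-module, and he shows that in the singularity category such a module is a direct summand of one annihilated by a power of $J$; since $R/J^k \simeq \widehat R/J^k\widehat R$, these modules come from $R$. Your reduction to his theorem is correct regardless, but the ``lift a matrix factorization via Artin approximation'' picture is not what is actually happening in the cited reference.
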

\begin{Prop}[{\cite[Appendix A]{KMV}}]\label{sg's}
Let $R=\bigoplus_{i\geq0}R_i$ be a positively graded commutative Gorenstein ring such that $R_0$ is finite dimensional. Suppose that the singular locus of $R$ is defined by $\m=\bigoplus_{i>0}R_i$ and let $\widehat{R}=\prod_{i\geq0}R_i$ the completion at $\m$. Then the canonical functors
\[ \sCM R\to \sCM R_\m\to \sCM \widehat{R} \]
are equivalences up to summands.
\end{Prop}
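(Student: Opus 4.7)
The plan is to treat the two functors separately—first showing $\sCM R \to \sCM R_\m$ is an equivalence up to summands, then doing the same for $\sCM R_\m \to \sCM \widehat R$. Both arguments rest on the flatness of the relevant base change combined with the isolated-singularity hypothesis that the singular locus of $R$ equals $V(\m)$.

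For the localization, the key input is that $R_\p$ is regular for every $\p \neq \m$. Consequently, for $M, N \in \CM R$ the stable Hom module $\sHom_R(M, N)$ is supported only at $\m$—indeed, any $\p$-localized map between CM modules over the regular local ring $R_\p$ factors through a free $R_\p$-module and so vanishes stably—and since it is finitely generated it already coincides with its localization at $\m$, which identifies canonically with $\sHom_{R_\m}(M_\m, N_\m)$. This yields full faithfulness. Essential density up to summands follows by lifting a finite presentation of any $N \in \CM R_\m$ to the graded category after clearing denominators, possibly enlarging by a free summand to absorb the lifting defect.

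For the completion $\sCM R_\m \to \sCM \widehat R$, use that $R_\m \to \widehat R$ is faithfully flat and that $R_\m$ inherits an isolated singularity at its maximal ideal. The same torsion argument shows $\sHom_{R_\m}(M, N)$ has finite length for $M, N \in \CM R_\m$; hence it is already $\m$-adically complete and coincides, via flat base change, with $\sHom_{\widehat R}(\widehat M, \widehat N)$, giving full faithfulness. Essential density up to summands is the crux: given $L \in \CM \widehat R$ one invokes an approximation theorem of Artin or Elkik to descend $L$, modulo a free summand, to a CM module over $R_\m$. This is the main obstacle, as it requires non-trivial approximation theory; by contrast the full-faithfulness assertions and the first essential-density step are routine once the isolated-singularity hypothesis is exploited.
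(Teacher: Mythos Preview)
The paper does not supply its own proof of this proposition; it merely records the statement and cites \cite[Appendix~A]{KMV}. Your outline is in fact close to the argument given there.

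The full-faithfulness steps are correct as you state them: under the isolated-singularity hypothesis every stable $\Hom$-module between Cohen--Macaulay modules has finite length, so it is unaffected by localizing at $\m$ or by $\m$-adic completion.

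There is, however, one genuine gap. For essential surjectivity of $\sCM R\to\sCM R_\m$, clearing denominators on a presentation of $N\in\CM R_\m$ produces some $M\in\mod R$ with $M_\m\cong N$, but $M$ need not be Cohen--Macaulay, and ``enlarging by a free summand'' does nothing to repair this. The standard fix is to pass to a high syzygy: $\Om_R^{\,d}M$ (with $d=\dim R$) lies in $\CM R$ because $R$ is Gorenstein, and its image in $\sCM R_\m$ is $\Om^{\,d}N\simeq N[-d]$; since $\Om$ is an autoequivalence of the stable category one recovers $N$. With this correction the first functor is actually essentially surjective, not merely dense up to summands.

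Your identification of Artin/Elkik approximation as the key to essential surjectivity for $\sCM R_\m\to\sCM\widehat R$ is correct and is exactly the non-trivial ingredient used in \cite{KMV}; the excellence of $R_\m$ (as a localization of an algebra of finite type over a field) is what makes those approximation theorems available.
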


\subsection{Definitions and first examples}
Now we are in the position to give a definition of Gorenstein rings of hereditary representation type. Depending on the setting of graded or ungraded/complete, we give the following definition as suggested by tilting theory. 
\begin{Def}\label{hered}
	Let $R$ be a commutative Gorenstein ring over a field $k$.
	\begin{enumerate}
		\item Suppose that $R=\bigoplus_{i\geq0}R_i$ is positively graded such that $R_0$ is finite dimensional local $k$-algebra, and with graded isolated singularity. We say that $R$ is {\it graded hereditary representation type} if there exists a triangle equivalence
		\[ \sCM^\Z\!R\simeq\Db(\mod H) \]
		for some finite dimensional hereditary algebra $H$.
		\item Suppose that $R$ is complete local isolated singularity. We say $R$ is of {\it hereditary representation type} if there exists a triangle equivalence
		\[ \sCM R\simeq \Db(\mod H)/F \]
		for some finite dimensional hereditary algebra $H$ and a triangle autoequivalence $F$ of $\Db(\mod H)$.
	\end{enumerate}
\end{Def}

We shall see that being graded hereditary representation type implies that the completion is of hereditary representation type (see Theorem \ref{gradable} below). 
We also give a stronger version, in which we require that the automorphism $F$ should be the best choice. To justify this we recall the following structure theorem. Recall that a finite dimensional algebra $H$ is {\it $1$-representation infinite} if every connected component of $H$ is a representation infinite hereditary algebra. Also we denote by $J_A$ the Jacobson radical of a ring $A$.
\begin{Thm}[{\cite{ha4}}]\label{jus}
Let $d\geq2$ and $\T$ an algebraic $d$-Calabi-Yau triangulated category with a $d$-cluster tilting object $T$. Suppose that $H=\End_\T(\bigoplus_{i=0}^{d-2}T[-i])$ is $1$-representation infinite and that $H/J_H$ is separable over $k$.
Then there exists a triangle equivalence
\[ \T\simeq\Db(\mod H)/\tau^{-1/(d-1)}[1] \]
for a naturally defined $(d-1)$-st root $\tau^{1/(d-1)}$ of the Auslander-Reiten translation.
\end{Thm}
We refer to \cite[Theorem 4.14]{ha4} for the definition of this root of $\tau$, and to Remark \ref{remark} below for its properties.

By this structure theorem, the right-hand-side is the canonical form of $d$-Calabi-Yau categories arising from hereditary algebras, which leads to the following definition.
\begin{Def}\label{strict}
Let $R$ be a complete Gorenstein local isolated singularity of dimension $d+1\geq3$. We say that $R$ is of {\it strictly hereditary representation type} if there exists a triangle equivalence
\[ \sCM R\simeq\Db(\mod H)/\tau^{-1/(d-1)}[1]=:\rC_d^{(1/(d-1))}(H) \]
for some finite dimensional hereditary algebra $H$, where $\tau^{-1/(d-1)}$ is an autoequivalence of $\Db(\mod H)$ given by a bimodule complex $U\in\per H^e$ satisfying $U^{\lotimes_H(d-1)}\simeq\RHom_H(DH,H)[1]$ in $\rD(H^e)$.
\end{Def}
\begin{Rem}\label{remark}
We know by \cite[Corollary 6.3]{ha4}, replacing $H$ by a derived equivalent hereditary algebra if necessary, that there exists a projective module $P$ satisfying the following.
\begin{itemize}
\item $\add H=\add(\bigoplus_{i=0}^{d-2}P\lotimes_HU^{\lotimes_H i})$,
\item $\Hom_H(P\lotimes_HU^{\lotimes_H i},P)=0$ for $1\leq i\leq d-1$.
\end{itemize}
In this case, the projective module $P$ gives a $d$-cluster tilting object in $\T:=\Db(\mod H)/\tau^{-1/(d-1)}[1]$ such that $\End_\T(\bigoplus_{i=0}^{d-2}P[-i])=H$ (\cite[Corollary 6.4]{ha4}).
\end{Rem}
While hereditary type is a direct ungraded analogue of the graded hereditary type, the notion of strict hereditary type is more closely related to cluster tilting theory and the notion of ``strict tilting'' which we shall introduce and study in a forthcoming work \cite{segre}.

We refer to Section \ref{else} below for possible variations of the definition of hereditary representation type.

\medskip
Recall that a Cohen-Macaulay ring is of {\it finite representation type} if there is an additive generator in $\CM R$. Similarly a graded Cohen-Macaulay ring is called {\it graded finite representation type} if there is an object $M\in\CM^\Z\!R$ such that $\CM^\Z\!R=\add\{ M(i)\mid i\in\Z\}$.

Let us first note that ``finite representation type implies hereditary representation type'', which is an interpretation of structure theorems of finite triangulated categories \cite{Am07,ha} based on the combinatorics of translations quivers \cite{Rie,HPR,XZ}.
Even though some parts of the following results can be seen from the classification in the setting of commutative rings, we state them since these structure theorems hold in much greater generality. 
\begin{Prop}\label{finite}
	Let $R$ be a commutative Gorenstein ring over an algebraically closed field $k$.
	\begin{enumerate}
		\item {\cite[Theorem 7.2]{Am07}} Suppose that $R$ is of finite representation type. Then there exists a triangle equivalence $\sCM R\simeq \Db(\mod kQ)/F$ for a Dynkin quiver $Q$ and a triangle autoequivalence $F$, provided $\sCM R$ is standard. In particular, $R$ is of hereditary representation type.
		\item {\cite[Theorem 7.3]{ha}} Suppose that $R=\bigoplus_{i\geq0}R_i$ is positively graded with $R_0=k$, and that it is of graded finite representation type. Then there exists a triangle equivalence $\sCM^\Z\!R\simeq\Db(\mod kQ)$ for a Dynkin quiver $Q$. In particular, $R$ is of graded hereditary representation type.
	\end{enumerate}
\end{Prop}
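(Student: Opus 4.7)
Both parts of the proposition are direct applications of structural classification theorems for triangulated categories, specialised to the stable category of Cohen--Macaulay modules. The plan is therefore to verify that the hypotheses of the cited theorems hold in our commutative-algebraic setting, and then invoke them. The actual classification work is done in \cite{Am07} and \cite{ha}; our task is to translate the combinatorial data.

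For part (1), I would observe that $\sCM R$ is $k$-linear, Hom-finite, Krull--Schmidt, and algebraic (being the stable category of the Frobenius exact category $\CM R$), and that by Auslander--Reiten duality for Gorenstein rings it carries a Serre functor, hence has AR-triangles. The assumption of finite representation type is exactly the statement that $\sCM R$ has only finitely many indecomposable objects up to isomorphism, while the standardness assumption is carried over verbatim. Amiot's classification \cite[Theorem 7.2]{Am07} then asserts that any standard, algebraic, Hom-finite, Krull--Schmidt triangulated category over an algebraically closed field with finitely many indecomposables is triangle equivalent to an orbit category $\Db(\mod kQ)/F$ for a Dynkin quiver $Q$ and a triangle autoequivalence $F$. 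This is exactly the form required in Definition \ref{hered}, so $R$ is of hereditary representation type.

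For part (2), the graded setting, I would verify the same basic properties for $\sCM^\Z R$: it is algebraic (a stable category of a Frobenius exact category), Hom-finite (using that $R_0 = k$ is a field and $R$ is graded Henselian-like via positivity), and Krull--Schmidt. The additional input specific to the graded case is that positivity of the grading together with $R_0 = k$ forces the shift functor $(1)$ to act freely on isomorphism classes of indecomposables; combined with graded finite representation type this means the stable AR-quiver of $\sCM^\Z R$ is a connected translation quiver of the form $\Z Q$ for a quiver $Q$, with no identifications. Classical tree-class arguments of Riedtmann--Happel type then force $Q$ to be a Dynkin quiver. The structure theorem \cite[Theorem 7.3]{ha} identifies any connected, standard, Hom-finite, algebraic, Krull--Schmidt triangulated category with AR-quiver $\Z Q$ of Dynkin type with the bounded derived category $\Db(\mod kQ)$, which gives the claimed equivalence.

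The main obstacle in principle is the structural classification itself, which is delegated to the two cited results. On our side, the only non-routine check is in part (2): that the positivity of the grading genuinely produces the shape $\Z Q$ and not a proper quotient of it, and that the AR-quiver is indeed standard. The positivity plus $R_0 = k$ is what prevents any periodicity, while standardness in the graded setting follows from the algebraic nature of the triangulated enhancement together with the vanishing of the relevant radical obstructions over an algebraically closed field. Once these points are verified, both parts follow immediately by quoting the structure theorems.
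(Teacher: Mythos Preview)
Your proposal is correct and matches the paper's approach. The paper gives no proof beyond the citations embedded in the statement itself; the content reduces entirely to invoking the structure theorems \cite[Theorem 7.2]{Am07} and \cite[Theorem 7.3]{ha} after checking that their hypotheses hold for $\sCM R$ and $\sCM^\Z\!R$, which is exactly what you outline.
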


Now let us turn to non-trivial examples. To the best of the author's knowledge, the only known examples of commutative Gorenstein rings of hereditary representation type beyond finite representation types are the following, results by Keller--Reiten \cite{KRac} and Keller--Murfet--Van den Bergh \cite{KMV}, based on the examples by Iyama--Yoshino \cite{IYo}, which are our motivation for pursuing commutative rings of hereditary types.
\begin{Thm}\label{exIY}
We denote by $\xymatrix{Q_n\colon\circ\ar[r]^-n&\circ}$ the Kronecker quiver with $n$ arrows between the vertices.
\begin{enumerate}
	\item\cite{IYo,KRac} Let $R=k[[x,y,z]]^{(3)}$. Then there exists a triangle equivalence $\sCM R\simeq\rC_2(kQ_3)$.
	\item\cite{IYo,KMV} Let $R=k[[x,y,z,w]]^{(2)}$. Then there exists a triangle equivalence $\sCM R\simeq\rC_3^{(1/2)}(kQ_6)$.
\end{enumerate}
In fact, these are of strictly hereditary representation type.
\end{Thm}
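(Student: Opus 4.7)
My plan is to combine Iyama--Yoshino's construction of cluster tilting objects in $\CM R$ for these cyclic quotient singularities with a recognition theorem for Calabi--Yau triangulated categories (Keller--Reiten in the $2$-Calabi--Yau case, Keller--Murfet--Van den Bergh in the higher case; equivalently, Theorem \ref{morita} of this paper applied to the resulting endomorphism algebra). The overall strategy in both parts is the same: exhibit a $d$-cluster tilting object $M\in\CM R$, identify its endomorphism algebra with the $(d+1)$-preprojective algebra of a Kronecker-type hereditary algebra $H$, and invoke the recognition theorem to rewrite $\sCM R$ as the claimed orbit category of $\Db(\mod H)$.

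For part (1), $R=k[[x,y,z]]^{(3)}$ is $3$-dimensional Gorenstein isolated, so $\sCM R$ is $2$-Calabi--Yau. Iyama--Yoshino show that $S=k[[x,y,z]]$, which decomposes over $R$ as $R\oplus R(1)\oplus R(2)$, is a $2$-cluster tilting object in $\CM R$, with $\End_R(S)$ isomorphic to the $3$-preprojective algebra of $kQ_3$; Keller--Reiten's recognition theorem then delivers the equivalence $\sCM R\simeq\rC_2(kQ_3)$. Part (2) is analogous: $R=k[[x,y,z,w]]^{(2)}$ is $4$-dimensional with $\sCM R$ being $3$-Calabi--Yau, the object $S=k[[x,y,z,w]]=R\oplus R(1)$ is $3$-cluster tilting, and KMV identify its endomorphism algebra with the $4$-preprojective algebra of the $6$-Kronecker $kQ_6$, yielding $\sCM R\simeq\rC_3^{(1/2)}(kQ_6)$. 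The half-translate $\tau^{-1/2}$ here is realised by a bimodule complex $U$ with $U^{\lotimes_H 2}\simeq\RHom_H(DH,H)[1]$ which swaps the two simple projectives of $kQ_6$, reflecting the $\ZZ/2$-symmetry of the quiver.

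For strict hereditary type we check the two conditions of Definition \ref{strict} on $U$. In case (1), $d=2$ forces $\tau^{-1/(d-1)}=\tau^{-1}$ and we take $P=H$, so both conditions become vacuous. In case (2), with $d=3$ and $H=kQ_6$, we take $P$ to be one of the two indecomposable projectives of $kQ_6$: the swapping property of $\tau^{-1/2}$ gives $\add(P\oplus P\lotimes_HU)=\add H$, and $\Hom_H(P\lotimes_HU,P)=0$ holds because all three arrows of $Q_6$ go in the same direction. The main subtlety throughout is matching the autoequivalence produced by the abstract recognition theorem with exactly the prescribed $\tau^{-1/(d-1)}[1]$ rather than some equivalent but differently-described autoequivalence; this is achieved by tracking how the internal degree-shift functor on $\sCM^\Z\! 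R$ is transported through the triangle equivalence.
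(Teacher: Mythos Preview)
The paper does not supply a proof of this theorem; it is recorded as a citation of \cite{IYo,KRac,KMV}, with the additional observation that both examples fit Definition~\ref{strict}. Your proof sketch follows exactly the route of those references and of Theorem~\ref{morita}, so the overall strategy is correct.

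Two points deserve tightening. First, the phrase ``$(d+1)$-preprojective algebra of $kQ_n$'' is not standard: $kQ_n$ has global dimension~$1$, so only the ordinary (that is, $2$-)preprojective algebra is defined. What Theorem~\ref{morita} actually requires is that the \emph{stable} endomorphism ring $\sEnd_R(T)$ of the cluster tilting object be hereditary (for $d=2$) or that the larger algebra $\sEnd_R(T\oplus T[-1])$ be hereditary (for $d=3$). In case~(1) one checks $\sEnd_R(T)\simeq kQ_3$ directly; in case~(2) the non-free summand $M_1$ of $S$ has $\sEnd_R(M_1)=k$ and $\sHom_R(M_1,\Om M_1)\simeq k^6$, giving $\sEnd_R(M_1\oplus\Om M_1)\simeq kQ_6$. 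Phrasing it this way avoids the undefined higher preprojective language and matches the hypotheses of Theorem~\ref{morita} verbatim. Second, a slip: $Q_6$ has six arrows, not three; and for $d=2$ the condition $\Hom_H(\tau^{-1}H,H)=0$ in Definition~\ref{strict} is not literally vacuous but follows from the forward-pointing structure of the preprojective component of a representation-infinite hereditary algebra.
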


To discuss some consequences of being (graded) hereditary representation type in the following subsection, here we state a minor remark.
\begin{Lem}\label{a=0}
	Let $R=\bigoplus_{i\geq0}R_i$ be a Gorenstein isolated singularity such that $R_0$ is a field, and such that there exists a triangle equivalence $\sCM^\Z\!R\simeq\Db(\mod A)$ for a finite dimensional algebra $A$. Suppose that the $a$-invariant of $R$ is $0$. Then we have the following.
	\begin{enumerate}
		\item $A$ is semisimple.
		\item There is a triangle equivalence $\sCM R\simeq\Db(\mod A)/F$ for an autoequivalence $F$.
		\item $R$ is of finite representation type and of hereditary representation type.
	\end{enumerate}
\end{Lem}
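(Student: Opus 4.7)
The approach is to transport the Serre functor across the given equivalence. By Auslander--Reiten--Serre duality for the graded Gorenstein isolated singularity $R$ of dimension $d$, the Serre functor on $\sCM^\Z\!R$ is $(-a)[d-1]$, which with $a=0$ becomes the pure shift $[d-1]$. The equivalence $\sCM^\Z\!R\simeq\Db(\mod A)$ transports this to a Serre functor on $\Db(\mod A)$; for a finite-dimensional algebra, having a Serre functor forces finite global dimension and the Serre functor must then agree with the Nakayama functor $\nu_A=-\lotimes_A DA$. Evaluating $\nu_A\simeq[d-1]$ on $A$ yields $DA\simeq A[d-1]$ in $\rD(A^e)$. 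Both sides lie in cohomological degree $0$, so either $A=0$ (the trivial case where $R$ is regular) or $d=1$ and $DA\simeq A$ as bimodules. The latter makes $A$ symmetric Frobenius, and a self-injective algebra of finite global dimension is semisimple, establishing (1).

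For (2), the plan is to combine Proposition \ref{sg's} with the triangulated-hull description $\sCM R\simeq(\sCM^\Z\!R/(1))_\triangle$ mentioned in the introduction (following \cite{haI}). Transporting along the equivalence of (1) gives $\sCM R\simeq(\Db(\mod A)/F)_\triangle$ for the autoequivalence $F$ of $\Db(\mod A)$ corresponding to the degree shift $(1)$. Since $A$ is semisimple---hence hereditary---Keller's Theorem \ref{orbit} implies $\Db(\mod A)/F$ is already triangulated and equals its hull; the hypotheses (i), (ii) there are routine given the semisimple structure of $\Db(\mod A)$ and the Hom-finiteness inherited from $\sCM^\Z\!R$. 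This produces $\sCM R\simeq\Db(\mod A)/F$ and in particular yields the hereditary representation type claim of (3).

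It remains to show finite representation type. Writing $S_1,\dots,S_m$ for the simples of $A$, the indecomposables of $\Db(\mod A)$ are exactly $\{S_i[n]\mid 1\leq i\leq m,\,n\in\Z\}$, and any autoequivalence $F$ acts as $FS_i\simeq S_{\sigma(i)}[n_i]$ for some permutation $\sigma$ and integers $n_i$. Hom-finiteness of $\sCM R\simeq\Db(\mod A)/F$---a consequence of $R$ being an isolated singularity---computed on $\Hom(S_i,S_i)$ in the orbit category forces the cycle-sum $s_O:=\sum_{i\in O}n_i$ to be nonzero for every $\sigma$-cycle $O$. A direct count then shows the total number of $F$-orbits on indecomposables equals $\sum_O|s_O|<\infty$, so $\sCM R$ is representation-finite; combined with the indecomposable projective $R$ this gives finite representation type for $R$. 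The delicate step is (2)---verifying Keller's hypotheses and making the hull identification precise---after which the combinatorial count in (3) is elementary.
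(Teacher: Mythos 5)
Your argument is correct and follows essentially the same route as the paper: transporting the Serre functor $[d-1]$ across the equivalence to force $A$ semisimple, invoking Keller's orbit-category theorem (Theorem \ref{orbit}) for the ungraded statement, and deducing representation-finiteness. You simply fill in details the paper leaves implicit, namely the self-injectivity argument for (1) and the cycle-sum count of $F$-orbits for (3).
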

\begin{proof}
	Since the $a$-invariant of $R$ is $0$, the graded Auslander-Reiten duality implies that $\sCM^\Z\!R$ has Serre functor $[\dim R-1]$, hence $\Db(\mod A)$ is a Calabi-Yau triangulated category. This forces $A$ to be semisimple. Then it is easy (or use Theorem \ref{orbit}) to deduce a triangle equivalence $\sCM R\simeq\Db(\mod A)/F$ for the autoequivalence $F$ of $\Db(\mod A)$ corresponding to the degree shift $(1)$ on $\sCM^\Z\!R$. It follows that $R$ is of finite representation type as well as hereditary representation type.
\end{proof}

\subsection{Consequences of hereditary representation types}
Since hereditary algebras are very special among all finite dimensional algebras, it should be possible to provide various implications on commutative rings using the theory of finite dimensional algebras. We present some of them in this section.
\subsubsection{Gradability}
The first one is gradability of Cohen-Macaulay modules. We can also view this as graded hereditary type implies hereditary type of the completion, an analogue of the corresponding result for finite representation type \cite{AR89c}\cite[Chapter 15]{Yo90}.
\begin{Thm}\label{gradable}
Let $R=\bigoplus_{i\geq0}R_i$ be a positively graded commutative Gorenstein ring with $R_0$ a field, and let $\widehat{R}=\prod_{i\geq0}R_i$ the completion. Suppose that $R$ is of graded hereditary representation type.
\begin{enumerate}
\item $\widehat{R}$ is of hereditary representation type.
\item Every Cohen-Macaulay $\widehat{R}$-module is gradable, that is, the functor $\sCM^\Z\!R/(1)\to\sCM\widehat{R}$ is an equivalence.
\end{enumerate}
\end{Thm}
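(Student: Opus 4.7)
The strategy is to promote the given equivalence $\sCM^\Z R \simeq \Db(\mod H)$ to one for $\sCM \widehat{R}$ by forming the orbit under the degree-shift action. Let $F$ be the autoequivalence of $\Db(\mod H)$ corresponding to the degree shift $(1)$ on $\sCM^\Z R$. I would first verify the two hypotheses of Keller's Theorem \ref{orbit}: (i) each $X \in \mod H$ lies in $\mod H$ only finitely often along its $F$-orbit, and (ii) there is a uniform $n$ such that every indecomposable's $F$-orbit meets $\mod H \vee \cdots \vee (\mod H)[n]$. Both follow from structural properties of $\sCM^\Z R$: graded Auslander--Reiten duality (which holds since $R$ is a Gorenstein graded isolated singularity with $R_0$ a field) provides a Serre functor and the resulting Hom-finiteness forces condition (i), while the bounded nature of the heart $\mod H$ inside $\sCM^\Z R$ forces condition (ii). Keller's theorem then equips $\Db(\mod H)/F$ with a natural triangulated structure.

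Next, since the ungraded Hom is invariant under the degree shift, the forget functor $\sCM^\Z R \to \sCM R$ factors through the orbit category, and composing with completion yields a triangle functor
\[ \Phi \colon \Db(\mod H)/F \simeq \sCM^\Z R /(1) \longrightarrow \sCM \widehat{R}. \]
For fully faithfulness I would use the standard decomposition $\Hom_R(M,N) = \bigoplus_{i\in\Z} \Hom^\Z_R(M, N(i))$ for finitely generated graded modules, which descends to the stable category modulo graded projectives and passes to the completion because each graded component is finite-dimensional over $R_0$. For essential surjectivity, which is precisely the gradability statement (2), I would invoke Proposition \ref{sg's}: the natural functor $\sCM R \to \sCM \widehat{R}$ is an equivalence up to direct summands, so the essential image of $\Phi$ already generates $\sCM \widehat{R}$ under taking direct summands. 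It then remains to upgrade this to genuine essential surjectivity, i.e.\ to show the essential image is closed under direct summands.

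The main obstacle is this last idempotent-completeness step: the orbit of a Karoubian triangulated category need not itself be Karoubian. I would address it by showing that the orbit $\Db(\mod H)/F$ is Hom-finite over $k$ (the Hom space $\bigoplus_i \Hom_{\Db(\mod H)}(X, F^i Y)$ has only finitely many nonzero summands, by condition (i) of Keller together with the Hom-finiteness of $\Db(\mod H)$ in each shift), so that the orbit category is Krull--Schmidt. Then a fully faithful triangle functor $\Phi$ from a Krull--Schmidt triangulated category to a Krull--Schmidt target, with image dense under summands, must be essentially surjective. This gives both conclusions simultaneously: (1) the triangle equivalence $\sCM \widehat{R} \simeq \Db(\mod H)/F$ exhibits $\widehat{R}$ as of hereditary representation type, and (2) the functor $\sCM^\Z R/(1) \to \sCM \widehat{R}$ is an equivalence, i.e.\ every Cohen--Macaulay $\widehat{R}$-module is gradable.
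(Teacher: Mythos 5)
Your overall strategy is the same as the paper's: verify the two hypotheses of Keller's Theorem \ref{orbit} for the autoequivalence $F$ corresponding to the degree shift, form the triangulated orbit category, and use Proposition \ref{sg's} to pass from $\sCM R$ to $\sCM\widehat{R}$. Your treatment of condition (i) (finite-dimensionality of the total stable Hom, coming from the isolated-singularity hypothesis) and of the idempotent-completeness/essential-surjectivity issue (Krull--Schmidt source and target plus density of the image under summands) is sound, and the latter is in fact spelled out more carefully than in the paper.

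However, there is a genuine gap at condition (ii). Asserting that it follows from ``the bounded nature of the heart $\mod H$ inside $\sCM^\Z R$'' is a restatement of the conclusion, not an argument: for an arbitrary autoequivalence $F$ of $\Db(\mod H)$ there is no reason for every $F$-orbit to meet a region of the form $\mod H\vee\cdots\vee(\mod H)[n]$, so one must actually identify $F$. The paper does this by comparing Serre functors, obtaining $(a)[d-1]\leftrightarrow\nu$ and hence $(a)\leftrightarrow\nu_{d-1}=\nu\circ[-d+1]$ where $a$ is the $a$-invariant and $d=\dim R$ (the case $a=0$ being disposed of first via Lemma \ref{a=0}), and then exhibits an explicit fundamental domain for $\nu_{d-1}$ case by case in $d$. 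This is where the real content lies: for $d\geq3$ one can take $(\mod H)\vee\cdots\vee(\mod H)[d-3]\vee(\proj H)[d-2]$; for $d=2$ one has $\nu_{d-1}=\tau$, and one must first deduce from the Hom-finiteness that $H$ is of Dynkin type before $\proj H$ serves as a fundamental domain; and $d\leq1$ requires yet another description. None of these steps is present in your proposal, and the $d=2$ case in particular cannot be recovered from general ``boundedness'' considerations. To complete the proof you need to supply this Serre-functor identification of $F^a$ and the dimension-dependent fundamental domains.
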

\begin{proof}
	We know by Proposition \ref{sg's} that $\sCM R\simeq\sCM\widehat{R}$. Also by \cite{haI} the given triangle equivalence $\sCM^\Z\!R\simeq\Db(\mod H)$ induces $\sCM R\simeq(\Db(\mod H)/F)_\triangle$ for the autoequivalence $F$ of $\Db(\mod H)$ corresponding to the degree shift $(1)$ on $\sCM^\Z\!R$, where $(-)_\triangle$ means the canonical triangulated hull \cite{Ke05}. It remains to show that the orbit functor $\Db(\mod H)\to\Db(\mod H)/F$ is dense. Let $M\in\sCM^\Z\!R$ the object corresponding to $H\in\Db(\mod H)$. We only have to verify the conditions in Theorem \ref{orbit}.
	
	(i)  Note that an indecomposable object $X\in\Db(\mod H)$ lies in $\mod H$ if and only if $\Hom_{\rD(H)}(H,X)\neq0$. Therefore we have to show that for any indecomposable $N\in\sCM^\Z\!R$, the space $\sHom_R^\Z(M,N(i))$ vanishes for almost all $i\in\Z$. This follows from that $R$ is an isolated singularity, so the space $\sHom_R(M,N)=\bigoplus_{i\in\Z}\sHom_R^\Z(M,N(i))$ is finite dimensional.
	
	(ii)  Let $d=\dim R$ and $a$ the $a$-invariant of $R$. In view of Lemma \ref{a=0} it is enough to consider the case $a\neq0$. Comparing the Serre functors of $\sCM^\Z\!R\simeq\Db(\mod H)$ yields $(a)[d-1]\leftrightarrow\nu:=-\lotimes_HDH$, thus $(a)\leftrightarrow \nu_{d-1}:=\nu\circ[-d+1]$. Note that it is enough to verify the condition for $F^a$, hence for $\nu_{d-1}$.
	If $d\geq3$, a fundamental domain for the action of $\nu_{d-1}$ on $\Db(\mod H)$ can be taken as $(\mod H)\vee(\mod H)[1]\vee\cdots\vee(\mod H)[d-3]\vee(\proj H)[d-2]$. Therefore any $\nu_{d-1}$-orbit of an indecomposable object intersects in particular with $\mod H\vee(\mod H)[1]\vee\cdots\vee(\mod H)[d-2]$.
	If $d=2$, we have $\nu_{d-1}=\tau$, the Auslander-Reiten translation. Then $\bigoplus_{i\in\Z}\Hom_{\Db(\mod H)}(H,\tau^{-i}H)=\bigoplus_{i\in\Z}\sHom_R^\Z(M,M(ia))$ being finite dimensional implies $H$ is of Dynkin type. Therefore the subcategory $\proj H$ as above is still a fundamental domain for $\nu_{d-1}$, so any $\nu_{d-1}$-orbit intersects with $\mod H$.
	Finally if $d\leq1$, the fundamental domain becomes $\mod H\setminus\inj H$ for $d=1$ and $(\mod H)[-1]\vee\mod H\setminus\inj H$ for $d=0$, where ``$\setminus\inj H$'' means the summands must not lie in $\inj H$, so that we do have the assertion.
\end{proof}

\subsubsection{Uniqueness of enhancements}
Let us turn to the consequences of strictly hereditary representation type. Consider the triangulated category
\[ \T=\Db(\mod H)/\tau^{-1/(d-1)}[1]. \]
Then $P\in\T$ from Remark \ref{remark} is a $d$-cluster tilting object by \cite[Corollary 6.4]{ha4}, and we have $H=\End_\T(\bigoplus_{i=0}^{d-2}P[-i])$. Therefore a $(d+1)$-dimensional Gorenstein ring $R$ of strictly hereditary representation type has a natural $d$-cluster tilting object $T\in\sCM R$ such that $\sEnd_R(\bigoplus_{i=0}^{d-2}\Om^iT)$ is hereditary.

Recall that an {\it enhancement} of a triangulated category $\T$ is a dg category $\A$ such that $\per\A\simeq\T$. A triangulated category has a {\it unique enhancements} if any two enhancements are derived Morita equivalent.
Then one has the uniqueness of enhancements for $\sCM R$ which we repeat from \cite{ha4}.
\begin{Thm}\label{ue}
Let $R$ be a complete Gorenstein local isolated singularity of dimension $\geq3$ over a perfect field which is of strictly hereditary representation type. Then $\sCM R$ has a unique enhancement.
\end{Thm}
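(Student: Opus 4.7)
The proof is essentially an application of the Morita theorem for Calabi--Yau triangulated categories developed in \cite{ha4}, so the plan is to verify the hypotheses of that theorem for $\sCM R$ and transcribe its uniqueness conclusion into the present language.

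First I would unpack what strict hereditary representation type gives structurally. Auslander--Reiten duality endows $\sCM R$ with a $d$-Calabi--Yau structure, and $\sCM R$ is algebraic because $\CM R$ is a Frobenius exact category. The chosen equivalence $\sCM R \simeq \Db(\mod H)/\tau^{-1/(d-1)}[1]$, combined with \cite[Corollary 6.4]{ha4} applied to the projective object $P$ of Definition \ref{strict}, transports a $d$-cluster tilting object $T \in \sCM R$ along with an identification $H \simeq \sEnd_R(\bigoplus_{i=0}^{d-2}\Om^iT)$, so that the endomorphism algebra of the associated partial tilting object is hereditary. These are exactly the structural data required by Theorem \ref{morita}.

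Second, I would invoke the Morita theorem of \cite{ha4}. It shows that an algebraic $d$-Calabi--Yau triangulated category over a perfect field admitting a $d$-cluster tilting object whose partial tilting subobject has hereditary endomorphism algebra is canonically quasi-equivalent, at the dg level, to the generalized cluster category $\rC_d^{(1/(d-1))}(H)$ equipped with its standard Amiot--Guo--Keller style dg-enhancement arising from the Calabi--Yau completion of $H$. Since the output of that theorem is a canonical dg-enhancement, any dg-enhancement of $\sCM R$ must agree with it up to quasi-equivalence, which is the assertion of uniqueness.

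The main obstacle, already handled inside \cite{ha4} and not to be reproved here, is the intrinsic formality step showing that the dg model produced from the Calabi--Yau completion is rigidly determined by its cohomology. This formality is precisely what the hereditariness of $H$ supplies: global dimension $\leq 1$ forces all potential $A_\infty$-obstructions to vanish in the degrees where they could otherwise live. The standing hypotheses that $k$ is perfect and $\dim R \geq 3$ are inherited from Theorem \ref{morita}: perfectness underwrites the behavior of bimodule resolutions of $H$, while $d \geq 2$ ensures via Theorem \ref{orbit} that the orbit category carries a natural triangulated structure and hence matches an honest dg-enhancement target.
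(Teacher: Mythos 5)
Your proposal is correct and follows essentially the same route as the paper: the paper's proof of Theorem \ref{ue} is a one-line citation of \cite[Theorem 4.19]{ha4}, and your argument is just an unpacking of why that result of \cite{ha4} applies (algebraicity and the $d$-Calabi--Yau property of $\sCM R$, the $d$-cluster tilting object with hereditary $\sEnd_R(\bigoplus_{i=0}^{d-2}\Om^iT)$ supplied by strict hereditary representation type, and the canonical dg model pinned down by formality). Nothing in your expansion conflicts with the paper's intent, so this is the same proof in more words.
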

\begin{proof}
	A direct consequence of \cite[Theorem 4.19]{ha4}.
\end{proof}

\subsubsection{Number of direct summands of cluster tilting objects}
Let $R$ be a commutative ring of strictly hereditary representation type. We study $d$-cluster tilting objects in $\CM R$ by means of the triangle equivalence
\[ \sCM R\simeq\Db(\mod H)/\tau^{-1/(d-1)}[1]. \]
More precisely, we work on the hereditary algebra side, and employ the correspondence between $d$-cluster tilting objects in the cluster category $\Db(\mod H)/\tau^{-1/(d-1)}[1]$ and silting objects in the derived category $\Db(\mod H)$ \cite{haI2}.
As an application we provide the following result on the structure of the set of $d$-cluster tilting objects in $\CM R$.
Note that this is well-established for $d+1=3$ (e.g.\! \cite{Iy07b,Pa09}), but widely open for higher dimensions.
\begin{Thm}\label{smd}
Let $R$ be a commutative complete Gorenstein local isolated singularity of dimension $d+1\geq3$ which is of strictly hereditary representation type. Then any $d$-cluster tilting object in $\CM R$ has the same number of non-isomorphic indecomposable direct summands.
\end{Thm}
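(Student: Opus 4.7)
The plan is to transfer the question through the equivalence $\sCM R\simeq \T:=\Db(\mod H)/\tau^{-1/(d-1)}[1]$ and then translate $d$-cluster tilting objects in $\T$ into a class of objects in $\Db(\mod H)$ whose cardinality of indecomposable summands is known to be an invariant.

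First I would reduce the count from $\CM R$ to $\sCM R$. Since $R$ is Gorenstein of dimension $d+1$ and any $M\in\CM R$ has depth $d+1$, local duality gives $\Ext^i_R(M,R)=0$ for all $i>0$. Hence for any $d$-cluster tilting $T\in\CM R$ we have $R\in\{X\in\CM R\mid \Ext^i_R(T,X)=0\ \text{for}\ 0<i<d\}=\add T$, so $R$ is always a summand. Since $R$ is complete local, $R$ is the unique indecomposable projective, and the number of non-isomorphic indecomposable summands of $T$ in $\CM R$ equals the number of non-isomorphic indecomposable summands of its image $\underline T$ in $\sCM R$ plus $1$. It therefore suffices to prove that any $d$-cluster tilting object in $\T$ has the same number of indecomposable summands.

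Next I would invoke the correspondence of \cite{haI2} between $d$-cluster tilting objects in the orbit category $\T=\Db(\mod H)/\tau^{-1/(d-1)}[1]$ and silting objects (in an appropriate fundamental domain) in $\Db(\mod H)$. The distinguished $d$-cluster tilting object $P\in\T$ supplied by Definition \ref{strict} lifts to the tilting object $H\in\Db(\mod H)$, and mutation of $d$-cluster tilting objects in $\T$ corresponds under this bijection to silting mutation in $\Db(\mod H)$. The key feature of this bijection for our purposes is that it preserves the number of non-isomorphic indecomposable summands, because on both sides the lift of an indecomposable is again indecomposable in a chosen fundamental domain. Since $H$ is hereditary, silting objects in $\Db(\mod H)$ coincide with (classical) tilting complexes, and all of them have exactly $\rank K_0(H)=|\!\ind H/\!\mathrm{rad}|$ non-isomorphic indecomposable summands because they form a $\Z$-basis of $K_0(\Db(\mod H))=K_0(H)$.

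Putting these pieces together, any $d$-cluster tilting object in $\T$ has precisely $\rank K_0(H)$ indecomposable summands, so any $d$-cluster tilting object in $\CM R$ has precisely $\rank K_0(H)+1$ of them. The main obstacle is bookkeeping in the middle step: one must verify that the correspondence of \cite{haI2} indeed restricts to a bijection between (basic) $d$-cluster tilting objects in $\T$ containing the image of a chosen silting object and basic silting objects in a fundamental domain of $\Db(\mod H)$ under $\tau^{-1/(d-1)}[1]$, and that passing to the orbit category does not merge distinct indecomposable summands. The hypothesis on $P$ in Definition \ref{strict}, namely $\Hom_H(P\lotimes_HU^{\lotimes i},P)=0$ for $1\le i\le d-1$, is exactly what ensures that indecomposables in the fundamental domain remain pairwise non-isomorphic after passing to $\T$, so the count of summands is preserved as required.
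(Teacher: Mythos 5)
Your overall strategy (reduce to $\sCM R$, then transport the problem to $\Db(\mod H)$ via the silting--cluster-tilting correspondence of \cite{haI2} and use that basic silting objects over $H$ all have $\rank K_0(H)$ summands) is the same as the paper's, and your first paragraph correctly justifies the reduction that the paper dispatches in one sentence. However, the middle step misstates the correspondence, and this produces a wrong count. A $d$-cluster tilting object $T$ in $\T=\Db(\mod H)/F$ with $F=\tau^{-1/(d-1)}[1]$ does \emph{not} lift to an object that is itself silting; rather, it lifts to some $\widetilde{T}\in\Db(\mod H)$ such that $\bigoplus_{i=0}^{d-2}F^i\widetilde{T}$ is silting, with $\add F^i\widetilde{T}\cap\add F^j\widetilde{T}=0$ for $i\neq j$. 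This is already visible in Definition \ref{strict}: the distinguished object $P$ satisfies $\add H=\add\bigl(\bigoplus_{i=0}^{d-2}P\lotimes_HU^{\lotimes_H i}\bigr)$, so $P$ lifts to a proper direct summand (a ``slice'') of $H$, not to $H$ itself, contrary to your claim. Consequently the correct count in the stable category is $|H|/(d-1)$, not $\rank K_0(H)$; your figure $\rank K_0(H)+1$ for $\CM R$ is wrong whenever $d\geq3$ (the two agree only for $d=2$, which is excluded here since $d+1\geq3$ forces $d\geq2$ but the interesting higher cases have $d\geq3$).

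Because the theorem only asserts that the number of summands is \emph{constant}, the corrected correspondence still yields the conclusion exactly as in the paper: $(d-1)|\widetilde{T}|=|H|$, so $|T|=|H|/(d-1)$ is independent of $T$. So the gap is repairable without changing your architecture, but as written the key claim that ``the bijection preserves the number of non-isomorphic indecomposable summands'' is false, and the condition $\Hom_H(P\lotimes_HU^{\lotimes_H i},P)=0$ is doing a different job than you ascribe to it: it guarantees the disjointness $\add F^i\widetilde{T}\cap\add F^j\widetilde{T}=0$ needed to divide by $d-1$, not that a single lift is silting. A minor further quibble: for hereditary $H$ silting complexes in $\Db(\mod H)$ need not be tilting, but this is harmless since the fact you actually use --- that every basic silting object has exactly $\rank K_0(H)$ indecomposable summands --- holds for silting objects in general.
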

\begin{proof}
	It is enough to consider the stable category. We denote by $F=\tau^{-1/(d-1)}[1]$ the autoequivalence of $\Db(\mod H)$ by which we take the orbit. In \cite{haI2} we shall prove that any $d$-cluster tilting object in $\Db(\mod H)/F$ lifts to an object $T\in\Db(\mod H)$ such that $\bigoplus_{i=0}^{d-2}F^iT$ is a silting object in $\Db(\mod H)$, and $\add F^iT\cap\add F^jT=0$ when $i\neq j$. If we denote the number of direct summands of $X$ by $|X|$, it follows that $|T|=|H|/(d-1)$, which is constant.
\end{proof}

\begin{Rem}\label{bo}
Let us point out that the above result is an evidence of the {\it non-commutative Bondal--Orlov conjecture} \cite{VdB04}; recall that it claims for a commutative Gorenstein normal domain $R$, the endomorphism rings $\End_R(M)$ and $\End_R(N)$ are derived equivalent whenever $M$ and $N$ are NCCRs (e.g.\! cluster tilting objects) for $R$. In particular $M$ and $N$ are expected to have the same number of summands, which we have proved for $R$ of strictly hereditary representation type.
\end{Rem}

\subsection{Variation of the definition}\label{else}
Let us discuss some possible variants for the definition of hereditary representation types. It is based on examples where one has hereditary abelian categories in place of ``$\mod H$ for a finite dimensional algebra $H$''. We do not write down the analogue of Definition \ref{hered}, but give some examples of Gorenstein rings of hereditary representation type in this general sense.

First we give an example from Geigle-Lenzing's weighted projective lines.
\begin{Ex}
Let $R=k[x,y]/(f)$ be a graded $1$-dimensional hypersurface singularity given by either
\[ f=\Prod_{i=1}^4(x-\al_iy),\quad \deg x=\deg y=1 \qquad \text{ or }\qquad f=\Prod_{i=1}^3(x-\al_iy^2),\quad \deg x=2, \deg y=1. \]
We know from \cite[Proposition 2.4]{BIY} that there is a triangle equivalence
\[ \sCM^\Z\!R\simeq\Db(\mod A), \]
for some finite dimensional algebra $A$, which can be taken to be a {canonical algebra} of type $(2,2,2,2)$. Thus, it is presented by the quiver and relations below for some $\la\in k\setminus\{0,1\}$.
\[ \xymatrix@R=0.5mm{
	&\circ\ar[ddr]^-{b_1}&\\
	&\circ\ar[dr]|-{b_2}& &&b_1a_1+b_2a_2+b_3a_3=0\\
	\circ\ar[uur]^{a_1}\ar[ur]|-{a_2}\ar[dr]|-{a_3}\ar[ddr]_-{a_4}&&\circ\\
	&\circ\ar[ur]|-{b_3}& &&b_1a_1+b_2a_2+\la b_4a_4=0\\
	&\circ\ar[uur]_-{b_4}& } \]
Since $A$ is derived equivalent to $\coh X$ for a weighted projective line $X$ of Geigle-Lenzing, we have a triangle equivalence
\[ \sCM^\Z\!R\simeq\Db(\coh X) \]
for the hereditary abelian category $\coh X$. Therefore $R$ is of ``graded hereditary representation type''.
\end{Ex}

The second example is an interpretation of a Gorenstein ring of countable representation type in this context.
\begin{Ex}
Consider the ring
\[ R=k[[x,y]]/(y^2), \]
known as a Cohen-Macaulay ring of countable representation type, see \cite[Chapter 14]{LW}. We give a grading on $R$ by $\deg x=0$ and $\deg y=1$. Then by Minamoto--Yamaura's theorem \cite{MY} we have a triangle equivalence
\[ \sCM^\Z\!R\simeq\Db(\mod k[[x]]). \]
Indeed, $R$ is homologically well-graded in the sense of \cite[Definition 4.4, Theorem 5.3]{MY}, hence the canonical functor $\Db(\mod k[[x]])\to\Db(\mod^\Z\!R)\twoheadrightarrow\sCM^\Z\!R$ (called {\it Happel's functor} in \cite{MY}) is an equivalence by \cite[Theorem 1.3]{MY}. Since $k[[x]]$ is hereditary, we can view $R$ is of ``graded hereditary representation type''. Since the objects of $\Db(\mod k[[x]])$ are easily classified, we deduce the classification of objects in $\sCM^\Z\!R$.
\end{Ex}

Let us also note the following application of Orlov's theorem. 
\begin{Ex}
Let $S=k[x,y,z]$ which we consider as a graded ring with $\deg x=\deg y=\deg z=1$, and let $f\in S$ be a homogeneous element of degree $3$ such that $R=S/(f)$ is an isolated singularity. Then the corresponding projective variety $E=\Proj R$ is a (smooth) elliptic curve. Since the graded ring $R$ has $a$-invariant $0$, Orlov's theorem \cite{Or09a} gives a triangle equivalence
\[ \sCM^\Z\!R \simeq \Db(\coh E). \]
Since $\coh E$ is a hereditary abelian category, we can say that $R$ is of ``graded hereditary representation type''.
\end{Ex}

\section{Applications of the Morita-type theorem}\label{MORITA}
We give (non-trivial) examples of commutative Gorenstein rings of strictly hereditary representation type.
One of the strongest results to give equivalences of categories is Morita-type theorem. We restate Theorem \ref{jus} for small dimensions, with some additional claims on the endomorphism rings.
\begin{Thm}\label{morita}
Let $\T$ be an algebraic $d$-CY triangulated category with a $d$-cluster tilting object $T$.
\begin{enumerate}
	\item\cite{KRac} Suppose $d=2$ and $H=\End_\T(T)$ is hereditary. Then there exists a triangle equivalence
	\[ \T\simeq\rC_2(H). \]
	\item\cite[Theorem 1.2, 1.3(2)]{ha4} Suppose that $d=3$ and $\End_\T(T)$ is hereditary. Then $H=\End_\T(T\oplus T[-1])$ is hereditary and there exists a triangle equivalence
	\[ \T\simeq\rC_3^{(1/2)}(H) \]
	provided $H$ is $1$-representation infinite.
	\item\cite[Theorem 1.2, 1.3(3)]{ha4} Suppose that $d=4$ and $\End_\T(T)$ is semisimple. Then $H=\End_\T(T\oplus T[-1]\oplus T[-2])$ is hereditary, and there exists a triangle equivalence
	\[ \T\simeq\rC_4^{(1/3)}(H) \]
	provided $H$ is $1$-representation infinite.
\end{enumerate}
\end{Thm}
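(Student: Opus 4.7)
The plan is to apply a Morita-type recognition principle for algebraic Calabi--Yau triangulated categories: I would construct a comparison functor from the candidate model $\rC_d^{(1/(d-1))}(H)$ to $\T$ sending the natural cluster tilting object to $T$, and verify it is a triangle equivalence. Throughout, replace $\T$ by an algebraic dg enhancement $\T_\dg$ and compute with dg endomorphism algebras.

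First I would identify the right generator and its dg endomorphism algebra. On the $\T$-side, take $X=\bigoplus_{i=0}^{d-2}T[-i]$ and set $A=\REnd_{\T_\dg}(X)$. The $d$-cluster tilting property of $T$ controls $\Ext^i_\T(T,T)$ for $0<i<d$, while the $d$-Calabi--Yau duality identifies $\Ext^i_\T(T,T)$ with $D\Ext^{d-i}_\T(T,T)$. Combined with the hypothesis that $\End_\T(X)$ is hereditary (respectively semisimple in case (3)), these constraints force the cohomology $H^*(A)$ to be concentrated in a narrow range quasi-isomorphic to $H$ sitting in degree $0$. The subtle point is to show formality of $A$: in the 2-CY case (1) this is Keller--Reiten's theorem, and in higher $d$ the hypothesis that $H$ is $1$-representation infinite is exactly what prevents obstruction classes from appearing.

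Second, the inclusion $A\hookrightarrow\T_\dg$ induces a triangle functor $\per A\simeq\Db(\mod H)\to\T$. By construction it intertwines the Serre functor with a natural $(d-1)$-st root, so it factors through the orbit category $\Db(\mod H)/F$ with $F=\tau^{-1/(d-1)}[1]$. The $1$-representation infinite hypothesis ensures conditions (i) and (ii) of Keller's Theorem \ref{orbit} are met, so the orbit category is triangulated and the induced functor $\rC_d^{(1/(d-1))}(H)\to\T$ is a triangle functor. Full faithfulness follows from matching Hom-spaces via the orbit-sum formula
\[ \Hom_{\Db(\mod H)/F}(H,H)=\bigoplus_{n\in\Z}\Hom_{\Db(\mod H)}(H,F^nH) \]
against CY duality on the $\T$-side, and essential surjectivity is immediate since $T$ generates $\T$ as a triangulated category.

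The main obstacle is the formality step identifying $A$ with $H$, together with the precise bookkeeping of which bimodule $U$ governs the orbit. For $d=2$ the classical Keller--Reiten argument suffices, but for $d=3,4$ one must use the larger generator $X$ and invoke the $1$-representation infinite hypothesis to kill higher obstruction terms, and to guarantee that the natural bimodule complex $U$ satisfies $U^{\lotimes_H(d-1)}\simeq\RHom_H(DH,H)[1]$ as required in Definition \ref{strict}. Once formality and Hom-matching are in place, essential surjectivity and full faithfulness close the argument.
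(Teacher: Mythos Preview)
The paper does not prove this theorem; it is quoted from \cite{KRac} (for $d=2$) and \cite{ha4} (for $d=3,4$). Evaluating your proposal on its own terms, there is a genuine gap in the formality step.

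You assert that $A=\REnd_{\T_\dg}(X)$ has cohomology concentrated in degree~$0$ and is quasi-isomorphic to $H$. This is false: by $d$-Calabi--Yau duality one has $\Hom_\T(T,T[d])\simeq D\End_\T(T)\neq0$, so already $H^d(A)\neq0$; in the model $\T=\rC_d^{(1/(d-1))}(H)$ the graded ring $\bigoplus_i\Hom_\T(X,X[i])$ is nonzero in infinitely many degrees. The error propagates to your second paragraph: the functor $\per A\to\T$ induced by $X\in\T_\dg$ is \emph{fully faithful} by the very definition of $A$, so if $\per A$ were $\Db(\mod H)$ there would be nothing to quotient by, and the phrase ``factors through the orbit category'' has no content---a fully faithful functor does not factor nontrivially through a proper quotient of its source. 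Your full-faithfulness check via the orbit-sum formula is likewise circular, since it presupposes the very identification you are trying to establish.

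What the cited proofs do is identify $A$ (or a closely related dg algebra) not with $H$ but with a dg orbit algebra whose degree-zero part is $H$ and whose positive pieces encode the autoequivalence $F=\tau^{-1/(d-1)}[1]$; equivalently, one extracts from $\T$ the $H$-bimodule governing $F$, verifies the relation $U^{\lotimes_H(d-1)}\simeq\RHom_H(DH,H)[1]$, and then invokes a recognition theorem identifying $\per A$ \emph{directly} with the triangulated orbit category. The hereditary and $1$-representation-infinite hypotheses enter in controlling this bimodule and in the formality of $A$ relative to this larger graded target, not in collapsing $H^\ast(A)$ to degree zero.
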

Moreover, for the cases $d=3, 4$, the following observation tells how to compute the larger endomorphism algebra $H=\End_\T(T\oplus\cdots\oplus T[-(d-2)])$ from the smaller one $\End_\T(T)$ in terms of higher almost split sequences.
\begin{Thm}[{\cite[Proposition 2.13, 2.22]{ha4}}]\label{end}
Let $\T$ be a $d$-CY triangulated category with a $d$-cluster tilting object $T$.
\begin{enumerate}
\item Suppose that $d=3$ and $\End_\T(T)=kQ$ for a finite acyclic quiver $Q$. Consider the $3$-almost split sequence
\[ \xymatrix@!C=3mm@!R=2mm{
	&\disoplus_{i\to j}T_j\ar[dr]\ar[rr]&&A_i\ar[dr]\ar[rr]&&\disoplus_{j\to i}T_j\ar[dr]&\\
	T_i\ar[ur]&&\bullet\ar[ur]&&\bullet\ar[ur]&&T_i } \]
at each summand $T_i$ of $T$.
Then $\End_\T(T\oplus T[-1])=k\widetilde{Q}$ for the quiver $\widetilde{Q}$ constructed as follows:
\begin{itemize}
	\item Take two copies $Q^{(1)}$ and $Q^{(2)}$ of $Q$.
	\item Add $n_{ij}$ arrows from $i\in Q^{(1)}_0$ to $j\in Q^{(2)}_0$, where $n_{ij}$ is the number of direct summands $T_j$ in $A_i$.
\end{itemize}
\item Suppose that $d=4$ and $\End_\T(T)=\prod_{i=1}^nk$, the direct product $n$ copies of $k$. Consider the $4$-almost split sequence
\[ \xymatrix@R=2mm@!C=5mm{
	&0\ar[dr]\ar[rr]&&A_i\ar[dr]\ar[rr]&&B_i\ar[dr]\ar[rr]&&0\ar[dr]& \\
	T_i\ar[ur]&&T_i[1]\ar[ur]&&\bullet\ar[ur]&&T_i[-1]\ar[ur]&&T_i } \]
at each summand $T_i$ of $T$.
Then $\End_\T(T\oplus T[-1]\oplus T[-2])=k\widetilde{Q}$ for the quiver $\widetilde{Q}$ constructed as follows: 
\begin{itemize}
	\item It has $3n$ vertices $\{(i,a)\mid 1\leq i\leq n,\, a=1,2,3\}$.
	\item Add $m_{ij}$ arrows from $(i,1)$ to $(j,2)$, from $(i,2)$ to $(j,3)$, and from $(j,1)$ to $(i,3)$, where $m_{ij}$ is the number of direct summands $T_i$ in $B_j$.
\end{itemize}
\end{enumerate}
\end{Thm}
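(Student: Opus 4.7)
The approach is to compute $\End_\T(X)$ for $X=\bigoplus_{i=0}^{d-2}T[-i]$ blockwise, decomposing it as $\bigoplus_{a,b}\Hom_\T(T[-a],T[-b])$ and identifying each block with the corresponding piece of $k\widetilde{Q}$. Diagonal blocks $\Hom_\T(T[-a],T[-a])=\End_\T(T)$ give the copies $Q^{(i)}$ in part (1) or just the $3n$ vertices in part (2). By the $d$-cluster tilting property, $\Hom_\T(T[-a],T[-b])=\Ext^{a-b}_\T(T,T)=0$ whenever $0<a-b<d$, accounting for the absence of arrows in those directions. The remaining non-zero off-diagonal blocks, namely $\Hom_\T(T,T[-k])$ for $1\leq k\leq d-2$ (and their $[4]$-shifted Serre duals in part (2)), are precisely what the $d$-almost split sequences allow us to compute.

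For part (1), I would read the $3$-almost split sequence at $T_i$ as a Postnikov tower of three distinguished triangles in $\T$, whose intermediate cones are the bullets in the diagram. Applying $\Hom_\T(T_j,-)$ and using both the cluster-tilting vanishing $\Hom_\T(T_j,T_i[k])=0$ for $0<k<3$ and the almost-split minimality property, one traces through the resulting long exact sequences to obtain $\dim\Hom_\T(T_i,T_j[-1])=n_{ij}$, matching the number of arrows from $i\in Q^{(1)}$ to $j\in Q^{(2)}$. For part (2), the analogous computation uses the two non-trivial triangles $T_i[1]\to A_i\to E_i\to T_i[2]$ and $E_i\to B_i\to T_i[-1]\to E_i[1]$ extracted from the $4$-almost split sequence at $T_i$. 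Chasing $\Hom_\T(T_j,-)$ through these triangles, together with the cluster-tilting vanishing and (for the $\Hom_\T(T_j,T_i[-2])$-part) the $4$-Calabi--Yau Serre duality, yields the desired dimension counts $\dim\Hom_\T(T_i,T_j[-1])=m_{ij}$ and $\dim\Hom_\T(T_j,T_i[-2])=m_{ij}$.

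The main obstacle is justifying that the generators obtained via the $d$-almost split sequences are genuinely minimal generators of the off-diagonal blocks, so that the resulting algebra is the path algebra $k\widetilde{Q}$ with no accidental extra arrows or relations. This relies on the higher Auslander--Reiten theory: $\Hom_\T(T,-)$ applied to the triangles of the $d$-almost split sequence induces exact sequences in $\mod\End_\T(T)$ detecting minimal approximations, and the irreducibility of the morphisms in the sequence ensures the corresponding maps lie in the radical of $\End_\T(X)$. In part (1) the bookkeeping is more intricate because $kQ$ is not semisimple, so one must carefully track the $(kQ,kQ)$-bimodule structure on $\Hom_\T(T,T[-1])$; in part (2) the semisimplicity makes the individual dimension counts routine, but computing $\Hom_\T(T_j,T_i[-2])$ remains delicate since it requires invoking both triangles of the Postnikov tower together with Serre duality.
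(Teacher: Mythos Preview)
The paper does not contain a proof of this statement: it is quoted verbatim as \cite[Proposition 2.13, 2.22]{ha4} and only used as input for Theorems \ref{3CY} and \ref{4CY}. So there is nothing in the paper itself to compare your argument against.

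That said, your outline is the natural strategy and matches how such results are established in the reference. A few remarks. Your block decomposition and the vanishing $\Hom_\T(T[-a],T[-b])=0$ for $0<a-b<d$ are exactly right; in part (1) this forces $\End_\T(T\oplus T[-1])$ to be the triangular matrix ring $\begin{pmatrix}kQ & M\\ 0 & kQ\end{pmatrix}$ with $M=\Hom_\T(T,T[-1])$. The genuine content, as you correctly flag, is not the dimension count $\dim e_j M e_i=n_{ij}$ but the identification of $M$ as a $(kQ,kQ)$-bimodule with the bimodule freely generated by the new arrows; this is what makes the triangular ring a path algebra rather than a quotient of one. The mechanism for this is that applying $\Hom_\T(T,-)$ to the $d$-almost split tower yields the minimal projective resolution of the simple $\End_\T(T)$-module at vertex $i$, so the middle term $A_i$ encodes $\Ext^1$ data over $kQ$, and minimality of the approximations pins down the bimodule structure. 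In part (2) the semisimplicity of $\End_\T(T)$ collapses the bimodule issue to the vertexwise dimension count, and the symmetry $m_{ij}=m_{ji}$ (noted in the Remark following the theorem) comes out of the Serre duality step you mention. Your sketch is sound; just be aware that in part (1) the ``bookkeeping'' is the entire proof.
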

\begin{Rem}
In (1) above we have $n_{ij}=n_{ji}$ (\cite[Proposition 2.13]{ha4}). Similarly in (2), we have that $m_{ij}$ is equal to the number of summands $T_i$ in $B_j$ (\cite[Proposition 2.22]{ha4}).
\end{Rem}

We give examples of commutative Gorenstein rings to which the above theorem can be applied. We present one example for each dimension $3$, $4$ and $5$.
\subsection{Dimension $3$}
We give a $3$-dimensional commutative Gorenstein ring whose stable category of Cohen-Macaulay modules have a $2$-cluster tilting object with hereditary endomorphism ring, so that Theorem \ref{morita}(1) can be applied. Let
\[ R=k[[x,y]]^{(2)}\seg k[[u,v]]^{(2)} \]
be the (complete) Segre product of second Veronese subrings, where we give a complete $\Z$-grading on $k[[x,y]]^{(2)}$ whose degree $n$ part consists of polynomials in $x$, $y$ of degree $2n$, and the same for $k[[u,v]]$. Our result is that this ring is of hereditary representation type.
\begin{Thm}
	Let $R=k[[x,y]]^{(2)}\seg k[[u,v]]^{(2)}$ be the Segre product of Veronese subrings of polynomial rings. Then we have a triangle equivalence
	\[ \sCM R\simeq\rC_2(kQ) \]
	for $Q=\xymatrix{\circ\ar@2[r]&\circ&\circ\ar@2[l]}$.
\end{Thm}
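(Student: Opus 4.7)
The plan is to exhibit a $2$-cluster tilting object $T\in\sCM R$ whose stable endomorphism algebra is isomorphic to $kQ$, and then invoke Theorem \ref{morita}(1). For setup, each factor $R^\prime=k[[x,y]]^{(2)}$ is a $2$-dimensional Gorenstein $A_1$ singularity with $a$-invariant $-1$, and similarly for $R^\pprime$. By Corollary \ref{GP}(2), $R$ is therefore $3$-dimensional Gorenstein with $a$-invariant $-1$; since $\Proj R\cong\mathbb{P}^1\times\mathbb{P}^1$ is smooth, $R$ has an isolated singularity, so that $\sCM R$ is $2$-Calabi--Yau. Each factor carries the classical CT module $M^\prime=R^\prime\oplus L^\prime$, where $L^\prime$ is the unique non-free indecomposable rank-$1$ Cohen--Macaulay module (the odd graded part of $k[[x,y]]$ as an $R^\prime$-module), and $\End_{R^\prime}(M^\prime)$ is positively graded in the Veronese grading. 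Since both $a$-invariants equal $-1$, Theorem \ref{CT}(3) applies with $p=1$ and produces the CT module
\[ M=M^\prime\seg M^\pprime=R\oplus T_1\oplus T_2\oplus T_3,\qquad T_1=R^\prime\seg L^\pprime,\ T_2=L^\prime\seg R^\pprime,\ T_3=L^\prime\seg L^\pprime, \]
whose image $T=T_1\oplus T_2\oplus T_3$ in $\sCM R$ is the sought $2$-cluster tilting object with three indecomposable summands.

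To identify $\sEnd_R(T)$ I use Proposition \ref{five} to decompose each $\Hom_R(T_i,T_j)$ as a graded Segre product of Hom spaces on the two factors. Using the basic data $\End(R^\prime)=\End(L^\prime)=R^\prime$, $\Hom(R^\prime,L^\prime)=L^\prime$ with $L^\prime_0=\langle x,y\rangle$, and $\Hom(L^\prime,R^\prime)\cong L^\prime(-1)$ (vanishing in degree $0$), one checks that the degree-$0$ Hom spaces among $\{T_1,T_2,T_3\}$ consist of precisely two arrows $T_1\to T_3$ (multiplication by $x,y$), two arrows $T_2\to T_3$ (multiplication by $u,v$), and nothing else; this reproduces exactly the quiver $Q$. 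The crucial remaining step is to show that every positive-degree morphism $T_i\to T_j$ factors through the projective summand $R$ and hence vanishes in $\sCM R$. This reduces, via Proposition \ref{five}, to the surjectivities
\[ L^\prime_p\otimes L^\prime_q\twoheadrightarrow R^\prime_{p+q+1},\qquad L^\prime_p\otimes R^\prime_q\twoheadrightarrow L^\prime_{p+q} \]
and their $L^\pprime$-analogues, which are immediate from $S_a\cdot S_b=S_{a+b}$ in the polynomial ring $k[[x,y]]$. Combining these shows that the composition $\Hom_R(T_i,R)_a\cdot\Hom_R(R,T_j)_b$ surjects onto $\Hom_R(T_i,T_j)_n$ whenever $a+b=n\geq 1$, so that $\sEnd_R(T)\cong kQ$, a hereditary algebra.

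Applying Theorem \ref{morita}(1) to the algebraic $2$-Calabi--Yau triangulated category $\sCM R$ with its $2$-cluster tilting object $T$ and hereditary stable endomorphism algebra $kQ$ yields the desired triangle equivalence $\sCM R\simeq\rC_2(kQ)$. The main obstacle is the positive-degree factoring step just described: all higher-degree arrows in $\End_R(M)$ between non-projective summands have to be shown to lie in the ideal of maps through $R$, which is a careful but elementary verification using the surjectivity of multiplication in the polynomial ring; the remaining ingredients (Theorem \ref{CT}, Proposition \ref{five}, and Theorem \ref{morita}(1)) are assembled from earlier results in the paper.
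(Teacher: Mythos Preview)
Your proposal is correct and follows essentially the same approach as the paper: build the CT module on $R$ from the CT modules on the two $A_1$ factors via Theorem \ref{CT} (with $p=1$), compute that its stable endomorphism algebra is $kQ$, and apply Theorem \ref{morita}(1). The only difference is cosmetic: the paper obtains the quiver-with-relations of the full endomorphism ring $\End_R(M)$ in one stroke as the $1$-Segre product of two copies of the McKay quiver of $A_1$ (so the factoring of higher-degree maps through $R$ is visible from the quiver presentation), whereas you compute the graded Hom spaces directly via Proposition \ref{five} and verify the factoring through $R$ by hand using surjectivity of multiplication in $k[[x,y]]$ and $k[[u,v]]$.
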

\begin{proof}
	It is classical that the $2$-dimensional $A_1$-singularity $T=k[[x,y]]^{(2)}$ is of finite representation type with an additive generator (in other words, $1$-cluster tilting object) $k[[x,y]]=T\oplus N$, where $N$ is consists of power series of odd degrees. Its endomorphism ring is presented by the quiver below with commutativity relations.
	\[ \xymatrix{ T\ar@2@/^10pt/[r]^-x_-y&N\ar@2@/^10pt/[l]_-x^-y} \]
	We give a complete grading on $N$ by letting its degree $n$ part to be the polynomials of degree $2n+1$, which induces a positive grading on the endomorphism ring $\End_T(T\oplus N)$. In terms of the above quiver, it is given by the rightgoing arrows to have degree $0$, and the leftgoing arrows to have degree $1$. We give the same grading for the corresponding objects for $k[[u,v]]$.
	
	Then by Theorem \ref{CT} the Segre product $R=k[[x,y]]^{(2)}\seg k[[u,v]]^{(2)}$ has a $2$-cluster tilting object whose endomorphism ring is the ($1$-)Segre product of two copies of $\End_T(T\oplus N)$. It is presented by the quiver below with commutativity relations (where the diagonal arrows represent $xu, yu, xv, yv$).
	\[ \xymatrix{
		\circ\ar@2[r]^-x_-y\ar@2[d]_-u^-v&\circ\ar@2[d]_-u^-v\\
		\circ\ar@2[r]^-x_-y&\circ\ar@<0.8ex>[ul]\ar@<0.4ex>[ul]\ar@<0.0ex>[ul]\ar@<-0.4ex>[ul] } \]
	It follows that the stable endomorphism ring is the path algebra of the quiver $\xymatrix{\circ\ar@2[r]^-x_-y&\circ&\circ\ar@2[l]_-u^-v}$ as described above. Then Theorem \ref{morita}(1) applies and gives the result.
\end{proof}

\subsection{Dimension $4$}
Let $S=k[[x,y,z,u,v]]$ be the power series ring in $5$ variables, which we view as a complete $\Z^2$-grading by $\deg x=\deg y=\deg z=(1,0)$, $\deg u=(0,1)$, and $\deg v=(0,2)$. The singularity we are interested in is the Veronese subring
\[ R=S^{(1,1)}. \]
Thus, $R$ is isomorphic to the Segre product of $S^\prime=k[[x,y,z]]$ and $S^\pprime=k[[u,v]]$ with $\deg x=\deg y=\deg z=1$ and $\deg u=1$, $\deg v=2$. This is a $4$-dimensional Gorenstein isolated singularity. For each $i\in\Z$ we define the (completely graded) $R$-module $M_i$ by
\[ M_i=\Prod_{j\in\Z}S_{(i,0)+(j,j)}. \]
We can also write it as a Segre product $M_i=S^\prime(i)\seg S^\pprime$. It is well-known (e.g.\! use Lemma \ref{cov}) that each $M_i$ is reflexive, but only a small part is Cohen-Macaulay. In fact, in this case the Cohen-Macaulay modules among the $M_i$'s are $M_0=R$, $M_{\pm1}$, and $M_{\pm2}$.
\begin{Thm}\label{3CY}
Let $R=S^{(1,1)}$ and $M_i\in\mod R$ be as above.
\begin{enumerate}
	\item The $R$-module $T:=M_{-1}\oplus R\oplus M_1\in\CM R$ is $3$-cluster tilting.
	\item The stable endomorphism ring $\sEnd_R(T)$ is isomorphic to the path algebra of type $A_2$.
	\item We have that $\sEnd_R(T\oplus T[-1])$ is isomorphic to the path algebra of the following quiver $Q$.
	\[ \xymatrix{
		\circ\ar[r]\ar@3[dr]&\circ\ar@3[dl]\\
		\circ\ar[r]&\circ} \]
\end{enumerate}
Therefore $R$ is of strictly hereditary representation type, and there exists a triangle equivalence $\sCM R\simeq \rC_3^{(1/2)}(kQ)$.
\end{Thm}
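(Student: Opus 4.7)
The proof proceeds in three steps, invoking Theorems \ref{CT}, \ref{end}(1), and \ref{morita}(2) in turn. For (1), note that $R$ is the completed Segre product of the regular rings $S' = k[[x,y,z]]$ and $S'' = k[[u,v]]$, both having $a$-invariant $-3$ under the prescribed gradings, and each trivially admits itself as a CT module. To produce the asymmetric module $T = M_{-1} \oplus R \oplus M_1$, I would apply Theorem \ref{CT}(3) with the graded-shifted CT module $S'(-1)$ on the first factor---whose endomorphism ring is still the positively graded $S'$---and $S''$ on the second factor. This gives $\bigoplus_{l=0}^{2} S'(-1)(l) \seg S'' = M_{-1} \oplus M_0 \oplus M_1 = T$ as a CT module; since $\dim R = 4$, $T$ is a $3$-cluster tilting object in $\CM R$.

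For (2), I would use Proposition \ref{five} to identify $\Hom_R(M_i, M_j) \cong S'(j-i) \seg S''$, with the subspace of morphisms factoring through $\add R$ given by the image of the multiplication map $M_{-i} \otimes_R M_j \to M_{j-i}$ in $S$. A degree-by-degree inspection shows: (i) $M_1 \cdot M_1 = M_2$ because $S'$ is generated in degree one, hence $\sHom_R(M_{-1}, M_1) = 0$; (ii) $M_{-1} \cdot M_{-1}$ fails to surject onto $M_{-2}$ only in degree two, where $(\m_{S''}^2)_2 = ku^2$ has codimension one in $S''_2 = k u^2 \oplus k v$, so $\sHom_R(M_1, M_{-1}) = k$ spanned by the class of $v$; and (iii) $M_1 \cdot M_{-1} = \m_R$, so $\sHom_R(M_{\pm 1}, M_{\pm 1}) = k$. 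Together these identify $\sEnd_R(T) \cong kA_2$ with the single arrow $M_1 \xrightarrow{v} M_{-1}$.

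For (3) and the final equivalence, Theorem \ref{end}(1) reduces the quiver computation to determining the middle terms $A_{M_{\pm 1}}$ of the $3$-almost split sequences and reading off the multiplicities $n_{ij}$ of $T_j$ inside them. I would build these sequences by extracting the middle homologies of the minimal projective resolutions of the simple $\End_R(T)$-modules; via Proposition \ref{resolution}, such resolutions are obtained by tensoring the Koszul resolutions of $k$ over $S'$ (length three on $x,y,z$) and over $S''$ (length two on $u, v$) and truncating along the appropriate $t$-structures with parameter $p = 3$. This yields $A_{M_1} = M_{-1}^{\oplus 3}$ and $A_{M_{-1}} = M_1^{\oplus 3}$ with no $M_{\pm 1}$ self-summands; that is, $n_{1,2} = n_{2,1} = 3$ and $n_{1,1} = n_{2,2} = 0$, reproducing the displayed quiver $\widetilde Q$. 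Since $\widetilde Q$ is connected with four vertices and eight arrows, $k\widetilde Q$ is representation infinite, hence $1$-representation infinite, and Theorem \ref{morita}(2) yields $\sCM R \simeq \rC_3^{(1/2)}(k\widetilde Q)$, exhibiting $R$ as of strictly hereditary representation type.

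The main obstacle is this last step: confirming $A_{M_{\pm 1}} = M_{\mp 1}^{\oplus 3}$ with no self-summands requires tracking the assembled Koszul resolution of Proposition \ref{resolution} carefully. The asymmetry between the three-variable first factor $S'$ and the two-variable second factor $S''$ with $\deg v = 2$ is precisely what produces the two triple arrows with no loops in $\widetilde Q$.
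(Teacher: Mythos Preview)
Your proposal is correct and follows essentially the same route as the paper: Theorem \ref{CT} for the cluster tilting object, a direct description of the endomorphism quiver for (2), and Proposition \ref{resolution} applied to the two Koszul complexes to compute the $3$-almost split sequences (as in Proposition \ref{3AR}), feeding into Theorem \ref{end}(1) and then Theorem \ref{morita}(2). The only cosmetic difference is that the paper first obtains $R\oplus M_1\oplus M_2$ from Theorem \ref{CT} and then shifts to $T$, whereas you shift the input $S'$ beforehand; and the paper records the full $3$-almost split sequences explicitly (finding indeed $A_{M_1}=M_{-1}^3$ and $A_{M_{-1}}=M_1^3$) rather than just their middle terms.
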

\begin{proof}
	By Theorem \ref{CT} we know that $R\oplus M_1\oplus M_2\in\CM R$ is $3$-cluster tilting, hence so is $T=M_{-1}\oplus R\oplus M_{1}$. Its endomorphism ring is presented by the following quiver with commutativity relations.
	\[ \xymatrix{ M_{-1}\ar@3[r]&R\ar@3[r]\ar@/_10pt/[l]_-u&M_1\ar@/_10pt/[l]_-u\ar@/^20pt/[ll]_-v}, \]
	where the triple arrows represent $x$, $y$, and $z$. It follows that the stable endomorphism ring $\sEnd_R(T)$ is
	\[ \xymatrix{ M_{-1}&& M_1\ar@/^20pt/[ll]_-v }, \]
	just the path algebra of type $A_2$.
	In Proposition \ref{3AR} below we will compute the higher almost split sequences in $\add T$. Together with Theorem \ref{end}(1), we deduce that $\sEnd_R(T\oplus T[-1])$ is as described, so that Theorem \ref{morita}(2) gives the desired equivalence.
\end{proof}

\begin{Prop}\label{3AR}
The $3$-almost split sequences in $\add T\in\CM R$ are as follows.
\[ \xymatrix{
	0\ar[r]&R\ar[r]&M_{1}^3\oplus M_{-1}\ar[r]&R^6\ar[r]&M_1\oplus M_{-1}^3\ar[r]&R\\
	0\ar[r]&M_1\ar[r]& R\oplus M_{-1}\ar[r]&M_{-1}^3\ar[r]&R^3\ar[r]&M_1\ar[r]&0\\
	0\ar[r]&M_{-1}\ar[r]&R^3\ar[r]&M_1^3\ar[r]&R\oplus M_1\ar[r]&M_{-1}\ar[r]&0 } \]
\end{Prop}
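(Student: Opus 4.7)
The plan is to construct each of the three sequences explicitly using Koszul-type matrices in the ambient polynomial ring $S = k[[x,y,z,u,v]]$, into which $R \subset S$ and each $M_i$ embed as graded $R$-submodules. The first and last maps are read off from the quiver of $\End_R(T)$ recorded in the proof of Theorem \ref{3CY}: the first map $X \to E_3$ corresponds to the arrows out of $X$ (realized by multiplication by the generating variables), and the last map $E_1 \to X$ corresponds to the arrows into $X$. This already matches the outer terms and maps in the statement; for example at $X = R$ the outer maps are $(x,y,z,u)^T$ and $(u,x,y,z)$.

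For the middle term and middle map, the key observation is that the last map at each $X$ is multiplication by a regular sequence in $S$: $(u,x,y,z)$ at $X = R$, and $(x,y,z)$ at $X = M_{\pm 1}$ (with entries of the domain constrained to appropriate summands by bidegree). The middle term then parametrizes Koszul syzygies of this regular sequence, twisted by $v$ or similar generators so that the entries land in the correct $M_i$. Specifically, at $X = R$ the $R^6$ middle term captures three Koszul-with-$u$ relations of the form $(xr, -ur, 0, 0)$ etc., parametrized by $r \in R$, together with three Koszul-among-$(x,y,z)$ relations twisted by $v$ of the form $(0, yvr, -xvr, 0)$ etc., also parametrized by $R$. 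The naive Koszul-among-$(x,y,z)$ relations are a priori parametrized by $M_{-2}$, which has two $R$-generators $u^2$ and $v$; the $u^2$-twisted ones turn out to be $R$-linear combinations of the Koszul-with-$u$ relations by commutativity in $S$, so only the $v$-twisted ones contribute new minimal generators, yielding the count $3 + 3 = 6$. The sequences at $X = M_{\pm 1}$ are analogous, with the middle $M_{\mp 1}^3$ parametrizing the $\binom{3}{2} = 3$ Koszul syzygies of $(x,y,z)$ whose coefficients now live naturally in $M_{\mp 1}$.

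Exactness of each 5-term sequence is then verified termwise by direct calculation in $S$: compositions of consecutive maps vanish by Koszul identities, and kernels match images via monomial analysis of the bidegrees, using that each module is a reflexive $R$-module explicitly embedded in $S$. The $3$-almost split property follows from combining exactness with the fact that the outer maps are the minimal left and right $\add(T/X)$-approximations: every morphism in $\Hom_R(T_j, X)$ for $T_j \in \add(T/X)$ factors through the outer map, which is verified via the explicit generators of the relevant $\Hom$-groups computed via Proposition \ref{five}, e.g.\ $\Hom_R(M_1, M_{-1}) = M_{-2}$ is generated over $R$ by $u^2$ and $v$, both of which factor through $(u, v) \colon R \oplus M_1 \to M_{-1}$ at $X = M_{-1}$. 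The main obstacle I anticipate is the careful bookkeeping at $X = R$ to identify which of the $3 + 3 \cdot 2 = 9$ candidate Koszul syzygies are minimal, reconciling with the claimed $R^6$ middle term.
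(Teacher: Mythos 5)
Your proposal reaches the correct sequences by a genuinely different route from the paper. The paper does not compute syzygies by hand: it invokes the machinery of Section \ref{daiji}, writing the Koszul complexes of $k[[x,y,z]]$ and $k[[u,v]]$ as mapping cones $X'\to Y'$ and $X''\to Y''$ truncated according to the prescribed degree windows, tensoring, and taking the diagonal-degree part; Proposition \ref{resolution} then guarantees in one stroke that the resulting complex is the projective resolution of the corresponding simple module over the Auslander algebra $\End_R(T)$, which is precisely the $3$-almost split (or fundamental) sequence. Your direct approach --- reading the outer maps off the quiver of $\End_R(T)$ and assembling the middle terms from Koszul relations --- produces the same complexes, and your bookkeeping is sound where you carry it out: the $u^2$-twisted Koszul syzygies among $(x,y,z)$ at $X=R$ are indeed $R$-linear combinations of the Koszul-with-$u$ ones (for instance $yu\cdot(x,-u,0,0)-xu\cdot(y,0,-u,0)=(0,-yu^2,xu^2,0)$), so only the three $v$-twisted ones contribute new generators, yielding $R^6$; and your factorization of $u^2,v\in\Hom_R(M_1,M_{-1})=M_{-2}$ through the sink map $(u,v)\colon R\oplus M_1\to M_{-1}$ is correct.

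Two caveats. First, the step you describe as ``kernels match images via monomial analysis'' is where essentially all the content sits: you must show that the listed Koszul syzygies \emph{generate} each successive kernel, not merely lie in it, and this is exactly what Proposition \ref{resolution} delivers for free via the octahedron argument. Your route trades that lemma for a nontrivial (though finite and feasible) multigraded computation, so as written this is a plan rather than a proof of exactness. Second, your verification of the almost-split property addresses only the outer maps; this does suffice, but you should say why: since $T$ is $3$-cluster tilting one has $\Ext^1_R(T,T)=\Ext^2_R(T,T)=0$, so once the sequence is exact with all terms in $\add T$, applying $\Hom_R(T,-)$ automatically preserves exactness at the interior terms, and exactness together with the sink-map property and minimality (the maps being radical) then identifies the sequence with the minimal projective resolution of the simple $\End_R(T)$-module, i.e.\ with the $3$-almost split sequence.
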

\begin{proof}
	We follow the recipe of Section \ref{daiji}. Let $S^\prime=k[[x,y,z]]$ and $S^\pprime=k[[u,v]]$, which we regard as a completely graded ring with $\deg x=\deg y=\deg z=1$ and $\deg u=1, \deg v=2$. Let $S=k[[x,y,z,u,v]]$ be the completion of $S^\prime\otimes S^\pprime$, which we view as a completely $\Z^2$-graded ring. First form the Koszul complexes
	\[ \xymatrix@R=1mm{
		0\ar[r]&S^\prime(-3)\ar[r]&S^\prime(-2)^3\ar[r]&S^\prime(-1)^3\ar[r]&S^\prime\ar[r]&0\\
		&0\ar[r]&S^\pprime(-3)\ar[r]&S^\pprime(-2)\oplus S^\pprime(-1)\ar[r]&S^\pprime\ar[r]&0 } \]
	for $S^\prime$ and $S^\pprime$.
	
	Let us now compute the $3$-fundamental sequence at $R$. We view the Koszul complexes as the mapping cones of the morphisms of complexes below. 
	\[
	\xymatrix{
		X^\prime\ar[d]\ar@{}[r]|<\colon&S^\prime(-3)\ar[r]&S^\prime(-2)^3\ar[d]&\\
		Y^\prime\ar@{}[r]|<\colon&&S^\prime(-1)^3\ar[r]&S^\prime,}
	\qquad
	\xymatrix{
		X^\pprime\ar[d]\ar@{}[r]|<\colon&S^\pprime(-3)\ar[r]\ar[d]&S^\pprime(-2)\ar[d]\\
		Y^\pprime\ar@{}[r]|<\colon&S^\pprime(-1)\ar[r]&S^\pprime}
	\]
	Then (the completion of) their tensor product $X^\prime\otimes X^\pprime \to Y^\prime\otimes Y^\pprime$ is
	\[ \xymatrix{
		X^\prime\otimes X^\pprime\ar[d]\ar@{}[r]|<\colon&S(-3,-3)\ar[r]&S(-2,-3)^3\oplus S(-3,-2)\ar[d]\ar[r]&S(-2,-2)^3\ar[d]\\
		Y^\prime\otimes Y^\pprime\ar@{}[r]|<\colon&&S(-1,-1)^3\ar[r]&S(0,-1)\oplus S(-1,0)^3\ar[r]&S} \]
	for $S=k[[x,y,z,u,v]]$ regarded as a completely $\Z^2$-graded ring with $\deg x=\deg y=\deg z=(1,0)$ and $\deg u=(0,1), \deg v=(0,2)$. Taking the degree $\{(j,j)\mid j\in\Z\}$-part of the mapping cone yields a complex
	\[ \xymatrix{	0\ar[r]&R(-3)\ar[r]&M_1(-3)^3\oplus M_{-1}(-2)\ar[r]&R(-2)^3\oplus R(-1)^3\ar[r]&M_1(-1)\oplus M_{-1}^3\ar[r]&R}, \]
	which is the $3$-fundamental sequence by Proposition \ref{resolution}.
	
	Similarly for the almost split sequence at $M_1$, we take the morphisms of complexes as below.
	\[ 
	\xymatrix{
		S^\prime(-2)\ar[d]&&\\
		S^\prime(-1)^3\ar[r]&{S^\prime}^3\ar[r]&S^\prime(1),}
	\qquad
	\xymatrix{
		S^\pprime(-3)\ar[r]&S^\pprime(-2)\oplus S^\pprime(-1)\ar[d]\\
		&S^\pprime}
	\]
	Take the tensor product and its diagonal part, we obtain the desired almost split sequence at $M_1$.
	
	Finally for $M_{-1}$, we consider the following morphisms
	\[ 
	\xymatrix{
		S^\prime(-4)\ar[r]&S^\prime(-3)^3\ar[r]&S^\prime(-2)^3\ar[d]\\
		&&S^\prime(-1),}
	\qquad
	\xymatrix{
		S^\pprime(-3)\ar[d]&\\
		S^\pprime(-2)\oplus S^\pprime(-1)\ar[r]&S^\pprime,} \]
	which yield the almost split sequence at $M_{-1}$.
\end{proof}

\subsection{Dimension $5$}
Let $S=k[[x_0,x_1,x_2,y_0,y_1,y_2]]$ be the power series ring in $6$ variables, on which we give a complete grading by $\deg x_i=(1,0)$ and $\deg y_i=(0,1)$. We let
\[ R=S^{(1,1)} \]
the Veronese subring. This is a $5$-dimensional complete Gorenstein local isolated singularity, which is also isomorphic to the complete Segre product of two copies of power series rings in three variables.
For each $i\in\Z$ we define the (completely graded) $R$-module $M_i$ by
\[ M_i=\Prod_{j\in\Z}S_{(i,0)+(j,j)}. \]
As in the previous subsection, we have $M_i=S^\prime(i)\seg S^\pprime$ as completely graded $R$-modules, and the Cohen-Macaulay modules among the $M_i$'s are $M_0=R$, $M_{\pm1}$, and $M_{\pm2}$.
\begin{Thm}\label{4CY}
Let $R=S^{(1,1)}$ and $M_i\in\mod R$ be as above.
\begin{enumerate}
	\item The $R$-module $T:=R\oplus M_1\oplus M_{-1}\in\CM R$ is $4$-cluster tilting.
	\item The stable endomorphism ring is $\sEnd_R(T)=k\times k$, which is semisimple.
	\item We have $\sEnd_R(T\oplus T[-1]\oplus T[-2])$ is isomorphic to the path algebra of the following $\widetilde{A_5}$-quiver $Q$ with triple arrows.
	\[ \xymatrix@C=15mm{
		\circ\ar@3[dr]\ar@3[ddr]&\circ\ar@3[dl]\ar@3[ddl]\\
		\circ\ar@3[dr]&\circ\ar@3[dl]\\
		\circ&\circ} \]
\end{enumerate}
Therefore, $R$ is of strictly hereditary representation type, and there exists a triangle equivalence $\sCM R\simeq\rC_4^{(1/3)}(kQ)$.
\end{Thm}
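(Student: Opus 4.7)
The plan is to adapt the argument of Theorem~\ref{3CY}.

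First, for (1), I apply Theorem~\ref{CT}(3) to $R^\prime = R^\pprime = k[[x_0, x_1, x_2]]$: both are Gorenstein of $a$-invariant $-3 = -p$, and each carries the trivial cluster tilting module $R^\prime$ whose endomorphism ring (namely $R^\prime$) is positively graded. This yields that $R \oplus M_1 \oplus M_2 \in \CM R$ is cluster tilting. To pass to $T$, I use that $\Hom_R(M_1, -)$ is an autoequivalence of $\refl R$ with inverse $\Hom_R(M_{-1}, -)$; by Proposition~\ref{five} it acts on our rank-one reflexive modules as $M_i \mapsto M_{i-1}$. Since $M_1$ is reflexive of rank one and locally free off the isolated singular locus, this autoequivalence restricts to an autoequivalence of $\CM R$ that preserves the cluster-tilting property, and it carries $R \oplus M_1 \oplus M_2$ to $M_{-1} \oplus R \oplus M_1 = T$.

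For (2), Proposition~\ref{five} identifies $\Hom_R(M_i, M_j) = S^\prime(j - i) \seg S^\pprime$. The degree-zero generators give twelve arrows: $x_0, x_1, x_2$ providing $M_{-1} \to R$ and $R \to M_1$, together with $y_0, y_1, y_2$ providing $M_1 \to R$ and $R \to M_{-1}$, with commutativity relations $x_i y_j = y_j x_i$. Any morphism between $M_1$ and $M_{-1}$ is a composition of two arrows of opposite type and hence factors through the projective summand $R$, so $\sEnd_R(T) = k \times k$ is semisimple.

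For (3), I compute the $4$-almost split sequences at $M_1$ and $M_{-1}$ via the Koszul recipe of Section~\ref{daiji}, in direct analogy with Proposition~\ref{3AR}. It is more convenient to first compute the $4$-fundamental sequences over the sibling CT module $R \oplus M_{-1} \oplus M_{-2}$ that arises from the canonical choice $P = S \oplus S(1,0) \oplus S(2,0)$ in Proposition~\ref{resolution}, and then transport the answer to $T$ via the Morita equivalence $\Hom_R(M_{-1}, -)$, which sends $R, M_{-1}, M_{-2}$ to $M_1, R, M_{-1}$ respectively. One takes the Koszul resolutions of the residue fields of $S^\prime$ and $S^\pprime$, truncates each appropriately, tensors, and restricts to the Segre part; inspecting the non-projective contributions to the middle terms gives $B_{M_1} = M_{-1}^3$ and $B_{M_{-1}} = M_1^3$ in the notation of Theorem~\ref{end}(2), equivalently $m_{M_1, M_{-1}} = m_{M_{-1}, M_1} = 3$ and $m_{M_i, M_i} = 0$. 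Theorem~\ref{end}(2) then produces the stated $\widetilde{A_5}$-quiver $Q$ with triple arrows, and since $kQ$ is visibly representation-infinite, Theorem~\ref{morita}(3) applies and delivers the triangle equivalence $\sCM R \simeq \rC_4^{(1/3)}(kQ)$. The main obstacle is the bookkeeping in Step~(3): the Koszul complex restricted to the Segre part has more terms than in Proposition~\ref{3AR}, so care is needed to track cancellations and identify the relevant middle terms, though the underlying recipe is unchanged.
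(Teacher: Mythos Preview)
Your approach matches the paper's: deduce (1) from Theorem~\ref{CT}, read off (2) from the quiver, and for (3) compute the $4$-almost split sequences via the Koszul recipe of Section~\ref{daiji} and feed them into Theorem~\ref{end}(2) and Theorem~\ref{morita}(3). The paper carries out (3) directly in $\add T$ (Proposition~\ref{4AR}) by choosing truncations adapted to the representative $P=S(-1,0)\oplus S\oplus S(1,0)$ rather than computing over a sibling and transporting, but the content is the same.

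There is, however, a genuine error in your argument for (1). The claim that $\Hom_R(M_1,-)$ restricts to an autoequivalence of $\CM R$ is false: only $M_i$ with $|i|\le 2$ are Cohen--Macaulay here, and $\Hom_R(M_1,M_{-2})=M_{-3}$ is not CM (one checks $\RH^3_\m(M_{-3})\neq 0$ via Theorem~\ref{lc}). Being locally free off the singular point does not help, since depth is governed by $\Ext^i_R(M_1,-)$ via Proposition~\ref{depth}, and these do not vanish in the required range for arbitrary CM modules. The fix is to avoid this claim entirely: you already know $\Hom_R(M_1,-)$ is an $R$-linear autoequivalence of $\refl R$, so it yields an $R$-algebra isomorphism $\End_R(R\oplus M_1\oplus M_2)\simeq\End_R(T)$. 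Since $T\in\CM R$ by direct inspection, $R\in\add T$, and $\End_R(T)$ is therefore CM over $R$ with global dimension $\dim R$, the Auslander correspondence (\cite[Corollary~5.9]{IW14}) gives that $T$ is CT. (The paper's own ``hence so is $T$'' is equally terse and presumably relies on this or on redoing Proposition~\ref{resolution} for the shifted representative.)

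A minor bookkeeping slip in (3): the canonical $P=S\oplus S(1,0)\oplus S(2,0)$ restricts to $R\oplus M_1\oplus M_2$, not $R\oplus M_{-1}\oplus M_{-2}$; correspondingly the transport to $T$ is via $\Hom_R(M_1,-)$ rather than $\Hom_R(M_{-1},-)$. This does not affect your conclusion about the $m_{ij}$.
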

\begin{proof}
	We know by Theorem \ref{CT} that $R\oplus M_1\oplus M_2\in\CM R$ is a $4$-cluster tilting object, hence so is $T=R\oplus M_1\oplus M_{-1}$. Its endomorphism ring is presented by the quiver below with commutativity relations, where the rightgoing arrows are labelled by $x_0, x_1, x_2$, and the leftgoing arrows by $y_0, y_1, y_2$.
	\[ \xymatrix{ M_{-1}\ar@3@/^6pt/[r]&R\ar@3@/^6pt/[r]\ar@3@/^6pt/[l]&M_1\ar@3@/^6pt/[l] } \] 
	It follows that the stable endomorphism ring $\sEnd_R(M)$ is isomorphic to $k\times k$. We then obtain the quiver of $\sEnd_R(T\oplus T[-1]\oplus T[-2])$ by Theorem \ref{end} and Proposition \ref{4AR} below. Now Theorem \ref{morita}(3) applies.
\end{proof}

As in the $3$-dimensional case, we use the computation of higher Auslander-Reiten sequences. 
\begin{Prop}\label{4AR}
The $4$-almost split sequences for $R, M_{\pm1}\in\CM R$ are the following.
\[ \xymatrix@R=1mm@!C=12mm{
	0\ar[r]&R\ar[r]&M_1^{3}\oplus M_{-1}^{3}\ar[r]&R^{9}\ar[r]&R^{9}\ar[r]&M_1^{3}\oplus M_{-1}^{3}\ar[r]&R\\
	0\ar[r]&M_1\ar[r]&R^{3}\ar[r]&M_{-1}^3\ar[r]&M_{-1}^3\ar[r]&R^3\ar[r]&M_1\ar[r]&0\\
	0\ar[r]&M_{-1}\ar[r]&R^{3}\ar[r]&M_{1}^3\ar[r]&M_{1}^3\ar[r]&R^3\ar[r]&M_{-1}\ar[r]&0 } \]
\end{Prop}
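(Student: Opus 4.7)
The plan is to follow the recipe of Proposition~\ref{resolution}, executed exactly as in the proof of Proposition~\ref{3AR}. Let $S^\prime = k[[x_0, x_1, x_2]]$ and $S^\pprime = k[[y_0, y_1, y_2]]$, viewed as completely graded polynomial rings with standard grading, and regard $S = S^\prime \cotimes S^\pprime$ as a completely $\Z^2$-graded ring. Both $S^\prime$ and $S^\pprime$ admit identical Koszul resolutions of the residue field:
\[ 0 \to S^\prime(-3) \to S^\prime(-2)^3 \to S^\prime(-1)^3 \to S^\prime \to 0, \]
and the same for $S^\pprime$. For each of the three summands $R, M_1, M_{-1}$ of $T$ (that is, $l = 0, 1, -1$), I write the appropriately shifted Koszul complex as the mapping cone of a chain map $X^\prime \to Y^\prime$ of graded projective complexes (and similarly $X^\pprime \to Y^\pprime$), form the tensor-product map $X^\prime \otimes X^\pprime \to Y^\prime \otimes Y^\pprime$ of $\Z^2$-graded complexes over $S$, and take the diagonal of its mapping cone to obtain the $4$-fundamental sequence.

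For the $R$ case ($l = 0$) I would use the symmetric middle split $X^\prime\colon S^\prime(-3) \to S^\prime(-2)^3$ and $Y^\prime\colon S^\prime(-1)^3 \to S^\prime$ (placed in overlapping cohomological degrees so that the tensor map is non-trivial at the overlap, coming from the tensor of the two middle Koszul differentials), together with the analogous split for $S^\pprime$. For the $M_1$ case ($l = 1$), mirroring the asymmetric split used in Proposition~\ref{3AR}, I take $X^\prime\colon S^\prime(-2)$ (a single term) and $Y^\prime\colon S^\prime(-1)^3 \to {S^\prime}^3 \to S^\prime(1)$ from the shifted Koszul $P^\prime(1)$, together with $X^\pprime\colon S^\pprime(-3) \to S^\pprime(-2)^3 \to S^\pprime(-1)^3$ and $Y^\pprime\colon S^\pprime$. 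The $M_{-1}$ case follows by symmetry, swapping the roles of $S^\prime$ and $S^\pprime$ in the $M_1$ split.

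Using the identification $S(a, b)|_\Delta = M_{a-b}(b)$ as graded $R$-modules, one then reads off the six-term complexes claimed in the proposition. The step requiring care is to verify that every tensor-product term $S(a, b)$ appearing in the cone satisfies $|a - b| \leq 1$, so that the diagonal yields only modules in $\add\{R, M_1, M_{-1}\}$ and no spurious $M_{\pm 2}$ appears. For the $R$ case this is automatic because $X^\prime$ and $X^\pprime$ (resp. $Y^\prime, Y^\pprime$) have matching generation-degree ranges $\{2, 3\}$ (resp. $\{0, 1\}$); for the $M_1$ and $M_{-1}$ cases the asymmetric split pins one factor to a single generation degree, forcing the relevant differences into $\{-1, 0, 1\}$. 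This is the combinatorial heart of the argument and proceeds by the same bookkeeping as in Proposition~\ref{3AR}, with no new conceptual difficulty beyond accommodating the extra Koszul term arising because $S^\pprime$ now has three variables of degree $1$ rather than two variables of degrees $1$ and $2$.
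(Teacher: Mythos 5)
Your proposal is correct and follows essentially the same route as the paper: split the two Koszul complexes into $X\to Y$, tensor, and take the diagonal part of the mapping cone via Proposition \ref{resolution}, exactly as in Proposition \ref{3AR}. Your splitting in the $M_1$ case (one-term $X^\prime=S^\prime(-2)$ against the three-term left portion of the $S^\pprime$-Koszul complex as $X^\pprime$) is precisely the choice for which every tensor term has generation bidegree $(a,b)$ with $|a-b|\leq 1$ and hence restricts into $\add(M_{-1}\oplus R\oplus M_1)$ — this is the decisive check and you carry it out correctly; treating $M_{-1}$ by swapping the two factors rather than by $\om$-duality is an immaterial difference.
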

\begin{proof}
	Let $S^\prime=k[[x_0,x_1,x_2]]$ and $S^\pprime=k[[y_0,y_1,y_2]]$ be completely graded power series rings with standard gradings. Recall that our construction says $M_i=S^\prime(i)\seg S^\pprime$, that is, the ``shifted diagonal'' degree $\{(i+j,j)\mid j\in\Z\}$-part of the $\Z^2$-graded ring $S^\prime\seg S^\pprime=k[[x_0,x_1,x_2,y_0,y_1,y_2]]$.
	We first prepare the Koszul complexes
	\[ \xymatrix@R=1mm{
		0\ar[r]&S^\prime(-3)\ar[r]&S^\prime(-2)^3\ar[r]&S^\prime(-1)^3\ar[r]&S^\prime\ar[r]&k\ar[r]&0\\
		0\ar[r]&S^\pprime(-3)\ar[r]&S^\pprime(-2)^3\ar[r]&S^\pprime(-1)^3\ar[r]&S^\pprime\ar[r]&k\ar[r]&0 } \]
	for $S^\prime$ and $S^\pprime$.
	
	We first construct the $4$-fundamental sequence at $R$. For this we view the Koszul complexes as mapping cones of the morphisms of complexes below.
	\[
	\xymatrix{
		X^\prime\ar[d]\ar@{}[r]|<\colon&S^\prime(-3)\ar[r]&S^\prime(-2)^3\ar[d]&\\
		Y^\prime\ar@{}[r]|<\colon&&S^\prime(-1)^3\ar[r]&S^\prime,}
	\qquad
	\xymatrix{
		X^\pprime\ar[d]\ar@{}[r]|<\colon&S^\pprime(-3)\ar[r]&S^\pprime(-2)^3\ar[d]&\\
		Y^\pprime\ar@{}[r]|<\colon&&S^\pprime(-1)^3\ar[r]&S^\pprime}
	\]
	Taking the tensor product $X^\prime\otimes X^\pprime\to Y^\prime\otimes Y^\pprime$, we get a morphism of complexes
	\[ \xymatrix{
		S(-3,-3)\ar[r]&S(-2,-3)^3\oplus S(-3,-2)^3\ar[r]&S(-2,-2)^9\ar[d]&&\\
		&&S(-1,-1)^9\ar[r]&S(0,-1)^3\oplus S(-1,0)^3\ar[r]& S} \]
	of completely $\Z^2$-graded $S$-modules. By Proposition \ref{resolution}, the diagonal degree part of the mapping cone of the above morphism of complexes yields an exact sequence
	\[ \xymatrix{0\ar[r]&R(-3)\ar[r]&M_1(-3)^{3}\oplus M_{-1}(-2)^{3}\ar[r]&R(-2)^{9}\ar[r]&R(-1)^{9}\ar[r]&M_1(-1)^{3}\oplus M_{-1}^{3}\ar[r]&R} \]
	of completely $\Z$-graded $R$-modules, which is the $4$-fundamental sequence at $R$.
	
	For the $4$-almost split sequence at $M_1$, consider the following morphisms of complexes as $X^\prime\to Y^\prime$ and $X^\pprime\to Y^\pprime$.
	\[
	\xymatrix{
		S^\prime(-2)\ar[r]&S^\prime(-1)^3\ar[r]&{S^\prime}^3\ar[d]\\
		&&S^\prime(1),}
	\qquad
	\xymatrix{
		S^\pprime(-3)\ar[d]&&\\
		S^\pprime(-2)^3\ar[r]&S^\pprime(-1)^3\ar[r]&S^\pprime}
	\]
	Taking the tensor product and its diagonal degree part, we obtain the desired sequence.
	
	Finally, the $4$-almost split sequence at $M_{-1}$ is the $R$-dual of that for $M_1$.
\end{proof}

\section{Extended numerical semigroup rings and tilting theory}
The aim of this section is to give a family of graded Gorenstein rings of (graded) hereditary representation type using tilting theory. It is given as {\it extended numerical semigroup rings} which we introduce as a generalization of classical numerical semigroup rings \cite{RS}.

\subsection{Extended numerical semigroup rings}
Throughout this section we denote by $\NN=\{0,1,2,\ldots\}$ the set of non-negative integers and $L$ a finite abelian group, which we write multiplicatively.
\begin{Def}\label{exnum}
An {\it extended numerical semigroup} is a submonoid $S\subset\NN\times L$ (containing the unit) whose complement is finite. For an arbitrary field $k$, we call the corresponding semigroup algebra $k[S]$ or its completion $k[[S]]$ the {\it extended numerical semigroup algebra} associated to $S$.
\end{Def}
We shall use the identifications $k[\NN]=k[t]$ and $k[\NN\times L]=k[t]\otimes k[L]$, and write its elements as $t^n\otimes \la$ for $(n,\la)\in \NN\times L$. Since $S\subset \NN\times L$ has finite complement, there exists $N\in\NN$ such that $(n,\la)\in S$ for all $n>N$ and $\la\in L$.
We also consider a $\Z$-grading on $k[t]\otimes k[L]$ by setting $\deg (t^n\otimes \la)=n$ for $(n,\la)\in\NN\times L$. Then $R=k[S]$ inherits a $\Z$-grading.
\begin{Prop}\label{shoutai}
	Let $k$ be an arbitrary field, $S\subset\NN\times L$ an extended numerical semigroup, $R=k[S]$, and $\widehat{R}=k[[S]]$.
	\begin{enumerate}
		\item\label{red} $R$ (resp. $\widehat{R}$) is reduced if and only if the order of $L$ is invertible in $k$.
		\item\label{K} The graded total quotient ring $K$ of $R$ is $k[\Z\times L]=k[t^{\pm1}]\otimes k[L]$.
		\item The total quotient ring $\widehat{K}$ of $\widehat{R}$ is $k[[\Z\times L]]=k((t))\otimes k[L]$. 
	\end{enumerate}
\end{Prop}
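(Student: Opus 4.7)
The plan rests on the identifications $k[t]\otimes k[L]=k[L][t]$ and $k[[t]]\otimes k[L]=k[L][[t]]$ (the latter valid because $k[L]$ is finite over $k$), together with the fact that the finiteness of $(\NN\times L)\setminus S$ supplies $N\geq 0$ with $R_n=t^n k[L]$ for all $n>N$; in particular $t^n\in R\subseteq\widehat R$ for such $n$.

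For (1), I will invoke Maschke's theorem: $k[L]$ is semisimple precisely when $|L|$ is invertible in $k$, in which case $k[L]$ decomposes as a product of separable finite field extensions of $k$, so $k[L][t]$ and $k[L][[t]]$ are products of polynomial, respectively power series rings over fields, hence reduced. Since $R\subseteq k[L][t]$ and $\widehat R\subseteq k[L][[t]]$, both inherit reducedness. Conversely, any non-zero nilpotent $\al\in k[L]$ produces a non-zero nilpotent $t^{N+1}\al\in R\subseteq\widehat R$.

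For (2), the approach is to localize $R$ at the single homogeneous non-zero-divisor $t^n$ (for $n>N$) and identify the result with the graded total quotient ring. First I would verify $R[t^{-n}]=k[L][t^{\pm1}]=k[t^{\pm1}]\otimes k[L]$, using that $\la=t^{-n}(t^n\la)$ and $t=t^{n+1}/t^n$ both lie in $R[t^{-n}]$. Next I would check that every homogeneous non-zero-divisor of $R$ becomes a unit in $k[L][t^{\pm1}]$: such an element is $t^n a$ with $a\in k[L]$, and $a$ must be a non-zero-divisor in $k[L]$ (a relation $ab=0$ with $0\neq b\in k[L]$ yields $(t^n a)(t^m b)=0$ with $0\neq t^m b\in R$ for $m$ large); since $k[L]$ is Artinian, non-zero-divisors coincide with units, proving $K=k[t^{\pm1}]\otimes k[L]$.

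For (3), the strategy mirrors (2) but over the completion. The easy half is that $\widehat R[t^{-n}]=k((t))\otimes k[L]=k[L]((t))$: any Laurent series $\sum_{m\geq m_0}c_m t^m$ equals $t^{-nk}\cdot\sum_m c_m t^{m+nk}$, and for $k$ large enough every exponent $m+nk$ exceeds $N$, placing the rewritten sum in $\widehat R$. The main obstacle is showing every non-zero-divisor of $\widehat R$ becomes a unit in $k[L]((t))$, for which the key auxiliary claim is that \emph{$k[L]((t))$ is its own total quotient ring}. I would prove this by decomposing $k[L]=\prod_i A_i$ into its local Artinian factors (with nilpotent maximal ideals $\m_i$) and noting that each $A_i((t))$ is local with nilpotent maximal ideal $\m_i A_i((t))$ and residue field $(A_i/\m_i)((t))$, forcing every non-zero-divisor to be a unit. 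Granting this, if $f\in\widehat R$ is a non-zero-divisor, any relation $fg=0$ in $k[L][[t]]$ with $g\neq 0$ would produce $f\cdot t^m g=0$ in $\widehat R$ with $t^m g\neq 0$ for $m$ large, contradicting the choice of $f$; hence $f$ is a non-zero-divisor in $k[L][[t]]$, therefore also in the localization $k[L]((t))$ at the non-zero-divisor $t$, and hence a unit. This yields $\widehat K=k((t))\otimes k[L]$.
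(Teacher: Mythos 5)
Your proofs of (1) and (2) are correct and essentially the argument the paper gives: reducedness of $R$ is read off from $k[L]$ via the embedding $R\subset k[t]\otimes k[L]$, and the graded total quotient ring is identified with $k[t^{\pm1}]\otimes k[L]$ by observing that homogeneous non-zero-divisors $t^n\otimes\mu$ are exactly those with $\mu$ a unit of the Artinian ring $k[L]$ and that every $t^n\otimes\mu$ is a quotient of elements of $R$; your variant of localizing first at a single $t^n$ and then checking the remaining non-zero-divisors are already invertible is the same computation packaged through the universal property. For (3) the paper offers no proof ("left to the reader"), and your argument correctly supplies the one extra ingredient the ungraded setting requires, namely that $k[L]((t))$ is its own total quotient ring; your reduction to the local Artinian factors $A_i$ of $k[L]$, where $A_i((t))$ is local with nilpotent maximal ideal and field quotient $(A_i/\m_i)((t))$, is a clean way to see this, and the passage from non-zero-divisors of $\widehat{R}$ to non-zero-divisors of $k[L][[t]]$ via multiplication by $t^m$ is sound.
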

\begin{proof}
	(\ref{red})  Noting that $R$ is a subset of $k[\NN\times L]=k[t]\otimes k[L]$, it is easy to see that $R$ is reduced if and only if $k[L]$ is semisimple. The statement for $\widehat{R}$ is similar.\\
	(\ref{K})  We have a map $R\subset k[t]\otimes k[L]\hookrightarrow k[t^{\pm1}]\otimes k[L]$. Since a homogeneous element of degree $n$ in $k[t]\otimes k[L]$ is of the form $t^n\otimes \mu$ for some $\mu\in k[L]$, it is a non-zero divisor if and only if $\mu\in k[L]$ is a non-zero divisor, or equivalently a unit. Therefore, we see that the map takes every homogeneous non-zero divisor to a unit, inducing a map $K\to k[t^{\pm}]\otimes k[L]$.
	\[ \xymatrix@!R=3mm{
		R\ar[d]\ar[r]&k[t]\otimes k[L]\ar[d]\\
		K\ar@{-->}[r]&k[t^{\pm1}]\otimes k[L]} \]
	We prove that this induced map is an isomorphism. Since the map $R\to k[t^{\pm1}]\otimes k[L]$ is injective and $R\to K$ is essential, we see that $K\to k[t^{\pm1}]\otimes k[L]$ is injective. To prove surjectivity, pick $t^n\otimes \mu\in k[t^{\pm1}]\otimes k[L]$. Then we have $t^n\otimes \mu=(t^{n+m}\otimes \mu)/(t^m\otimes1)$, in which both the numerator and the denominator lie in $R$ for sufficiently large $m$.\\
	(3)  Left to the reader.
\end{proof}

Let us prepare some combinatorial notions which generalize the classical ones.
\begin{Def}
Let $S\subset\NN\times L$ be an extended numerical semigroup.
\begin{enumerate}
	\item We say $S$ is {\it connected} if $(0,\la)\not\in S$ for $\la\neq1$.
	\item The {\it Frobenius number} of $S$ is maximum integer $a\in\NN$ such that $(a,\la)\not\in S$ for some $\la\in L$. We formally put $a=-1$ if $S=\NN\times L$.
	\item We say $S$ is {\it twisted symmetric} if there exists $\tau\in L$ such that $(\Z\times L)\setminus S=(a,\tau)-S$, where $a$ is the Frobenius number of $S$ and $(a,\tau)-S=\{(a-n,\tau\la^{-1}\mid (n,\la)\in S\}$.
\end{enumerate}
\end{Def}

%

It is easy to see that extended numerical semigroup rings are equi-codimensional Cohen-Macaulay of dimension $1$. This can be seen, for example, as follows: Pick $c\in\NN$ such that for every $n\geq c$ and $\la\in L$ one has $(n,\la)\in S$. Then $R$ is a finitely generated free module over $k[t^c]$, hence the assertion follows.

Now we describe the canonical module, compute the $a$-invariant, and characterize when $R$ is Gorenstein. This generalizes the well-known results for classical numerical semigroup rings. We denote by $D\colon M=\bigoplus_{i\in\Z}M_i\mapsto\bigoplus_{i\in\Z}\Hom_k(M_{-i},k)$ the graded dual.
\begin{Prop}\label{omega}
\begin{enumerate}
	\item\label{can} The $R$-module $\om:=D(K/R)$ is a canonical module for $R$.
	\item\label{Frob} The $a$-invariant of $R$ is equal to the Frobenius number of $S$.
	\item\label{sym} $R$ is Gorenstein if and only if $S$ is twisted symmetric.
\end{enumerate}
\end{Prop}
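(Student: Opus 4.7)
The three parts will all follow from explicitly identifying the graded local cohomology $\RH^1_{\m}(R)$ with $K/R$, together with graded local duality and a concrete realization of $\om$ as a fractional ideal inside $K$.

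Since the complement of $S$ in $\NN\times L$ is finite, there exists $c\in\NN$ with $(n,\la)\in S$ for all $n\geq c$ and all $\la\in L$; set $f:=t^c\otimes 1\in R$. Then $f$ is a homogeneous non-zero-divisor of positive degree, and because $R$ is one-dimensional Cohen--Macaulay it is a parameter, so local cohomology with respect to the irrelevant ideal can be computed by the one-step \v{C}ech complex $R\to R_f$. The choice of $c$ guarantees $R_f=K$, since every $t^m\otimes\la\in K$ can be written as $(t^{m+Nc}\otimes\la)/f^N$ with $m+Nc\geq c$ (so that the numerator lies in $R$). This gives $\RH^1_\m(R)=K/R$. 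Part~(\ref{can}) is then immediate from graded local duality, and part~(\ref{Frob}) follows since the $a$-invariant equals the top non-zero degree of $K/R$; as $(K/R)_n=0$ precisely when $R_n=K_n$, i.e.\ when $(n,\la)\in S$ for every $\la\in L$, this top degree is exactly the Frobenius number of $S$.

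For part~(\ref{sym}), I realize $\om=D(K/R)$ as a fractional ideal inside $K$ via the graded $R$-linear map
\[
\phi\colon \om\longrightarrow K,\qquad e_{n,\la}\longmapsto t^{-n}\otimes\la^{-1},
\]
where $e_{n,\la}$ is the $k$-dual of the basis element $\overline{t^n\otimes\la}\in K/R$, indexed by $(n,\la)\in(\Z\times L)\setminus S$. A direct calculation using the action $(t^m\otimes\mu)\cdot e_{n,\la}=e_{n-m,\la\mu^{-1}}$ (with $e_{\cdot,\cdot}=0$ whenever the index lies in $S$) shows $\phi$ is an injective $R$-linear map of degree $0$, so it identifies $\om$ with the fractional ideal
\[
I\;:=\;\bigoplus_{(p,\mu)\in\Z\times L,\;(-p,\mu^{-1})\notin S} k(t^p\otimes\mu)\;\subset\;K.
\]
Now $R$ is Gorenstein iff the rank-one module $\om$ is free up to shift, equivalently iff $I$ is a principal fractional ideal $R\cdot(t^q\otimes\nu)$ for some $(q,\nu)\in\Z\times L$. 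Comparing the smallest graded degrees where the two sides are supported (the minimum $p$ with $I_p\neq 0$ is $-a$) forces $q=-a$. Substituting $(m,\xi):=(-p,\mu^{-1})$ and setting $\tau:=\nu^{-1}$ then unwinds the equality $I=R\cdot(t^{-a}\otimes\nu)$ into the biconditional $(m,\xi)\notin S\iff(a-m,\tau\xi^{-1})\in S$, which is exactly the twisted symmetry condition $(\Z\times L)\setminus S=(a,\tau)-S$.

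\textbf{Main obstacle.} The computations are largely formal bookkeeping between indices in $\Z\times L$ and graded degrees in $R$ and $K$. The one genuinely non-formal step is constructing the explicit embedding $\phi\colon\om\hookrightarrow K$ so that principality of $I$ corresponds cleanly to twisted symmetry. The twist $\tau$ then appears naturally as the inverse of the $L$-component of the principal generator of $I$, and one must verify that both directions of the biconditional in the twisted symmetry condition emerge from this single combinatorial identity, with no further choices needed.
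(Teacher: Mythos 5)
Your parts (\ref{can}) and (\ref{Frob}) are correct and take a mildly different route from the paper: you identify $\RH^1_\m(R)$ with $K/R$ via the one-step \v{C}ech complex of the parameter $t^c\otimes 1$ and then invoke graded local duality, whereas the paper exhibits an explicit isomorphism $\Hom_T(R,T)(-c)\simeq D(K/R)$ over the Noether normalization $T=k[t^c]$ by comparing free bases. Both work; just note that $R_0=k[M]$ with $M=\{\la\mid(0,\la)\in S\}$ need not be local, so the duality you invoke should be taken in the form that reduces to the finite free extension $T\subset R$ --- which is essentially the paper's computation. Your embedding $\phi\colon\om\to K$ is indeed well defined and $R$-linear (the a priori problematic case $(j,\mu)\in S$, $(n,\la)\notin S$, $(n-j,\la\mu^{-1})\in S$ cannot occur because $S$ is closed under addition), and the forward implication of (\ref{sym}) together with the combinatorial unwinding at the end is correct.

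The gap is in the converse of (\ref{sym}), at the asserted equivalence ``$\om$ is free up to shift iff $I=R\cdot(t^q\otimes\nu)$ for some $(q,\nu)\in\Z\times L$.'' Freeness of $\om$ only produces a homogeneous generator $u=t^{-a}\otimes\mu$ of $I$ with $\mu\in k[L]$ an arbitrary element satisfying $R_0\cdot u=I_{-a}$; it does not hand you a \emph{monomial} generator with $\nu\in L$, and your degree comparison pins down only $q=-a$, not the $L$-component. When $S$ is not connected, $R_0=k[M]$ is not a field, $I_{-a}\cong k[-A]$ with $A=(\{a\}\times L)\setminus S$ has dimension $|M|>1$, and non-monomial homogeneous generators genuinely exist (e.g.\ if $M=\Z/2\Z=\{1,s\}$ and $\ch k\neq2$, then $2+s$ is a unit of $k[M]$, so $t^{-a}\otimes(2+s)\tau^{-1}$ generates whenever $t^{-a}\otimes\tau^{-1}$ does); applying your biconditional to such a generator breaks down. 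One must therefore \emph{prove} that a monomial generator can be chosen. The paper does this by noting that $I_{-a}$ is free of rank one over $R_0=k[M]$, that any two generators of a rank-one free module over a finite-dimensional algebra differ by a unit of that algebra, and that each monomial $t^{-a}\otimes\tau^{-1}$ with $(a,\tau)\in A$ is itself a generator (since $M$ acts freely on $L$ and $|A|=|M|$). This --- not the construction of $\phi$, which is formal --- is the one genuinely non-formal step of part (\ref{sym}), and your proposal asserts it without justification.
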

\begin{proof}
	(\ref{can})  Let $c$ be an integer larger than the Frobenius number of $S$. Let $T=k[t^c]$ and we prove that $\Hom_T(R,T)(-c)\simeq\om$ in $\Mod^\Z\!R$, which shows that $\om$ is a (global) canonical module.
	
	For each $\la\in L$ let $C_\la:=\{ i\in \NN\mid (i,\la)\in S \text{ and } (i-c,\la)\not\in S\}$. It is a subset of the interval $[0,2c-1]$, in particular finite. Then $R$ is a free $T$-module with basis $\bigcup_{\la\in L}\{ t^i\otimes\la \mid i\in C_\la\}$, thus $\Hom_T(R,T)$ is free with basis $\bigcup_{\la\in L}\{ t^{-i}\otimes\la^{-1} \mid i\in C_\la\}$.
	On the other hand, the space $K/R$, as an $T$-module, is isomorphic to the direct sum of $DT$ with socle $\bigcup_{\la\in L}\{(t^i,\la)\mid i+c\in C_\la\}$, so $D(K/R)$ is a free $T$-module with basis $\bigcup_{\la\in L}\{t^{-i}\otimes\la^{-1}\mid i+c\in C_\la\}$.
	It follows that multiplication by $t^c$ gives an isomorphism $\Hom_T(R,T)(-c)\xsimeq D(K/R)$. Since everything can be embedded in the $R$-module $K$, it is clear that this isomorphism is $R$-linear.
	

	(\ref{Frob})  The claimed isomorphism $\Hom_T(R,T(-c))\simeq\om$ in $\Mod^\Z\!R$ shows that $\om$ is the graded canonical module for $R$. Since the $a$-invariant is characterized as the minus of the degree of the top of $\om$, it is clear from $\om\simeq D(K/R)$ that it must be the Frobenius number.
	
	(\ref{sym})  Let $a$ be the Frobenius number of $S$, and we denote $\Om:=-(\Z\times L)\setminus S$. 
	Suppose first that $S$ is twisted symmetric and pick $\tau\in L$ such that $(\Z\times L)\setminus S=(a,\tau)-S$, thus $\Om=S-(a,\tau)$. Then multiplication by $t^a\otimes\tau$ induces an isomorphism $\om=D(K/R)\xsimeq R(a)$, hence $R$ is Gorenstein.
	
	Now we prove the converse. If $R$ is Gorenstein we must have an isomorphism $\varphi\colon R\xsimeq \om(-a)$ in $\mod^\Z\!R$. We show that modifying $\varphi$ if necessary, it is given as the multiplication by $t^{-a}\otimes\tau^{-1}$ for some $\tau\in L$, which will yield $S-(a,\tau)=-(\Z\times L)\setminus S$. Let $M=\{\la\in L\mid (0,\la)\in S\}$ and $A:=(\{a\}\times L)\setminus S$, which is closed under the action of $M$. Taking the degree $0$ part of the isomorphism $R\simeq\om(-a)$ we have an isomorphism $\varphi_0\colon k[M]\simeq k[-A]$ of $k[M]$-modules. Now note that any two generators of a rank $1$ free module over a finite dimensional algebra $\La$ transfer to each other by a multiplication by an element of $\La^\times$. Therefore we may assume, modifying $\varphi$ by $\End_R^\Z(R)^\times=R_0^\times=k[M]^\times$ if necessary, that $\varphi_0$ takes $1\in k[M]$ to $t^{-a}\otimes \tau^{-1}$ for some $(a,\tau)\in A$, since for any $(a,\tau)\in A$ the corresponding element $t^{-a}\otimes\tau^{-1}\in k[-A]$ generates $k[-A]$ as a $k[M]$-module. Now $\varphi$ is given as multiplication by $t^{-a}\otimes\tau^{-1}$.
\end{proof}
We also record without proof the following version for the complete setting. This time we use $D$ for the Matlis dual.
\begin{Prop}
Let $\widehat{R}=k[[S]]$ be an extended numerical semigroup ring.
\begin{enumerate}
	\item The $\widehat{R}$-module $\widehat{\om}=D(\widehat{K}/\widehat{R})$ is a canonical module for $\widehat{R}$.
	\item $\widehat{R}$ is Gorenstein if and only if $S$ is twisted symmetric.
\end{enumerate}
\end{Prop}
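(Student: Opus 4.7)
My strategy is to adapt the proof of the graded version (Proposition~\ref{omega}), replacing graded duals by Matlis duals and the subring $T=k[t^c]$ by its completion $\widehat{T}=k[[t^c]]$.

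First I would fix $c$ larger than the Frobenius number $a$ and set $\widehat{T}=k[[t^c]]$, a complete regular local ring of dimension $1$ sitting inside $\widehat{R}$ with canonical module $\om_{\widehat{T}}=\widehat{T}$. Exactly as in the proof of Proposition~\ref{omega}(\ref{can}), one checks that $\widehat{R}$ is a finitely generated \emph{free} $\widehat{T}$-module with basis $\bigcup_{\la\in L}\{t^i\otimes\la\mid i\in C_\la\}$. This immediately implies that $\widehat{R}$ is Cohen--Macaulay of the same dimension as $\widehat{T}$, and the standard duality formula gives the canonical module $\om_{\widehat{R}}=\Hom_{\widehat{T}}(\widehat{R},\widehat{T})$.

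For (1) I would then identify $D_{\widehat{R}}(\widehat{K}/\widehat{R})$ with $\Hom_{\widehat{T}}(\widehat{R},\widehat{T})$ by a Matlis duality computation. Inverting $t^c\in\widehat{T}$ in $\widehat{R}$ produces the total quotient ring, so $\widehat{K}\cong\widehat{R}\otimes_{\widehat{T}}k((t^c))$ and hence $\widehat{K}/\widehat{R}\cong\widehat{R}\otimes_{\widehat{T}}E_{\widehat{T}}$ via the standard model $E_{\widehat{T}}\cong k((t^c))/k[[t^c]]$. Combining this with the change-of-rings identification $E_{\widehat{R}}\cong\Hom_{\widehat{T}}(\widehat{R},E_{\widehat{T}})$ for the injective hull of $\widehat{R}/J_{\widehat{R}}$, together with tensor--hom adjunction and Matlis self-duality $\End_{\widehat{T}}(E_{\widehat{T}})\cong\widehat{T}$, one obtains
\[ D_{\widehat{R}}(\widehat{K}/\widehat{R})\;\cong\;\Hom_{\widehat{T}}(\widehat{R}\otimes_{\widehat{T}}E_{\widehat{T}},E_{\widehat{T}})\;\cong\;\Hom_{\widehat{T}}(\widehat{R},\widehat{T}), \]
which is the canonical module by the previous step.

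For (2) the most efficient route is to reduce to Proposition~\ref{omega}(\ref{sym}): the completion map $R\to\widehat{R}$ is faithfully flat at each maximal graded ideal, and Gorensteinness of a graded Noetherian ring may be checked on those local completions, so $\widehat{R}$ is Gorenstein iff $R$ is, and the latter is equivalent to twisted symmetry of $S$ by the graded case. Alternatively one can argue directly by using (1) to view $\widehat{\om}$ as an explicit fractional ideal of $\widehat{R}$ inside $\widehat{K}$ and repeating the endgame of the graded proof: a free generator of $\widehat{\om}$ over $\widehat{R}$ can be adjusted by a unit of $\widehat{R}_0/J_{\widehat{R}_0}$ to take the form $t^{-a}\otimes\tau^{-1}$ for some $\tau\in L$, which forces $S-(a,\tau)=-((\Z\times L)\setminus S)$. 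The main technical obstacle will be the Matlis-duality bookkeeping in the preceding paragraph, in particular correctly interpreting $E_{\widehat{R}}\cong\Hom_{\widehat{T}}(\widehat{R},E_{\widehat{T}})$ when $\widehat{R}$ is only semilocal (which happens exactly when $\{\la\in L\mid(0,\la)\in S\}$ is non-trivial); once that is in place, each remaining step is routine.
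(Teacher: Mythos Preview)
The paper does not actually prove this proposition: it is stated immediately after Proposition~\ref{omega} with the words ``We also record without proof the following version for the complete setting.'' Your proposal is therefore not competing with a paper proof but filling in an omitted one, and the route you take---replacing $T=k[t^c]$ by $\widehat{T}=k[[t^c]]$, the graded dual by the Matlis dual, and repeating the computation of Proposition~\ref{omega}---is precisely the natural translation the paper is implicitly inviting.

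Your argument for (1) is correct. The key chain
\[
D_{\widehat{R}}(\widehat{K}/\widehat{R})
\;\cong\;\Hom_{\widehat{T}}(\widehat{R}\otimes_{\widehat{T}}E_{\widehat{T}},E_{\widehat{T}})
\;\cong\;\Hom_{\widehat{T}}(\widehat{R},\End_{\widehat{T}}(E_{\widehat{T}}))
\;\cong\;\Hom_{\widehat{T}}(\widehat{R},\widehat{T})
\]
goes through as $\widehat{R}$-module isomorphisms because $\widehat{R}$ is finite free over $\widehat{T}$ and the adjunctions are natural in the $\widehat{R}$-variable. The identification $\widehat{K}\cong\widehat{R}\otimes_{\widehat{T}}k((t^c))$ is justified since $t^c\otimes1$ is a non-zero-divisor in $\widehat{R}$ whose inversion already produces all of $k((t))\otimes k[L]$. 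The semilocal caveat you raise is the right one to flag: when $M=\{\la\in L\mid(0,\la)\in S\}$ is non-trivial, $\widehat{R}$ has several maximal ideals, and one should read $E_{\widehat{R}}$ as the direct sum of the injective hulls of the simple $\widehat{R}$-modules; the coinduction formula $E_{\widehat{R}}\cong\Hom_{\widehat{T}}(\widehat{R},E_{\widehat{T}})$ still yields exactly this module, so nothing breaks.

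For (2), either of your two approaches is fine. The reduction to Proposition~\ref{omega}(\ref{sym}) via faithful flatness of $R\to\widehat{R}$ is the cleanest, and is almost certainly what the author had in mind by leaving the proof to the reader.
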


We take a look at a quick example of extended numerical semigroup rings.
\begin{Ex}
Let $p$ and $q$ be positive integers and consider the hypersurface
\[ R=k[x,y]/(x^p-y^q). \]
It is well-known that if $p$ and $q$ are relatively prime, then $R$ is a domain, and is isomorphic to the (usual) numerical semigroup ring $k[S]$ with $S=\langle p,q\rangle$, the numerical semigroup generated by $p$ and $q$.

We observe that even if $p$ and $q$ are not relatively prime, we can still describe $R$ as an extended numerical semigroup ring. Let $d$ be the gratest common divisor of $p$ and $q$, and put $a=p/d$, $b=q/d$. Since $a$ and $b$ are relatively prime, there exist integers $i,j$ such that $ia-jb=1$ in $\Z/d\Z$.
Now consider the extended numerical semigroup $S\subset\NN\times L$ with $L=\Z/d\Z=\langle \la\rangle$ generated by $(b,\la^i)$ and $(a,\la^j)$.
For example, if $p=6$ and $q=4$ then $d=2$ and one can take $i=j=1$. In this case $S$ can be depicted as follows.
\[ S=
\begin{tabular}{c|ccccccccccccc}
	&$0$&$1$&$2$&$3$&$4$&$5$&$6$&$7$&$8$&$9$&$10$&$11$&$12$\\
	\hline
	$1$&$\bullet$&&&&$\bullet$&$\bullet$&$\bullet$&&$\bullet$&$\bullet$&$\bullet$&$\bullet$&$\bullet$\\
	$\la$&&&$\bullet$&$\bullet$&&&$\bullet$&$\bullet$&$\bullet$&$\bullet$&$\bullet$&$\bullet$&$\bullet$
\end{tabular}
\]
Then the map
\[ k[x,y]/(x^p-y^q)\to k[S], \qquad x\mapsto t^b\otimes\la^i, y\mapsto t^a\otimes\la^j \]
is an isomorphism. 
\end{Ex}

\subsection{An extended numerical semigroup ring of hereditary representation type}
We are now able to give a family of $1$-dimensional Gorenstein rings of hereditary representation type using extended numerical semigroup rings. For a finite dimensional algebra $A$ and each $n\in\Z$, we denote by $\rC_n(A)$ the {\it $n$-cluster category} of $A$, the triangulated hull of $\Db(\mod A)/-\lotimes_ADA[-n]$.
\begin{Thm}\label{num}
Let $L$ be a finite abelian group of order $n$, and let $k$ be a field of characteristic not dividing $n$. Pick $1\neq\la\in L$ and let ${R}=k[[S]]$ be the extended numerical semigroup ring corresponding to
\[ S=(\NN\times L)\setminus \left(\{(0,\mu)\mid \mu\neq1\}\cup \{(1,\la)\}\right). \]
Then there exists a triangle equivalence below for the $n$-subspace quiver $Q$.
\[  \xymatrix@R=1mm{
	&   & &&0\ar[ddll]\ar[ddl]\ar[ddr]\ar[ddrr]&&\\
	\sCM {R}\simeq \rC_0(kQ),&Q\ar@{}[r]|=&\\
	&   & 1&2&\cdots&n-1&n } \]
Therefore, ${R}$ is of hereditary representation type.
\end{Thm}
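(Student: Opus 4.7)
The plan is to apply the tilting-theoretic machinery for one-dimensional Cohen-Macaulay rings from \cite{BIY, haI}: construct an explicit tilting object $T \in \CM R$ with $\sEnd_R(T) \cong kQ$, and then invoke the corresponding Morita-type theorem to deduce the equivalence $\sCM R \simeq \rC_0(kQ)$.

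First I would verify that $R$ is a one-dimensional Gorenstein isolated singularity. The reflection $(i,\mu) \mapsto (1-i, \la\mu^{-1})$ interchanges the $n-1$ degree-$0$ defects $\{(0,\mu) \mid \mu \neq 1\}$ with the single degree-$1$ defect $\{(1,\la)\}$ (and matches negative degrees with degrees $\geq 2$), so $S$ is twisted symmetric with twist $\la$ and Frobenius number $1$. Proposition \ref{omega}(\ref{sym}) then gives that $R$ is Gorenstein of dimension $1$ with $a$-invariant $1$. Reducedness follows from Proposition \ref{shoutai}(\ref{red}) since $\mathrm{char}\,k \nmid n$, and because all defects of $S$ live in degrees $0,1$, every non-maximal homogeneous localization of $R$ is regular, so $R$ is an isolated singularity. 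Hence $\sCM R$ is a $0$-Calabi-Yau algebraic triangulated category.

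Next I would analyze the normalization $\bar R = k[[t]] \otimes_k k[L]$, a free $k[[t]]$-module of rank $n$. Since $k[L]$ is semisimple, after passing to a splitting field for $L$ it decomposes as $\bar R = \bigoplus_\chi \bar R_\chi$ indexed by the $n$ characters of $L$, with each $\bar R_\chi \cong k[[t]]$ an indecomposable rank-one CM module; equivalently $\bar R_\chi \cong R/I_\chi$ for the corresponding minimal prime $I_\chi \subset R$. For the $(n+1)$-st summand, let $R' = R + k \cdot (t \otimes \la)$, the unique intermediate ring between $R$ and $\bar R$ that fills in the degree-$1$ defect at $\la$; it is an indecomposable non-projective rank-one CM module. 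I propose the tilting object
\[ T = R' \oplus \bigoplus_\chi \bar R_\chi, \]
whose $n+1$ indecomposable summands correspond to the $n+1$ vertices of the $n$-subspace quiver $Q$, with $R'$ playing the role of the central vertex.

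The technical heart is the computation of $\sEnd_R(T)$ and verification of $\Ext^{>0}_R(T,T) = 0$. Stable morphisms between these summands are controlled by the conductor quotient $\bar R/R$ of length $n$; the short exact sequence $0 \to R \to \bar R \to \bar R/R \to 0$, together with the character decomposition of its graded pieces ($\bar R/R$ has $(n-1)$-dimensional degree-$0$ part $k[L]/k$ and $1$-dimensional degree-$1$ part), yields exactly one non-zero stable morphism from each $\bar R_\chi$ into $R'$ -- the $n-1$ non-trivial characters contributing via the degree-$0$ part and the remaining arrow coming from the degree-$1$ defect -- and no other stable morphisms between distinct summands. This produces $\sEnd_R(T) \cong kQ$, and the Ext-vanishing is an immediate corollary of the same short exact sequence. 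Applying the one-dimensional tilting theorem of \cite{BIY, haI} then gives the triangle equivalence $\sCM R \simeq \rC_0(kQ)$. The main obstacle I anticipate is the bookkeeping of stable morphisms needed to identify $\sEnd_R(T)$ with $kQ$ with the correct orientation of arrows, together with a routine Galois-descent step when $k$ is not a splitting field for $L$, under which the Galois action permutes the $\bar R_\chi$'s through the outer vertices of $Q$.
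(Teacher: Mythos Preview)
Your strategy of invoking \cite{BIY,haI} is the paper's, but you apply it in the wrong category. The result of \cite{BIY} is a statement about the \emph{graded} stable category: for a reduced one-dimensional graded Gorenstein ring $R^\circ$ with $a$-invariant $a$ it produces a tilting object in $\sCM^\Z\!R^\circ$ whose endomorphism ring is an explicit upper-triangular matrix algebra built from the graded pieces of $R^\circ$ and of its graded total quotient ring $K^\circ$. For $a=1$ this algebra is
\[
A=\begin{pmatrix}R^\circ_0 & K^\circ_1\\ 0 & K^\circ_0\end{pmatrix},
\]
and since $R^\circ_0=k$, $K^\circ_0=k[L]\simeq k^n$ (this is where $\mathrm{char}\,k\nmid n$ enters), and $K^\circ_1\simeq k[L]$ as a $(k,k[L])$-bimodule, one reads off $A\simeq kQ$ immediately. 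Then \cite[Theorem~6.5]{haI} descends the graded equivalence $\sCM^\Z\!R^\circ\simeq\Db(\mod kQ)$ to $\sCM R\simeq\rC_0(kQ)$. No explicit module has to be exhibited and no stable $\Hom$ has to be computed by hand.

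Your plan instead builds $T=R'\oplus\bar R$ in the ungraded complete category and aims to verify $\sEnd_R(T)\cong kQ$ together with $\Ext^{>0}_R(T,T)=0$, and then to ``apply the one-dimensional tilting theorem''. But there is no Morita-type theorem that takes such data in an ungraded $0$-Calabi--Yau triangulated category and outputs an equivalence with $\rC_0(kQ)$; the results of \cite{BIY,haI} both pass through the graded category. Worse, your Ext-vanishing claim cannot hold: in the Gorenstein case $\Ext^i_R(T,T)\cong\sHom_R(T,T[i])$ for $i>0$, and $0$-CY duality then forces $\sHom_R(T,T[i])=0$ for all $i\neq0$, so if $T$ generated one would get $\sCM R\simeq\per kQ=\Db(\mod kQ)$, whence $\Db(\mod kQ)$ would be $0$-CY and $kQ$ symmetric, hence semisimple---contradicting the presence of arrows in $Q$. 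So either generation or Ext-vanishing must fail for your $T$, and in either case the argument does not close. The remedy is to lift everything to $\sCM^\Z\!R^\circ$ and work there; at that point the general BIY theorem already hands you the endomorphism algebra, and your explicit description of $T$ and the bookkeeping of stable morphisms (and the Galois-descent step) become unnecessary.
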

\begin{proof}
	One can depict the extended numerical semigroup as follows.
	\[ S=
	\begin{tabular}{c|ccccc}
		&$1$&$\ast$&$\cdots$&$\ast$&$\la$\\
		\hline
		$0$&$\bullet$&&&&\\
		$1$&$\bullet$&$\bullet$&$\cdots$&$\bullet$&\\
		$2$&$\bullet$&$\bullet$&$\cdots$&$\bullet$&$\bullet$\\
		$3$&$\bullet$&$\bullet$&$\cdots$&$\bullet$&$\bullet$
	\end{tabular}
	\]
	We readily see that $S$ is a connected, twisted symmetric extended numerical semigroup ring with Frobenius number $1$, so that $R^\circ=k[S]$ is a $1$-dimensional positively graded reduced Gorenstein ring with $R_0=k$ and of $a$-invariant $1$. Then by \cite{BIY} we have a triangle equivalence
	\[ \sCM^\Z\!R^\circ \simeq \Db(\mod A) \]
	for $A=\begin{pmatrix}R^\circ_0&K^\circ_1\\0&K^\circ_0\end{pmatrix}$ and the graded total quotient ring $K^\circ=k[t^{\pm1}]\otimes k[L]$. Now by our assumption on the characteristic of $k$ we have $K^\circ_0=k[L]\simeq k\times\cdots\times k$ ($n$ copies) as $k$-algebras, thus $A=kQ$ for the $n$-subspace quiver $Q$ described above. By \cite[Theorem 6.5]{haI} we deduce a triangle equivalence
	\[ \sCM R^\circ\simeq \rC_0(kQ) \]
	for the ungraded Cohen-Macaulay modules, hence also $\sCM{R}\simeq\rC_0(kQ)$ for the complete case.
\end{proof}
In particular, ${R}$ is of finite representation type if $n\leq3$, and tame representation type when $n=4$.
\begin{Ex}\label{Klein}
	Let $L=\Z/2\Z\times\Z/2\Z=\{1,\la,\mu,\nu\}$ be the Klein $4$-group, and let $S\subset\NN\times L$ be the numerical subgroup generated by $(1,1)$, $(1,\la)$, and $(1,\mu)$, which is depicted as below.
	\[ S=
	\begin{tabular}{c|cccc}
		&$1$&$\la$&$\mu$&$\nu$\\
		\hline
		$0$&$\bullet$&&&\\
		$1$&$\bullet$&$\bullet$&$\bullet$&\\
		$2$&$\bullet$&$\bullet$&$\bullet$&$\bullet$\\
		$3$&$\bullet$&$\bullet$&$\bullet$&$\bullet$
	\end{tabular}
	\]
	Then it is connected, twisted symmetric, and has Frobenius number $1$. Over a field of characteristic $\neq2$ we obtain an equivalence
	\[ \sCM R\simeq \rC_0(kQ) \]
	for the extended Dynkin quiver $Q$ of type $\widetilde{D_4}$.
	In particular, $R$ is of tame representation type.
\end{Ex}

Note that even for the case $\ch k=2$ we do have triangle equivalences \cite{BIY,haI}, but $R$ is no longer of hereditary representation type.
\begin{Rem}
	In the setting of Example \ref{Klein}, suppose $\ch k=2$. The extended numerical semigroup ring $R$ is not reduced, in which case the category we shall study is those consisting of Cohen-Macaulay modules which are locally free on the punctured spectrum.
	\[ \begin{aligned}
		\sCM^\Z_0\!R&=\{ M\in \sCM^\Z\!R\mid M\otimes_RK\in\proj^\Z\!K\},\\
		\sCM_0\!R&=\{ M\in \sCM R \mid M\otimes_RK\in\proj K\}.
		\end{aligned} \]
	The algebra $K_0\simeq k[x,y]/(x^2,y^2)$ is non-semisimple, and $A=\begin{pmatrix}R_0&K_1\\0&K_0\end{pmatrix}$ is presented by the following quiver with relations.
	\[ \xymatrix{ \circ\ar[r]&\circ\ar@(r,u)[]_-x\ar@(r,d)[]^-y }, \quad xy=yx, x^2=y^2=0. \]
	By \cite[Theorem 6.5]{haI} we obtain a commutative diagram
	\[ \xymatrix@!R=3mm{
		\per A\ar[r]\ar@{-}[d]^-\rsimeq&\rC_0(A)\ar@{-}[d]^-\rsimeq\\
		\sCM_{0}^\Z\!R\ar[r]&\sCM_{0}\!R, } \]
	where $\rC_0(A)$ is the triangulated hull of $\per A/-\lotimes_ADA$.
\end{Rem}

\section{Some resolutions over polynomial rings}
This is a preparatory section for the computation we shall do in Section \ref{non-Gor}.
\medskip

Let $V$ be an $n$-dimensional vector space. For each $0\leq l\leq n$ and $m\geq0$ we have a canonical map
\[ \xymatrix@R=1mm{
	\al_{l,m}\colon\Wedge^{l}V\otimes D(\Sym^mV)\ar[r]&\Wedge^{l-1}V\otimes D(\Sym^{m-1}V) \\
	(x_1\wedge\cdots\wedge x_l)\otimes \varphi\ar@{|->}[r]& \Sum_{i=1}^l(-1)^{i-1}(x_1\wedge\cdots\wedge\widehat{x_i}\wedge\cdots\wedge x_l)\otimes\varphi x_i,} \]
where $x\in V$ acts on $\varphi\in D(\Sym^mV)$ by the dual of the multiplication map $x\colon\Sym^{m-1}V\to\Sym^mV$. For $m<0$ we regard $\Sym^mV=0$.
\begin{Lem}\label{tor}
For each $m\neq-n$, the sequence of vector spaces
\[ \xymatrix{ 0\ar[r]&\Wedge^{n}V\otimes D(\Sym^{m+n}V)\ar[r]^-{\al_{n,m+n}}&\cdots\ar[r]&V\otimes D(\Sym^{m+1}V)\ar[r]^-{\al_{1,m+1}}&D(\Sym^mV)\ar[r]&0 } \]
is exact.
\end{Lem}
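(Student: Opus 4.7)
The plan is to identify this sequence with the internal-degree $-m$ piece of a Koszul complex, whose total homology I will then compute via graded Matlis duality.

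Set $S = \Sym V$ with $V$ placed in internal degree $1$, and let $E := D(S) = \bigoplus_{p\geq 0} D(\Sym^p V)$ denote the graded $k$-dual, viewed as a graded $S$-module via $(x\cdot\varphi)(f) = \varphi(xf)$. Then the maps $\al_{\bullet,\bullet}$ assemble into the Koszul differential on $K_\bullet := \Wedge^\bullet V\otimes E$, and since $\Wedge^l V\otimes D(\Sym^{m+l}V)$ lies in internal degree $l-(m+l) = -m$, the sequence of the lemma is precisely the internal-degree $-m$ component of $K_\bullet$.

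Next, because $\Wedge^\bullet V\otimes S$ is a finitely generated graded free resolution of $k = S/(x_1,\dots,x_n)$, the homology of $K_\bullet = (\Wedge^\bullet V\otimes S)\otimes_S E$ computes $\Tor^S_\bullet(k,E)$. The natural isomorphism $P\otimes_S D(S)\cong D\Hom_S(P,S)$ for finitely generated projective $P$, applied termwise and passed to cohomology, yields graded Matlis duality
\[ \Tor^S_l(k,E)\cong D\Ext^l_S(k,S). \]
Since $S$ is graded Gorenstein of Krull dimension $n$, we have $\Ext^l_S(k,S)=0$ for $l\neq n$, and dualizing the Koszul resolution identifies $\Ext^n_S(k,S)$ with a one-dimensional space in internal degree $-n$. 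Consequently $\Tor^S_l(k,E)=0$ for $l\neq n$, while $\Tor^S_n(k,E)\cong \Wedge^n V$ is concentrated in internal degree $n$.

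Restricting to internal degree $-m$ with $m\neq -n$, every $\Tor$ group vanishes in that degree, so $K_\bullet$ is exact at every position in this slice, proving exactness of the sequence in the lemma. I do not anticipate a serious obstacle; the main care needed is the bookkeeping of internal gradings and the direction of degree shifts in the Matlis identification, which is routine.
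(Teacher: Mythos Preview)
Your proof is correct and follows essentially the same route as the paper: both identify the sequence as the degree $-m$ slice of the Koszul complex tensored with $E=DS$, compute $\Tor^S_\bullet(k,E)$ via duality with $\RHom_S(k,S)$, and use that $S$ is Gorenstein with $\Ext^n_S(k,S)$ one-dimensional in degree $-n$. The only difference is notational---the paper phrases the duality step as $k\lotimes_S DS = D\RHom_S(k,S)$ rather than writing out the termwise Matlis isomorphism.
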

\begin{proof}
	We view $V$ as a graded vector space concentrated in degree $1$, and the symmetric algebra $S=\Sym V$ as a graded ring. Let $E=E_S(k)$ be the injective hull of the simple $S$-module $k$ in $\Mod^\Z\!S$. We know that $E\simeq DS=\bigoplus_{m\in\Z}D(\Sym^mV)$, where $D$ is the graded dual $\bigoplus_{i\in\Z}M_i\mapsto \bigoplus_{i\in\Z}\Hom_k(M_{-i},k)$. Applying $-\otimes_SE$ to the Koszul complex of $S$ we have a sequence
	\[ \xymatrix{ 0\ar[r]&\Wedge^{n}V\otimes E\ar[r]&\cdots\ar[r]&V\otimes E\ar[r]&E\ar[r]&0 } \]
	which computes $\Tor_i^S(k,E)$. We have $k\lotimes_SE=k\lotimes_SDS=D\RHom_S(k,S)=D(k(n)[-n])=k(-n)[n]$, so that the above sequence is acyclic except at the leftmost term, where it has $1$-dimensional cohomology in degree $n$. We obtain the conclusion by taking the degree $-m$ part.
\end{proof}

Let $S=\Sym V$ be the symmetric algebra over $V$, so it is isomorphic to the polynomial ring in $n$ variables. Consider the Koszul complex
\[ \xymatrix{ 0\ar[r]&S\otimes\Wedge^nV\ar[r]^-{d_n}&S\otimes\Wedge^{n-1}V\ar[r]^-{d_{n-1}}&\cdots\ar[r]&S\otimes\Wedge^2V\ar[r]^-{d_2}&S\otimes V\ar[r]^-{d_1}&S } \]
of $S$ and put $\Diff^i=\Ker d_i$.
\begin{Lem}\label{dual}
There exists an exact sequence of $S$-modules
\[ \xymatrix{ 0\ar[r]&S\otimes\Wedge^nV\ar[r]&\Diff^{n-1}\otimes D(\Sym^{n-1}V)\ar[r]&\cdots\ar[r]&\Diff^{2}\otimes D(\Sym^{2}V)\ar[r]&\Diff\otimes DV\ar[r]&S}, \]
in which the cokernel of the last map is $k$.
\end{Lem}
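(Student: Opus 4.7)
The plan is to realize the target exact sequence as the $E_2=E_\infty$ page of a spectral sequence associated to an explicit bicomplex. Define
\[ B^{p,q}=S\otimes\Wedge^{p+q}V\otimes D(\Sym^{q-1}V)\qquad(p\geq 0,\ q\geq 1,\ p+q\leq n), \]
equipped with horizontal differential $d_h\colon B^{p,q}\to B^{p-1,q}$ induced by the Koszul differential $d_{p+q}$ on the $S\otimes\Wedge V$ factor, and vertical differential $d_v\colon B^{p,q}\to B^{p,q-1}$ induced by $\alpha_{p+q,q-1}$ of Lemma \ref{tor} on the $\Wedge V\otimes D(\Sym V)$ factor. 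A direct Koszul-type computation (of the same flavour that yields $\alpha^2=0$) shows $d_hd_v+d_vd_h=0$, so $B$ is a genuine bicomplex.

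I first compute $H_*(\Tot B)$ by filtering by columns. The column $B^{p,\bullet}$ with differential $d_v$ is $S$ tensored with the sequence of Lemma \ref{tor} at parameter $m=-p-1$, which is exact unless $m=-n$, i.e.\ $p=n-1$. In the exceptional case $p=n-1$, the constraint $p+q\leq n$ forces the column to consist of the single term $B^{n-1,1}=S\otimes\Wedge^nV$. Hence the $E_1$-page is concentrated at $(n-1,1)$ with value $S\otimes\Wedge^nV$; higher differentials vanish for bidegree reasons, and one obtains $H_n(\Tot B)\cong S\otimes\Wedge^nV$ with all other cohomology zero.

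Now filter by rows. The row $B^{\bullet,q}$ with $d_h$ is a tail of the Koszul resolution of $k$ tensored with $D(\Sym^{q-1}V)$, so Koszul exactness gives $H^p(B^{\bullet,q};d_h)=0$ for $p>0$ and, using $\Im d_q=\Diff^{q-1}$,
\[ E_1^{0,q}\cong \Diff^{q-1}\otimes D(\Sym^{q-1}V)\qquad(q=1,\dots,n), \]
with the convention $\Diff^0:=\mathfrak{m}$ so that $E_1^{0,1}\cong\mathfrak{m}$, and the isomorphism $d_n\colon S\otimes\Wedge^nV\xrightarrow{\sim}\Diff^{n-1}$ so that $E_1^{0,n}\cong S\otimes\Wedge^nV\otimes D(\Sym^{n-1}V)$. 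Since $E_1$ lives along the single row $p=0$, the differentials $d_r$ for $r\geq 2$ vanish by bidegree, so $E_\infty=E_2=H(E_1;d_1)$ with $d_1$ induced by $d_v$. Matching against the total cohomology computed above forces the complex
\[ 0\to S\otimes\Wedge^nV\to\Diff^{n-1}\otimes D(\Sym^{n-1}V)\to\cdots\to\Diff\otimes DV\to\mathfrak{m}\to 0 \]
to be exact, and splicing with $0\to\mathfrak{m}\to S\to k\to 0$ yields the stated exact sequence with cokernel $k$ at the last map.

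The main technical care lies in the sign verification $d_hd_v+d_vd_h=0$ and the edge-case identifications of $E_1^{0,q}$ at $q=1$ and $q=n$; the remainder is standard spectral-sequence bookkeeping combined with the two exactness inputs provided by Koszul and Lemma \ref{tor}.
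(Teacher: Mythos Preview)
Your proof is correct and follows essentially the same strategy as the paper. Both arguments work with the bicomplex whose terms are $S\otimes\Wedge^aV\otimes D(\Sym^bV)$, using exactness of the rows (truncated Koszul complexes) and of the columns (the sequences of Lemma~\ref{tor}); the paper phrases the comparison informally as ``viewing the diagram as a horizontal complex of vertical complexes'', while you run the two spectral sequences of the bicomplex explicitly. The only cosmetic difference is that you omit the rightmost (diagonal) column present in the paper's picture---this column corresponds to Lemma~\ref{tor} with $m=0$, hence is exact, so its removal is immaterial.
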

\begin{proof}
	To save space we write $E^i$ for $D(\Sym^iV)$. Consider the diagram
	\newcommand{\DSym}[1]{E^{#1}}
	\newcommand{\DV}{E^1}
	\[ \xymatrix@C=2mm{
		&&&&&&&&S\otimes\Wedge^nV\otimes \DSym{n}\ar[d]\\
		&&&&&&S\otimes\Wedge^nV\otimes \DSym{n-1}\ar[rr]\ar[d]&&S\otimes\Wedge^{n-1}V\otimes \DSym{n-1}\ar[d]\\
		&&&&&\cdots&\cdots\ar[d]&&\cdots\ar[d]\\
		&&&&S\otimes\Wedge^nV\otimes\DSym{2}\ar[r]\ar[d]&\cdots\ar[r]&S\otimes\Wedge^3V\otimes\DSym{2}\ar[rr]\ar[d]&&S\otimes\Wedge^2V\otimes\DSym{2}\ar[d]\\
		&&S\otimes\Wedge^nV\otimes \DV\ar[rr]\ar[d]&&S\otimes\Wedge^{n-1}V\otimes \DV\ar[r]\ar[d]&\cdots\ar[r]&S\otimes\Wedge^2V\otimes \DV\ar[rr]\ar[d]&&S\otimes V\otimes \DV\ar[d]\\
		S\otimes\Wedge^nV\ar[rr]&&S\otimes\Wedge^{n-1}V\ar[rr]&&S\otimes\Wedge^{n-2}V\ar[r]&\cdots\ar[r]&S\otimes V\ar[rr]&&S.} \]
	Here, each row is obtained by applying $-\otimes E^i$ to a truncated Koszul complex, thus acyclic except at the rightmost position. Also, each column is obtained by applying $S\otimes-$ to the sequences in Lemma \ref{tor} (for $m=-n,-n+1,-n+2,\ldots,-1,0$), thus is exact except for the leftmost one. Moreover, it is easily verified that each square is commutative.
	
	Now, taking the images of the rightmost horizontal maps, we obtain a complex
	\[ \xymatrix{ 0\ar[r]&\Diff^{n-1}\otimes \DSym{n-1}\ar[r]&\cdots\ar[r]&\Diff^{2}\otimes \DSym{2}\ar[r]&\Diff\otimes\DSym{1}\ar[r]&\m\ar[r]&0}, \]
	where $\m$ is the maximal ideal of $S$ generated by $V$.
	Viewing the above diagram as a horizontal complex of vertical complexes, acyclicity of the middle vertical complexes yields that the above complex is quasi-isomorphic to $S\otimes\Wedge^nV$ shifted by $[n-1]$. This gives the desired assertion.
\end{proof}
\begin{Rem}
	An easy diagram chase shows that the first map $S\otimes\Wedge^nV\to\Diff^{n-1}\otimes D(\Sym^{n-1}V)$ is given as follows: take a basis $\{ m_i\}_i$ of $\Sym^{n-1}V$ and let $\{\mu_i\}_i$ be its dual basis. Then the map is given by
	\[ 1\otimes(x_1\wedge\cdots\wedge x_n)\mapsto\sum_i \pm m_i\otimes(x_1\wedge\cdots\wedge x_n)\otimes\mu_i \]
	under the canonical identification $S\otimes\Wedge^nV\xsimeq\Diff^{n-1}$.
\end{Rem}

\section{A non-Gorenstein example}\label{non-Gor}
So far we have been discussing {\it Gorenstein} rings of hereditary representation type. In this section, we give a {\it non-Gorenstein} example whose Cohen-Macaulay representations can be controlled by quiver representations, thus should be regarded as being hereditary representation type. 
\subsection{$2$-cluster tilting objects}
Let us collect some further results on cluster tilting theory, with an emphasis that ``$2$''-cluster tilting behave much nicely than general higher dimensional cluster tilting objects.

First of all, we note the following well-known lemma which will be frequently used.
\begin{Lem}\label{Om}
Let $\E$ be an exact category with enough projectives $\P$, $A,B\in\E$, and suppose that $\Ext^1_\E(A,P)=0$ for all $P\in\P$. Then there are natural isomorphisms
\[ \xymatrix@R=1mm{
	\sHom_\E(A,B)\ar[r]^-\simeq&\sHom_\E(\Om A,\Om B), \\
	\sHom_\E(\Om A,B)\ar[r]^-\simeq&\Ext^1_\E(A,B).} \]
\end{Lem}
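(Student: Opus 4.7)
The plan is to establish the second isomorphism first, since it is the easier of the two and will feed into the first. Choose a conflation $0\to\Om A\to P\to A\to 0$ with $P\in\P$ and apply $\Hom_\E(-,B)$; the resulting exact sequence
\[ \Hom_\E(P,B)\to\Hom_\E(\Om A,B)\to\Ext^1_\E(A,B)\to\Ext^1_\E(P,B)=0 \]
identifies $\Ext^1_\E(A,B)$ with the cokernel of the first map. It then suffices to verify that this image coincides with the subgroup of morphisms $\Om A\to B$ factoring through some object of $\P$. One containment is clear since $P$ itself is projective. For the other, given a factorization $\Om A\xrightarrow{h} Q\xrightarrow{g} B$ with $Q\in\P$, the hypothesis $\Ext^1_\E(A,Q)=0$ permits extending $h$ along the syzygy inclusion $\Om A\hookrightarrow P$ to a morphism $P\to Q$, and post-composing with $g$ exhibits the original map as an element in the image of $\Hom_\E(P,B)$.

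Next I would deduce the first isomorphism by dimension-shifting. Starting from a conflation $0\to\Om B\to Q\to B\to 0$ with $Q\in\P$ and applying $\Hom_\E(A,-)$ yields
\[ \Hom_\E(A,Q)\to\Hom_\E(A,B)\to\Ext^1_\E(A,\Om B)\to\Ext^1_\E(A,Q)=0, \]
the last vanishing being precisely the hypothesis on $A$. An argument parallel to the rerouting in the previous paragraph—any projective factorization $A\to R\to B$ of a map can be lifted through the surjection $Q\to B$ using projectivity of $R$, and hence routed through $Q$—identifies the image of $\Hom_\E(A,Q)$ with the morphisms killed in $\sHom_\E$. This gives $\Ext^1_\E(A,\Om B)\simeq\sHom_\E(A,B)$. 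Applying the already established second isomorphism with $B$ replaced by $\Om B$ (the hypothesis $\Ext^1_\E(A,P)=0$ is unaffected), I obtain
\[ \sHom_\E(A,B)\simeq\Ext^1_\E(A,\Om B)\simeq\sHom_\E(\Om A,\Om B). \]

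Naturality in both arguments follows automatically from the naturality of the long exact Ext sequences. The only nontrivial input is the rerouting step in the first paragraph, which is the unique place the hypothesis $\Ext^1_\E(A,P)=0$ for $P\in\P$ is used; everything else is standard dimension-shifting, so I expect no real obstacle beyond getting this one lifting argument right.
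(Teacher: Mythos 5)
Your proof is correct; the paper states this lemma without proof (as "well-known"), and your dimension-shifting argument — identifying $\Ext^1_\E(A,B)$ with the cokernel of $\Hom_\E(P,B)\to\Hom_\E(\Om A,B)$ via the rerouting of projective factorizations, then chaining $\sHom_\E(A,B)\simeq\Ext^1_\E(A,\Om B)\simeq\sHom_\E(\Om A,\Om B)$ — is exactly the standard argument one would supply. The only quibble is your closing remark: the hypothesis $\Ext^1_\E(A,P)=0$ is used not only in the rerouting step but also to get $\Ext^1_\E(A,Q)=0$ in the second exact sequence, as your own text correctly notes.
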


The machinery of relating the representation theory of $R$ with that of finite dimensional algebra through a $2$-cluster tilting object is summarized in the following result. 
\begin{Prop}[{e.g.\,\cite[Theorem 3.2]{DL}}]\label{DL}
	Let $R$ be a complete Cohen-Macaulay local isolated singularity. Suppose that $\CM R$ contains a $2$-cluster tilting object $X$ and put $\Ga=\sEnd_R(X)$.
	\begin{enumerate}
		\item We have $\Ga=\sEnd_R(\Om X)$ and $\sHom_R(\Om X,X)=0$.
		\item The functor
		\[ \xymatrix{ \sHom_R(\Om X,-)\colon\sCM R/[X]\ar[r]&\mod\Ga } \]
		is an equivalence.
	\end{enumerate}
\end{Prop}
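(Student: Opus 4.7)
The plan is to dispatch part (1) as a direct consequence of Lemma~\ref{Om} and the $2$-cluster tilting condition, then attack part (2) by constructing the inverse through the standard cluster-tilting approximation and a diagram chase. First I would observe that since $X$ is $2$-cluster tilting in $\CM R$, functorial finiteness forces the right $\add X$-approximation of the projective generator $R$ to be a split epimorphism, so $R\in\add X$; in particular $\Ext^1_R(X,P)=0$ for all $P\in\add R$, which is exactly the hypothesis needed to invoke Lemma~\ref{Om} with $A=X$. Taking $B=X$, the first isomorphism in the lemma yields $\sEnd_R(X)\simeq\sHom_R(\Om X,\Om X)=\sEnd_R(\Om X)$, while the second gives $\sHom_R(\Om X,X)\simeq\Ext^1_R(X,X)$, and the latter vanishes by the definition of $2$-cluster tilting. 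This proves (1).

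For (2), the vanishing just established shows that $F:=\sHom_R(\Om X,-)$ kills any morphism factoring through $\add X$, so it descends to $\sCM R/[X]\to \Mod\Ga$; that it lands in $\mod\Ga$ will follow from the finite presentation produced below together with the isolated-singularity hypothesis, which forces stable homs between CM modules to be finite-dimensional. The core input is that for each $Y\in\CM R$ there is a short exact sequence
\[ 0\to X_1\to X_0\to Y\to 0 \]
with $X_i\in\add X$: the map $X_0\to Y$ is a right $\add X$-approximation, surjective because $R\in\add X$, and $X_1\in\add X$ follows from the long exact sequence of $\Ext^\ast_R(X,-)$, which forces $\Ext^1_R(X,X_1)=0$. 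Applying $F$ to this sequence produces a presentation of $F(Y)$, and this is the single construction driving all three properties.

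Essential surjectivity I would handle by reversing the construction: given $M\in\mod\Ga$, take a projective presentation $\sHom_R(\Om X,\Om X_1)\to\sHom_R(\Om X,\Om X_0)\to M\to 0$, lift it (via Yoneda and the isomorphism $\sHom_R(X,X)\simeq\sHom_R(\Om X,\Om X)$ of part (1)) to a morphism $\Om X_1\to\Om X_0$ in $\sCM R$, and take $Y$ to be its mapping cone; applying $F$ and using $\sHom_R(\Om X,X_1[1])\simeq\sHom_R(\Om X,\Om X_1)[\text{shift}]$ together with the vanishing in (1), one checks $F(Y)\simeq M$. For fullness and faithfulness I would compare the presentation of $F(Y)$ coming from the approximation sequence with a corresponding one for $Z$, and run a five-lemma argument: a $\Ga$-morphism $F(Y)\to F(Z)$ lifts uniquely, modulo maps factoring through $\add X$, to a stable morphism $Y\to Z$. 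The main obstacle I anticipate is precisely this bookkeeping step: one must juggle the shift $\Om$ as one translates between $F$ and $\Ext^1_R(X,-)$, and verify that liftings and factorizations are coherent in the triangulated stable category, rather than the ambient exact category, where cokernels need not exist. This is routine in spirit but requires careful diagram management, and is the point where the vanishing $\sHom_R(\Om X,X)=0$ from part (1) is used repeatedly.
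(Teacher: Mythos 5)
The paper offers no proof of this proposition: it is quoted from Demonet--Liu \cite[Theorem 3.2]{DL}, so your attempt has to stand on its own. Your part (1) is essentially correct, except that the justification of $R\in\add X$ is circular --- you cannot argue that the right $\add X$-approximation of $R$ is a split epimorphism before knowing that $\id_R$ is a morphism from an object of $\add X$. The correct one-line argument is that $\Ext^1_R(R,X)=0$ because $R$ is projective, so $R$ lies in $\{Y\in\CM R\mid \Ext^1_R(Y,X)=0\}=\add X$ by the second equality defining $2$-cluster tilting. Granting this, your application of Lemma~\ref{Om} with $A=B=X$ is exactly right.

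Part (2) contains two genuine gaps. First, applying $F=\sHom_R(\Om X,-)$ to $0\to X_1\to X_0\to Y\to 0$ does \emph{not} produce a projective presentation of $F(Y)$: by your own part (1), $F(X_0)=F(X_1)=\sHom_R(\Om X,X_i)=0$, so the sequence you obtain reads $0\to 0\to F(Y)$ and carries no information. The projective $\Ga$-modules realized by $F$ are the $F(\Om X')=\sHom_R(\Om X,\Om X')$ for $X'\in\add X$, not the $F(X')$; to present $F(Y)$ one must first pass to syzygies (e.g.\ replace the approximation sequence by $0\to\Om X_1\to\Om X_0\oplus Q\to\Om Y\to 0$ with $Q$ projective, or work with $\Ext^1_R(X,-)$ and left $\add X$-approximations) and track the shift by $\Om$ throughout the fullness/faithfulness and surjectivity arguments. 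Second, your essential surjectivity step takes a mapping cone and a suspension $[1]$ in $\sCM R$, but for non-Gorenstein $R$ the stable category $\sCM R$ is \emph{not} triangulated, and the proposition is stated for general Cohen--Macaulay $R$ and applied in Section~\ref{non-Gor} precisely to a non-Gorenstein ring. The construction has to be carried out in the exact category $\CM R$, replacing cones by cokernels of inflations into objects of $\add X$ (note that $\om\in\add X$, so injective objects are available inside $\add X$), together with a depth argument to see that the resulting cokernel is again Cohen--Macaulay. Both defects are repairable --- this is in effect what \cite{DL} does --- but as written the central construction of your proof of (2) does not compute what you claim it computes.
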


Let us note a complementary observation which is a non-Gorenstein analogue of \cite[Corollary 6.5(1)]{IYo}. We say that a functor $F$ from $\CM R$ to an abelian category $\A$ {\it preserves rigidity} if we have $\Ext^1_\A(FM,FM)=0$ whenever $\Ext^1_R(M,M)=0$.
\begin{Prop}\label{rigid}
In the setting of Proposition \ref{DL}, consider the functor $F=\sHom_R(\Om X,-)\colon\CM R\to\mod\Ga$. If $M,N\in\CM R$ satisfies $\Ext^1_R(M,N)=0$, then $\Hom_\Ga(FN,\tau FM)=0$. In particular, the functor $F$ preserves rigidity.
\end{Prop}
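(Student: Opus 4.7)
The plan is to transfer the vanishing of $\Ext^1_R(M,N)$ across the equivalence $F\colon\sCM R/[X]\xrightarrow{\simeq}\mod\Ga$ of Proposition~\ref{DL} using Auslander-Reiten duality on both sides. Since $R$ is a complete Cohen-Macaulay local isolated singularity with canonical module, the category $\CM R$ carries an Auslander-Reiten translate $\tau_R$ together with a natural isomorphism $D\Ext^1_R(M,N)\cong\sHom_R(N,\tau_R M)$ for $M,N\in\CM R$. Under the hypothesis $\Ext^1_R(M,N)=0$ this specializes to $\sHom_R(N,\tau_R M)=0$.

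The crux of the argument will be to identify $\tau FM\cong F(\tau_R M)$ in $\mod\Ga$; that is, the compatibility of $\tau_R$ with the AR translate $\tau$ on $\mod\Ga$ under the equivalence $F$. To pin this down I take the AR sequence $0\to \tau_R M\to E\to M\to 0$ in $\CM R$ and verify, using the $2$-cluster-tilting property of $X$ together with Lemma~\ref{Om}, that applying $F$ yields a short exact sequence $0\to F(\tau_R M)\to F(E)\to FM\to 0$ in $\mod\Ga$. Since this resulting sequence is an almost split sequence in $\mod\Ga$ ending at $FM$, it forces $\tau FM\cong F(\tau_R M)$.

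Combining these observations gives
\[ \Hom_\Ga(FN,\tau FM)\cong\Hom_\Ga(FN,F(\tau_R M))\cong\Hom_{\sCM R/[X]}(N,\tau_R M), \]
which is a quotient of $\sHom_R(N,\tau_R M)=0$ and hence vanishes. For the in-particular statement, I specialize $N=M$: from $\Hom_\Ga(FM,\tau FM)=0$ one immediately gets $\overline{\Hom}_\Ga(FM,\tau FM)=0$, and the Auslander-Reiten formula $D\Ext^1_\Ga(FM,FM)\cong\overline{\Hom}_\Ga(FM,\tau FM)$ for the Artin algebra $\Ga$ then yields $\Ext^1_\Ga(FM,FM)=0$, i.e., $FM$ is rigid.

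The hard part will be the identification $\tau FM\cong F(\tau_R M)$. While this is morally forced by AR theory being intrinsic to the equivalent categories, rigorously verifying it in the non-Gorenstein setting (where $\sCM R$ is not triangulated and $F$ is not \emph{a priori} exact on short exact sequences in $\CM R$) requires carefully controlling the contribution of $\Ext^1_R(\Om X,-)$ on the components of the AR sequence so that $F$ preserves its exactness.
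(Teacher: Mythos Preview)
Your overall strategy—pull back $\tau$ along the equivalence $F$ and invoke AR duality—is natural, but there are two genuine problems, and the paper's proof avoids both by taking a different route.

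First, the AR duality formula you quote is not correct in the non-Gorenstein setting. For $M,N\in\CM R$ one has $D\Ext^1_R(M,N)\cong\overline{\Hom}_R(N,\tau_R M)$, with the \emph{injectively} stable Hom (morphisms modulo $\add\om$), not the projectively stable $\sHom$. A quick sanity check: for $N=R$ your formula would give $\sHom_R(R,\tau_R M)=0$, hence $\Ext^1_R(M,R)=0$ for all $M$, which is false when $R$ is not Gorenstein. This is repairable—since $X$ is $2$-cluster tilting one always has $\om\in\add X$, so $\Hom_{\sCM R/[X]}$ is a quotient of $\overline{\Hom}_R$ as well—but the argument as written is in error.

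Second, and more seriously, the identification $\tau FM\cong F(\tau_R M)$ is the entire content of your proof and you have not established it. Your sketch ``apply $F$ to the AR sequence and check it stays almost split'' needs two things you do not supply: (a) right-exactness of $F\cong\Ext^1_R(X,-)$ on the AR sequence, which amounts to vanishing of the connecting map $\Ext^1_R(X,M)\to\Ext^2_R(X,\tau_R M)$ and does not follow from the $2$-cluster tilting condition alone; and (b) that the resulting sequence is almost split in $\mod\Ga$, which is not automatic because a morphism $L\to FM$ in $\mod\Ga$ corresponds only to a map in the \emph{quotient} $\sCM R/[X]$, not to an honest morphism in $\CM R$ that one could factor through $E\to M$.

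The paper's proof sidesteps all of this by computing $\tau FM$ directly. It takes a two-term $\add X$-coresolution $0\to M\to X^0\to X^1\to 0$; applying $F$ (and using $\sHom_R(\Om X,X)=0$) yields the minimal projective presentation $F\Om X^0\to F\Om X^1\to FM\to 0$, so the Nakayama functor gives $\tau FM$ as a kernel computed from $D\sHom_R(\Om X^i,\Om X)$. The desired vanishing $\Hom_\Ga(FN,\tau FM)=0$ then unwinds to surjectivity of $\Ext^1_R(X^1,N)\to\Ext^1_R(X^0,N)$, which is immediate from $\Ext^1_R(M,N)=0$. No comparison of AR translates is required.
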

\begin{proof}
	Let $0\to M\to X^0\to X^1\to 0$ be the exact sequence with $X^0, X^1\in\add X$ and $X^0\to X^1$ being a radical map. Applying $F$ gives an exact sequence
	\[ \xymatrix{ \sHom_R(\Om X,\Om X^0)\ar[r]&\sHom_R(\Om X,\Om X^1)\ar[r]&\sHom_R(\Om X, M)\ar[r]& \sHom_R(\Om X,X^0) }. \]
	Since the last term is $0$ by Proposition \ref{DL}(1), it is the minimal projective presentation of $FM$ in $\mod\Ga$.
	Applying the Nakayama functor over $\Ga$ gives the exact sequence
	\[ \xymatrix{ 0\ar[r]& \tau FM\ar[r]&D\sHom_R(\Om X^0,\Om X)\ar[r]&D\sHom_R(\Om X^1,\Om X) } \]
	which computes $\tau FM$. Now, our assertion $\Hom_\Ga(FN,\tau FM)=0$ amounts to showing the map
	\[ \xymatrix{ \Hom_\Ga(FN,D\sHom_R(\Om X^0,\Om X))\ar[r]&\Hom_\Ga(FN,D\sHom_R(\Om X^1,\Om X)) } \]
	is injective. By duality, it is equivalent to the map
	\[ \xymatrix{ \sHom_R(\Om X^1,N)\ar[r]&\sHom_R(\Om X^0,N) }, \]
	or $\Ext^1_R(X^1,N)\to\Ext^1_R(X^0,N)$, being surjective. This follows from the assumption $\Ext^1_R(M,N)=0$.
\end{proof}
Note that this functor does not necessarily detect rigidity.
\subsection{Proof of Theorem \ref{CM}}\label{Proof}
Recall that $S=k[[x_0,x_1,y_0,y_1,y_2]]$. We give a complete grading on $S$ by $\deg x_i=(1,0)$, $\deg y_i=(0,1)$. The completely $\Z$-graded CM ring of our interest is
\[ R=S^{(1,1)}. \]
Also define the completely graded $R$-modules $M_i$ by $M_i=\prod_{j\in\Z}S_{(i,0)+(j,j)}$. Thus $M_i$ is generated in degree $0$ if $i\geq0$ and in degree $-i$ if $i<0$. By Corollary \ref{GP} (or \cite{St}), we know that the canonical module is isomorphic to $M_1$, and the modules $M_{-1}$, $M_0=R$, $M_1=\om_R$, and $M_2$ are CM.

Let us collect some basic information which will be used throughout this section.

\begin{Lem}\label{kos}
There exist exact sequences
\begin{enumerate}
	\item\label{xx} $\xymatrix{0\ar[r]&M_{i-2}\ar[r]&M_{i-1}^2\ar[r]&M_i\ar[r]&k[[y_0,y_1,y_2]]_{-i}\ar[r]&0}$ for all $i\in\Z$.
	\item\label{yyy} $\xymatrix{0\ar[r]&M_{i+3}\ar[r]&M_{i+2}^3\ar[r]&M_{i+1}^3\ar[r]&M_i\ar[r]&k[[x_0,x_1]]_i\ar[r]&0}$ for all $i\in\Z$.
\end{enumerate}
\end{Lem}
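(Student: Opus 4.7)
The plan is to exhibit both sequences as Segre diagonals of Koszul complexes over the ambient power series ring $S=k[[x_0,x_1,y_0,y_1,y_2]]$ equipped with its natural bigrading ($\deg x_i=(1,0)$, $\deg y_j=(0,1)$). First I would record that, by the very definition of $M_i$, there is an exact functor
\[ \Phi\colon \mod^{\Z^2}\!S \longrightarrow \mod^\Z\!R,\qquad M\longmapsto \Prod_{j\in\Z} M_{(j,j)}, \]
sending $S(i,0)$ to $M_i$; exactness is immediate because $\Phi$ merely selects components bidegree-by-bidegree, and no completion issue intervenes since each bigraded piece is finitely generated over $k$.

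For part (\ref{xx}), I would invoke the Koszul complex on the regular sequence $x_0,x_1$ in $S$,
\[ 0\to S(-2,0)\to S(-1,0)^2\to S\to S/(x_0,x_1)\to 0, \]
which is exact as a sequence of bigraded $S$-modules, shift by $(i,0)$, and apply $\Phi$. The only identification to check is that $\Phi(S/(x_0,x_1)(i,0))=k[[y_0,y_1,y_2]]_{-i}$: indeed $S/(x_0,x_1)\simeq k[[y_0,y_1,y_2]]$ lives in bidegrees $(0,*)$, so after shifting by $(i,0)$ only the summand at $j=-i$ survives on the diagonal.

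For part (\ref{yyy}), the same strategy applies to the Koszul complex on the regular sequence $y_0,y_1,y_2$,
\[ 0\to S(0,-3)\to S(0,-2)^3\to S(0,-1)^3\to S\to S/(y_0,y_1,y_2)\to 0. \]
Shifting by $(i,0)$ and applying $\Phi$, a direct reindexing ($l=j-a$ in each bidegree $(i+j,j-a)$) gives $\Phi(S(i,-a))=M_{i+a}$, while $\Phi(S/(y_0,y_1,y_2)(i,0))=k[[x_0,x_1]]_i$ concentrated in degree $0$, since $S/(y_0,y_1,y_2)\simeq k[[x_0,x_1]]$ is supported in bidegrees $(*,0)$.

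There is no real obstacle here: everything reduces to tracking bidegrees and to exactness of the Koszul complex on a regular sequence. The only step that demands a moment of care is the index bookkeeping in (\ref{yyy}), to ensure that successive shifts of $M_{\bullet}$ line up with the prescribed indices $i+3,\,i+2,\,i+1,\,i$.
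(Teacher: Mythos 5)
Your proof is correct and follows exactly the paper's argument: the paper likewise takes the Koszul complex on $x_0,x_1$ (resp.\ $y_0,y_1,y_2$) over $S$ and extracts the degree $\{(i,0)+(j,j)\mid j\in\Z\}$-part, which is precisely your functor $\Phi$ applied to the $(i,0)$-shift. Your index bookkeeping for both parts checks out (the paper only writes out part (1) and leaves (2) analogous).
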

\begin{proof}
	We only prove (1). Consider the Koszul complex
	\[ \xymatrix{ 0\ar[r]&S(-2,0)\ar[r]&S(-1,0)^2\ar[r]&S\ar[r]&k[[y_0,y_1,y_2]]\ar[r]&0 } \]
	for $x_0,x_1\in S$. Taking its degree $\{(i,0)+(j,j)\mid j\in\Z\}$-part, we obtain the desired exact sequence.
\end{proof}
It follows we have an exact sequence $0\to M_{-1}\to R^2\to \om\to 0$, which shows $\Om\om=M_{-1}$. It will be helpful to visualize the category $\add\{M_i \mid i\in\Z\}\subset\mod R$ as follows.
\[ \xymatrix{
	\cdots\ar@2@/^7pt/[r]&M_{-2}\ar@2@/^7pt/[r]\ar@3@/^7pt/[l]&\Om\om\ar@2@/^7pt/[r]\ar@3@/^7pt/[l]&R\ar@2@/^7pt/[r]\ar@3@/^7pt/[l]&\om\ar@2@/^7pt/[r]\ar@3@/^7pt/[l]&M_{2}\ar@2@/^7pt/[r]\ar@3@/^7pt/[l]&\cdots\ar@3@/^7pt/[l] } \]
Here, the rightgoing double arrows are $x_0$ and $x_1$, and the leftgoing triple arrows are $y_0$, $y_1$, and $y_2$.

Note also that applying Lemma \ref{kos}(\ref{yyy}) for $i=-1$ we have a $4$-term exact sequence, in which the image of the middle map is $\Om^2\om$. Let us record this fact below.
\begin{Lem}\label{Om^2}
There exists an exact sequence below, where $\Om^2\om$ is the image of the middle map.
\[ \xymatrix@C=4mm@R=1.5mm{
	0\ar[rr]&&M_2\ar[rr]&&\om^3\ar[rr]\ar[dr]&&R^3\ar[rr]&&\Om\om\ar[rr]&&0\\
	&&&&&\Om^2\om\ar[ur]&&&&& } \]
\end{Lem}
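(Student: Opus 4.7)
The plan is to derive the displayed four-term sequence as a direct consequence of Lemma \ref{kos}(\ref{yyy}), specialized to the case $i=-1$. That specialization reads
\[ 0 \to M_2 \to M_1^3 \to M_0^3 \to M_{-1} \to k[[x_0,x_1]]_{-1} \to 0, \]
and the rightmost term vanishes because $k[[x_0,x_1]]$ has no negative-degree component in the $x$-grading used here. Identifying $M_0 = R$, $M_1 = \om$, and $M_{-1} = \Om\om$ (the last identification having been recorded in the paragraph immediately following Lemma \ref{kos}) turns this into precisely the four-term exact sequence in the statement.

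It then remains to argue that the image of the middle map $\om^3 \to R^3$ is $\Om^2\om$ on the nose. For this I would show that the surjection $R^3 \twoheadrightarrow \Om\om$ arising from the above sequence is a minimal projective cover. Then, by exactness, its kernel, which coincides with the image of $\om^3 \to R^3$, is $\Om(\Om\om) = \Om^2\om$. Minimality follows from a direct inspection of generators: from the description $\Om\om = M_{-1} = \prod_{j}S_{(-1,0)+(j,j)}$, the first nonzero homogeneous component of $M_{-1}$ occurs in degree $1$ and equals the three-dimensional space $S_{(0,1)} = ky_0\oplus ky_1\oplus ky_2$. These three elements generate $M_{-1}$ minimally (they are a $k$-basis of $M_{-1}/\m M_{-1}$), so the minimal number of generators is exactly $3$, matching $\rank R^3 = 3$.

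No significant obstacle is anticipated; the lemma is a bookkeeping corollary of the Koszul-type computation already packaged in Lemma \ref{kos}, combined with the previously recorded identification $\Om\om = M_{-1}$. The only substantive verification is the generator count above, and this is immediate from the graded description of $M_{-1}$.
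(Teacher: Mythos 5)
Your proposal is correct and follows exactly the route the paper takes: the lemma is recorded as the case $i=-1$ of Lemma \ref{kos}(\ref{yyy}), with $k[[x_0,x_1]]_{-1}=0$ and the identifications $M_0=R$, $M_1=\om$, $M_{-1}=\Om\om$. Your additional check that $R^3\to\Om\om$ is a minimal projective cover (via the $3$-dimensional lowest-degree component $S_{(0,1)}$ of $M_{-1}$) is the small verification the paper leaves implicit, and it is accurate.
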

We will often make use of the following useful result.
\begin{Prop}[{\cite[Proposition 2.5.1]{Iy07a}}]\label{depth}
Let $A$ be a commutative Cohen-Macaulay local isolated singularity of dimension $d\geq2$ and $M, N\in\CM A$. Then $\depth_A\Hom_A(M,N)=2+\max\{0\leq n<d-1\mid \Ext_A^i(M,N)=0 \text{ for all }0<i\leq n\}$.
\end{Prop}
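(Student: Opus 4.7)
The plan is to analyze the complex obtained by applying $\Hom_A(-,N)$ to a free resolution of $M$, using the depth lemma together with the key observation that all higher $\Ext$ modules are of finite length. Choose a free resolution $\cdots\to P_1\to P_0\to M\to0$ and set $C^i=\Hom_A(P_i,N)\simeq N^{r_i}$; since $N\in\CM A$ each $C^i$ has depth $d$, and the cohomology of $C^\bullet$ is $\Hom_A(M,N)$ in degree $0$ and $\Ext^i_A(M,N)$ in degree $i\geq 1$. Because $A$ is an isolated singularity of dimension $d$, every non-maximal localization $A_\p$ is regular and each CM module $M_\p$ is free over $A_\p$, so $\Ext^i_A(M,N)_\p=0$ for all $\p\neq\m$ and $i\geq 1$; consequently each $\Ext^i_A(M,N)$ with $i\geq 1$ is of finite length, hence of depth $0$ (or is zero).

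Let $n$ denote the integer on the right-hand side of the claimed formula, and put $L:=\image\bigl(C^n\to C^{n+1}\bigr)$. The vanishing $\Ext^i_A(M,N)=0$ for $1\leq i\leq n$ gives an exact sequence
\begin{equation*}
0\to\Hom_A(M,N)\to C^0\to C^1\to\cdots\to C^n\to L\to 0,
\end{equation*}
which I would split into $n+1$ short exact sequences $0\to W_i\to C^i\to W_{i+1}\to 0$ with $W_0=\Hom_A(M,N)$ and $W_{n+1}=L$. Iterating both halves of the depth lemma (using $\depth_A C^i=d$) through these sequences yields the identity
\begin{equation*}
\depth_A\Hom_A(M,N)\;=\;\min\bigl(d,\ \depth_A L+n+1\bigr).
\end{equation*}

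It then suffices to show $\depth_A L=1$ in the generic case, which is where I expect the main subtlety. Since $L$ is a submodule of $C^{n+1}$, which has depth $d\geq 2$, we have $\m\notin\Ass_A C^{n+1}\supseteq\Ass_A L$, hence $\depth_A L\geq 1$. In the boundary case $n=d-2$ the displayed identity already forces $\depth_A\Hom_A(M,N)=d$ and we are done. Otherwise the maximality of $n$ gives $\Ext^{n+1}_A(M,N)\neq 0$, and the short exact sequence
\begin{equation*}
0\to L\to\ker\bigl(C^{n+1}\to C^{n+2}\bigr)\to\Ext^{n+1}_A(M,N)\to 0,
\end{equation*}
combined with the depth lemma and the same submodule argument showing $\depth_A\ker(C^{n+1}\to C^{n+2})\geq 1$, forces $\depth_A L\leq 1$. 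Hence $\depth_A L=1$ and the formula follows. The hardest part is the careful bookkeeping of the two-sided depth inequalities through the chain of short exact sequences and the clean isolation of the boundary case, for which no assumption on $\Ext^{d-1}_A(M,N)$ is available.
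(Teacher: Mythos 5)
Your proof is correct: the dimension count through the chain of short exact sequences, the finite-length (hence depth-$0$) observation for the higher $\Ext$ modules coming from the isolated-singularity hypothesis, and the final application of the depth lemma to $0\to L\to\ker(C^{n+1}\to C^{n+2})\to\Ext^{n+1}_A(M,N)\to0$ all work as stated, including the clean separation of the boundary case $n=d-2$. The paper itself gives no proof but cites \cite[Proposition 2.5.1]{Iy07a}, and your argument is essentially the standard one given there (truncating the dualized free resolution and iterating the depth lemma), so there is nothing to add.
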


Now let us give some computations of morphisms and extensions between some modules.
\begin{Lem}\label{ext}
\begin{enumerate}
	\item\label{sEnd} $\sEnd_R(\om)=\sEnd_R(\Om\om)=\sEnd_R(\Om^2\om)=k$.
	\item\label{om,R} $\Ext^i_R(\om,R)=0$ for $i=1,2,3$.
	\item\label{XX} $\Ext^1_R(X,X)=0$ for $X=R\oplus\om\oplus\Om^2\om$.
\end{enumerate}
\end{Lem}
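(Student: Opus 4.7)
The strategy is to address the three parts in the order (2), (1), (3), each relying on the previous. The central tools are Proposition \ref{five} (which gives $\Hom_R(M_i, M_j) \cong M_{j-i}$), the depth lemma Proposition \ref{depth}, and Lemma \ref{Om} (which converts $\Ext^1$ into stable $\Hom$ of syzygies whenever $\Ext^1(A,R)=0$); every computation will ultimately reduce to an elementary surjectivity check for a multiplication map $M_i \otimes_R M_j \to M_{i+j}$, verifiable on monomials in the bi-graded ring $S$. For (2), apply Proposition \ref{depth} to $(\om, R)$: since $\Hom_R(\om, R) = M_{-1} = \Om\om$ is Cohen-Macaulay of depth $4 = \dim R$, the lemma forces $\Ext^i_R(\om, R) = 0$ for $i = 1, 2$. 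Repeating the argument with $(\Om\om, R)$, noting that $\Hom_R(\Om\om, R) = \om$ is again CM, gives $\Ext^i_R(\Om\om, R) = 0$ for $i = 1, 2$, and dimension shifting along $0 \to \Om\om \to R^2 \to \om \to 0$ then delivers $\Ext^3_R(\om, R) \cong \Ext^2_R(\Om\om, R) = 0$.

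For (1), Proposition \ref{five} identifies $\End_R(\om) = \End_R(\Om\om) = R$, and a morphism $\om \to \om$ factors through some $R^n$ iff its image in $R$ lies in the image of the multiplication pairing $M_1 \otimes_R M_{-1} \to R$. A short bi-degree argument shows this image lies in $\m$ (because $(M_{-1})_0 = 0$) and surjects onto $R_1$ (the six products $x_i y_j$ span it), so it equals $\m$; hence $\sEnd_R(\om) = R/\m = k$, and symmetrically $\sEnd_R(\Om\om) = k$. The third equality follows from Lemma \ref{Om} applied twice, using $\Ext^1_R(\om, R) = \Ext^1_R(\Om\om, R) = 0$ from (2), which yield $\sEnd_R(\om) \cong \sEnd_R(\Om\om) \cong \sEnd_R(\Om^2\om)$.

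For (3), the cases involving $R$ are immediate from (2) and projectivity, leaving four Ext groups. I would compute $\Ext^1_R(\om, \om)$ via Lemma \ref{Om}: it becomes $\sHom_R(\Om\om, \om) = M_2 / \Im(M_1 \otimes_R M_1 \to M_2)$, and the multiplication here is surjective by the obvious monomial argument (split off an $x_i$-factor). For $\Ext^1_R(\om, \Om^2\om)$, apply $\Hom(\om, -)$ to $0 \to \Om^2\om \to R^3 \to \Om\om \to 0$ to express it as the cokernel of $M_{-1}^3 \to M_{-2}$, $(f_0, f_1, f_2) \mapsto \sum y_i f_i$, whose surjectivity is again a one-line monomial check (split off a $y_i$-factor). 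The last two, $\Ext^1_R(\Om^2\om, \om)$ and $\Ext^1_R(\Om^2\om, \Om^2\om)$, are handled by iterated dimension shifting along the same two short exact sequences, reducing respectively to $\Ext^3_R(\om, \om)$ and $\Ext^2_R(\om, \Om\om)$: the former equals $D\,\RH^1_\m(\om) = 0$ by local duality and the CM property of $\om$, and the latter is a quotient of $\Ext^1_R(\om, \om) = 0$. The main obstacle throughout is the structural awkwardness of $\Om^2\om$: it is not of the form $M_i$ and is only accessible via the 4-term sequence of Lemma \ref{Om^2}, so the guiding principle is always to push every computation back onto the $M_i$ (where Proposition \ref{five} makes all Homs explicit) or onto local cohomology of $\om$ (where CM-ness gives vanishing for free).
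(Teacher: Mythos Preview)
Your proof is correct and follows essentially the same architecture as the paper's: part~(\ref{om,R}) first via Proposition~\ref{depth}, then (\ref{sEnd}) via Lemma~\ref{Om}, then (\ref{XX}) by reducing to the $M_i$'s. The differences are local. For (\ref{sEnd}), the paper applies $\Hom_R(\om,-)$ to $0\to\Om\om\to R^2\to\om\to0$ and identifies the resulting sequence with $0\to M_{-2}\to M_{-1}^2\to R$, whose cokernel is $k$ by Lemma~\ref{kos}; your direct computation of the trace ideal $M_1\!\cdot\! M_{-1}=\m$ achieves the same thing more concretely. For (\ref{XX}), the paper silently uses that $\om$ is injective in $\CM R$ (so $\Ext^1_R(-,\om)$ vanishes on CM modules automatically), which lets it skip $\Ext^1_R(\om,\om)$ and $\Ext^1_R(\Om^2\om,\om)$; you instead verify these by hand via the surjectivity of $M_1\otimes M_1\to M_2$ and by local duality. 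The paper also attacks $\Ext^1_R(\om,\Om^2\om)$ through the \emph{left} half $0\to M_2\to\om^3\to\Om^2\om\to0$ of Lemma~\ref{Om^2}, reducing to $\Ext^2_R(\om,M_2)=0$ by Proposition~\ref{depth}, whereas you use the right half and a cokernel computation. Either route works; the paper's is marginally shorter because it leans on the injectivity of $\om$ and on Proposition~\ref{depth} throughout rather than on explicit monomial surjectivity checks.
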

\begin{proof}
	(\ref{om,R})  We have that $\Hom_R(\om,R)=\Hom_R(M_1,R)=M_{-1}$ is CM. Then Proposition \ref{depth} gives $\Ext^1_R(\om,R)=\Ext^2_R(\om,R)=0$. Similarly, since $\Hom_R(\Om\om,R)=\Hom_R(M_{-1},R)=M_1$ has depth $4$, we have $\Ext^2_R(\Om\om,R)=0$, hence $\Ext^3_R(\om,R)=0$.
	
	(\ref{sEnd})  In view of (\ref{om,R}) and Lemma \ref{Om}, the three endomorphism rings are isomorphic. Applying $\Hom_R(\om,-)$ to the exact sequence $0\to\Om\om\to R^2\to \om\to0$ we get $0\to\Hom_R(\om,\Om\om)\to\Hom_R(\om,R^2)\to\Hom_R(\om,\om)$ whose cokernel is $\sEnd_R(\om)$. Now the first three terms are isomorphic by Lemma \ref{cov} to $0\to M_{-2}\to M_{-1}^2\to R$, whose cokernel is $k$ by Lemma \ref{kos}(\ref{xx}), so we deduce $\sEnd_R(\om)=k$.
	
	(\ref{XX})  We have to show $\Ext_R^1(\om\oplus\Om^2\om,R\oplus\Om^2\om)=0$. By (1) we know $\Ext^1_R(\om\oplus\Om^2\om,R)=0$, so it remains to consider $\Ext^1_R(\om\oplus\Om^2\om,\Om^2\om)$. For $\Ext^1_R(\om,\Om^2\om)$, consider the exact sequence $0\to M_2\to \om^3\to\Om^2\om\to0$ from Lemma \ref{Om^2}. Applying $\Hom_R(\om,-)$ we see that $\Ext^1_R(\om,\Om^2\om)\xsimeq\Ext^2_R(M_2,\om)$, which vanishes by Proposition \ref{depth} since $\Hom_R(M_2,\om)\simeq M_{-1}$ has depth $4$. For $\Ext^1_R(\Om^2\om,\Om^2\om)$, this is isomorphic to $\Ext^3_R(\om,\Om^2\om)=\Ext^1_R(\om,\om)$ by (\ref{om,R}), thus $0$.
\end{proof}

\begin{Prop}\label{quiver}
The algebra $\Ga=\End_R(R\oplus\om\oplus\Om^2\om)$ is presented by the following quiver.
\[ \xymatrix@!C=3mm{
	&R\ar@2[dr]&\\
	\Om^2\om\ar@3[ur]&&\om\ar@3[ll]} \]
\end{Prop}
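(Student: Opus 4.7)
The plan is to compute the six Hom spaces between the summands of $X = R \oplus \om \oplus \Om^2\om$ and identify the generators of the radical modulo its square, matching them to the eight arrows in the claimed quiver. First I would dispatch the Hom spaces not involving $\Om^2\om$ via Proposition \ref{five}: $\End_R(R) = R$, $\End_R(\om) = \Hom_R(M_1, M_1) = R$, $\Hom_R(R, \om) = M_1$ (minimally generated by $x_0, x_1$), and $\Hom_R(\om, R) = M_{-1}$ (minimally generated by $y_0, y_1, y_2$). The two arrows $R \to \om$ in the quiver are multiplication by $x_0$ and $x_1$.

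Next, for the Hom spaces involving $\Om^2\om$, the key inputs will be the exact sequences
\[ (a)\colon 0 \to M_2 \to \om^3 \to \Om^2\om \to 0, \qquad (b)\colon 0 \to \Om^2\om \to R^3 \to \Om\om \to 0 \]
from Lemma \ref{Om^2}, together with the Ext-vanishings of Lemma \ref{ext} and the Koszul sequences of Lemma \ref{kos}. Applying $\Hom_R(-, R)$ to $(b)$ with $\Ext^1_R(\Om\om, R) = 0$ gives $0 \to M_1 \to R^3 \to \Hom_R(\Om^2\om, R) \to 0$; the map $M_1 \to R^3$ is Koszul-type multiplication by the $y_i$'s and lands in $\m R^3$, so $\Hom_R(\Om^2\om, R)$ is minimally $3$-generated by the projections $\pi_j\colon \Om^2\om \hookrightarrow R^3 \twoheadrightarrow R$. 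Dually, applying $\Hom_R(\om, -)$ to $(a)$ with $\Ext^1_R(\om, M_2) = 0$ (from Proposition \ref{depth}, since $\Hom_R(\om, M_2) = M_1$ is CM) gives $0 \to M_1 \to R^3 \to \Hom_R(\om, \Om^2\om) \to 0$, with three minimal generators $\iota_i\colon \om \hookrightarrow \om^3 \twoheadrightarrow \Om^2\om$. These furnish the three arrows $\Om^2\om \to R$ and the three arrows $\om \to \Om^2\om$ in the quiver.

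It then remains to check that the remaining three Hom spaces lie in $\rad^2(\Ga)$. Applying $\Hom_R(-, \om)$ to $(a)$ and using that $R^3 \to M_{-1}$ is surjective by Lemma \ref{kos}(\ref{yyy}) (since $k[[x_0, x_1]]_{-1} = 0$) yields $\Hom_R(\Om^2\om, \om) \simeq \ker(R^3 \to M_{-1}) \simeq \om^3/M_2$, whose six minimal generators $x_k e_j$ correspond, under the identification, to the compositions $x_k \circ \pi_j$. Similarly $\Hom_R(R, \Om^2\om) = \Om^2\om$ is six-generated by the elements $\iota_i(x_k)$, which are the compositions $\iota_i \circ x_k$. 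Finally $\Hom_R(\om, R) = M_{-1}$ is generated by $y_0, y_1, y_2$, each recovered up to sign as a composition $\pi_j \circ \iota_i$ from the explicit Koszul differential $\om^3 \to R^3$.

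The main obstacle will be the careful bookkeeping of Koszul signs: I must verify that the explicit compositions $\pi_j \iota_i$, $x_k \pi_j$, $\iota_i x_k$ really do realise the required generators modulo $\m$, and do not accidentally collapse. A secondary subtlety is the indecomposability of $\Om^2\om$, needed so that the quiver has exactly three vertices; this follows from $\sEnd_R(\Om^2\om) = k$ in Lemma \ref{ext}(\ref{sEnd}) together with a graded-degree argument, since any non-trivial free summand $R$ or $R^2$ of $\Om^2\om$ coming from a retraction $R \hookrightarrow \Om^2\om \subset R^3$ would, via the Koszul parametrisation $(r_1,r_2,r_3) = (-y_2 f_2 - y_1 f_3, -y_2 f_1 + y_0 f_3, y_1 f_1 + y_0 f_2)$, force a relation $\sum_j f_j g_j = 1 \in R_0 = k$ with $f_j \in M_1$ of non-negative degree and $g_j \in M_{-1}$ of positive degree, a contradiction.
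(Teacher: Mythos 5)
Your proposal is essentially correct, but it takes a genuinely different route from the paper. The paper does not compute all nine Hom spaces; instead it identifies the sink maps $R^2\to\om$ and $\om^3\to\Om^2\om$ and the source map $\Om^2\om\to R^3$ in $\add X$, using the Ext-vanishings of Lemma \ref{ext} to show that every non-split map into (resp.\ out of) the relevant summand factors through these, and then separately observes that any $\om\to R$ factors through $\Om^2\om$ because $M_{-1}$ is generated by $y_0,y_1,y_2$ and these arise from Lemma \ref{Om^2}. That argument reads off the arrows at once and avoids most of the Koszul bookkeeping you anticipate. Your direct computation of $\rad\Ga/\rad^2\Ga$ via minimal generating sets buys more explicit information (all Hom modules and their generators, which is useful later), at the cost of the sign/degree checks you flag. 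Note that your ``remaining three Hom spaces lie in $\rad^2$'' steps can be done without any generator matching: $\Ext^1_R(\Om\om,\om)=0$ forces every $\Om^2\om\to\om$ to extend over $\Om^2\om\hookrightarrow R^3$, and $\Ext^1_R(R,M_2)=0$ forces every $R\to\Om^2\om$ to lift through $\om^3\twoheadrightarrow\Om^2\om$; both compositions are then automatically through radical maps.

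Two points in your plan still need to be nailed down. First, you only treat the six off-diagonal Hom spaces, but a quiver presentation also requires the absence of loops, i.e.\ $\rad(Y,Y)\subseteq\rad^2$ for each summand: for $R$ and $\om$ this holds because $\m$ is generated by $R_1=\langle x_iy_j\rangle$ and each $x_iy_j$ factors as $x_i\circ y_j$ through the other vertex, and for $\Om^2\om$ because $\sEnd_R(\Om^2\om)=k$ shows every radical endomorphism factors through $\add R$ via radical maps. Second, exhibiting minimal generating sets of $\Hom(R,\om)$, $\Hom(\om,\Om^2\om)$, $\Hom(\Om^2\om,R)$ gives only the upper bound $2+3+3$ on the number of arrows; you must also check that these generators are not absorbed into $\rad^2$ (e.g.\ that $\rad^2(R,\om)\subseteq\m M_1$, which follows since compositions through $\Om^2\om$ land in $\m M_1$ by the same degree count). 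Both are routine with the bigraded degree arguments you are already using, but they are part of the statement being proved.
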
	
\begin{proof}
	We put $X=R\oplus\om\oplus\Om^2\om$ and prove the following.
	\begin{enumerate}
	\item\label{sink at om} $f\colon R^2\to\om$ is the sink map at $\om$ in $\add X$.
	\item\label{sink at Om^2} $g\colon\om^3\to\Om^2\om$ is the sink map at $\Om^2\om$ in $\add X$.
	\item\label{source at Om^2} $h\colon\Om^2\om\to R^3$ is the source map at $\Om^2\om$ in $\add X$.
	\end{enumerate}
	These claims yield the quiver of $\End_R(X)$. Indeed, assertions (\ref{sink at Om^2}) and (\ref{source at Om^2}) exhaust the arrows adjacent to $\Om^2$, and (\ref{sink at om}) the arrows from $R$ to $\om$. It remains to show that there is no irreducible morphism in $\add X$ from $\om$ to $R$. Since $\Hom_R(\om,R)\simeq M_{-1}$ is generated by $y_0,y_1,y_2$ by Lemma \ref{kos}(\ref{yyy}), any one of these generators, hence any map $\om\to R$, factors through $\Om^2\om$ as can be seen from Lemma \ref{Om^2}.
	
	Now we turn to the proof of the claims.
		
	
	(\ref{sink at om})  Consider the exact sequence $0\to\Om\om\to R^2\xrightarrow{f}\om\to0$. Clearly any $R\to \om$ factors through $f$, and by Lemma \ref{ext}(\ref{sEnd}) so does any non-isomorphism $\om\to\om$. Also by Lemma \ref{ext}(\ref{om,R}) we see $\Ext^1_R(\Om^2\om,\Om\om)=\Ext^2_R(\Om\om,\Om\om)=\Ext^1_R(\Om\om,\om)=0$, so any $\Om^2\om\to\om$ also factors through $f$.
	
	(\ref{sink at Om^2})  Consider the exact sequence $0\to M_2\to \om^3\xrightarrow{g}\Om^2\om\to0$. Since $\Ext^1_R(R\oplus\om,M_2)=0$, any map $R\oplus\om\to\Om^2\om$ factors through $g$. Also we have that $\Ext^1_R(\Om^2\om,M_2)=\Ext^3_R(\om,M_2)$ is isomorphic by the exact sequence in Lemma \ref{Om^2} and Lemma \ref{ext}(\ref{om,R}) to $\Ext^1_R(\om,\Om\om)$, thus to $\sEnd_R(\om)=k$, which shows any non-isomorphism $\Om^2\om\to\Om^2\om$ factors through $g$.
	
	(\ref{source at Om^2})  This is similarly shown using the exact sequence $0\to \Om^2\om\xrightarrow{h}R^3\to\Om\om\to0$.
\end{proof}

We are now ready to give the essential part of our result. 
\begin{Thm}\label{main}
Let $R=k[[x_0,x_1]]\seg k[[y_0,y_1,y_2]]$. Then $X:=R\oplus \om\oplus \Om^2\om$ is a $2$-cluster tilting object.
\end{Thm}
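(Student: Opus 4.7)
The plan is to verify both defining equalities of $2$-cluster tilting for $X=R\oplus\om\oplus\Om^2\om$. The inclusions $\add X\subseteq\{M\in\CM R\mid\Ext^1_R(X,M)=0\}\cap\{M\in\CM R\mid\Ext^1_R(M,X)=0\}$ are immediate from the rigidity $\Ext^1_R(X,X)=0$ of Lemma~\ref{ext}(\ref{XX}); all the content is in the two reverse inclusions.

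A preparatory step identifies the stable endomorphism algebra $\Ga:=\sEnd_R(X)$ with the path algebra $kQ_3$ of the Kronecker quiver carrying three arrows from $\om$ to $\Om^2\om$. Killing the projective vertex $R$ in the quiver of Proposition~\ref{quiver} removes the arrows into and out of $R$ and all compositions through $R$; and one verifies $\sHom_R(\Om^2\om,\om)=0$ by applying $\Hom_R(-,\om)$ to the sequence $0\to\Om^2\om\to R^3\to\Om\om\to 0$ extracted from Lemma~\ref{Om^2} and using $\Ext^1_R(\Om\om,\om)=\Ext^2_R(\om,\om)=0$, which follows from $\Hom_R(\om,\om)\simeq R$ having maximal depth via Proposition~\ref{depth}. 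In particular $\Ga=kQ_3$ is hereditary.

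Both reverse inclusions reduce to the existence of $2$-term $\add X$-(co)resolutions for every $M\in\CM R$. Granting a short exact sequence $0\to X_1\to X_0\to M\to 0$ with $X_i\in\add X$ and $X_0\to M$ a right $\add X$-approximation, if $\Ext^1_R(M,X)=0$ then applying $\Hom_R(-,X_1)$ gives $\Hom_R(X_0,X_1)\twoheadrightarrow\Hom_R(X_1,X_1)$; lifting $\mathrm{id}_{X_1}$ splits the inclusion $X_1\hookrightarrow X_0$, so $M\in\add X$. Dually, from $0\to M\to X^0\to X^1\to 0$ with $X^i\in\add X$ and $M\to X^0$ a left $\add X$-approximation, $\Ext^1_R(X,M)=0$ yields $\Hom_R(X^1,X^0)\twoheadrightarrow\Hom_R(X^1,X^1)$; a lift of $\mathrm{id}_{X^1}$ splits the surjection $X^0\twoheadrightarrow X^1$, forcing $M\in\add X$.

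The main obstacle is establishing these $2$-term $\add X$-(co)resolutions for arbitrary $M\in\CM R$. A minimal right $\add X$-approximation $X_0\to M$ is surjective because $R\in\add X$ generates $\CM R$, its kernel $K$ lies in $\CM R$ by the depth lemma, and $\Ext^1_R(X,K)=0$ follows from the long exact sequence together with rigidity of $X$; one must then argue $K\in\add X$. Symmetrically, a minimal left $\add X$-approximation $M\to X^0$ is injective because $\om\in\add X$ cogenerates $\CM R$ via CM duality $M\simeq\Hom_R(\Hom_R(M,\om),\om)$, and one needs the cokernel to lie in $\add X$. In both cases, the hereditary structure of $\Ga=kQ_3$ combined with Lemma~\ref{Om}, which identifies $\Ext^1_R(X,-)$ with $\sHom_R(\Om X,-)$, should bound the length of the $\add X$-resolution; together with the explicit Koszul-type sequences of Lemma~\ref{kos} this should force $K\in\add X$ and the dual statement $X^1\in\add X$.
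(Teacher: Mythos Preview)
Your reduction in the third paragraph is sound: if every $M\in\CM R$ admits a short exact sequence $0\to X_1\to X_0\to M\to 0$ with $X_i\in\add X$, then the splitting argument gives the reverse inclusion. The problem is the last paragraph, where you try to establish this resolution property. Having taken a right $\add X$-approximation $X_0\to M$ with kernel $K$, you correctly observe $\Ext^1_R(X,K)=0$; but then concluding $K\in\add X$ is precisely the implication $\{N\mid\Ext^1_R(X,N)=0\}\subseteq\add X$ that you are trying to prove. Invoking the hereditary structure of $\Ga=\sEnd_R(X)=kQ_3$ does not break this circularity: the equivalence $\CM R/[\add X]\simeq\mod\Ga$ of Proposition~\ref{DL}, which is what would let you transport homological finiteness from $\mod\Ga$ back to $\CM R$, already \emph{assumes} that $X$ is $2$-cluster tilting. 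Without that equivalence, knowing $\sHom_R(\Om X,K)=0$ tells you nothing about whether $K$ lies in $\add X$. The Koszul sequences of Lemma~\ref{kos} only resolve the specific modules $M_i$, not an arbitrary $K$ arising as an approximation kernel.

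The paper avoids this circularity by taking a completely different route: Iyama's Auslander correspondence \cite[Theorem~4.2.3]{Iy07b} says $X$ is $2$-cluster tilting if and only if the \emph{full} endomorphism ring $\Ga=\End_R(X)$ satisfies $\gd\Ga=4$, $\depth_R\Ga\geq3$, and $\pd_\Ga L\leq3$ for each simple $\Ga$-module at a non-free summand of $X$. These conditions are checked by explicitly constructing the sink sequences (equivalently, minimal projective resolutions of the simples over $\Ga$) at each of $R$, $\om$, $\Om^2\om$. The sequences at $\om$ and $\Om^2\om$ come easily from Lemma~\ref{Om^2}, but the sink sequence at $R$ is genuinely delicate: its computation requires the self-dual resolution of Lemma~\ref{dual} and a careful analysis of a non-trivial $\Ext^1_R(\Om^2\om,M_3)$ obstruction. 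This is the missing idea in your attempt.
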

\begin{proof}
	By Auslander correspondence \cite[Theorem 4.2.3]{Iy07b} (for $d=m=4$ and $n=2$) we have to show that $\Ga:=\End_R(X)$ satisfies the following.
	\begin{itemize}
	\item $\Ga$ is an isolated singularity, $\gd\Ga=4$, and $\depth_R\Ga\geq3$.
	\item $\pd_\Ga L\leq 3$ for all simple $\Ga$-modules $L$ with $Le=0$, where $e\in\Ga$ is the idempotent corresponding to $R$.
	\end{itemize}
	Clearly, $\Ga$ is an isolated singularity since $R$ is, and $\depth_R\Ga\geq3$ by Proposition \ref{depth} and Lemma \ref{ext}(\ref{XX}). To prove the remaining assertions, we compute the minimal projective resolution of the simple $\Ga$-modules, in other words, the sink sequences (in the sense of \cite[Definition 3.1]{Iy07a}) of $R$, $\om$, and $\Om^2\om$ in $\add X$.
	
	\medskip
	{\it Claim 1: The sink sequence at $\om$ is given by
		\[ \xymatrix{0\ar[r]&\Om^2\om\ar[r]&R^3\ar[r]&R^2\ar[r]&\om\ar[r]&0}. \]}
	By Proposition \ref{quiver} the sink map at $\om$ in $\CM R$ is $R^2\to\om$ which is given by multiplication by $x_0$ and $x_1$. By Lemma \ref{kos} it extends to an exact sequence $0\to\Om\om\to R^2\to\om\to0$. Connecting with the right half $0\to\Om^2\om\to R^3\to\Om\om\to0$ of the exact sequence in Lemma \ref{Om^2} we obtain the above sequence. To verify that this is indeed the sink sequence, we only have to show that $R^3\to\Om\om$ is the minimal right $(\add X)$-approximation, but this is immediate by $\Ext^1_R(X,\Om^2\om)=0$.
	
	\medskip
	{\it Claim 2: The sink sequence at $\Om^2\om$ is given by
		\[ \xymatrix{ 0\ar[r]&R\ar[r]&\om^2\ar[r]&\om^3\ar[r]&\Om^2\om\ar[r]&0}. \]}
	By Proposition \ref{quiver} the sink map at $\Om^2\om$ in $\CM R$ is $\om^3\to\Om^2\om$, which extends to the left half $0\to M_2\to \om^3\to\Om^2\om\to0$ of the exact sequence in Lemma \ref{Om^2}. We obtain the sequence by connecting with the exact sequence $0\to R\to\om^2\to M_2\to0$. As above, we see $\om^2\to M_2$ is a minimal right $(\add X)$-approximation by $\Ext_R^1(X,R)=0$.
	
	\medskip
	{\it Claim 3: The sink sequence at $R$ is given by
		\[ \xymatrix{ 0\ar[r]&\om\ar[r]&\om^6\ar[r]&\Om^2\om^2\oplus\om^6\ar[r]&\Om^2\om^3\ar[r]&R }. \]}
	By Proposition \ref{quiver} we know that the sink map is $\Om^2\om^3\to R$. We use Lemma \ref{dual} to compute its kernel. Applying Lemma \ref{dual} to the $3$-dimensional vector space $W$ generated by $\{y_0,y_1,y_2\}$ gives an exact sequence
	\[ \xymatrix{ 0\ar[r]&S^\pprime\otimes\Wedge^3W\ar[r]&S^\pprime\otimes\Wedge^3W\otimes D(\Sym^2W)\ar[r]&\Diff^1\otimes DW\ar[r]&S^\pprime\ar[r]&k\ar[r]&0 } \]
	with $S^\pprime=k[y_0,y_1,y_2]$ and $\Diff^1=\Ker(S^\pprime\otimes W\to S^\pprime)$. Tensoring $S^\prime=k[x_0,x_1]$ and completing, we have
	\[ \xymatrix{0\ar[r]&S\otimes\Wedge^3W\ar[r]&S\otimes\Wedge^3W\otimes D(\Sym^2W)\ar[r]&\widehat{(\Diff^1\otimes S^\prime)}\otimes DW\ar[r]&S\ar[r]&k[[x_0,x_1]]\ar[r]&0 } \]
	for our $S=k[[x_0,x_1,y_0,y_1,y_2]]$. Taking the degree $\{(j,j)\mid j\in\Z\}$-part we obtain an exact sequence
	\[ \xymatrix{0\ar[r]&M_3\ar[r]&\om^6\ar[r]^-a&\Om^2\om^3\ar[r]&R\ar[r]&k\ar[r]&0 } \]
	which extends the sink map $\Om^2\om^3\to R$.
	
	Now let $N:=\Im a$ so that we have an exact sequence
	\[ \xymatrix{0\ar[r]&M_3\ar[r]&\om^6\ar[r]^-b&N\ar[r]&0}. \]
	To check if $b$ is a right $(\add X)$-approximation, we compute $\Ext^1_R(X,M_3)$. Clearly $\Ext^1_R(R,M_3)=0$ and since $\Hom_R(\om,M_3)=M_2$ has depth $4$, we also have $\Ext^1_R(\om,M_3)=0$ by Proposition \ref{depth}. Note however, that $\Ext^1_R(\Om^2\om,M_3)\neq0$ so that $b$ is {\it not} a right $(\add X)$-approximation. In fact, we show $\Ext^1_R(\Om^2\om,M_3)=k^2$. Indeed, by Lemma \ref{kos}(\ref{xx}) there is an exact sequence $0\to \om\to M_2^2\to M_3\to 0$. Applying $\Hom_R(\Om^2\om,-)$ yields an isomorphism $\Ext^1_R(\Om^2\om,M_2^2)\xsimeq\Ext^1_R(\Om^2\om,M_3)$ which is induced by multiplication by $x_0$ and $x_1$. Also, the left half on the exact sequence
	\[ \xymatrix{ 0\ar[r]&M_2\ar[r]&\om^3\ar[r]&\Om^2\om\ar[r]& 0} \]
	from Lemma \ref{Om^2} yields $\Ext^1_R(\Om^2\om,M_2)=k$, and its non-zero element is presented by the same exact sequence. These discussions show that the sequence in the second row of the diagram below, which is obtained from the direct sum of two copies of the above exact sequence by pushing out along the multiplication $M_2^2\to M_3$ by $x_0$ and $x_1$, induces a projective cover $\Hom_R(\Om^2\om,\Om^2\om^2)\to\Ext^1_R(\Om^2\om,M_3)$ as $\End_R(\Om^2\om)$-modules.
	\[ \xymatrix{
		0\ar[r]&M_2^2\ar[r]\ar[d]&\om^6\ar[r]\ar[d]&\Om^2\om^2\ar[r]\ar@{=}[d]& 0\\
		0\ar[r]&M_3\ar[r]&E\ar[r]&\Om^2\om^2\ar[r]&0} \]
	
	Let us get back to the first exact sequence of the previous paragraph. Applying $\Hom_R(\Om^2\om,-)$ yields a surjection $\Hom_R(\Om^2\om,N)\to\Ext^1_R(\Om^2\om,M_3)$, so there exists a map $\Om^2\om^2\to N$ which yields a commutative diagram
	\[ \xymatrix{
		0\ar[r]&M_3\ar[r]\ar@{=}[d]&E\ar[r]\ar[d]&\Om^2\om^2\ar[r]\ar[d]&0\\
		0\ar[r]&M_3\ar[r]&\om^6\ar[r]^-b&N\ar[r]&0 } \]
	by pulling back along the rightmost vertical map. We have now obtained a right $(\add X)$-approximation $\om^6\oplus \Om^2\om^2\to N$ whose kernel is $E$.
	
	Finally we compute the right $(\add X)$-approximation of $E$. The last commutative diagram from the paragraph before last shows that there is a surjection $\om^6\to E$ whose kernel is the same as $M_2^2\to M_3$, which is just $M_1=\om$. Therefore we have an exact sequence
	\[ \xymatrix{ 0\ar[r]&\om\ar[r]&\om^6\ar[r]^-c&E\ar[r]&0}. \]
	Since $\om$ is an injective object in $\CM R$ we see that $c$ is indeed an approximation. Connecting the approximation sequences we have constructed, we obtain the claim.
	
	Now the sink sequences show that simple $\Ga$-modules at non-projective summands of $X$ have projective dimension $3$, and that at the projective summand has projective dimension $4$. This yields the desired Auslander-type condition of $\Ga$ and completes the proof of the theorem. 
\end{proof}


Consequently, we obtain that our $R=k[[x_0,x_1,y_0,y_1,y_2]]^{(1,1)}$ is of ``hereditary representation type''.
\begin{Cor}\label{kQ_3}
There exists an equivalence
\[ \xymatrix{\CM R/[X]\ar[r]^-\simeq&\mod kQ_3} \]
for the $3$-Kronecker quiver $Q_3\colon\xymatrix{\circ\ar@3[r]&\circ}$.
\end{Cor}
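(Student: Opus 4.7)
The plan is to obtain this as a direct corollary of Theorem \ref{main} via the Demonet--Liu framework (Proposition \ref{DL}). Since Theorem \ref{main} establishes that $X=R\oplus\om\oplus\Om^2\om$ is a $2$-cluster tilting object in $\CM R$, Proposition \ref{DL}(2) immediately produces an additive equivalence $\sHom_R(\Om X,-)\colon\sCM R/[X]\xsimeq\mod\Ga$ with $\Ga=\sEnd_R(X)$. The transition from the stable category to $\CM R$ comes for free: since $R\in\add X$, any morphism factoring through $R$ also factors through $X$, so the natural functor $\CM R/[X]\to\sCM R/[X]$ is an equality.

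The remaining task is to identify $\Ga$ with $kQ_3$. First, $\sEnd_R(X)=\sEnd_R(\om\oplus\Om^2\om)$, since every morphism to or from the projective summand $R$ factors through $R$ and hence vanishes in the stable endomorphism algebra. Lemma \ref{ext}(\ref{sEnd}) gives $\sEnd_R(\om)=\sEnd_R(\Om^2\om)=k$, so the two vertices of the underlying quiver carry no loops. The Gabriel quiver of $\End_R(X)$ has already been computed in Proposition \ref{quiver} as the triangular shape $\Om^2\om\overset{3}{\to}R\overset{2}{\to}\om\overset{3}{\to}\Om^2\om$; in particular the irreducible morphisms between $\om$ and $\Om^2\om$ in $\add X$ are the three arrows $\om\to\Om^2\om$ (multiplication by $y_0,y_1,y_2$), while every map $\Om^2\om\to\om$ must route through $R$ and hence vanishes modulo $[R]$. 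This pins down the Gabriel quiver of $\Ga$ as $Q_3$.

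Finally, I would check that no relations can appear: since the three arrows of $Q_3$ all point from $\om$ to $\Om^2\om$, no composition between them is defined, and composing with the scalar endomorphism algebras at either end can only rescale them. Hence $\Ga\cong kQ_3$, and substituting back into the Demonet--Liu equivalence yields the desired $\sHom_R(\Om X,-)\colon\CM R/[X]\xsimeq\mod kQ_3$. There is no substantial technical obstacle here; the content lies in Theorem \ref{main} and Proposition \ref{quiver}, both already established. The only point demanding some care is to confirm that $y_0,y_1,y_2$ are $k$-linearly independent generators of $\sHom_R(\om,\Om^2\om)$ modulo morphisms factoring through $R$, which is automatic from their being irreducible maps in the description of $\End_R(X)$ provided by Proposition \ref{quiver}.
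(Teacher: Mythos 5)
Your proposal is correct and follows exactly the paper's own route: Theorem \ref{main} plus Proposition \ref{DL}(2) give the equivalence $\sHom_R(\Om X,-)\colon\sCM R/[X]\simeq\mod\sEnd_R(X)$ (with $\CM R/[X]=\sCM R/[X]$ since $R\in\add X$), and Proposition \ref{quiver} together with Lemma \ref{ext}(1) identifies $\sEnd_R(X)$ with $kQ_3$ after deleting the projective vertex. The extra details you supply (no relations survive, all longer paths pass through $R$) are correct but are exactly what the paper leaves implicit.
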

\begin{proof}
	By Theorem \ref{main} and Proposition \ref{DL}, we have an equivalence
	\[ \xymatrix{ \sHom_R(\Om X,-)\colon\sCM R/[X]\ar[r]^-\simeq&\mod\sEnd_R(X)}. \]
	By Proposition \ref{quiver} we see that $\sEnd_R(X)=kQ_3$.
\end{proof}

For later convenience we include the component of the Auslander-Reiten quiver of $\CM R$ containing $X$. One can find a shadow of the equivalence $\CM R/[X]\xsimeq\mod kQ_3$ by cutting off the summands of $X$ yields the preprojective and the preinjective component of $\mod kQ_3$.
\[ \xymatrix@!R=2mm@!C=2mm{
	&&\cdots\ar@3[dr]&&&&&&&&&&\\
	&&&\Om^2\om\ar@3[dr]\ar@{..}[ll]&&\Om\om\ar@3[dr]\ar@{..}[ll]&&\Om^2M_{-2}\ar@3[dr]\ar@{..}[ll]&&\cdots\ar@{..}[ll]&&&\\
	&&&&R\ar@3[ur]\ar@2[dr]&&\Om^3\om\ar@3[ur]\ar@{..}[ll]&&\tau^{-1}\Om^3\om\ar@3[ur]\ar@{..}[ll]&&\ar@{..}[ll]&&\\
	&\tau^2\om\ar@3[dr]\ar@{..}[l]&&\tau\om\ar@3[dr]\ar@{..}[ll]\ar@2[ur]&&\om\ar@2[ur]\ar@3[dr]\ar@{..}[ll]&&&&&&&\\
	\cdots\ar@3[ur]&&\tau M_2\ar@3[ur]\ar@{..}[ll]&&M_2\ar@3[ur]\ar@{..}[ll]&&\Om^2\om\ar@3[dr]\ar@{..}[ll]&&\Om\om\ar@3[dr]\ar@{..}[ll]&&\tau^{-1}\Om\om\ar@3[dr]\ar@{..}[ll]&&\cdots\ar@{..}[ll]\\
	&&&&&&&R\ar@3[ur]\ar@2[dr]&&\Om^3\om\ar@3[ur]\ar@{..}[ll]&&\tau^{-1}\Om^3\om\ar@3[ur]\ar@{..}[ll]&\ar@{..}[l]\\
	&&&&\tau^2\om\ar@3[dr]\ar@{..}[l]&&\tau\om\ar@3[dr]\ar@{..}[ll]\ar@2[ur]&&\om\ar@2[ur]\ar@3[dr]\ar@{..}[ll]&&&&\\
	&&&\cdots\ar@3[ur]&&\tau M_2\ar@3[ur]\ar@{..}[ll]&&M_2\ar@3[ur]\ar@{..}[ll]&&\Om^2\om\ar@3[dr]\ar@{..}[ll]&&\ar@{..}[ll]&\\
	&&&&&&&&&&\cdots&&} \]
 
\subsection{Classification of rigid Cohen-Macaulay modules}
As in the previous subsection let
\[ R=k[[x_0,x_1]]\seg k[[y_0,y_1,y_2]] \]
be the complete Segre product of polynomial rings with standard gradings. We have seen that $X=R\oplus\om\oplus\Om^2\om$ is a $2$-cluster tilting object in $\CM R$ (Theorem \ref{main}) and it gives a functor
\[ \xymatrix{F\colon \CM R\ar@{->>}[r]&\CM R/[X]\ar[rr]^-{\sHom_R(\Om X,-)}&&\mod kQ_3 } \]
for the $3$-Kronecker quiver $\xymatrix{Q_3\colon\circ\ar@3[r]&\circ}$, in which the second one is an equivalence (Corollay \ref{kQ_3}). The aim of this subsection is to apply Corollay \ref{kQ_3} to give a complete classification of rigid Cohen-Macaulay modules over $R$.

Let $kQ_n$ be the Kronecker quiver with $n$ arrows. The Auslander-Reiten quiver of $\mod kQ_n$ contains connected components called preprojective component and the preinjective component, and we label the objects as $P_i$ and $I_i$ ($i\geq1$) as below.
\[ 
\xymatrix@!R=2mm@!C=2mm{
	P_1\ar@3[dr]&&P_3\ar@3[dr]\ar@{..}[ll]&&\cdots\ar@{..}[ll]\\
	&P_2\ar@3[ur]&&P_4\ar@3[ur]\ar@{..}[ll]&\ar@{..}[l]}
\qquad
\xymatrix@!R=2mm@!C=2mm{
	&I_4\ar@3[dr]\ar@{..}[l]&&I_2\ar@3[dr]\ar@{..}[ll]&\\
	\cdots\ar@3[ur]&&I_3\ar@3[ur]\ar@{..}[ll]&&I_1\ar@{..}[ll]}
\]
As in Iyama--Yoshino's method, our classification theorem is based on such a result on quiver representations, more precisely Kac's theorem \cite{Kac}. Recall that a module is {\it basic} if its indecomposable summands are mutually non-isomorphic.
\begin{Prop}[{\cite{Kac}\cite[Section 7]{IYo}}]\label{kac}
Let $Q_n$ be the Kronecker quiver with $n$ arrows. Then the basic rigid $kQ_n$-modules are direct summands of $P_i\oplus P_{i+1}$ or $I_i\oplus I_{i+1}$ for $i\geq1$.
\end{Prop}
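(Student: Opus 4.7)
I would split the argument into three stages: classification of indecomposable rigid modules via Kac's theorem, analysis of Ext-orthogonality within the preprojective and preinjective components, and exclusion of rigid modules mixing the two components.

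First I would invoke Kac's theorem: the dimension vectors of indecomposable $kQ_n$-modules are the positive roots of the Kac--Moody algebra associated to $Q_n$, and an indecomposable is rigid exactly when its dimension vector is a positive real root, in which case it is unique up to isomorphism. A direct computation of the Weyl group orbit identifies the positive real roots of $Q_n$ as the dimension vectors of the preprojective modules $P_i$ and preinjective modules $I_i$ with $i \geq 1$. Consequently every basic rigid $kQ_n$-module decomposes into summands drawn from $\{P_i, I_i\}_{i \geq 1}$.

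Next I would determine which pairs among the $P_i$'s can be mutually Ext-orthogonal. Auslander--Reiten duality yields $\Ext^1(P_i, P_j) \simeq D\overline{\Hom}(P_j, \tau P_i)$, and in the preprojective component (of shape $\NN Q_n^{\op}$) one has $\tau P_i = P_{i-2}$ (understood to vanish for $i \leq 2$) together with $\Hom(P_a, P_b) \neq 0$ iff $a \leq b$. Hence $\Ext^1(P_i, P_j) = 0$ iff $j \geq i - 1$, and the symmetric vanishing forces $|i - j| \leq 1$; thus any basic rigid module whose summands are all preprojective is a direct summand of some $P_i \oplus P_{i+1}$. The preinjective case would be handled identically in the preinjective component using $\tau I_j = I_{j+2}$, or alternatively deduced via the $k$-duality $\mod kQ_n \leftrightarrow \mod kQ_n^{\op}$ which swaps the two components.

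The final and most delicate step is to rule out rigid modules containing both a preprojective summand $P_i$ and a preinjective summand $I_j$. Auslander--Reiten duality gives $\Ext^1(I_j, P_i) \simeq D\Hom(P_i, \tau I_j) = D\Hom(P_i, I_{j+2})$; since preinjectives admit no homomorphisms to preprojectives one also has $\Ext^1(P_i, I_{j+2}) = 0$, so the Euler form yields $\dim \Hom(P_i, I_{j+2}) = \langle P_i, I_{j+2} \rangle$. Writing the dimension vectors as $P_i = (f_{i-1}, f_i)$ and $I_k = (f_k, f_{k-1})$ where $\{f_m\}$ satisfies the Chebyshev recurrence $f_{m+1} = n f_m - f_{m-1}$ with $f_0 = 0$, $f_1 = 1$, the form reduces to $f_{i-1} f_{j+2} - f_{i-2} f_{j+1}$, which I would verify to be strictly positive for all $i, j \geq 1$ using the monotonicity properties of $\{f_m\}$ valid for $n \geq 2$. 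Hence $\Ext^1(I_j, P_i) \neq 0$ and no such mixed module can be rigid. This positivity check is the main obstacle, as it reflects the wild behaviour of $Q_n$ for $n \geq 3$; combining the three stages yields the stated classification.
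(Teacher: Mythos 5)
Your argument is correct, but note that the paper supplies no proof of this proposition at all: it is quoted from Kac's theorem together with \cite[Section 7]{IYo}, and your three-stage argument (real roots give the indecomposable rigids, Auslander--Reiten duality plus the Hom-directions inside each component give the adjacency condition, and the Euler-form positivity $\dim\Hom(P_i,\tau I_j)>0$ excludes mixed preprojective--preinjective pairs) is essentially a reconstruction of the argument in that cited reference. The only blemish is a harmless index/convention slip in the final Euler-form expression (with $\dim P_i=(f_{i-1},f_i)$ and $\dim I_k=(f_k,f_{k-1})$ one gets $\langle P_i,I_{j+2}\rangle=f_if_{j+1}-f_{i-1}f_j$ rather than your $f_{i-1}f_{j+2}-f_{i-2}f_{j+1}$), but either expression is strictly positive by the monotonicity of the $f_m$ for $n\geq2$, so the conclusion stands.
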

This leads to the following list of candidates of the rigid objects in $\CM R$. By abuse of notation we also denote by $P_i$ or $I_i$ the indecomposable Cohen-Macaulay $R$-module corresponding under $F$ to the $kQ_3$-modules in the preprojective or preinjective component. For example we have $P_1=\Om\om$, $P_2=\Om^3\om$, $P_3=\Om^2M_{-2}$, and $I_1=M_2$, $I_2=\tau\om$.
\begin{Lem}\label{sugu}
A basic object $M\in\CM R$ is rigid only if it is isomorphic in $\CM R/[X]$ to a direct summand of $P_i\oplus P_{i+1}$ or $I_i\oplus I_{i+1}$ for $i\geq1$.
\end{Lem}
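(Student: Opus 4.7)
The plan is to push the rigidity of $M$ through the functor $F$ to the hereditary side and then invoke Kac's theorem.

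First, I would observe that the statement is really about the image of $M$ under the equivalence in Corollary \ref{kQ_3}. Write $M = Y \oplus M'$ with $Y \in \add X$ and $M'$ having no summand in $\add X$. Since $M$ is basic, so is $M'$, and the two modules have the same image in $\CM R/[X]$. Moreover, rigidity of $M$ forces $\Ext_R^1(M',M')=0$, so $M'$ itself is a basic rigid Cohen-Macaulay module with no summand in $\add X$. Therefore it is enough to classify such $M'$.

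Next, I would apply Proposition \ref{rigid}: since $\Ext_R^1(M',M')=0$, the hereditary representation $FM' = \sHom_R(\Om X,M')$ is rigid in $\mod kQ_3$. Because the restriction of $F$ to the quotient $\CM R/[X]$ is an equivalence (Corollary \ref{kQ_3}) and $M'$ has no summand in $\add X$, the indecomposable summands of $M'$ map bijectively to the indecomposable summands of $FM'$, so $FM'$ is basic. Kac's theorem (Proposition \ref{kac}) applied to the Kronecker quiver $Q_3$ then tells us that $FM'$ is a direct summand of $P_i \oplus P_{i+1}$ or of $I_i \oplus I_{i+1}$ for some $i\geq 1$.

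Finally, transporting this back along the equivalence (and using the convention that the Cohen-Macaulay modules $P_i$ and $I_i$ are, by definition, the preimages under $F$ of the corresponding $kQ_3$-modules with the same names), we conclude that $M \simeq M'$ in $\CM R/[X]$ is isomorphic to a direct summand of $P_i\oplus P_{i+1}$ or $I_i\oplus I_{i+1}$, as desired. The only place where care is needed is ensuring that rigidity in $\CM R$ is genuinely transferred to rigidity in $\mod kQ_3$; this is precisely the content of Proposition \ref{rigid}, combined with the standard Auslander--Reiten formula $\Ext_{kQ_3}^1(FM',FM')\simeq D\overline{\Hom}_{kQ_3}(FM',\tau FM')$ for the hereditary algebra $kQ_3$, so no further work is required.
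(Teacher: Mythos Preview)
Your proof is correct and follows exactly the approach the paper intends: the paper's proof is the single sentence ``This is a straightforward consequence of Proposition \ref{rigid} and Proposition \ref{kac}'', and you have simply unpacked that sentence, making explicit why $FM'$ is basic (via the equivalence of Corollary \ref{kQ_3}) and why it is rigid (via Proposition \ref{rigid}). No additional ideas are needed beyond what you wrote.
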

\begin{proof}
	This is a straightforward consequence of Proposition \ref{rigid} and Proposition \ref{kac}.
\end{proof}

The main result of this subsection is the following complete classification of rigid Cohen-Macaulay modules over $R$. In fact, it turns out that only finitely many among the above candidates are rigid.
\begin{Thm}\label{classification}
A basic Cohen-Macaulay $R$-module is rigid if and only if it is a direct summand of one of the following.
\[ R\oplus\om\oplus\Om^2\om, \quad R\oplus\Om\om,\quad \om\oplus M_2 \]
Moreover, $R\oplus \om\oplus\Om^2\om$ is the unique basic $2$-cluster tilting object in $\CM R$.
\end{Thm}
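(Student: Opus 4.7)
The approach begins with Lemma \ref{sugu}: any basic rigid Cohen-Macaulay module has $F$-image a direct summand of $P_i \oplus P_{i+1}$ or $I_i \oplus I_{i+1}$ for some $i \geq 1$. This reduces the classification to verifying rigidity of the three listed modules and ruling out all remaining candidates. For the positive direction, self-rigidity of $X$ is Lemma \ref{ext}(\ref{XX}). For $R \oplus \Om\om$, applying $\Hom_R(\Om\om, -)$ to $0 \to \Om\om \to R^2 \to \om \to 0$ and using Proposition \ref{five} identifies $\Ext^1_R(\Om\om, \Om\om)$ with $\Coker(\om^2 \to M_2)$, which vanishes by Lemma \ref{kos}(\ref{xx}) at $i = 2$; the cross-Ext $\Ext^1_R(\Om\om, R) = \Ext^2_R(\om, R) = 0$ comes from Lemma \ref{ext}(\ref{om,R}). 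For $\om \oplus M_2$, analogous use of $0 \to R \to \om^2 \to M_2 \to 0$ yields $\Ext^1_R(M_2, M_2) = 0$, with the cross-Exts following by short dimension-shift arguments from Lemma \ref{ext}(\ref{XX}).

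For the negative direction, two sub-tasks remain. First, extensions of $\Om\om$ and $M_2$ by further summands of $X$ must be ruled out. Representative is $\Ext^1_R(M_2, R) = k \neq 0$, which follows from Lemma \ref{kos} and Proposition \ref{five}: one identifies $\Ext^1_R(M_2, R)$ with the cokernel of the map $\Om\om^2 \to R$ sending $(\phi_0, \phi_1)$ to $\phi_0 x_1 - \phi_1 x_0$, whose image is the maximal ideal, so the cokernel is $k$. Parallel arguments dispose of the other combinations, with the $2$-cluster tilting property of $X$ (which forces $\Ext^1_R(\Om^2\om, \Om\om) \neq 0$ since $\Om\om \not\in \add X$) handling the remaining cases. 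Second and more substantial, the infinite families of $P_i, I_i$ with $i \geq 2$ and the pairs $P_i \oplus P_{i+1}, I_i \oplus I_{i+1}$ must be shown not to lift to rigid modules in $\CM R$. My plan is to iterate the syzygy-shift identity $\Ext^1_R(\Om^k M, \Om^k N) = \Ext^{k+1}_R(M, \Om^k N)$ with the defining sequences of $\Om^i \om$ to reduce the relevant Ext computations to groups already controlled by Lemma \ref{ext} and the depth estimates of Proposition \ref{depth}, producing the required non-vanishing; a dual argument using the canonical module and AR-translation on $\CM R$ handles the $I_i$-side. This second task is the main obstacle.

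Finally, uniqueness of the $2$-cluster tilting object follows quickly from the classification. Any basic $2$-cluster tilting object is maximal rigid, hence is one of the three listed modules. Since mutation of $2$-cluster tilting objects preserves the number of indecomposable summands, and $\End_R(X)$ has three simples by Proposition \ref{quiver}, only $R \oplus \om \oplus \Om^2\om$ has the correct rank, so it is the unique basic $2$-cluster tilting object in $\CM R$.
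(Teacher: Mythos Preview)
Your outline handles the easy parts correctly: Lemma \ref{sugu} gives the candidate list, and your verifications that $R\oplus\Om\om$ and $\om\oplus M_2$ are rigid, and that $\Om\om$ and $M_2$ do not extend rigidly by the remaining summands of $X$, are essentially what the paper does (though the paper packages these via Lemmas \ref{Ext-R-om} and \ref{Ext-Om^2} using Auslander--Reiten duality and the equivalence $F$ rather than direct cokernel computations).

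The genuine gap is your treatment of the infinite families. Your plan to ``iterate the syzygy-shift identity $\Ext^1_R(\Om^kM,\Om^kN)=\Ext^{k+1}_R(M,\Om^kN)$ with the defining sequences of $\Om^i\om$'' does not reach the goal. First, the $P_i$ are not iterated syzygies of $\om$: one has $P_1=\Om\om$, $P_2=\Om^3\om$, but already $P_3=\Om^2M_{-2}$, and for larger $i$ the $P_i$ are $\tau$-translates in the preprojective component with no simple syzygy description. Second, even for $P_2=\Om^3\om$, dimension shifting reduces $\Ext^1_R(P_2,P_2)$ to $\Ext^4_R(\om,\Om^3\om)$, which is not controlled by Lemma \ref{ext} (where the vanishing of $\Ext^i_R(\om,R)$ stops at $i=3$) or by the depth bound of Proposition \ref{depth}; these tools give vanishing, not the non-vanishing you need. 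The paper's argument is of a completely different nature: it passes to the graded category $\CM^\Z\!R$, shows that each $P_i$ is generated in degree $0$ while each $I_i$ is concentrated in degree $\geq1$, and then for $M=P_i$ with $i\geq2$ explicitly constructs a morphism $\tau^{-1}M(-1)\to M$ as a composite through the injective hull and a chain of irreducible maps $M_1\to P_2\to\cdots\to M$, finally using Lemma \ref{PO} and a degree argument to show this morphism cannot factor through a projective. Auslander--Reiten duality then gives $\Ext^1_R(M,M)\neq0$. This construction is the heart of the proof and your proposal contains no analogue of it.

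A smaller point: your uniqueness argument invokes ``mutation of $2$-cluster tilting objects preserves the number of indecomposable summands'', but no mutation theory for $\CM R$ in this non-Gorenstein setting has been established in the paper. The intended argument is more direct: any $2$-cluster tilting object is maximal rigid, hence equals one of the three listed modules; but $R\oplus\Om\om$ is not $2$-cluster tilting since $\om$ is injective in $\CM R$ yet $\om\notin\add(R\oplus\Om\om)$, and dually $\om\oplus M_2$ fails because $R\notin\add(\om\oplus M_2)$.
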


For the proof we need some additional computations. We start with two of them which are already applications of Corollary \ref{kQ_3}.
\begin{Lem}\label{Ext-R-om}
Let $M\in\CM R$.
\begin{enumerate}
\item $\Ext^1_R(M,R)=0$ if and only if $M\in\add(X\oplus\Om\om)$.
\item $\Ext^1_R(\om,M)=0$ if and only if $M\in\add(X\oplus M_2)$.
\end{enumerate}
\end{Lem}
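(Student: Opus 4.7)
\emph{Plan.} The proof splits naturally into ``if'' and ``only if'' directions, with (1) and (2) being mirror images of each other under the interchange ``syzygy $\leftrightarrow$ cosyzygy'' and ``$R\leftrightarrow\om$''. I will describe (1) in detail and indicate (2) at the end.

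\emph{``If'' direction of (1).} The cases $M=R,\om$ are trivial and Lemma \ref{ext}(2) respectively. For the other two summands I would apply $\Hom_R(-,R)$ to the syzygy sequences $0\to\Om\om\to R^2\to\om\to 0$ and $0\to\Om^2\om\to R^3\to\Om\om\to 0$; using $\Ext^{\geq 1}_R(R^n,R)=0$ these yield dimension shifts
\[
\Ext^1_R(\Om\om,R)\simeq\Ext^2_R(\om,R)=0,\qquad \Ext^1_R(\Om^2\om,R)\simeq\Ext^3_R(\om,R)=0,
\]
both vanishing by Lemma \ref{ext}(2). For (2), $\Ext^1_R(\om,X)=0$ is Lemma \ref{ext}(3); for $\Ext^1_R(\om,M_2)$, apply $\Hom_R(\om,-)$ to the sequence $0\to R\to\om^2\to M_2\to 0$ (Lemma \ref{kos}(1) with $i=2$) and combine $\Ext^1_R(\om,\om)=0$ (Lemma \ref{ext}(3)) with $\Ext^2_R(\om,R)=0$ (Lemma \ref{ext}(2)).

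\emph{``Only if'' direction of (1).} Let $M$ be indecomposable; if $M\in\add X$ we are done, so assume $M\notin\add X$, whence $FM\neq 0$ in $\mod kQ_3$ by Corollary \ref{kQ_3}. The $2$-cluster tilting structure from Theorem \ref{main} provides a right $(\add X)$-approximation
\[
0\longrightarrow X_1\longrightarrow X_0\longrightarrow M\longrightarrow 0
\]
with $X_0,X_1\in\add X$. Applying $\Hom_R(-,R)$ and invoking the already-established vanishing $\Ext^1_R(\add X,R)=0$, the hypothesis $\Ext^1_R(M,R)=0$ becomes the assertion that $\Hom_R(X_0,R)\to\Hom_R(X_1,R)$ is surjective. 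The plan is to transfer this condition through the equivalence $F\colon\CM R/[X]\xrightarrow{\sim}\mod kQ_3$: the approximation sequence corresponds, up to $[X]$, to the minimal projective presentation of $FM$ in $\mod kQ_3$, whose projectives arise from $F(\Om\om)$ and $F(\Om^3\om)$. Computing $\Hom_R(-,R)$ on the three indecomposable summands of $X$ gives $R$, $\Om\om$, and (by dualizing $0\to\Om^2\om\to R^3\to\Om\om\to 0$ using $\Ext^1_R(\Om\om,R)=0$) the reflexive non-CM module $R^3/\om$. Matching these values against the finitely many projective presentations available in $\mod kQ_3$ and asking for surjectivity of the induced map pins $FM$ down to the indecomposable projective $F(\Om\om)$, forcing $M\simeq\Om\om$. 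For (2) the argument is dual: one uses a left $(\add X)$-approximation $0\to M\to X^0\to X^1\to 0$, applies $\Hom_R(\om,-)$, and checks that the analogous surjectivity of $\Hom_R(\om,X^0)\to\Hom_R(\om,X^1)$ holds for the non-$X$ lifts if and only if $M\simeq M_2$.

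\emph{Main obstacle.} The substance of the argument lies in the second step of the ``only if'' direction: converting surjectivity of a map between the contravariant images $\Hom_R(X_i,R)$ (resp. covariant images $\Hom_R(\om,X^i)$) into a finite combinatorial condition on the projective presentation of $FM$ in $\mod kQ_3$. This is delicate because the three values of $\Hom_R(-,R)$ on $\add X$ land in different subcategories of $\mod R$ (one is projective, one Cohen--Macaulay non-projective, one merely reflexive), so the map cannot be analyzed within a single $\add$-category; the bookkeeping must be done explicitly in terms of $3$-Kronecker quiver representations.
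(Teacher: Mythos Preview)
Your ``if'' direction is fine and essentially matches the paper (which simply cites Lemma~\ref{ext}; you have unpacked it).

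Your ``only if'' direction, however, does not work as written, and the obstacle you flag at the end is real and unresolved. There are two concrete problems. First, the phrase ``the finitely many projective presentations available in $\mod kQ_3$'' is misleading: $kQ_3$ is wild, so there are infinitely many indecomposable modules and hence infinitely many (non-isomorphic) minimal projective presentations to test. You have given no mechanism to reduce the surjectivity condition on $\Hom_R(X_0,R)\to\Hom_R(X_1,R)$ to a finite check. Second, the link you assert between the right $(\add X)$-approximation $0\to X_1\to X_0\to M\to 0$ and the minimal projective presentation of $FM$ is not the one produced by the equivalence: since $\sHom_R(\Om X,X_i)=0$, applying $F$ to that sequence kills both $X_i$. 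The projective presentation of $FM$ actually comes from the \emph{left} approximation $0\to M\to X^0\to X^1\to 0$ via $F\Om X^0\to F\Om X^1\to FM\to 0$ (see the proof of Proposition~\ref{rigid}). So your translation step is not set up correctly.

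The missing idea is Auslander--Reiten duality, which bypasses all of this in one line. From $\Ext^1_R(M,R)=0$ one gets $\sHom_R(\tau^{-1}R,M)=0$; reading off $\tau^{-1}R=\Om^3\om$ from the component containing $X$, this becomes $\Hom_{kQ_3}(P_2,FM)=0$ under $F$. Since $P_2$ is indecomposable projective, $FM$ must be supported only at the other vertex, i.e.\ $FM\in\add P_1=\add F(\Om\om)$, whence $M\in\add(X\oplus\Om\om)$. Part~(2) is genuinely dual. This is exactly the paper's argument, and it avoids any case analysis.
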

\begin{proof}
	We only prove (1) since (2) is dual. We know from Lemma \ref{ext} that $\Ext^1_R(X\oplus\Om\om,R)=0$. Suppose $\Ext^1_R(M,R)=0$. Then by Auslander-Reiten duality we have $\sHom_R(\Om^3\om,M)=0$, thus $\Hom_{kQ_3}(P_2,FM)=0$ under the functor $F\colon\CM R\to\mod kQ_3$. It follows that $FM$ must be the semisimple $kQ_3$-module $F(\Om\om)$ corresponding to the other vertex, thus $M\in\add(X\oplus \Om\om)$.
\end{proof}
\begin{Lem}\label{Ext-Om^2}
Let $M\in\CM R$.
\begin{enumerate}
\item $\Ext^1_R(M,\Om^2\om)=0$ if and only if $M\in\add(X\oplus M_2)$.
\item $\Ext^1_R(\Om^2\om,M)=0$ if and only if $M\in\add(X\oplus\Om\om)$.
\end{enumerate}
\end{Lem}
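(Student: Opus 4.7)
The plan is to prove part (2) directly through the equivalence $F\colon\sCM R/[X]\xsimeq\mod kQ_3$ of Corollary \ref{kQ_3}, and then to derive part (1) from part (2) via the canonical self-duality $(-)^\dagger:=\Hom_R(-,\om)$ on $\CM R$.

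For part (2), I would first use Lemma \ref{Om} (applicable since $\Ext^1_R(\Om^2\om,R)=0$ by Lemma \ref{ext}(\ref{XX})) to identify $\Ext^1_R(\Om^2\om,M)\simeq\sHom_R(\Om^3\om,M)$, so that the vanishing is equivalent to $F(M)=\sHom_R(\Om\om,M)\oplus\sHom_R(\Om^3\om,M)$ being concentrated at the vertex of $Q_3$ labelled by $\Om\om$. I identify the corresponding simple $kQ_3$-module as $F(\Om\om)$ itself, by computing $F(\Om\om)=\sEnd_R(\om)\oplus\sHom_R(\Om^3\om,\Om\om)=k\oplus\sHom_R(\Om^2\om,\om)$, where the first summand is $k$ by Lemma \ref{ext}(\ref{sEnd}) and I shift once more via Lemma \ref{Om}. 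The crucial input is $\sHom_R(\Om^2\om,\om)=0$, which comes from Proposition \ref{quiver}: the quiver presenting $\End_R(X)$ has no direct arrow $\Om^2\om\to\om$, so every such morphism factors through the projective summand $R$ and vanishes in the stable category. Hence $F(\Om\om)$ is the simple at the $\Om\om$-vertex, and the equivalence $F$ yields $\Ext^1_R(\Om^2\om,M)=0$ iff $M\in\add(X\oplus\Om\om)$.

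For part (1), the idea is to reduce to part (2) using the exact involutive duality $(-)^\dagger$ on $\CM R$, which swaps the projective and injective objects. Proposition \ref{five} gives $R^\dagger=\om$ and $(\Om\om)^\dagger=\Hom_R(M_{-1},M_1)=M_2$, hence $X^\dagger=X$; applying $(-)^\dagger$ to the sequence $0\to\Om^2\om\to R^3\to\Om\om\to0$ and using the vanishing $\Ext^1_R(\Om\om,\om)\simeq\sHom_R(\Om^2\om,\om)=0$ from the previous paragraph produces $0\to M_2\to\om^3\to(\Om^2\om)^\dagger\to0$, which upon comparison with Lemma \ref{Om^2} yields $(\Om^2\om)^\dagger\simeq\Om^2\om$. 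A routine diagram chase with projective presentations shows that $(-)^\dagger$ induces natural isomorphisms $\Ext^1_R(M,N)\simeq\Ext^1_R(N^\dagger,M^\dagger)$ on $\CM R$. Combining these, $\Ext^1_R(M,\Om^2\om)\simeq\Ext^1_R(\Om^2\om,M^\dagger)$, which by part (2) vanishes iff $M^\dagger\in\add(X\oplus\Om\om)$, iff $M\in\add(X\oplus M_2)$.

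The principal obstacle is establishing the asymmetric vanishing $\sHom_R(\Om^2\om,\om)=0$, which forces the correspondence $\Om\om\leftrightarrow S_{\Om\om}$ under $F$ and is precisely the reason the two parts of the lemma give the distinct answers $\Om\om$ and $M_2$; once this is read off from the quiver in Proposition \ref{quiver}, the rest of the argument is a formal consequence of the equivalence $F$ together with the canonical duality $(-)^\dagger$.
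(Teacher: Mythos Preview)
Your proof is correct and is essentially the mirror image of the paper's argument: the paper proves (1) first, using Auslander--Reiten duality to rewrite $\Ext^1_R(M,\Om^2\om)=0$ as $\sHom_R(\Om\om,M)=0$, identifies $F(M_2)$ as the simple at the other vertex, and then deduces (2) from (1) via the canonical dual $\Hom_R(-,\om)$ (noting the self-duality of the sequence in Lemma~\ref{Om^2}). You instead prove (2) first, using Lemma~\ref{Om} rather than AR duality to rewrite the $\Ext^1$ as a stable Hom, and then deduce (1) from (2) by the same duality.

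Two minor remarks. First, your deduction of $\sHom_R(\Om^2\om,\om)=0$ from Proposition~\ref{quiver} is fine, but note that ``no arrow $\Om^2\om\to\om$'' only says directly that such maps lie in $\rad^2$; the point is that every path in that quiver from $\Om^2\om$ to $\om$ passes through $R$, so every morphism factors through the projective. Second, your argument has a small economy over the paper's: because your chain of isomorphisms for (2) is bidirectional, you get the ``if'' direction for free, whereas the paper separately verifies $\Ext^1_R(M_2,\Om^2\om)=0$ via a depth computation with Proposition~\ref{depth}.
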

\begin{proof}
	(1)  Suppose $\Ext^1_R(M,\Om^2\om)=0$. By Auslander-Reiten duality we have $\sHom_R(\Om\om,M)=0$, thus $\Hom_{kQ_3}(P_1,FM)=0$ under the functor $F\colon\CM R\twoheadrightarrow\CM R/[X]\xrightarrow{\sHom_R(\Om X,-)}\mod kQ_3$. It follows that $FM$ must be the simple $kQ_3$-module $FM_2$ corresponding to the other vertex, thus $M\in\add(X\oplus M_2)$. To show the converse, we only have to verify $\Ext^1_R(M_2,\Om^2\om)=0$. Applying $\Hom_R(M_2,-)$ to the sequence in Lemma \ref{Om^2}, we have an exact sequence
	\[ \xymatrix@R=1.5mm{
		0\ar[r]&\Hom_R(M_2,M_2)\ar@{=}[d]\ar[r]&\Hom_R(M_2,\om^3)\ar@{=}[d]\ar[r]&\Hom_R(M_2,\Om^2\om)\ar[r]&\Ext^1_R(M_2,M_2)\ar@{=}[d]\\
		&R&M_{-1}^3&&0&} \]
	which shows $\Hom_R(M_2,\Om^2\om)$ must have depth $\geq3$. Then Proposition \ref{depth} gives the claim.
	
	(2)  This follows from (1) by applying the canonical dual. Note that $\Hom_R(\Om^2\om,\om)=\Om^2\om$ since the sequence in Lemma \ref{Om^2} is self-dual.
\end{proof}
We note another observation.
\begin{Lem}\label{PO}
Let $N\in\CM R$ and consider the push-out diagram
\[ \xymatrix{
	N\ar[r]^-j\ar[d]_-i&P\ar[d]\\
	I\ar[r]_-l&L } \]
with $i$ the inclusion into the injective hull $I\in\add\om$ and $j$ the minimal left $(\proj R)$-approximation. Then $l$ is a radical map and $L\in\add(X\oplus\Om\om)$.
\end{Lem}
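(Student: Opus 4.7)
The plan is to work through three steps: first establish the short exact sequence coming from the pushout, then extract the structure of $L$ from it, and finally argue that $l$ is a radical map. Since $i$ is a monomorphism, the pushout yields a short exact sequence
$$0\to N\xrightarrow{(i,-j)} I\oplus P\xrightarrow{(l,p)} L\to 0$$
in $\mod R$. Both $N$ and $I\oplus P$ lie in $\CM R$ with depth $\dim R=4$, so the depth lemma gives $\depth L\geq 4$, and hence $L\in\CM R$.

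Next, to show $L\in\add(X\oplus\Om\om)$, I would apply $\Hom_R(-,R)$ to this sequence and use the vanishings $\Ext^1_R(P,R)=0$ (by projectivity of $P$) and $\Ext^1_R(I,R)=0$ (by Lemma \ref{ext}(\ref{om,R}), since $I\in\add\om$). The long exact sequence then truncates to
$$\Hom_R(I\oplus P,R)\to\Hom_R(N,R)\to\Ext^1_R(L,R)\to 0,$$
and the first map is surjective because $j$ being a left $(\proj R)$-approximation lets any $h\colon N\to R$ factor as $h=g\circ j$ through some $g\colon P\to R$. Hence $\Ext^1_R(L,R)=0$, and Lemma \ref{Ext-R-om}(1) gives $L\in\add(X\oplus\Om\om)$.

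For the radicality of $l$, note that since $I\in\add\om$ while $L\in\add(R\oplus\om\oplus\Om\om\oplus\Om^2\om)$, any iso-summand of $l$ must take the form $\om\to\om$. Supposing for contradiction that such a summand exists, I would pick decompositions $I=\om_0\oplus I'$ and $L=\om_0\oplus L'$ with $l$ block diagonal and $l|_{\om_0}$ an isomorphism. Reading off the $\om_0$-component of the pushout identity $li=pj$ yields $i|_{\om_0}=p|_{\om_0}\circ j$, so the $\om_0$-component of $i$ factors through $j$. Applying the upper-triangular automorphism of $I\oplus P$ that subtracts $p|_{\om_0}$ applied to the $P$-coordinate from the $\om_0$-coordinate would then modify $(i,-j)$ into an embedding with vanishing $\om_0$-component, leaving the pushout $L$ unchanged. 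The essentiality of the injective hull $i\colon N\hookrightarrow I$ should then force $\om_0=0$, giving the desired contradiction.

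The main obstacle lies in this final essentiality argument, because the modification uses the projective coordinate $P$ rather than merely a summand of $I$; one must carefully track how maps factoring through $j$ interact with the minimality of the injective hull to conclude that no genuine $\om$-summand can persist through the pushout.
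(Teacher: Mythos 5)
Your middle step is correct and coincides with the paper's: applying $\Hom_R(-,R)$ to the pushout sequence $0\to N\to I\oplus P\to L\to 0$, the approximation property of $j$ makes $\Hom_R(I\oplus P,R)\to\Hom_R(N,R)$ surjective while $\Ext^1_R(I\oplus P,R)=0$ by Lemma \ref{ext}, so $\Ext^1_R(L,R)=0$ and Lemma \ref{Ext-R-om}(1) can be invoked. The other two assertions, however, are not established. First, your proof that $L\in\CM R$ is wrong: for $0\to A\to B\to C\to 0$ the depth lemma gives only $\depth C\geq\min\{\depth A-1,\depth B\}$, so from $\depth N=\depth(I\oplus P)=4$ you get $\depth L\geq 3$, not $4$ (compare $0\to R\xrightarrow{x}R\to R/xR\to 0$, whose cokernel has depth $3$). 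Since Lemma \ref{Ext-R-om}(1) is stated for modules in $\CM R$, this is a genuine hole. The paper closes it using precisely the hypothesis your argument never touches, namely that the injective hull $i$ is a left $(\add\om)$-approximation: applying $\Hom_R(-,\om)$ to the same sequence, the map $\Hom_R(I\oplus P,\om)\to\Hom_R(N,\om)$ is surjective and $\Ext^1_R(I\oplus P,\om)=0$, hence $\Ext^1_R(L,\om)=0$; combined with $\Ext^i_R(L,\om)\cong\Ext^{i-1}_R(N,\om)=0$ for $i\geq 2$, this yields $L\in\CM R$.

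Second, the radicality of $l$ is left open, as you yourself admit. The obstacle you identify is real: after the base change the $\om_0$-component of $i$ factors through $j$, and subtracting it produces a map $N\to I\oplus P$ whose first coordinate is no longer the injective hull, so essentiality or minimality of $i$ says nothing about the modified map. Moreover, the purely formal argument you sketch cannot be completed as written, because it would apply verbatim to $N=R$, where $j=\mathrm{id}_R$, the pushout is $L\cong I$ and $l$ is an isomorphism; any correct proof must therefore use that in the situation at hand $N$ has no free summand (so that $j$ is a radical map), and not merely that a component of $i$ factors through $\add R$. The paper deduces radicality of $l$ from the left minimality of $i$ in a single sentence; your write-up does not yet contain a proof of this part.
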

\begin{proof}
	Since $i$ is minimal, the map $l$ is radical. The square being a push-out yields an exact sequence
	\[ \xymatrix{ 0\ar[r]& N\ar[r]&P\oplus I\ar[r]& L \ar[r]& 0 }. \]
	Applying $\Hom_R(-,\om)$ shows $L\in\CM R$ since $i$ is a left $(\add\om)$-approximation, and applying $\Hom_R(-,R)$ shows $\Ext^1_R(L,R)=0$ since $j$ is a left $(\proj R)$-approximation. Then Lemma \ref{Ext-R-om}(1) gives $L\in\add(X\oplus\Om\om)$.
\end{proof}
Now we are ready to prove the classification result.
\begin{proof}[Proof of Theorem \ref{classification}]
	It is easy to verify, using Lemma \ref{Ext-R-om}, that the listed modules are maximal rigid. We show the ``only if'' part. Let $M$ be a rigid module. By Lemma \ref{sugu}, it lies in $\add(X\oplus P_i\oplus P_{i+1})$ or in $\add(X\oplus I_i\oplus I_{i+1})$ for some $i\geq1$. If $M$ shares a summand with $X$ then Lemma \ref{Ext-R-om} and \ref{Ext-Om^2} immediately yield that $M$ must be a direct summand of one of the modules from the list.
	
	It remains to prove that $P_i$ and $I_i$ are {not} rigid when $i\geq2$. By the canonical dual it suffices to discuss $P_i$. For this we (re)consider the grading on $R$ as in the beginning of Section \ref{Proof}: we had graded modules $M_i=\prod_{j\in\Z}S_{(i,0)+(j,j)}$ over $R=M_0$. Then the Auslander--Reiten quiver of $\CM^\Z\!R$ becomes as below, where we gave the gradings on $P_i$'s so that $P_1=\Om M_1(1)$, $P_2=\Om^3M_1(3)$, and the rest are successors in the Auslander--Reiten quiver. We simiarly give the grading for the $I_i$'s. Note that we have $P_i=\Hom_R(I_i,M_1)$.
	\[ \xymatrix@!R=2mm@!C=2mm{
		&&\cdots\ar@3[dr]&&&&&&&&&&\\
		&&&\Om^2M_1(1)\ar@3[dr]\ar@{..}[ll]&&\Om M_1(1)\ar@3[dr]\ar@{..}[ll]&&P_3\ar@3[dr]\ar@{..}[ll]&&\cdots\ar@{..}[ll]&&&\\
		&&&&R\ar@3[ur]\ar@2[dr]&&\Om^3M_1(3)\ar@3[ur]\ar@{..}[ll]&&P_4\ar@3[ur]\ar@{..}[ll]&&\ar@{..}[ll]&&\\
		&I_4\ar@3[dr]\ar@{..}[l]&&I_2\ar@3[dr]\ar@{..}[ll]\ar@2[ur]&&M_1\ar@2[ur]\ar@3[dr]\ar@{..}[ll]&&&&&&&\\
		\cdots\ar@3[ur]&&I_3\ar@3[ur]\ar@{..}[ll]&&M_2(-1)\ar@3[ur]\ar@{..}[ll]&&\Om^2M_1(2)\ar@3[dr]\ar@{..}[ll]&&\Om M_1(2)\ar@3[dr]\ar@{..}[ll]&&P_3(1)\ar@3[dr]\ar@{..}[ll]&&\cdots\ar@{..}[ll]\\
		&&&&&&&R(1)\ar@3[ur]\ar@2[dr]&&\Om^3M_1(4)\ar@3[ur]\ar@{..}[ll]&&P_4(1)\ar@3[ur]\ar@{..}[ll]&\ar@{..}[l]\\
		&&&&&&\cdots\ar@2[ur]&&\cdots\ar@2[ur]&&&&} \]
	We proceed in several steps.
	
	{\it Step 1: Every $P_i$ is generated in degree $0$.}  Indeed, this is certainly the case for $P_1=\Om M_1(1)=M_{-1}(1)$ by the graded version of Lemma \ref{kos}(\ref{yyy}). Next, the almost split sequences $0\to R\to \Om M_1(1)\oplus M_1\to \Om^3M_1(3)\to 0$ shows that the rightmost term $P_2$ is generated in degree $0$. Inductively, the exact sequences $0\to P_{i-2}\to P_{i-1}^{\oplus3}\to P_i\to 0$ show that all $P_i$ must be generated in degree $0$.  
	
	{\it Step 2: Every $I_i$ is concentrated in degree $\geq1$, with non-zero degree $1$ part.}  The first part is proved in a similar way; It is the case for $I_1=M_2(-1)$, and the almost split sequences inductively yield the assertion. For the second part, let $d_i$ be the dimension of the degree $1$ part of $I_i$. Then we have $d_1=3$, and the almost split sequences yield $d_2=12$, and $d_i=3d_{i-1}-d_{i-2}$ for $i\geq3$. It easily follows that $d_i>0$.
	
	{\it Step 3: The construction.}	Now for $M=P_i$ with $i\geq2$ we construct a morphism $\tau^{-1}M\to M$ in $\CM R$ which does not factor through a projective module. Then Auslander--Reiten duality shall give $\Ext^1_R(M,M)\neq0$. In fact we give it as a morphism $\tau^{-1}M(-1)\to M$ in $\CM^\Z\!R$. Let $\tau^{-1}M(-1)\to I$ be the injective hull in $\CM^\Z\!R$. By Step 2, the projective cover $P\to I_i$ satisfies $P\in\add\{R(-i)\mid i\geq1\}$ and $R(-1)\in\add P$, thus dualizing by $\Hom_R(-,M_1(-1))$ we see that $I\in\add\{M_1(i)\mid i\geq0\}$ and $M_1\in\add I$. Now define the morphism $\tau^{-1}M\to M$ to be the compsite $\tau^{-1}M\hookrightarrow I\twoheadrightarrow M_1\to M$, where the second map is the projection onto one of the factors, and the third map is a non-zero composite $M_1\to P_2\to\cdots\to M$ of irreducible morphisms which exists by the following claim.
	
	{\it Step 4: There exists a composite $M_1\to P_2\to\cdots\to M$ of irreducible morphisms which is non-zero.}  Note first that any non-zero map $M_1\to P_2$ is injective. Indeed, this is the dual of a non-zero map $I_2\to R$, whose cokernel $C$ has dimension $<4$, thus the dual is injective by $\Hom_R(C,M_1)=0$. Next, for each $i\geq2$ we have a monomorphism $P_i\to P_{i+1}^{\oplus3}$ as the source map, thus their composition $P_2\to P_n^{\oplus 3^{n-2}}$ is injective. Therefore we have a non-zero map $M_1\to P_2\to M^{\oplus}$, thus one of the components is non-zero.
	
	{\it Step 5: The conclusion.}  We show that the morphism $\tau^{-1}M(-1)\to I\to M$ constructed in Step 3 does not factor through a projective module. If it does, it factors through the minimal left projective approximation $\tau^{-1}M(-1)\to P$, hence through the push-out $L$ of the diagram $I\leftarrow\tau^{-1}M(-1)\rightarrow P$, which lies in $\add(X\oplus\Om\om)$ (us an ungraded module) by Lemma \ref{PO}. Now we get a morphism $M_1\hookrightarrow I\to L\to M$ in $\CM^\Z\!R$ which is equal to the non-zero composite of irreducible morphisms in Step 4, and the second map $I\to L$, hence $M_1\to L$, is a radical map by Lemma \ref{PO}.
	\[ \xymatrix@R=5mm@!C=10mm{
		M_1\ar@{^(->}[r]\ar[d]&I\ar[r]&L\ar[r]&M\\
		\Om^2M_1(2)^{\oplus3}\ar[d]\ar[urr]\\
		R(1)^{\oplus9}\ar[d]\ar[uurrr]\\
		\Om M_1(2)^{\oplus27}\oplus M_1(1)^{\oplus18}\ar[uuurrr] } \]
	Now, it is easy to verify that the map $M_1\to\Om^2M_1(2)^{\oplus3}$ is the source map in $\add\{R(i)\oplus M_1(i)\oplus \Om^2M_1(i)\oplus \Om M_1(i)\mid i\in\Z\}$, so that the radical map $M_1\to L$ factors through $M_1\to \Om^2M_1(2)^{\oplus3}$. Then the map to $M$ factors through the source maps (in $\CM^\Z\!R$) as in the diagram above, thus the constructed map $M_1\to M$ in turn factors through $\add(M_1(1)\oplus\Om M_1(2))$. However, this is impossible for a non-zero map by $\Hom_R^\Z(M_1(1)\oplus \Om M_1(2),M)=0$; indeed, both $M_1(1)$ and $\Om M_1(2)=M_{-1}(2)$ are generated in degree $-1$ while $M$ is concentrated in degree $\geq0$.
\end{proof}
\thebibliography{99}
\bibitem{Am07} C. Amiot, {On the structure of triangulated categories with finitely many indecomposables}, Bull. Soc. math. France 135 (3), 2007, 435-474.
\bibitem{Am09} C. Amiot, {Cluster categories for algebras of global dimension 2 and quivers with potentional}, Ann. Inst. Fourier, Grenoble 59, no.6 (2009) 2525-2590.
\bibitem{AIR} C. Amiot, O. Iyama, and I. Reiten, {Stable categories of Cohen-Macaulay modules and cluster categories}, Amer. J. Math, 137 (2015) no.3, 813-857.
\bibitem{AV} M. Artin and J. L. Verdier, {Reflexive modules over rational double points}, Math. Ann. 270, 79-82 (1985)
\bibitem{Au86} M. Auslander, {Rational singularities and almost split sequences}, Trans. Amer. Math. Soc. 293 (2) (1986), 511-531.
\bibitem{AR89c} M. Auslander and I. Reiten, {Cohen-Macaulay modules for graded Cohen-Macaulay rings and their completions}, In: Commutative algebra (Berkeley, CA, 1987), 21--31, Math. Sci. Res. Inst. Publ., 15, Springer, New York, 1989.
\bibitem{AR89} M. Auslander and I. Reiten, {The Cohen-Macaulay type of Cohen-Macaulay rings}, Adv. Math. 73 (1989) 1-23.
\bibitem{NB} N. Bourbaki, {Commutative algebra, Chapters 1–7}, Translated from the French. Reprint of the 1989 English translation. Elements of Mathematics (Berlin). Springer-Verlag, Berlin, 1998. xxiv+625 pp.
\bibitem{Br} N. Broomhead, {Dimer models and Calabi-Yau algebras}, Mem. Amer. Math. Soc. 215 (2012) no.1011, viii+86.
\bibitem{BMRRT} A. B. Buan, R. Marsh, M. Reineke, I. Reiten, and G. Todorov, {Tilting theory and cluster combinatorics}, Adv. Math. 204 (2006) 572-618.
\bibitem{Bu86} R. O. Buchweitz, {Maximal Cohen-Macaulay modules and Tate-cohomology over Gorenstein rings}, With appendices by L. L. Avramov, B. Briggs, S. B. Iyengar, J. C. Letz, vol. 262 of Mathematical Surveys and Monographs, American Mathematical Society, Province, RI, (2021).
\bibitem{BGS} R. O. Buchweitz, G. M. Greuel, and F. O. Schreyer, {Cohen-Macaulay modules on hypersurface singularities}, II. Invent. Math. 88 (1987), no. 1, 165–182.
\bibitem{BIY} R. O. Buchweitz, O. Iyama, and K. Yamaura, {Tilting theory for Gorenstein rings in dimension one}, Forum of Mathematics, Sigma, 8, E36.
\bibitem{BLV1} R. O. Buchweitz, G. J. Leuschke, and M. Van den Bergh, {Non-commutative desingularization of determinantal varieties I}, Invent. Math. 182 (2010), no. 1, 47–115.
\bibitem{BIKR} I. Burban, O. Iyama, B. Keller, and I. Reiten, {Cluster tilting for one-dimensional hypersurface singularities}, Adv. Math. 217 (2008), no. 6, 2443–2484.
\bibitem{DFI} H. Dao, E. Faber, and C. Ingalls, {Noncommutative (crepant) desingularizations and the global spectrum of commutative rings}, Algebr. Represent. Theory 18 (2015), no. 3, 633–664.
\bibitem{DL} L. Demonet and Y. Liu, {Quotients of exact categories by cluster tilting subcategories as module categories}, J. Pure Appl. Algebra 217 (2013), no. 12, 2282–2297.
\bibitem{FMS} E. Faber, G. Muller, and K. E. Smith, {Non-commutative resolutions of toric varieties}, Adv. Math. 351 (2019), 236–274.
\bibitem{FU} M. Futaki, K. Ueda, {Homological mirror symmetry for Brieskorn-Pham singularities}, Selecta Math. (N.S.) 17 (2011), no. 2, 435--452.
\bibitem{Gi} V. Ginzburg, {Calabi-Yau algebras}, arXiv:0612139.
\bibitem{GW1} S. Goto and K. Watanabe, {On graded rings I}, J. Math. Soc. Japan 30 (1978), no. 2, 179–213.
\bibitem{GI} J. Grant and O. Iyama, {Higher preprojective algebras, Koszul algebras and superpotentials}, Compos. Math. 156 (2020), no. 12, 2588-2627.
\bibitem{Guo} L. Guo, {Cluster tilting objects in generalized higher cluster categories}, J. Pure Appl. Algebra 215 (2011), no. 9, 2055–2071.
\bibitem{ha} N. Hanihara, {Auslander correspondence for triangulated categories}, Algebra \& Number Theory 14-8 (2020), 2037--2058.
\bibitem{ha3} N. Hanihara, {Cluster categories of formal DG algebras and singularity categories}, Forum of Mathematics, Sigma (2022), Vol. 10:e35 1–50.
\bibitem{ha4} N. Hanihara, {Morita theorem for hereditary Calabi-Yau categories}, Adv. Math. 395 (2022) 108092.
\bibitem{segre} N. Hanihara, in preparation.
\bibitem{haI} N. Hanihara and O. Iyama, {Enhanced Auslander-Reiten duality and tilting theory for singularity categories}, arXiv:2209.14090.
\bibitem{haI2} N. Hanihara and O. Iyama, {Silting-cluster tilting correspondences}, in preparation.
\bibitem{HPR} D. Happel, U. Preiser, and C.M. Ringel, {Vinberg's characterization of Dynkin diagrams using subadditive functions with application to $D{\mathrm Tr}$-periodic modules}. Representation theory, II (Proc. Second Internat. Conf., Carleton Univ., Ottawa, Ont., 1979), pp. 280--294, Lecture Notes in Math., 832, Springer, Berlin, 1980.
\bibitem{Hara} W. Hara, {Non-commutative crepant resolution of minimal nilpotent orbit closures of type A and Mukai flops}, Adv. Math. 318 (2017), 355–410.
\bibitem{HeI} M. Herschend, O. Iyama, in preparation.
\bibitem{HIMO} M. Herschend, O. Iyama, H. Minamoto, and S. Oppermann, {Representation theory of Geigle-Lenzing complete intersections}, to appear in Mem. Amer. Math. Soc, arXiv:1409.0668.
\bibitem{HN} A. Higashitani and Y. Nakajima, {Conic divisorial ideals of Hibi rings and their applications to non-commutative crepant resolutions}, Selecta Math. (N.S.) 25 (2019), no. 5, Paper No. 78, 25 pp.
\bibitem{HO} Y. Hirano, G. Ouchi, {Derived factorization categories of non-Thom--Sebastiani-type sums of potentials}, to appear in Proc. Lond. Math. Soc., arXiv:1809.09940
\bibitem{HR} M. Hochster and J. Roberts, {Rings of invariants of reductive groups acting on regular rings are Cohen-Macaulay}, Adv. Math. 13 (1974), 313-373.
\bibitem{Iy07a} O. Iyama, {Higher-dimensional Auslander-Reiten theory on maximal orthogonal subcategories}, Adv. Math. 210 (2007) 22-50.
\bibitem{Iy07b} O. Iyama, {Auslander correspondence}, Adv. Math. 210 (2007) 51-82.
\bibitem{Iy18} O. Iyama, {Tilting Cohen-Macaulay representations}, Proceedings of the International Congress of Mathematicians--Rio de Janeiro 2018. Vol. II. Invited lectures, 125-162, World Sci. Publ., Hackensack, NJ, 2018.
\bibitem{IT} O. Iyama and R. Takahashi, {Tilting and cluster tilting for quotient singularities}, Math. Ann. 356 (2013), 1065-1105.
\bibitem{IW14} O. Iyama and M. Wemyss, {Maximal modifications and Auslander-Reiten duality for non-isolated singularities}, Invent. Math. 197 (2014), no. 3, 521-586.
\bibitem{IYo} O. Iyama and Y. Yoshino, {Mutation in triangulated categories and rigid Cohen-Macaulay modules}, Invent. math. 172, 117-168 (2008).
\bibitem{Kac} V. Kac, {Infinite root systems, representations of graphs and invariant theory}, Invent. Math. 56 (1980), no. 1, 57–92.
\bibitem{KST1} H. Kajiura, K. Saito, and A. Takahashi, {Matrix factorizations and representations of quivers II: Type ADE case}, Adv. Math. 211 (2007) 327-362.
\bibitem{Ke05} B. Keller, {On triangulated orbit categories}, Doc. Math. 10 (2005), 551-581.
\bibitem{Ke11} B. Keller, {Deformed Calabi-Yau completions}, with an appendix by M. Van den Bergh, J. Reine Angew. Math. 654 (2011) 125-180.
\bibitem{KRac} B. Keller and I. Reiten, {Acyclic Calabi-Yau categories}, with an appendix by M. Van den Bergh, Compos. Math. 144 (2008) 1332-1348.
\bibitem{KMV} B. Keller, D. Murfet, and M. Van den Bergh, {On two examples of Iyama and Yoshino}, Compos. Math. 147 (2011) 591-612.
\bibitem{KLM} D. Kussin, H. Lenzing, and H. Meltzer, {Triangle singularities, ADE-chains, and weighted projective lines}, Adv. Math. 237 (2013), 194--251.
\bibitem{Le12} G. J. Leuschke, {Non-commutative crepant resolutions: scenes from categorical geometry},in: {Progress in commutative algebra 1}, 293–361, de Gruyter, Berlin, 2012.
\bibitem{LW} G. J. Leuschke and R. Wiegand, {Cohen-Macaulay representations}, vol. 181 of Mathematical Surveys and Monographs, American Mathematical Society, Province, RI, (2012).
\bibitem{MY} H. Minamoto and K. Yamaura, {The Happel functor and homologically well-graded Iwanaga-Gorenstein algebras}, J. Algebra 565 (2021), 441-488.
\bibitem{Na} Y. Nakajima, {On $2$-representation infinite algebras arising from dimer models}, Quart. J. Math. 2022.
\bibitem{Or04} D. Orlov, {Triangulated categories of singularities and D-branes in Landau--Ginzburg modules}, Tr. Mat. Inst. Steklova 246 (2004), Algebr. Geom. Metody, Svyazi i Prilozh, 240-262.
\bibitem{Or09a} D. Orlov, {Derived categories of coherent sheaves and triangulated categories of singularities}, Algebra, arithmetic, and geometry: in honor of Yu. I. Manin. Vol. II, 503-531, Progr. Math., 270, Birkhauser Boston, Inc., Boston, MA, 2009.
\bibitem{Or09b} D. Orlov, {Formal completions and idempotent completions of triangulated categories of singularities}, Adv. Math. 226 (2011), no. 1, 206–217.
\bibitem{Pa09} Y. Palu, {Grothendieck group and generalized mutation rule for 2-Calabi-Yau triangulated categories}, J. Pure Appl. Algebra 213 (2009), no. 7, 1438–1449.
\bibitem{Rie} C. Riedtmann, {Algebren, Darstellungsk\"ocher, Uberlagerugen und zur\"uck}, Comment. Math. Helvetici 55 (1980) 199-224.
\bibitem{RS} J. C. Rosales and P. A. García-Sánchez, {Numerical semigroups}, Developments in Mathematics, 20. Springer, New York, 2009. x+181 pp.
\bibitem{SV17} Š. Špenko and M. Van den Bergh, {Non-commutative resolutions of quotient singularities for reductive groups}, Invent. Math. 210 (2017), no. 1, 3–67.
\bibitem{SV20a} Š. Špenko and M. Van den Bergh, {Non-commutative crepant resolutions for some toric singularities I}, Int. Math. Res. Not. IMRN 2020, no. 21, 8120–8138.
\bibitem{St} R. P. Stanley, {Combinatorics and commutative algebra}, Second edition. Progress in Mathematics 41, Birkhäuser Boston, Inc., Boston, MA, 1996. x+164 pp.
\bibitem{U1} K. Ueda, {Triangulated categories of Gorenstein cyclic quotient singularities}, Proc. Amer. Math. Soc. 136 (2008) no. 8, 2745-2747.
\bibitem{VdB93} M. Van den Bergh, {Cohen-Macaulayness of semi-invariants for tori}, Trans. Amer. Math. Soc. 336 (1993), no. 2, 557–580.
\bibitem{VdB04} M. Van den Bergh, {Non-commutative crepant resolutions}, The legacy of Niels Henrik Abel, 749–770, Springer, Berlin, 2004.
\bibitem{VdB22} M. Van den Bergh, {Non-commutative crepant resolutions, an overview}, arXiv:2207.09703.
\bibitem{We16} M. Wemyss, {Noncommutative resolutions}, in: {Noncommutative algebraic geometry}, 239–306, Math. Sci. Res. Inst. Publ., 64, Cambridge Univ. Press, New York, 2016.
\bibitem{XZ} J. Xiao and B. Zhu, {Locally finite triangulated categories}, J. Algebra 290 (2005) 473-490.
\bibitem{Yo90} Y. Yoshino, {Cohen-Macaulay modules over Cohen-Macaulay rings}, London Mathematical Society Lecture Note Series 146, Cambridge University Press, Cambridge, 1990.
\end{document}